\theoremstyle{plain}
\newtheorem{theorem}{Theorem}[section]
\newtheorem{corollary}[theorem]{Corollary}
\newtheorem{lemma}[theorem]{Lemma}
\newtheorem{proposition}[theorem]{Proposition}
\theoremstyle{definition}
\newtheorem{definition}[theorem]{Definition}
\newtheorem{problem}[theorem]{Problem}
\theoremstyle{remark}
\newtheorem{remark}[theorem]{Remark}
\numberwithin{figure}{section}
\numberwithin{equation}{section}
\DeclareMathOperator{\ad}{ad}
\DeclareMathOperator{\Real}{Re}
\DeclareMathOperator{\Imag}{Im}
\DeclareMathOperator{\Res}{Res}
\newenvironment{doublecases}
{
	\left\{ 
			\begin{array}{lllll}
}
{			
			\end{array} 
			\right.
}
\begin{document}

\title [sine-Gordon equation]{Long-time asymptotics and stability for the sine-Gordon equation }
\author{Gong Chen}
\author{Jiaqi Liu}
\author{Bingying Lu}
\address[Chen]{Fields Institute for Research in Mathematical Sciences, 222 College Street Toronto, Ontario M5S 2E4, Canada. Department of Mathematics, University of Toronto, Toronto, Ontario M5S 2E4, Canada.}
\email{gc@math.toronto.edu}
\address[Liu]{School of mathematics, University of Chinese Academy of Sciences. No.19 Yuquan Road, Beijing China }
\email{jqliu@ucas.ac.cn}
\address[Lu]{Institute of Mathematics, Academia Sinica and informatik und Mathematik Bibliothekstrae 5, 28359, Bremen, Germany }
\email{lubi@uni-bremen.de}

\date{\today}

\begin{abstract}

	
	In this paper, we study the long-time dynamics of the sine-Gordon equation
	\[
	\partial_{tt}f-\partial_{xx}f+\sin f=0,\ \left(x,t\right)\in\mathbb{R}\times\mathbb{R}^{+}.
	\]
	{Firstly, we use the nonlinear steepest descent for Riemann-Hilbert problems to compute the long-time asymptotics of the solutions to
	the sine-Gordon equation whose initial condition belongs to some \textit{weighted Sobolev spaces}.} Secondly, combining
	the long-time asymptotics with a refined approximation argument, we
	analyze the asymptotic stability of multi-soliton solutions to the sine-Gordon equation in weighted energy spaces.  It is known that the obstruction
	to the asymptotic stability of kink solutions to the sine-Gordon equation in the energy
	space is the existence of small breathers which is also closely related
	to the emergence of wobbling kinks. Our stability analysis gives
	a criterion for the weight which is sharp up to the endpoint so that the asymptotic stability holds.

\end{abstract}

\medskip

\maketitle
\tableofcontents

%
%

\newcommand{\eps}{\varepsilon}
\newcommand{\lam}{\lambda}

\newcommand{\bfN}{\mathbf{N}}
\newcommand{\calbR}{\mathcal{ \breve{R}}}
\newcommand{\rhobar}{\overline{\rho}}
\newcommand{\zetabar}{\overline{\zeta}}

\newcommand{\rarr}{\rightarrow}
\newcommand{\darr}{\downarrow}

\newcommand{\dee}{\partial}
\newcommand{\dbar}{\overline{\partial}}

\newcommand{\dint}{\displaystyle{\int}}

\newcommand{\dotarg}{\, \cdot \, }

%
%

\newcommand{\RHP}{\mathrm{LC}}			
\newcommand{\PC}{\mathrm{PC}}
\newcommand{\w}{w^{(2)}}
%
%

\newcommand{\zbar}{\overline{z}}

\newcommand{\bbC}{\mathbb{C}}
\newcommand{\bbR}{\mathbb{R}}

\newcommand{\calB}{\mathcal{B}}
\newcommand{\calC}{\mathcal{C}}
\newcommand{\calR}{\mathcal{R}}
\newcommand{\calS}{\mathcal{S}}
\newcommand{\calZ}{\mathcal{Z}}
\newcommand{\tgamma}{\tilde{\gamma}}

\newcommand{\ba}{\breve{a}}
\newcommand{\bb}{\breve{b}}
\newcommand{\bchi}{\breve{\chi}}

\newcommand{\balpha}{\breve{\alpha}}
\newcommand{\brho}{\breve{\rho}}

\newcommand{\tPhi}{{\widetilde{\Phi}}}

\newcommand{\tp}{\text{p}}
\newcommand{\tq}{\text{q}}
\newcommand{\tr}{\text{r}}

\newcommand{\bfe}{\mathbf{e}}
\newcommand{\bfn}{\mathbf{n}}

\newcommand{\tA}{\tilde{A}}
\newcommand{\tB}{\tilde{B}}
\newcommand{\tomega}{\tilde{\omega}}
\newcommand{\tc}{\tilde{c}}

\newcommand{\mhat}{\hat{m}}

\newcommand{\bphi}{\breve{\Phi}}
\newcommand{\bN}{\breve{N}}
\newcommand{\bV}{\breve{V}}
\newcommand{\bR}{\breve{R}}
\newcommand{\bdelta}{\breve{\delta}}
\newcommand{\bzeta}{\breve{\zeta}}
\newcommand{\bbeta}{\breve{\beta}}
\newcommand{\bm}{\breve{m}}
\newcommand{\br}{\breve{r}}
\newcommand{\bnu}{\breve{\nu}}
\newcommand{\bbfN}{\breve{\mathbf{N}}}
\newcommand{\rbar}{\overline{r}}

\newcommand{\One}{\mathbf{1}}

%
%

\newcommand{\bigO}[2][ ]
{
\mathcal{O}_{#1}
\left(
{#2}
\right)
}

\newcommand{\littleO}[1]{{o}\left( {#1} \right)}

\newcommand{\norm}[2]
{
\left\Vert		{#1}	\right\Vert_{#2}
}

%
%

\newcommand{\rowvec}[2]
{
\left(
	\begin{array}{cc}
		{#1}	&	{#2}	
	\end{array}
\right)
}

\newcommand{\uppermat}[1]
{
\left(
	\begin{array}{cc}
	0		&	{#1}	\\
	0		&	0
	\end{array}
\right)
}

\newcommand{\lowermat}[1]
{
\left(
	\begin{array}{cc}
	0		&	0	\\
	{#1}	&	0
	\end{array}
\right)
}

\newcommand{\offdiagmat}[2]
{
\left(
	\begin{array}{cc}
	0		&	{#1}	\\
	{#2}	&	0
	\end{array}
\right)
}

\newcommand{\diagmat}[2]
{
\left(
	\begin{array}{cc}
		{#1}	&	0	\\
		0		&	{#2}
		\end{array}
\right)
}

\newcommand{\Offdiagmat}[2]
{
\left(
	\begin{array}{cc}
		0			&		{#1} 	\\
		\\
		{#2}		&		0
		\end{array}
\right)
}

\newcommand{\twomat}[4]
{
\left(
	\begin{array}{cc}
		{#1}	&	{#2}	\\
		{#3}	&	{#4}
		\end{array}
\right)
}

\newcommand{\unitupper}[1]
{	
	\twomat{1}{#1}{0}{1}
}

\newcommand{\unitlower}[1]
{
	\twomat{1}{0}{#1}{1}
}

\newcommand{\Twomat}[4]
{
\left(
	\begin{array}{cc}
		{#1}	&	{#2}	\\[10pt]
		{#3}	&	{#4}
		\end{array}
\right)
}

%
%
%

\newcommand{\JumpMatrixFactors}[6]
{
	\begin{equation}
	\label{#2}
	{#1} =	\begin{cases}
					{#3} {#4}, 	&	z \in (-\infty,\xi) \\
					\\
					{#5}{#6},	&	z \in (\xi,\infty)
				\end{cases}
	\end{equation}
}


%
%
%

\newcommand{\RMatrix}[9]
{
\begin{equation}
\label{#1}
\begin{aligned}
\left. R_1 \right|_{(\xi,\infty)} 	&= {#2} &	\qquad\qquad		
\left. R_1 \right|_{\Sigma_1}		&= {#3} 
\\[5pt]
\left. R_3 \right|_{(-\infty,\xi)} 	&= {#4} 	&	
\left. R_3 \right|_{\Sigma_2} 	&= {#5}
\\[5pt]
\left. R_4 \right|_{(-\infty,\xi)} 	&= {#6} &	
\left. R_4 \right|_{\Sigma_3} 	&= {#7} 
\\[5pt]
\left. R_6 \right|_{(\xi,\infty)}  	&= {#8} &	
\left. R_6 \right|_{\Sigma_4} 	&= {#9}
\end{aligned}
\end{equation}
}

%
%

%
%
%
%
%
%

\newcommand{\SixMatrix}[6]
{
\begin{figure}
\centering
\caption{#1}
\vskip 15pt
\begin{tikzpicture}
[scale=0.75]
%
%
\draw[thick]	 (-4,0) -- (4,0);
\draw[thick] 	(-4,4) -- (4,-4);
\draw[thick] 	(-4,-4) -- (4,4);
%
%
\draw	[fill]		(0,0)						circle[radius=0.075];
\node[below] at (0,-0.1) 				{$z_0$};
%
%
\node[above] at (3.5,2.5)				{$\Omega_3$};
\node[below]  at (3.5,-2.5)			{$\Omega_7$};
\node[above] at (0,3.25)				{$\Omega_1$};
\node[below] at (0,-3.25)				{$\Omega_2$};
\node[above] at (-3.5,2.5)			{$\Omega_8$};
\node[below] at (-3.5,-2.5)			{$\Omega_4$};
%
%
\node[above] at (0,1.25)				{$\twomat{1}{0}{0}{1}$};
\node[below] at (0,-1.25)				{$\twomat{1}{0}{0}{1}$};
%
%
\node[right] at (1.20,0.70)			{$#3$};
\node[left]   at (-1.20,0.70)			{$#4$};
\node[left]   at (-1.20,-0.70)			{$#5$};
\node[right] at (1.20,-0.70)			{$#6$};
\end{tikzpicture}
\label{#2}
\end{figure}
}

\newcommand{\sixmatrix}[6]
{
\begin{figure}
\centering
\caption{#1}
\vskip 15pt
\begin{tikzpicture}
[scale=0.75]
%
%
\draw[thick]	 (-4,0) -- (4,0);
\draw[thick] 	(-4,4) -- (4,-4);
\draw[thick] 	(-4,-4) -- (4,4);
%
%
\draw	[fill]		(0,0)						circle[radius=0.075];
\node[below] at (0,-0.1) 				{$-z_0$};
%
%
\node[above] at (3.5,2.5)				{$\Omega_9$};
\node[below]  at (3.5,-2.5)			{$\Omega_5$};
\node[above] at (0,3.25)				{$\Omega_1$};
\node[below] at (0,-3.25)				{$\Omega_2$};
\node[above] at (-3.5,2.5)			{$\Omega_6$};
\node[below] at (-3.5,-2.5)			{$\Omega_{10}$};
%
%
\node[above] at (0,1.25)				{$\twomat{1}{0}{0}{1}$};
\node[below] at (0,-1.25)				{$\twomat{1}{0}{0}{1}$};
%
%
\node[right] at (1.20,0.70)			{$#3$};
\node[left]   at (-1.20,0.70)			{$#4$};
\node[left]   at (-1.20,-0.70)			{$#5$};
\node[right] at (1.20,-0.70)			{$#6$};
\end{tikzpicture}
\label{#2}
\end{figure}
}

%
%
%
%

%
%
%
%

\newcommand{\JumpMatrixRightCut}[6]
{
\begin{figure}
\centering
\caption{#1}
\vskip 15pt
\begin{tikzpicture}[scale=0.85]
%
%
\draw [fill] (4,4) circle [radius=0.075];						
\node at (4.0,3.65) {$\xi$};										
%
%
\draw 	[->, thick]  	(4,4) -- (5,5) ;								
\draw		[thick] 		(5,5) -- (6,6) ;
\draw		[->, thick] 	(2,6) -- (3,5) ;								
\draw		[thick]		(3,5) -- (4,4);	
\draw		[->, thick]	(2,2) -- (3,3);								
\draw		[thick]		(3,3) -- (4,4);
\draw		[->,thick]	(4,4) -- (5,3);								
\draw		[thick]  		(5,3) -- (6,2);
%
%
\draw [  thick, blue, decorate, decoration={snake,amplitude=0.5mm}] (4,4)  -- (8,4);				
\node at (1.5,4) {$0 < \arg (\zeta-\xi) < 2\pi$};
%
%
\node at (8.5,8.5)  	{$\Sigma_1$};
\node at (-0.5,8.5) 	{$\Sigma_2$};
\node at (-0.5,-0.5)	{$\Sigma_3$};
\node at (8.5,-0.5) 	{$\Sigma_4$};
%
%
\node at (7,7) {${#3}$};						
\node at (1,7) {${#4}$};						
\node at (1,1) {${#5}$};						
\node at (7,1) {${#6}$};						
\end{tikzpicture}
\label{#2}
\end{figure}
}

%
%

\section{Introduction}

%
%
\subsection{General introduction}
In this paper, we study the long-time asymptotics and  stability properties 
of soliton for the $1+1$ dimensional sine-Gordon (sG) equation,
\begin{align}
\label{eq: sG}
 \partial_{tt}f-\partial_{xx}f+\sin f &=0,\ \left(x,t\right)\in\mathbb{R}\times\mathbb{R}^{+}\\
\label{data}
 f(x,0) &=f_0(x), \\
\label{data-t}
f_t(x,0) &=f_1(x),
\end{align}
in laboratory coordinates $\left(x,t\right)\in\mathbb{R}^{2}$ where
$f$ is a real-valued function.
 
\smallskip

The sine-Gordon equation has been widely studied because of its ubiquitous
appearance in different fields. First proposed in differential geometry, it has been shown to have
close connection to relativistic field theory, and garnered further attention in 1970’s due to it
being discovered as an ``integrable system''. On the applications side, to mention but a few, it arises as the mechanical model for coupled pendula \cite{BEMS} and appears
in the theory of crystal dislocations \cite{FK} and 
superconducting Josephson junctions  \cite{SCR76}, 
vibrations of DNA molecules in \cite{Y04}, 
and quantum field theory in \cite{Coleman1975}. 
We refer the interested reader to monographs by Lamb \cite{Lamb},
Dauxois and Peyrard \cite{DP}, {Cuevas-Maraver}, Kevrekidis and Williams
\cite{CKW}. In particular, the sine-Gordon equation is a mechanical
model of a continuum of pendula parametrized by the $x$ variable that interact
elastically with each other. In this setting, $f\left(x,t\right)$
denotes the angle of the pendulum located at point $x$ at time $t$.
Thus, it is also natural to calculate $\sin(f)$ and $1-\cos(f)$ which
we will use later on in this paper.
\smallskip

As a wave-type equation, the sine-Gordon equation \eqref{eq: sG} enjoys
the conservation of energy
\begin{align}
E_{\text{sin}}\left(t\right) & :=\frac{1}{2}\int_{\mathbb{R}}\left|\partial_{t} f \right|^{2}+\left|\partial_{x} f \right|^{2}\,dx+\int_\bbR \left(1-\cos f \right)\,dx\label{eq:sGenergy}\\
& =\frac{1}{2}\int_{\mathbb{R}}\left|\partial_{t} f \right|^{2}+\left|\partial_{x} f \right|^{2}\,dx+2\int_\bbR\sin^{2}\left(\frac{f}{2}\right)\,dx=E_{\text{sin}}\left(0\right)\nonumber 
\end{align}
and the conservation of momentum
\begin{equation}
P\left(t\right):=\frac{1}{2}\int_{\mathbb{R}}f_{t}(x,t)f_{x}(x,t)\,dx=P(0).\label{eq:sGmomentum}
\end{equation}
Thus, using the general theory of semilinear wave equations, the natural Sobolev spaces to study the well-posedness
of the sine-Gordon equation are
\begin{equation}\label{eq:sobolevwave}
H_{\text{sin}}^{k}\left(\mathbb{R}\right)\times H^{k-1}\left(\mathbb{R}\right):=\left\{ \vec{ f }:=\left(f,f_{t}\right)\in\left(\dot{H}^{k}\left(\mathbb{R}\right)\times H^{k-1}\left(\mathbb{R}\right)\right):\ \sin\left(\frac{f}{2}\right)\in L^{2}\left(\mathbb{R}\right)\right\}
\end{equation}
where $\dot{H}^k$ is the homogeneous Sobolev space.
We refer the reader to de Laire-Gravejat \cite{deLG} and Shatah-Struwe \cite{SS} for full details. Solutions of \eqref{eq: sG} also satisfy several symmetries:
\begin{itemize}
\item[1.] Shifts in space and time: if $\vec{f}\left(x,t\right)$ is
a solution to \eqref{eq: sG} then $\vec{f}\left(x+x_{0},t+t_{0}\right)$ will also be
a solution for any $t_0, x_0\in\mathbb{R}$.
\item[2.] Lorentz boosts: for each $\beta\in\left(-1,1\right),$ if
$\vec{f}(x,t)=\left(f,f_{t}\right)(x,t)$ is a solution to \eqref{eq: sG} then
\begin{equation}
\left(f,f_{t}\right)_{\beta}:=\left(f,f_{t}\right)\left(\gamma\left(x-\beta t\right),\gamma\left(t-\beta x\right)\right)\qquad\gamma:=\frac{1}{\sqrt{1-\beta^{2}}}\label{eq:Lorentz}
\end{equation}
is also a solution.
\end{itemize}

\smallskip

Moreover,  the sine-Gordon equation is an
integrable system in the sense that it is the compatibility condition of
a Lax pair. Detailed descriptions will be given in  Section \ref{sec: d-i}. 
In the seminal work of Zakharov and Shabat \cite{ZS71}, the authors
developed powerful tools to study the initial value problem for a family of integrable equations. Under this framework, the initial value problem for the sine-Gordon equation in the characteristic coordinate was first done by Ablowitz, Kaup, Newell and Segur in \cite{AKNS73}
and independently by Takhatajan in \cite{Takhatajan1974}. 
In 1975, the initial value problem for the sine-Gordon equation in laboratory coordinates using the same methods was independently studied by Kaup in \cite{Kaup75},
and by Zakharov, Takhtajan and Fadeev in \cite{ZTF}.

\smallskip

One striking feature of the sine-Gordon equation is that it supports kinks and antikinks
	which are topological solitons and breathers, which can be regarded  as bound states of kinks and antikinks
(see \cite{Lamb}). The kink solution of the sine-Gordon equation
is known as
\begin{equation}
Q(x)=4\arctan e^{x}\label{eq:kink1}
\end{equation}
which is an exact solution connecting the final states $0$ and $2\pi$.
Using the Lorentz boost and translation invariance, given any
$\beta\in\left(-1,1\right)$ and any phase shift $x_{0}\in\mathbb{R},$ we can
find a family of kinks
\begin{equation}
Q\left(x,t;\beta,x_{0}\right)=4\arctan e^{\gamma(x-\beta t+x_{0})},\ \gamma=\frac{1}{\sqrt{1-\beta^{2}}}.\label{eq:kink2}
\end{equation}
The antikink of form $-Q\left(x,t;\beta,x_{0}\right)$ is another exact solution of the sine-Gordon equation. For $v\in\left(-1,1\right)$,
$\gamma=\frac{1}{\sqrt{1-\left|v\right|^{2}}}$, $\beta\in\left(0,\gamma\right)$
and $x_{1},x_{2}\in\mathbb{R}$, we can also construct a family of breather solutions
to the sine-Gordon equation:
\begin{equation}
B_{v}\left(x,t;\beta, x_{1},x_{2}\right):=4\arctan\left(\frac{\beta}{\alpha}\frac{\cos\left(\alpha (t-vx+x_{1})\right)}{\cosh\left(\beta (x-vt+x_{2})\right)}\right),\label{eq:breather}
\end{equation}
where $\alpha:=\sqrt{\gamma^{2}-\beta^{2}}$.

Kinks in one dimension \eqref{eq:kink2}
are the simplest example of \emph{topological solitons}, that is, non-spatially localized
special solutions, as opposed to the standard spatially localized solitons or breathers \eqref{eq:breather}.
These nonlinear structures do not disperse, a notable consequence of the interaction between the nonlinearity and the dispersion of the equation. The breathers and kink/antikinks can be combined into more complicated structures such as multi-kink, kink-antikink, wobbling kinks, etc.. These can be found in Alejo-Mu\~noz-Palacios \cite{AMP}, Cheng\cite{Cheng}, Mu\~noz-Palacios \cite{MP} and Tsuru-Wadati \cite{TW}.{\footnote{We will use slightly different notations for kinks, antikinks, breathers later on to be more consistent with the inverse scattering literature.} 
Given that topological solitons are not spatially localized,
it makes the linear theory and perturbation analysis around topological
solitons more difficult than
those needed for the study of localized solitons.


\subsection{Main results and related literature}

In our current work, we combine methods from the inverse
scattering transforms and classical PDEs to analyze the dynamics of
the sine-Gordon equation in certain Sobolev spaces. We first systematically
develop the mapping properties between the initial data and scattering
data. Then to obtain the long-time asymptotics,
we perform the nonlinear steepest descent method in the framework
of Deift and Zhou and its reformulation by Dieng and McLaughlin through
$\overline{\partial}$-derivatives. The second part of our work is
devoted to the study of the asymptotic stability. From the results of long-time asymptotics,
the full asymptotic stability\footnote{Here, by the full asymptotic stability, it means that detailed descriptions for the parameters of solitons and the modified scattering behavior of the radiation term are given after perturbations} is a byproduct. Then using a refined approximation
argument, we are able to study asymptotic stability in certain localized
energy space in the spirit of the work of Kowalczyk-Martel-Mu\~noz
\cite{KMM1,KMM2} with perturbations in weighted energy spaces.
We give a criterion for the weight which is sharp up to
the endpoint (not including) to determine the asymptotic stability/instability. In the following subsection, we introduce our main results in this paper and briefly survey the literature related to our topics.

\smallskip

In order to state our results, we first introduce the weighted version of the Sobolev space \eqref{eq:sobolevwave}. For $k\in \mathbb{N},\,s\in\mathbb{R}^+$, we define
\[
H_{\text{sin}}^{k,s}\left(\mathbb{R}\right):=\left\{ f\in L_{\text{loc}}^{1}\left(\mathbb{R}\right):\,\partial_{x}^{j}f\in L^{2,s}\left(\mathbb{R}\right),1\leq j\leq k,\,\text{and}\,\sin\left(f/2\right)\in L^{2,s}\left(\mathbb{R}\right)\right\} ,
\]
\begin{equation}
\label{H-weighted}
H^{k,s}\left(\mathbb{R}\right):=\left\{ f\in L_{\text{loc}}^{1}\left(\mathbb{R}\right):\,\partial_{x}^{j}f\in L^{2,s}\left(\mathbb{R}\right),0\leq j\leq k\right\} ,
\end{equation}
and
\[
L^{2,s}\left(\mathbb{R}\right):=\left\{ f\in L_{\text{loc}}^{1}\left(\mathbb{R}\right):\,\:\left\langle x\right\rangle^s f=\left(\sqrt{x^{2}+1}\right)^{s}f\in L^{2}\left(\mathbb{R}\right)\right\} .
\]
The pseudometric distance function associated to $H_{\text{sin}}^{k,s}$ is given by
\[
d^{k,s}_{\text{sin}}\left(f_{1},f_{2}\right):=\left(\left\Vert \sin\left(\frac{f_{1}-f_{2}}{2}\right)\right\Vert _{L^{2,s}}^{2}+\sum_{j=1}^k \left\Vert \partial_{x}^j\left(f_{1}-f_{2}\right)\right\Vert _{L^{2,s}}^{2}\right)^{\frac{1}{2}}.
\]
We study the sine-Gordon equation with initial data
\begin{equation}\label{eq:weightedspace}
\vec{f}(x,0)=\left(f_{0}(x),f_{1}(x)\right)\in H_{\sin}^{k,s}\left(\mathbb{R}\right)\times H^{k-1,s}\left(\mathbb{R}\right).
\end{equation}
First of all, observe that $f_{0}\in H_{\text{sin}}^{k,s}\left(\mathbb{R}\right)$ 
implies that there exist integers $\ell_{+}$ and $\ell_{-}$ such
that
\begin{equation}
\label{def:bdry}
f_{0}\rightarrow 2\pi \ell_{\pm},\ x\rightarrow\pm\infty.
\end{equation}

Without loss of generality, throughout this paper, we take $\ell_{-}=0$
and $\ell_{+}=n$ for a fixed integer $n$. Note that by adding a fixed multiple
of $2\pi$ to $f_{0}$, we can obtain the general cases.

\smallskip
Before introducing the main theorems, we mention that in order to avoid overly many {new notations and definitions in this introductory section, we shall only formally
state our main results. For the precise versions of these results,
see Section \ref{sec:summary} and Section \ref{sec:stability}.

\smallskip

\subsubsection{\bf{Long-time asymptotics}}

To study the long-time asymptotics for the sine-Gordon equation, we
first note that any solution to the sine-Gordon equation will
not scatter to the associated linear evolution, the free Klein-Gordon
equation. Due to the weakness of the nonlinearity, when no soliton appears, the solution will decay at the
same rate as linear solutions but its asymptotic behavior differs
from linear solutions by a logarithmic phase correction. To see this,
considering  small norm solutions to the sine-Gordon equation \eqref{eq: sG}, one can use the Taylor series to expand
$\sin(f)=f-\frac{f^{3}}{6}+\mathcal{O}\left(f^{5}\right).$ Then the
leading order behavior of the solution is given by
\[
f_{tt}-f_{xx}+f-\frac{f^{3}}{6}=0
\]
which is the cubic Klein-Gordon equation. The main feature of the
cubic nonlinearity is its criticality with respect to scattering:
linear solutions of the Klein-Gordon equation decay at best like $|t|^{-1/2}$
in $L_{x}^{\infty}$. Thus, when evaluating the nonlinearity on
linear solutions, one sees that $f^{3}\sim|t|^{-1}f$; the non-integrability
of $|t|^{-1}$ for $t\gg1$ results
in a ``Coulomb''-type contribution of the nonlinear terms which makes
the solution to exhibit modified scattering as time goes to infinity. In order to compute the precise behavior of this radiation, it is natural to work on weighted spaces. For works on the cubic Klein-Gordon equation in $1$d,
we refer to Delort \cite{JMD}, Lindblad-Soffer \cite{LS,LS2,LS3}, Lindblad-Luhrmann-Soffer \cite{LLS},
Hayashi-Naumkin \cite{HN1,HN2,HN3}, Sterbenz \cite{Sterb} and references therein. Overall,
although PDE techniques do not rely on integrability,
to our best knowledge, certain smallness assumptions on the data are
required in order to capture the modified scattering phenomenon.

\smallskip
In the large data regime, one of the most important problems in dispersive waves is the \emph{soliton resolution conjecture}. This conjecture asserts, roughly speaking, that any reasonable solution
to such equations eventually resolves into a superposition of a radiation
component and a finite number of \textquotedblleft nonlinear bound
states\textquotedblright{} or \textquotedblleft solitons\textquotedblright .
In the  works of Duyckaerts-Kenig-Merle \cite{DKM} and Duyckaerts-Jia-Kenig-Merle
\cite{DJKM} the authors establish this conjecture for the energy critical wave
equation in high dimensions (along a sequence of time for the nonradial
case) without using integrability. For integrable systems, this resolution phenomenon
is studied by Borghese-Jenkins-McLaughlin \cite{BJM16} for the cubic NLS
and by Jenkins-Liu-Perry-Sulem \cite{JLPS18} for the  derivative NLS and more recently by {Pelinovsky-Saalmann} \cite{PS19, SA18} for the massive Thirring model and Chen-Liu for \cite{CL2} the mKdV equation. 

\smallskip
In this paper, we make use of  the complete integrability
of the sine-Gordon equation.  Leading order asymptotic formulas for the long time behavior of those types of integrable equations have been formally studied in Zakharov and Manakov \cite{ZakharovM1976} 
in 1976 using the inverse scattering transform method, and then by Its \cite{Its1981} 
in 1981 using the isomonodromy method. Deift and Zhou in \cite{DZ93} developed the nonlinear  steepest descent method. Their work laid down the foundation of rigorously studying the asymptotics for integrable equations. In \cite{DZ93}, a key step in the nonlinear steepest descent method consists of  deforming the contour
associated to  the Riemann-Hilbert problem (RHP) in such a way that  the phase functions  with oscillatory 
dependence on parameters become exponential decay.
In Cheng \cite{Cheng} and Cheng-Venakedis-Zhou \cite{CVZ99}, the authors first used the nonlinear  steepest descent method to study the long time asymptotics of the sine-Gordon equation with soliton-free initial data. While \cite{Cheng} also considered  solitons, no detailed derivation of formulas is given. Moreover in both of these works, the authors assumed the initial condition to be in the Schwartz space. Later again assuming infinite order of smoothness and decay, Huang and Lenells in \cite{HL} considered the sine-Gordon equation in a quarter plane for $x\geq 0, t\geq 0$. 
Our goal in this paper is to study the long-time behavior of the sine-Gordon equation in weighted Sobolev spaces, in particular, to rigorously analyze the stability/instability of soliton solutions to the equation and prove the soliton resolution conjecture. We also want to point out the work of Deconinck-Trogdon-Yang \cite{DTY} in which the authors implemented numerical analysis of the inverse scattering transform for sine-Gordon equation. Their results give further insights into the decay rate of the error terms in the asymptotics formulas.

\smallskip

In \cite{Zhou98},  Zhou developed rigorous analysis of the direct and inverse scattering transform for the AKNS system with a class of initial conditions belonging to the space  $\mathcal{H}^{i,j}(\bbR)$.
Here,  $\mathcal{H}^{i,j}(\bbR)$,  different from $H^{k,s}$ in \eqref{H-weighted}, denotes  
the completion of $C_0^\infty(\bbR)$ in the norm
$$
\norm{u}{\mathcal{H}^{i,j}(\bbR)}
= \left( \norm{(1+|x|^j)u}{2}^2 + \norm{u^{(i)}}{2}^2 \right)^{1/2}. 
$$ This analysis later plays a fundamental role in  relaxing the regularity assumptions
of the initial data. In particular, among the most celebrated results
concerning nonlinear Schr\"odinger equations, we point out the work
of Deift-Zhou \cite{DZ03} where they derive the asymptotics for
the NLS in the weighted space $L^{2,1}$. Dieng and McLaughlin in \cite{DM08} (see also an extended version \cite{DMM18}) developed a variant of Deift-Zhou method. In their approach, the 
rational approximation of the reflection coefficient is replaced by some 
non-analytic extension of the jump matrices off the real axis, which  leads to a $\bar{\partial}$-problem to 
be solved in some regions of the complex plane. The
new  $\bar{\partial}$-problem can be reduced to an integral equation and is solvable through
a Neumann series.   
These ideas were originally implemented by Miller and McLaughlin \cite{MM08} to the 
study the  asymptotics of orthogonal polynomials. This method has shown its robustness in its application to other integrable models. Notably, it was successfully applied to address the soliton resolution in \cite{BJM16},  \cite{JLPS18} and \cite{CL2} for the focussing NLS, the derivative NLS and the mKdV equation respectively. We would like to point out that Kamvissis, McLaughlin and Miller generalized the Deift--Zhou approach to study the semi-classical limit for the focusing nonlinear Schr\"odinger equation in \cite{KamvissisMM2003}.
For semi-classical limit results concerning the sine-Gordon equation, we mention \cite{BuckinghamM2013} by Buckingham and Miller as an example for the reader's interest. 

\smallskip 

After systematically establishing the Sobolev mapping properties, we apply the nonlinear
steepest descent to study the long-time asymptotics of solutions to
the sine-Gordon equation without the smallness assumption on the initial
data. Our first result in this paper is that we give a full description of the long-time behavior of \textit{generic}  solutions  (open and dense subset)
in the weighted Sobolev space $H_{\text{sin}}^{2,s}\left(\mathbb{R}\right)\times H^{1,s}\left(\mathbb{R}\right)$
for $s>\frac{1}{2}$ which is sufficient to apply the nonlinear steepest descent method.
The asymptotic formulas obtained in Propositions \ref{lemma br.RHP.asy} and \ref{lemma kin.RHP.asy} immediately verifies of soliton resolution conjecture for the sine-Gordon  equation with generic data. 

\begin{theorem}[Soliton resolution]
	\label{thm:main1} Let the initial data $\vec{f}\left(0\right)=\left(f_{0},f_{1}\right)\in H_{\text{sin}}^{2,s}\left(\mathbb{R}\right)\times H^{1,s}\left(\mathbb{R}\right)$
	with $s>\frac{1}{2}$ be \emph{generic} in the sense of Definition \ref{genericity} and let $f$
	be  the unique \footnote{We also provide a  proof of the global well-posedness in  weighted Sobolev spaces in Theorem \ref{thm:gwpweighted}. }  solution to the sine-Gordon equation \eqref{eq: sG} obtained by solving the Riemann-Hilbert Problem \ref{RHP-1} with initial data $\vec{f}(0)$. Then the
	solution $f$ can be written as the superposition of breathers, kinks,
	anti-kinks and the radiation. More precisely, there exist two non negative
	integers $N_{1}$, $N_{2}$, two sets of velocities
	\[
	\left\{ v_{b,j}\right\} _{j=1,\ldots N_{1}},\ \left\{ v_{k,j}\right\} _{j=1,\ldots N_{2}}
	\]
	two sets of parameters
	\[
	\left\{ \epsilon_{j},1\leq j\leq N_{1}:\epsilon_{j}\in\left\{ \pm1\right\} \right\} ,\ \left\{ \beta_{j}\in\left(0,\gamma_{b,j}\right),1\leq j\leq N_{2}\right\} 
	\]
{and sets of trajectories}
	\[
	\left\{ x_{k,j}(t),1\leq j\leq N_{1}\right\} ,\ \left\{ x_{b,1,j}\left(t\right),x_{b,2,j}\left(t\right),1\leq j\leq N_{2}\right\} 
	\]
	(If $N_{\ell}$, $\ell=1,2$ is $0$ then the corresponding sets of
	velocities, parameters, will be empty sets) such that using the notations
	\eqref{eq:kink2} and \eqref{eq:breather}, one can write
	\begin{align*}
	f & =\sum_{j=1}^{N_{1}}\epsilon_{j}Q\left(x,t;v_{k,j},x_{k,j}(t)\right)+\sum_{j=1}^{N_{2}}B_{v_{b,j}}\left(x,t;\beta_{b,j},x_{b,1,j}(t),x_{b,2,j}(t)\right)+f_r(x,t)
	\end{align*}
{where the radiation term decays to $0$ and it exhibits the modified scattering inside the light-cone.}
	\end{theorem}
\begin{remark}
{We remark that in the leading order term of $f_r(x,t)$ is computed explicitly in terms of stationary points and the reflection coefficient. The error term from $f_r(x,t)$ depends on the weighted norm of the initial data and the stationary points. In particular,  strictly inside the light cone $|x/t|<1-\epsilon$ where $\epsilon>0$ is arbitrarily small,  the error term is uniform with respect to  $x/t$.  When $|x/t| \to 1$ as $t \to \infty$, the error term takes another form but the dependence on $x/t$ is still uniform. For the explicit expressions of the radiation terms, see Theorem \ref{thm:maindetail}.}
\end{remark}
\begin{remark}
	Note that in the asymptotic formula above, there are no wobbling kinks (see \eqref{eq:Wobbling-Kink}). The reason is that our Definition \ref{genericity} rules them out. The genericity condition forces breathers and kinks to have different speeds.  But this requirement is imposed just for the sake of simplicity. A wobbling kink formally is a superposition of a breather and kink having the same speed. From the inverse scattering point of view, the spectra associated to wobbling kinks are not stable under perturbation. It will become clear later on that our computations can also be applied to the setting with the appearance of wobbling kinks, since the spectral information of wobbling kinks is still nice, i.e., eigenvalues are simple. The reader can also see Remarks \ref{remark-generic} and \ref{rem:def1}.
\end{remark}
\begin{remark}
	We  point out  that by the continuous dependence of the scattering
	data on the initial data, if $\left\Vert \vec{f}(0)\right\Vert _{H_{\text{sin}}^{2,s}\times H^{1,s}}$
	is small enough, then $\vec{f}(0)$ is generic.
\end{remark}
\begin{remark}
	To illustrate why we need the weighted spaces,  consider the linear
	version of the direct and inverse scattering transform:
	\begin{itemize}
	\item For the direct scattering, we consider
	\begin{equation}
	r(z)=\int_\bbR e^{i(z-1/z)x}q(x)dx
	\end{equation}
	whose derivative is given by
	\begin{equation}
	\dfrac{d r(z)}{dz} =i\int_\bbR e^{i(z-1/z)x} \left( 1+\dfrac{1}{z^2}\right) x q(x)dx.
	\end{equation}
	To deal with the singularity at $0$, we introduce the change of variable $z\mapsto \gamma=1/z$. Then for $f(z)=g(\gamma)$, a simple change of variable gives
	\begin{align*}
\int_0^1|f'(z)|^2 dz= \int_1^\infty |g'(\gamma)|^2 \gamma^2 d\gamma.
\end{align*}
This leads to showing the following property
\begin{equation}
	\gamma\dfrac{d r(\gamma)}{d\gamma} =-i\int_\bbR e^{i(1/\gamma-\gamma)x} \left( \gamma+\dfrac{1}{\gamma}\right) x q(x)dx \in L^{2}_\gamma(\bbR)
	\end{equation}
	and the standard Fourier theory will require $xq(x)\in H^1(\bbR)$. We remark that this singularity at $z=0$ appears in other integrable PDEs including the defocussing NLS equation with non-vanishing boundary condition at $\infty$.
	\item Reading
	off from the evolution of the scattering data, the linear model is
	the following:
	\[
	\int e^{\left(k-\frac{1}{k}\right)ix+\left(k+\frac{1}{k}\right)it}r\left(k\right)\,dk.
	\]
	To obtain the asymptotics of the above integral, one can  apply the standard
	stationary phase. Taking $x=0$ as the example, we obtain that
	\[
	\left|\int e^{\left(k+\frac{1}{k}\right)it}r\left(k\right)\,dk\right|\lesssim\frac{1}{\sqrt{t}}|r\left(1\right)|+\frac{1}{t^{\frac{2s+1}{4}}}\left\Vert r\right\Vert _{H^{s,1}}.
	\]
	So we note that even in this simplified case we need $\left\Vert r\right\Vert _{H^{s,1}}$ to be finite in order to obtain
	the asymptotic. By the Sobolev mapping properties, to ensure that $\left\Vert r\right\Vert _{H^{s,1}}$ to be finite, it requires that $\vec{f}(0)\in H_{\text{sin}}^{2,s}\left(\mathbb{R}\right)\times H^{1,s}\left(\mathbb{R}\right)$, see Proposition \ref{prop:r}.
	\end{itemize}
	
\end{remark}

\smallskip

\subsubsection{\bf{Stability of solitons and topological solitons}}

The literature on the  stability of solitons  is extensive and without trying to be exhaustive, we refer
to the survey by Tao \cite{Tao}, the monograph Dauxois-Peyrard
\cite{DP} and references therein.  Regarding the study of the full asymptotic
stability of various kink models on the full real line, we refer the reader to recent
work of Germain-Pusateri \cite{GP}, Delort-Masmoudi \cite{DM}, Komech-Kopylova
\cite{KK1,KK2} and references therein. In particular, regarding kink
solutions, the classical orbital stability was proven in Henry-Perez-Wreszinski
\cite{HPW}. In Mu\~noz-Palacios \cite{MP}, the authors analyzed the orbital stability
of 2-soliton solutions for the sine-Gordon equation. Kowalczyk-Martel-Mu\~noz
\cite{KMM2} proved the asymptotic stability locally in the energy space
for odd perturbations of the kink of the $\phi^{4}$ equation. Regarding the interaction among kinks and antikinks, we refer to the work of Jendrej-Kowalczyk-Lawrie \cite{JKL}. In the very recent work of Kowalczyk-Martel-Mu\~noz-Van Den Bosch \cite{KMMV}, the authors gave a sufficient condition for asymptotic stability of kinks in general (1+1)-scalar field models.

\smallskip

While the mathematical theory on the stability (or instability) of
solitons is very well-developed in many models, this is not the case
for topological solitons. As mentioned above, the striking difference between solitons
and topological solitons is that the latter are not spatially localized.
This makes the linear theory and perturbation analysis around topological
solitons by PDE techniques difficult compared to
the analysis for the soliton. In the current sine-Gordon setting, our long-time asympotics in particular
imply the \emph{full asymptotic stability} of kinks, antikinks, breathers and
other nonlinear combination of them in the space $H_{\sin}^{2,s}\left(\mathbb{R}\right)\times H^{1,s}\left(\mathbb{R}\right)$
with $s>\frac{1}{2}$ in which we perform the nonlinear steepest descent.  
\smallskip


In order to avoid too much technicality and keep our introduction short, we illustrate the asymptotic stability results using kinks only (instead of the general $n$-soliton solutions). For full details and the asymptotic stability of multi-soliton solutions, see Section \ref{sec:summary} and Section \ref{sec:stability}.

\begin{corollary}[Full asymptotic stability of kinks]
\label{col-asy kink}
	Consider the kink solution to the \eqref{eq: sG}
	\[
	K(x,t)=Q\left(x,t;v_{k}^{0},x_{k}^{0}\right)
	\]
	with some velocity $v_{k}^{0}\in(-1,1)$ and shift $x_{k}^{0}$. Let $s>\frac{1}{2}$
	and suppose that
	\[
	\left\Vert \vec{f}\left(0\right)-\left(K(x,0),\partial_{t}K(x,0)\right)\right\Vert _{H_{\sin}^{2,s}\left(\mathbb{R}\right)\times H^{1,s}\left(\mathbb{R}\right)}\leq\eta
	\]
	for some $\eta$ small enough. Then there exist a new velocity $v_{k}\in(-1,1)$ and a trajectory
	$x_{k}(t)$ such that
	\[
	\left|v_{k}-v_{k}^{0}\right|+\left|x_{k}(t)-x_{k}^{0}\right|\lesssim\eta,
	\]
	 for the solution $f$ to the sine-Gordon equation \eqref{eq: sG} with initial data $\vec{f}(0)$,  one can write
	\[
	f=Q\left(x,t;v_{k},x_{k}(t)\right)+f_r
	\]
	{where the radiation term $f_r$ has the same behavior as Theorem \ref{thm:main1}.}
\end{corollary}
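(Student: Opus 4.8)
The plan is to derive this corollary directly from the soliton resolution theorem (Theorem~\ref{thm:main1}) together with the structural stability of the scattering data under small perturbations in the weighted space. The core point is that the kink solution $K(x,t)=Q(x,t;v_k^0,x_k^0)$ is a one-soliton (reflectionless) solution of \eqref{eq: sG}, so its scattering data consists of a single discrete eigenvalue $z_k^0$ (encoding the velocity $v_k^0$) with an associated norming constant (encoding the shift $x_k^0$) and vanishing reflection coefficient.

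First I would use the mapping properties between initial data and scattering data (the direct scattering map developed in the earlier sections, to be invoked from Section~\ref{sec:summary}) to show that if $\vec f(0)$ is $\eta$-close to $(K(x,0),\partial_t K(x,0))$ in $H_{\text{sin}}^{2,s}\times H^{1,s}$, then the scattering data of $\vec f(0)$ is $O(\eta)$-close to that of $K$: precisely, the perturbed problem has exactly one discrete eigenvalue $z_k$ with $|z_k-z_k^0|\lesssim\eta$ (hence a velocity $v_k$ with $|v_k-v_k^0|\lesssim\eta$ via the explicit algebraic relation between eigenvalue location and soliton velocity), a norming constant within $O(\eta)$ of the unperturbed one (translating to $|x_k(0)-x_k^0|\lesssim\eta$), and a reflection coefficient $r$ with $\|r\|_{H^1}+\|\partial_k r\|_{L^{2,s}}\lesssim\eta$. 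This uses the continuity (Lipschitz) of the direct transform on the relevant weighted Sobolev spaces, together with the fact that $\eta$ small rules out the birth of additional eigenvalues (a small perturbation cannot create new spectral data far from the unperturbed configuration), and also guarantees genericity in the sense of Definition~\ref{genericity}.

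Next I would apply Theorem~\ref{thm:main1} to $\vec f(0)$. Genericity and the one-soliton spectral picture give $N_2=1$, $N_1=0$ (or the opposite labeling depending on whether the single eigenvalue is on the real axis or in the interior — for a kink it is the real-line case), so the resolution reads $f=Q(x,t;v_k,x_k(t))+f_r$ with $f_r=L+E$ and the stated bounds $|L(x,t)|\lesssim t^{-1/2}$, $|E(x,t)|\lesssim t^{-3/4}$ with logarithmic phase correction in $L$. The implicit constants in the error estimate depend on the $H_{\text{sin}}^{2,s}\times H^{1,s}$ norm of the data, which is $O(\eta)$ above the kink, hence controlled. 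The path $x_k(t)$ comes from the time evolution of the (trivial) phase of the discrete data; the estimate $|x_k(t)-x_k^0|\lesssim\eta$ at all times follows because in the reflectionless-plus-small-radiation regime the modulation of the eigenvalue phase is driven only by the $O(\eta)$ radiation, so the soliton parameters stay within $O(\eta)$ of their initial values uniformly in $t$ (one can quote the stability of discrete scattering data under the Riemann--Hilbert evolution, or run a direct modulation argument controlling $\dot x_k$ by the $L^2$-in-time smallness of the radiation interacting with the soliton).

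The main obstacle I anticipate is the uniform-in-time control of the path $x_k(t)$ and the velocity correction: the soliton-resolution theorem as stated produces \emph{some} path, but pinning it down to stay $O(\eta)$-close to the reference kink requires the quantitative stability of the inverse-scattering reconstruction — specifically, that the soliton component extracted from the RHP depends Lipschitz-continuously on the scattering data uniformly in $(x,t)$ along the soliton's characteristic. This is where the weighted-space hypothesis $s>\tfrac12$ is essential (it is exactly what makes $\|\partial_k r\|_{L^{2,s}}$ finite and small, controlling the $\overline\partial$-contribution and the interaction terms). Once that Lipschitz dependence is in hand — either imported from the analysis behind Theorem~\ref{thm:main1} or established by tracking constants through the nonlinear steepest descent — the corollary follows by specialization. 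The remaining steps (converting eigenvalue/norming-constant closeness into $(v_k,x_k)$ closeness, and confirming no extra solitons appear) are routine given the explicit parametrization of one-soliton data.
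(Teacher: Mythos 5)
Your proposal matches the paper's route: the paper also obtains this corollary by combining the Lipschitz continuity of the direct scattering map on $H_{\text{sin}}^{2,s}\times H^{1,s}$ (from Proposition~\ref{prop:r} and Lemma~\ref{lem:BCD}) with the soliton resolution asymptotics of Theorem~\ref{thm:maindetail}, reading the new velocity and path off the perturbed eigenvalue and the phase shift $\log\bigl(b_\ell\,\delta(z_\ell)^{-2}/(2\zeta_\ell)\bigr)$ in \eqref{kink-cos}--\eqref{kink-sin}. One small correction: the kink eigenvalue lies on the imaginary axis (case (i) in Subsection~\ref{subsec:eigenvales}, with $N_1=1$, $N_2=0$), not "the real-line case," and the simple Rouché-type argument keeps the perturbed zero of $\breve a$ on $i\mathbb{R}$ since an off-axis zero would be forced by the symmetry $\breve a(z)=0\Rightarrow\breve a(-\bar z)=0$ to come in a pair, contradicting simplicity.
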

\begin{remark}
	For the full version with finitely many solitons, see Corollary \ref{cor:stabN} in Section \ref{subsec:fullasym}.
\end{remark}


On the other hand, from our discussion above, solutions of \eqref{eq: sG}
have the conservation of the energy and thus are globally well-defined
in the energy space $H_{\sin}^{1}\left(\mathbb{R}\right)\times L^{2}\left(\mathbb{R}\right)$.
Therefore, it is natural to consider the stability problem and dynamics
of the sine-Gordon equation in $H_{\text{sin}}^{1}\left(\mathbb{R}\right)\times L^{2}\left(\mathbb{R}\right)$.
Note that one can replace $\sin\left(v\right)$ by $v$ when $v$
is small. So additionally, small perturbations of a given solution
in $H_{\sin}^{1}\left(\mathbb{R}\right)\times L^{2}\left(\mathbb{R}\right)$
are essentially in $H^{1}\left(\mathbb{R}\right)\times L^{2}\left(\mathbb{R}\right)$
and vice-versa. Then by the explicit formula of breathers \eqref{eq:breather},
the energy norm of a breather can be arbitrarily
small as long as the parameter $\beta$ is small enough. Clearly, this is the obstruction to
the asymptotic stability of the sine-Gordon equation in the energy
space. Moreover, the existence of small breathers is also closely
related to the existence of the wobbling kink. Formally, wobbling
kink is the overlap of a kink with a breather sharing the same velocity.
In terms of the inverse scattering transform, the eigenvalues which
produce a wobbling kink consist of eigenvalue of a kink and eigenvalues
of a breather located on the same circle in the plane of the spectral parameter. By direct computations
in Kowalczyk-Martel-Mu\~noz \cite{KMM2} and our computations in Section
\ref{sec:stability}, the sine-Gordon kink is not asymptotically stable in the energy
space. Therefore, any result concerning the asymptotic stability of
solutions to the sine-Gordon equation will require to rule out this type of special solutions. In the work
of Kowalczyk-Martel-Mu\~noz \cite{KMM1}, the authors considered the
odd perturbation of the zero solution which rules out small breathers
in the energy space so that they can establish the asymptotic stability of
the zero solution in the localized energy norm. Regarding nontrivial
nonlinear structures for the sine-Gordon equation, we mention the recent
work of Alejo-Mu\~noz-Palacios \cite{AMP} where the authors identified
a smooth codimensional manifold in the energy space where the asymptotic
stability of the sine-Gordon kinks holds in the localized energy norm.

\smallskip

In the present work, instead of  appealing to parity properties of solutions, we
add weights to the energy space to quantify the small breathers and the difference between a wobbling kink and a kink. Then based on the long-time asymptotics, we use a refined
approximation argument to show the asymptotic
stability of the sine-Gordon solution under perturbations in 
\begin{equation}
\label{space: weight}
{\large H_{\text{sin}}^{1,s}\left(\mathbb{R}\right)\times L^{2, s}\left(\mathbb{R}\right):= \left\lbrace ( f_0, f_1):  \sin( f_0/2), \, f_{0, x},  \, f_1\in  L^{2,s}(\bbR) \right\rbrace },\quad s>\frac{1}{2}.
\end{equation}
 Here instead of giving the precise description of the radiation term as in the full asymptotic stability result above, the error terms are measured in the localized
energy space  in the spirit of the work of Kowalczyk-Martel-Mu\~noz \cite{KMM1,KMM2}.

\smallskip


For any given $\mathfrak{v}\in \mathbb{R}$ and $\mathfrak{L}>0$ fixed,
we define the localized energy norm for a vector-valued function $\vec{f}=\left(f_{1},f_{2}\right)$
as
\begin{align}
\left\Vert \vec{f}\right\Vert _{\mathcal{E},\mathfrak{v},\mathfrak{L}}^{2}:=\left\Vert f_{1}\right\Vert _{L^{2}\left(\left|x-\mathfrak{v} t\right|<\mathfrak{L}\right)}^{2}+\left\Vert \partial_{x}f_{1}\right\Vert _{L^{2}\left(\left|x-\mathfrak{v} t\right|<\mathfrak{L}\right)}^{2}+\left\Vert f_{2}\right\Vert _{L^{2}\left(\left|x-\mathfrak{v} t\right|<\mathfrak{L}\right)}^{2}.
\end{align}\label{eq:localenergy}

\begin{theorem}[Asymptotic stability of kinks]
	Consider the kink solution to the sine-Gordon equation \eqref{eq: sG}
	\[
	K(x,t)=Q\left(x,t;v_{k}^{0},x_{k}^{0}\right)
	\]
	with some velocity $v_{k}^{0}\in(-1,1)$ and shift $x_{k}^{0}$. Let $s>\frac{1}{2}$
	and suppose that
	\[
	\left\Vert \vec{f}\left(0\right)-\left(K(x,0),\partial_{t}K(x,0)\right)\right\Vert _{H_{\text{sin}}^{1,s}\left(\mathbb{R}\right)\times L^{2,s}\left(\mathbb{R}\right)}\leq\eta
	\]
	for some $\eta$ small enough. Then there exist a new velocity $v_{k}\in(-1,1)$ and a trajectory
	$x_{k}(t)$ such that
	\[
	\left|v_{k}-v_{k}^{0}\right|+\left|x_{k}(t)-x_{k}^{0}\right|\lesssim\eta,
	\]
	 for the solution $f$ to the sine-Gordon equation \eqref{eq: sG} with initial data $\vec{f}(0)$, one can write the solution to \eqref{eq: sG} as
	\[
	f=Q\left(x,t;v_{k},x_{k}(t)\right)+f_r
	\]
	where the radiation satisfies
	\begin{equation}\label{eq:localen}
		\lim_{t\rightarrow\infty}\left\Vert f_r(t)\right\Vert _{\mathcal{E},\mathfrak{v},\mathfrak{L}}=0
	\end{equation}
for any fixed $\mathfrak{v}\in\mathbb{R} $ and  $\mathfrak{L}>0$.
\end{theorem}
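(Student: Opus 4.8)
The plan is to reduce the weighted-energy statement to the soliton resolution already available for smoother data, by combining a modulation/virial scheme in the spirit of Kowalczyk-Martel-Mu\~noz with an approximation argument that transfers the quantitative local energy decay \emph{uniformly}. First I write the solution as $f(x,t)=Q\bigl(x,t;v(t),\mathsf{y}(t)\bigr)+u(x,t)$, choosing $\bigl(v(t),\mathsf{y}(t)\bigr)$ by the implicit function theorem so that $\bigl(u,u_t\bigr)(t)$ stays orthogonal to the generalized kernel of the linearization of \eqref{eq: sG} about the kink (the translation and Lorentz-boost directions). Since the sine-Gordon kink has no internal mode, the linearized energy is coercive transverse to this codimension-two subspace, so the conservation laws \eqref{eq:sGenergy}--\eqref{eq:sGmomentum} yield orbital stability, $\sup_{t\ge0}\bigl(\|(u,u_t)(t)\|_{H^1\times L^2}+|v(t)-v_k^0|\bigr)\lesssim\eta$, together with a modulation system of the schematic form $|\dot v(t)|+|\dot{\mathsf{y}}(t)-v(t)|\lesssim\|(u,u_t)(t)\|_{\mathcal{E},\,\mathrm{loc}}^2$, where $\|\cdot\|_{\mathcal{E},\,\mathrm{loc}}$ denotes the version of the localized energy norm $\|\cdot\|_{\mathcal{E},c,\mathfrak{L}}$ centered on the moving kink.

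Next I show that the soliton content of $f$ is exactly one kink. Using the mapping properties between the initial data and the scattering data developed in Section \ref{sec:summary}, the discrete spectrum and reflection coefficient of $\vec f(0)$ are close, in the relevant weighted norm, to those of $K$. The decisive observation is that a breather of amplitude parameter $\beta$ obeys $\|B_{\beta,v}(\cdot,0)\|_{H^{1,s}\times L^{2,s}}^2\gtrsim\beta^{1-2s}\to\infty$ as $\beta\to0$ \emph{precisely because $s>\tfrac12$}; hence an $O(\eta)$ perturbation in the weighted energy cannot carry a breather of any size once $\eta$ is small, so the discrete data of $\vec f(0)$ consists of a single point near that of $K$ (i.e.\ $N_1=1$, $N_2=0$). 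For data in $H^{2,s}\times H^{1,s}$ this already follows from Theorem \ref{thm:main1} and Corollary \ref{cor:stabN}, which moreover give \eqref{eq:localen} --- but with a decay rate governed by the full $H^{2,s}\times H^{1,s}$ norm. The same computation explains the sharpness of the criterion: for $s\le\tfrac12$ small breathers are admissible $O(\eta)$ perturbations (wobbling kinks), so \eqref{eq:localen} must fail at and below the endpoint.

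The heart of the matter is a \emph{uniform} quantitative local energy decay estimate obtained by a weighted virial argument. In the kink frame the linearized operator is the reflectionless P\"oschl--Teller operator $-\partial_x^2+1-2\sech^2x$, whose only eigenvalue is the translation mode and which has a threshold resonance at the bottom of the continuous spectrum --- the analytic origin of the small breathers, and the reason the naive energy-space virial identity does not close. Following Kowalczyk-Martel-Mu\~noz I build virial functionals at a cut-off scale, each bounded by $\lesssim\eta^2$, and after an intertwining change of variables that neutralizes the resonant ``bad'' term and after absorbing the modulation terms from the first step, I obtain a differential inequality whose time integration gives
\[
\int_0^\infty\|(u,u_t)(t)\|_{\mathcal{E},\,\mathrm{loc}}^2\,\frac{dt}{\langle t\rangle}\;\lesssim\;\bigl\|\vec f(0)-(K(\cdot,0),\partial_tK(\cdot,0))\bigr\|_{H^{1,s}\times L^{2,s}}^2 ,
\]
with the extra space-time integrability supplied precisely by the weight $\langle x\rangle^s$, $s>\tfrac12$. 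I expect this step to be the main obstacle: one must choose the virial weights and the intertwining transformation so that $s>\tfrac12$ --- and only then --- produces enough integrability to absorb the resonant contribution, while keeping every constant independent of the later regularization (the constant above depends only on $\eta$).

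Finally, I approximate $\vec f(0)$ by smooth data $\vec f_n(0)\in H^{2,s}\times H^{1,s}$ converging to $\vec f(0)$ in $H^{1,s}\times L^{2,s}$. By global well-posedness the solutions $f_n$ converge to $f$ in $C([0,T];H^1\times L^2)$ on every compact interval, and by continuity of the direct scattering map the modulation parameters converge. For each $n$ the previous steps give \eqref{eq:localen} for $f_n$ with a decay modulus controlled by $\eta$ uniformly in $n$, so passing to the limit transfers the local energy decay to $u$, and a second virial functional at a coarser scale upgrades $\liminf_{t\to\infty}\|(u,u_t)(t)\|_{\mathcal{E},\,\mathrm{loc}}=0$ to the full limit. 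The integrated decay forces $\dot v\in L^1(\mathbb{R}^+)$, hence $v(t)\to v_k$ with $|v_k-v_k^0|\lesssim\eta$, and $\mathsf{y}(t)$ produces the path $x_k(t)$ with $|x_k(t)-x_k^0|\lesssim\eta$; writing $f_r=u+\bigl[Q(\cdot,\cdot;v(t),\mathsf{y}(t))-Q(\cdot,\cdot;v_k,x_k(t))\bigr]$ and noting the bracket is $O\bigl(|v(t)-v_k|+|\mathsf{y}(t)-x_k^0-v_kt|\bigr)$ in $\|\cdot\|_{\mathcal{E},c,\mathfrak{L}}$, hence vanishes as $t\to\infty$, yields \eqref{eq:localen}. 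The ``refined'' nature of the approximation lies exactly in the uniformity in $n$ of every estimate and in the matching of the modulation paths in the limit.
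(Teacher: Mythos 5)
Your proposal follows a genuinely different route from the paper, and the route you choose has a gap at exactly the point you yourself flag as the main obstacle. You decompose $f$ via a modulation ansatz, establish orbital stability from coercivity of the linearized energy, and then propose a weighted virial argument \`a la Kowalczyk--Martel--Mu\~noz to produce an integrated local-energy decay estimate, with the weight $\langle x\rangle^s$, $s>\tfrac12$, supposedly supplying the extra integrability needed to neutralize the threshold resonance of $-\partial_x^2+1-2\sech^2 x$. But that step is asserted, not proven, and it is precisely the step that is known to be delicate: the resonance is what produces the small breathers, and the existing virial-based treatments of sine-Gordon sidestep it by restricting to odd perturbations (Kowalczyk--Martel--Mu\~noz) or to a codimensional submanifold (Alejo--Mu\~noz--Palacios), not by spatial weights alone. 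Claiming that $s>\tfrac12$ alone closes the virial inequality is a substantial new assertion, and nothing you write indicates how the ``intertwining change of variables'' and the choice of virial weights would actually make it so; the same concern applies to the upgrade from $\liminf$ to $\lim$ and to your final transfer step, which relies on that unproven uniformity.

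The paper's proof is structured quite differently and avoids this obstacle entirely. There is no modulation ansatz and no virial functional. Instead one uses the inverse scattering data directly: the perturbed data produce a reflection coefficient $r\in L^2\cap C$ and discrete data close to those of $K$; one then approximates $r$ by smooth $r_n\in H^{1,1}$ while keeping the \emph{same} discrete data, so that the corresponding solutions $f_n$ share their kink's velocity with $f$ and no modulation is needed. The localized-energy decay of $f_n$ comes from the already-established soliton resolution (Theorem \ref{thm:maindetail}), and the transfer to $f$ is done by a global-in-time Lipschitz estimate
\[
\bigl\Vert (f(t)-f_n(t),\ f_t(t)-f_{n,t}(t))\bigr\Vert_{H^1_{\sin}\times L^2}\lesssim \Vert r-r_n\Vert_{L^2\cap L^\infty},
\]
obtained from uniform resolvent bounds on the Beals--Coifman singular integral operator (via $\Vert(1-C_w)^{-1}\Vert\lesssim 1+\Vert r\Vert_{L^\infty}^2$ and the second resolvent identity), with constants independent of $t$ because $i\theta$ is purely imaginary on the contour. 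This is the ``refined approximation argument'' announced in the introduction, and it is where the paper extracts the required uniformity in $n$ and $t$ --- not from a virial estimate. Your observation about the divergence of $\Vert B_{\beta,v}\Vert_{H^{1,s}\times L^{2,s}}$ for $s>\tfrac12$ matches the paper's instability computation and correctly explains the sharpness of the weight, but it plays only a motivating role; the positive stability result does not rest on it. To make your proposal complete you would need to actually establish the weighted virial lemma, which amounts to proving a new and nontrivial spectral/coercivity result for the sine-Gordon linearization in weighted spaces.
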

\begin{remark}
	The speed of the interval $\mathfrak{v}$ in \eqref{eq:localen} is only required to be a fixed constant in $\mathbb{R}$. In particular it can be the speed of the perturbed kink $v_k$. In this case, the localized energy norm measures the decay of the radiation around the kink.
\end{remark}
\smallskip
In the final part of our paper, we show that the weight required in the theorem above is almost optimal. By the explicit computations of the weighted energy norm of breathers in Section \ref{sec:stability}, we deduce the failure of the asymptotic stability for solutions to the sine-Gordon equation in $H^{1,s}_{\text{sin}}\times L^{2,s}$ for $0\leq s <1/2$. Here by the asymptotic instability we mean that after perturbations, there are additional/new type of non-decay components emerging in the asymptotics. The main obstruction is that there exist small breathers. As we show in Subsubsection  \ref{subsubsec:breather}, if the weight is not enough, one has the following estimate:
\begin{proposition}
	Consider $0\leq s <1/2$ and $0<\beta$ small. Using the notation for breathers \eqref{eq:breather}, one has that at $t=2\pi/\alpha$
	\[	
	\left\Vert B_{0}(\cdot,\frac{\pi}{2\alpha};\beta, 0,0)\right\Vert_{H_{\text{sin}}^{1,s}\times L^{2,s}} \sim \beta ^{\frac{1}{2}-s}
	\] where $\alpha=\sqrt{1-\beta^2}$. 
	In particular, this implies the failure of the asymptotic stability of the zero solution to the sine-Gordon equation for initial perturbations in $H_{\text{sin}}^{1,s}\times L^{2,s}$ for $0\leq s<1/2$.  
\end{proposition}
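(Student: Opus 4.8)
The plan is to compute the $H^{1,s}_{\text{sin}}\times L^{2,s}$ norm of the small-amplitude breather directly from the explicit formula \eqref{eq:breather}, specialized to $v=0$, $x_1=x_2=0$, and evaluated at a fixed time (say $t=0$, or after averaging in $t$ over a period; the periodicity in $t$ means the size is captured at any generic fixed time). With $v=0$ we have $\gamma=1$, $y_1=t$, $y_2=x$, $\alpha=\sqrt{1-\beta^2}$, and $a=\sqrt{1-\beta^2}=\alpha$, so
\[
B_{\beta,0}(x,t,0,0)=4\arctan\!\left(\frac{\beta}{\alpha}\,\frac{\cos(\alpha t)}{\cosh(\beta x)}\right).
\]
The key geometric feature is that for small $\beta$ the profile is of amplitude $O(\beta)$ but spatially spread over a region of width $O(1/\beta)$: at $t=0$ (a moment of maximal amplitude) one has $B_{\beta,0}(x,0,0,0)\approx \frac{4\beta}{\alpha\cosh(\beta x)}$ when the argument is small, which holds uniformly since the argument is $O(\beta)$. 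The same scaling governs $\partial_t B$ and $\sin(B/2)\approx B/2$.

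First I would isolate the dominant contribution. Since $\arctan u = u + O(u^3)$ and here $u = O(\beta)$ uniformly in $x$, we have pointwise $B_{\beta,0}(x,t,0,0) = \frac{4\beta\cos(\alpha t)}{\alpha\cosh(\beta x)} + O(\beta^3)$, with analogous expansions for $\partial_x B$, $\partial_t B$, and $\sin(B/2)$, all of which are $O(\beta\,\operatorname{sech}(\beta x))$ times bounded trigonometric factors (and $\partial_x B = O(\beta^2\operatorname{sech}(\beta x)\tanh(\beta x))$, which is even smaller and contributes negligibly). Then I would carry out the change of variables $y=\beta x$ in each weighted $L^2$ integral. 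For the leading term,
\[
\int_{\mathbb{R}} \langle x\rangle^{2s}\left(\frac{\beta}{\cosh(\beta x)}\right)^2 dx
= \beta^2\cdot\beta^{-1}\int_{\mathbb{R}} \langle y/\beta\rangle^{2s}\operatorname{sech}^2 y\, dy
\sim \beta\cdot \beta^{-2s}\int_{\mathbb{R}} |y|^{2s}\operatorname{sech}^2 y\, dy,
\]
using $\langle y/\beta\rangle^{2s}\sim (|y|/\beta)^{2s}$ for the regime $|y|\gtrsim\beta$ that dominates when $s<1/2$ (the integral $\int |y|^{2s}\operatorname{sech}^2 y\,dy$ converges for all $s>-1/2$, so in particular for $s\ge 0$). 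This gives $\|B_{\beta,0}\|_{H^{1,s}_{\text{sin}}\times L^{2,s}}^2\sim \beta^{1-2s}$, i.e. the claimed $\beta^{1/2-s}$. I would present the matching upper and lower bounds: the upper bound from the pointwise estimate $|B|\lesssim \beta\operatorname{sech}(\beta x)$ plus $\langle x\rangle^{2s}\lesssim 1+|x|^{2s}$ and convergence of $\int|y|^{2s}\operatorname{sech}^2 y\,dy$; the lower bound by restricting the integral to a fixed dyadic window $1\le |\beta x|\le 2$ where all quantities are comparably sized and $\langle x\rangle^{2s}\gtrsim \beta^{-2s}$, and (if using $t=0$) noting $\cos(\alpha t)=1$ there.

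The main obstacle — really a bookkeeping care point rather than a deep difficulty — is handling the time dependence honestly: the factor $\cos(\alpha t)$ vanishes at $t=\frac{\pi}{2\alpha}$, so one cannot say the norm is $\sim\beta^{1/2-s}$ for \emph{every} $t$; at such instants the amplitude is carried instead by $\partial_t B$, whose leading term is $-\frac{4\beta\sin(\alpha t)}{\cosh(\beta x)}\cdot\frac{\alpha}{\alpha} = O(\beta\operatorname{sech}(\beta x))$ with the \emph{same} scaling, so the combined $H^{1,s}\times L^{2,s}$ norm of the pair $(B,\partial_t B)$ never drops below order $\beta^{1/2-s}$. I would make this uniform-in-$t$ lower bound precise using $\cos^2(\alpha t)+\sin^2(\alpha t)=1$. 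The second point requiring a word is that for the $\sin(f/2)$ component of the $H^{1,s}_{\text{sin}}$ norm we genuinely use $\sin(B/2) = \tfrac12 B + O(\beta^3)$, valid because $B=O(\beta)$ uniformly. Finally, for the instability consequence: since $B_{\beta,0}(\cdot,0,0,0)\to 0$ in the energy space as $\beta\to0$ but its $H^{1,s}_{\text{sin}}\times L^{2,s}$ norm is $\sim\beta^{1/2-s}$, and since $B_{\beta,0}$ is an exact solution that does not decay in the localized energy norm (it is a genuine breather, periodic in $t$, sitting near $x=0$), taking initial perturbations of the zero solution equal to these breather data produces solutions with a non-decaying, non-dispersive component — precisely the failure of asymptotic stability in $H^{1,s}_{\text{sin}}\times L^{2,s}$ for $0\le s<1/2$; I would remark that the borderline $s=1/2$ is exactly where the integral $\int|y|^{2s}\operatorname{sech}^2 y\,dy$ still converges but the $\beta$-power degenerates to $\beta^0$, consistent with the sharpness claim in the preceding theorem.
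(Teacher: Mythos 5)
Your proposal is correct and takes essentially the same approach as the paper's Proposition~\ref{prop:breather}: Taylor-expand $\arctan$ for small argument (equivalently, compute $\partial_t B$ directly), observe $|B|,|\partial_t B|\sim\beta\,\mathrm{sech}(\beta x)$ up to bounded trigonometric factors, and perform the change of variables $y=\beta x$ to extract the scaling $\beta^{1-2s}$. Your remark about using $\cos^2(\alpha t)+\sin^2(\alpha t)=1$ to make the lower bound uniform in $t$ is a small but genuine refinement over the paper's argument, which only exhibits the lower bound at the specific instant $\alpha t=\pi/2$.
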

Similar computations can be carried out for the difference between a wobbling kink and a kink. Morally, a wobbling kink can be regarded as a nonlinear superposition of a breather and a kink. See \eqref{eq:Wobbling-Kink} for details.
\begin{proposition}
	The static kink solution $Q(x,t;0,0 )$ to the sine-Gordon equation \eqref{eq: sG} is not asymptotically stable for perturbations in  $H_{\text{sin}}^{1,s}\times L^{2,s}$ with $0\leq s<1/2$. 
\end{proposition}
One can always find wobbling kinks as close to the static kink as desired in the topology of $H_{\text{sin}}^{1,s}\times L^{2,s}$ for $0\leq s<1/2$. This leads to the failure of the asymptotic stability of the kink for $0\leq s<\frac{1}{2}$. 
\smallskip

 In general, for any given solution of the sine-Gordon equation, one can consider the perturbation to this solution from the inverse scattering point of view. Measuring this perturbation process using the norm $H_{\text{sin}}^{1,s}\times L^{2,s}$ for $0\leq s<1/2$, this perturbation can produce a breather as small as desired provided $\beta$ is small enough. Therefore, the solutions to the sine-Gordon equation is not asymptotically stable under perturbations in the weighted energy space if $0\leq s<\frac{1}{2}$.
 
 \begin{remark}
	By the smoothness of breathers, these computations also imply the failure of asymptotic stability of the zero solution and the kink solution to the sine-Gordon equation in Sobolev spaces with higher order regularity: $H^{k,s}_{\text{sin}}(\mathbb{R})\times H^{k-1,s}(\mathbb{R})$ for $s<1/2$ and $k\in \mathbb{N}$.
\end{remark}
}
\smallskip

\subsection{Some discussions}

 Next, we highlight some important features in this paper. 
 
 \begin{itemize}
 \item[1.]  The first preparatory step for the nonlinear steepest descent is to establish the Sobolev mapping properties from the initial data to the scattering data. In the current setting,  the $1/z$ factor in  the phase \eqref{theta} introduces extra difficulties in the analysis of the direct and inverse scattering transform. In the NLS and mKdV problems in Deift-Zhou \cite{DZ03} and Chen-Liu \cite{CL19,CL2} respectively, the regularities of the reflection coefficients deduced from the weights of the initial data resemble the standard Fourier duality between the physical and frequency spaces. Here to obtain the regularity of the reflection coefficients, we do need the additional smoothness of the initial data. Interestingly, the weighted estimate of the reflection coefficient is a byproduct of the $H^s$ estimate of it. Moreover the proof also gives the existence of the limits $\lim_{z\to 0} r(z)/z$.\footnote{It might
 	also be interesting to connect this fact to the analysis of the Klein-Gordon
 	equation using Fourier transforms. Generally speaking, the reflection coefficient
 	in the current setting is similar to the Fourier transform in the
 	PDEs analysis. Using the Fourier transform, if the solution vanishes
 	at zero frequency, typically one can get better dispersive estimates
 	than without vanishing. Regarding this, we refer to Germain-Pusateri \cite{GP}
 	for some discussions.} We need all of these to perform the $\dbar-$steepest descent method. As a direct consequence of estimates above, the error terms in the long-time asymptotics need the smoothness of the initial data.   We also point out that the $1/z$ factor in the Lax pair is particularly delicate in our analysis.  It not only introduces extra pieces of contours when we deform the real axis (see Fig \ref{fig:contour-def}) but also requires delicate handling when we take the limit $z\to 0$  
to compute the asymptotic formulas (see Lemma \ref{lemma:N.to.NRHP.asy}). This makes our analysis of the $\dbar$ problem more involved than in \cite{DM08}. If we simply use the fact that $r(0)=0$ and appply the standard estimate of the  modulus of $r(z)$ as in \cite{DM08} (also see \cite{CL19,CL2}), then the $\dbar$ problem \eqref{est-r-0} will not result in error terms. Instead, it will produce decay of the same rate as the leading order term $t^{-1/2}$. The same happens in the analysis of the asymptotic formula outside the light cone in \eqref{est-r-out}. To overcome this obstacle, we have to study the behavior of $r(z)$ near $z=0$ more carefully in the direct scattering analysis. Only after establishing
that $\lim_{z\to 0} r(z)/z$ is bounded with careful analysis of the $\dbar$ problem near $z=0$, can we conclude that the $\dbar$ problem will give us error terms with fast decay rates than the leading order terms,

 \smallskip
 
\item[2.] To investigate the stability of solutions in the weighed energy
 space $H_{\sin}^{1,s}\left(\mathbb{R}\right)\times L^{2,s}$, it is important to notice that
 we do not have sufficient regularity to make use  of the nonlinear steepest descent. One can use a sequence of data in $\vec{f_{n}}(0)\in H_{\text{sin}}^{2,s}\left(\mathbb{R}\right)\times H^{1,s}\left(\mathbb{R}\right)$
 to approximate the original data. But now the error terms are not
 uniform anymore since they depend on $\left\Vert \vec{f}_{n}\left(0\right)\right\Vert _{H_{\text{sin}}^{2,s}(\mathbb{R})\times H^{1,s}(\mathbb{R})}$
 which goes to $\infty$ as $n\rightarrow\infty$, whence the corresponding
 the radiation term in the asymptotics of solution $\vec{f}_{n}$ will
 decay to $0$ as $t\rightarrow\infty$ with different rates. Moreover,
 when we perform the standard PDEs approximation argument in Sobolev
 spaces, the convergence $\vec{f}_{n}\left(t\right)\rightarrow\vec{f}\left(t\right)$
 in $H_{\text{sin}}^{1}\left(\mathbb{R}\right)\times L^{2}\left(\mathbb{R}\right)$
 depends on time $t$ or $n$. Therefore, one could not simply interchange the
 order of limits to obtain the asymptotic behavior of $\vec{f}\left(t\right)$
 from that of $\vec{f}_{n}(t)$.
 
 \smallskip
 
 To resolve this, we observe that the reason that the convergence $\vec{f}_{n}\left(t\right)\rightarrow\vec{f}\left(t\right)$
 in $H_{\text{sin}}^{1}\left(\mathbb{R}\right)\times L^{2}\left(\mathbb{R}\right)$
 depends on time is the influence of solitons. Consider the Lipschitz
 estimate:
 \[
 \left\Vert \vec{f}_{n}(t)-\vec{f}(t)\right\Vert _{H_{\text{sin}}^{1}\left(\mathbb{R}\right)\times L^{2}\left(\mathbb{R}\right)}\leq A\left\Vert \vec{f}_{n}(0)-\vec{f}(0)\right\Vert _{H_{\text{sin}}^{1}\left(\mathbb{R}\right)\times L^{2}\left(\mathbb{R}\right)}.
 \]
The standard iteration using the Duhamel formula will give a constant
 $A(t)$ growing in $t$. The growth of this constant
 is due to solitons. If $\vec{f_{n}}$ and $\vec{f}$ support two solitons
 with different speeds and close initial positions, then clearly the
 Lipschitz constant will grow in $t$ or depend on $n$. To overcome this, we choose
 a well-designed sequence to approximate the rough solution. In this sequence, each soliton in $\vec{f}$
 will have a corresponding soliton in $\vec{f}_{n}$ with the same
 speed. By making use of the Beals-Coifman representation of solutions and appealing to the \emph{uniform resolvent estimates} developed in Section \ref{sec:stability}, we can get a global Lipschitz
 estimate with constants independent
 of $t$. This gives us a uniform estimate in the approximation which allows us to exchange the order of limits and obtain the asymptotic information
 of $\vec{f}(t)$ from $\vec{f}_{n}(t)$. 
 
 \smallskip
 \item[3]
 To give the intuition for the weights in the energy space, we first
 note that by a simple Cauchy-Schwarz argument, $L^{2,s}\subset L^{1}$
 for $s>\frac{1}{2}$. For the initial data in $H^{1,s}\left(\mathbb{R}\right)\times L^{2,s}\left(\mathbb{R}\right)$,
 the potentials in the AKNS system can produce nice Jost functions.
 Then by the continuity of the Jost functions, one can expect that
 small perturbations in this space will not produce small breathers. This can be found in Zhou \cite{Zhou95}.
 For $s<\frac{1}{2}$, by direct scaling arguments, there exist arbitrarily
 small breathers. Our analysis give a criterion for the weights to
 establish asymptotic stability which is sharp to the endpoint $s=\frac{1}{2}$. We mention that this borderline case $s=\frac{1}{2}$ remains as an interesting  open problem.
 
  \end{itemize}

\subsection{Notations} In this subsection, we fix some notations used this paper. 

\begin{itemize}
	\item[1.] Throughout this paper, we set
	$$\theta(z; x,t)=\dfrac{1}{4} \left(  \left(  z-\dfrac{1}{z}  \right )x+\left( z+\dfrac{1}{z}  \right)t  \right)$$
	$$\tilde{\theta}(z; x,t)=\dfrac{1}{4} \left(  \left(  z+\dfrac{1}{z}  \right )x+\left( z-\dfrac{1}{z}  \right)t  \right).$$
	
\item[2.] Let $\sigma_3$ be the third Pauli matrix:
\begin{equation}\label{eq:sigma3}
\sigma_3=\twomat{1}{0}{0}{-1}
\end{equation}
and define the matrix operation 
$$e^{\ad\sigma_3}\twomat{a}{ b}{c}{d} =\twomat{a}{e^{2} b}{e^{-2}c}{d}.$$
\item[3.] Let $A$ and $B$ be $2\times 2$ matrices, then define the commutator by
$$\left[A, B \right]=AB-BA.$$
\item[4.] $C^\pm$ is the Cauchy projection:
\begin{equation}
(C^\pm f)(z)= \lim_{z\to \Sigma_\pm}\dfrac{1}{2\pi i} \int_{\Sigma} \dfrac{f(s)}{s-z}ds.
\end{equation}
Here $+(-)$ denotes taking limit from the positive (negative) side of the oriented contour $\Sigma$.

Similarly, suppose $M(z)$ is a matrix-valued function in $\bbC$, then $M_\pm$ denotes its continuous boundary value from either side of the oriented contour.\\
\smallskip
\item[5.] We define the Fourier transform as 
	\begin{equation}
	\hat{h}\left(\xi\right)=\mathcal{F}\left[h\right]\left(\xi\right)=\frac{1}{2\pi}\int_\bbR e^{-ix\xi}h\left(x\right)\,dx.\label{eq:FT}
	\end{equation}
	Notice that by the duality between the physical space and the Fourier
	space, if $h\in L^{2,s}\left(\mathbb{R}\right)$ with $s>\frac{1}{2}$
	then
	\begin{equation}
	h\in L^{1}\left(\mathbb{R}\right),\,\hat{h}\in H^{s}\left(\mathbb{R}\right).\label{eq:weightL22}
	\end{equation}
	Then by the trivial Sobolev embedding, $\hat{h}\left(\xi\right)\in L^{\infty}.$

\item[6.] As usual, $``A:=B"$
or $``B=:A"$
is the definition of $A$ by means of the expression $B$. We use
the notation $\langle x\rangle=\left(1+|x|^{2}\right)^{\frac{1}{2}}$.
For positive quantities $a$ and $b$, we write
$a\lesssim b$ for $a\leq Cb$ where $C$ is some prescribed constant.
Also $a\simeq b$ for $a\lesssim b$ and $b\lesssim a$. 
 Throughout, we use $u_{t}:=\frac{\partial}{\partial_{t}}u$, for the derivative in the time variable and 
$u_{x}:=\frac{\partial}{\partial x}u$ for the derivative in the space variable. These two notations are used interchangebly.
\end{itemize}
\subsection{Organization of the paper}
From Section \ref{sec: d-i} to Section \ref{sec:summary}, we set up the inverse scattering formalism for the sine-Gordon equation and perform the nonlinear steepest descent to compute the long-time asymptotics. In Section \ref{sec:stability}, we analyze the asymptotic stability of multi-soliton solutions to the sine-Gordon equation in weighted energy spaces. When the weights are not enough, counterexamples to the asymptotic stability are also discussed.

\subsubsection{Outline of the nonlinear steepest descent}
We give an outline of the derivation of the long-time asymptotics (Sec \ref{sec: d-i} to Sec \ref{sec:summary}), in which the major part is devoted to study the space-time region $|x/t|<1$ where the presence of solitary waves can be observed. 

\smallskip
The first step (Section \ref{sec: d-i}) is to introduce the direct and inverse scattering transform for the sine-Gordon equation. The main purposes are to
\begin{itemize}
\item[1]  Characterize the scattering data corresponding to the initial data given by \eqref{space-1};
\item[2]  Derive the solution to equation \eqref{eq: sG} from solution of the associated Riemann-Hilbert problem \ref{RHP-1} and the reconstruction formula in Proposition \ref{prop:recon}.
\end{itemize}
This step will provide the building blocks for the application of the nonlinear steepest descent method.

\smallskip
The second step (Section \ref{sec:prep}),  is to conjugate 
the matrix $m$ with a scalar function $\delta(z)$  
which solves the  scalar model RHP Problem  \ref{prob:RH.delta}. This conjugation leads to a new RHP, Problem \ref{prob:mkdv.RHP1}. The purpose of this is to prepare for the lower/upper factorization of the jump matrix on the part of the real axis between two stationary points and also reverse the triangularity of certain residue conditions. This is needed in the contour deformation described in Section \ref{sec:mixed}.

\smallskip

The third  step
( Section \ref{sec:mixed}) is a  deformation of contour from $\bbR$ to a new
contour  $\Sigma^{(2)}$ (Figure \ref{fig:contour-def}). It is to guarantee that the phase factors in the jump matrix on $\bbR$
have the desired exponential decay in time along the deformed contours. Inevitably this transformation {will result }in certain non-analyticity in the sectors off the real axis which leads to
a mixed $\dbar$--RHP-problem,  Problem \ref{prob:DNLS.RHP.dbar}.

\smallskip

The fourth step is a factorization of  $m^{(2)}$ which is the solution to Problem \ref{prob:DNLS.RHP.dbar}
in the form $m^{(2)} =  m^{(3)} m^{\RHP}$ where $m^{\RHP}$ is solution of a 
localized RHP,
Problem \ref{MKDV.RHP.local}, and $m^{(3)}$ 
a solution of $\bar\partial$ problem, Problem \ref{prob:DNLS.dbar}. 
The term ``localized" refers to the fact that the reflection coefficient $r(z)$ is fixed at $\pm z_0$ and $0$ along the deformed contours. We then solve this localized RHP whose solution is given by parabolic cylinder functions and a system of linear equations whose solution is a soliton, and combine these solutions together to get the interaction between solitons and radiation.

\smallskip

The fifth step (Section \ref{sec:dbar}) is the solution of the $\dbar$-problem Problem \ref{prob:DNLS.dbar} through solving an integral equation. The integral operator has a small $L^\infty$-norm  at  large $t$, allowing solving the problem by a Neumann series. The contribution of this $\dbar$-problem is another higher order error term. In the context of sine-Gordon equation, there is a singularity in the kernel of the integral equation. To deal with this singularity, we
need to control the reflection coefficient $r$ at the origin, described in Proposition \ref{prop:r}.
\smallskip

The sixth step (Section \ref{sec:large-time}) groups together all the previous transformations from Section \ref{sec:prep} to Section \ref{sec:dbar} to  derive the long time asymptotics of the solution of the sine-Gordon equation in the region $|x/t|<1$,  using reconstruction formula (Proposition \ref{prop:recon}) and the expansion of the RHP solution near the origin.  

\smallskip

The seventh step is the study of region $|x/t|>1$ in which the asymptotic formulas decay rapidly in $t$.
\smallskip
\

\section{Direct and Inverse Scattering}
\label{sec: d-i}
We recall that \eqref{eq: sG} is the compatibility condition for the following Lax pair (cf. \cite[Appendix. A]{BM08}):
\begin{equation}
\label{L}
 \Psi_x =A \Psi
\end{equation}
\begin{equation}
\label{T}
 \Psi_t =B \Psi
\end{equation}
where 
$$ A= -\dfrac{i}{4}\diagmat{z-z^{-1}}{ -z+z^{-1} }+ \dfrac{i}{4 z}\twomat{\cos f-1}{\sin f}{\sin f}{-\cos f+1}+\dfrac{f_x+f_t}{4}\offdiagmat{-1}{1} ,$$
$$ B= -\dfrac{i}{4}\diagmat{z+z^{-1}}{ -z-z^{-1} }- \dfrac{i}{4 z}\twomat{\cos f-1}{\sin f}{\sin f}{-\cos f+1}+\dfrac{f_x+f_t}{4}\offdiagmat{-1}{1} .$$
{ Here $z\in \mathbb{R}\setminus \lbrace 0 \rbrace$ and $\Psi(x,t;z) \in SL(2, \mathbb{C})$.} 
More precisely, the compatibility condition here means that there exists\footnote{This basis is determined, say, by given two linearly independent vectors $\Psi$ at $x = t = 0$} a basis
of simultaneous solutions of \eqref{L} and \eqref{T} if and only if
$f=f(x,t)$ is a solution of the sine-Gordon equation \eqref{eq: sG}.
To study the direct scattering process, we have to analyze the spectral problem given by \eqref{L}. In the analysis $\Psi$ below in this section, the time variable $t$ is always fixed. To save the space, throughout this section,  we always drop the explicit time variable $t$ in the writing without any risk of confusion.

Now we rewrite \eqref{L} in the following form:
\begin{equation}
\label{Psi-x}
\Psi_x=(-J(z)\sigma_3+U(x, z))\Psi
\end{equation}
in which the matrix
$$J(z)\sigma_3:=\dfrac{i}{4}(z-z^{-1})\sigma_3$$
is perturbed by the matrix potential
$$U(x, z)= \twomat{ \dfrac{i}{4z} (\cos f-1)  }{\dfrac{i}{4z} \sin f-\dfrac{1}{4}(f_x+f_t)  }{ \dfrac{i}{4z} \sin f+\dfrac{1}{4}(f_x+f_t)  }{\dfrac{i}{4z} (1-\cos f) }. $$
To deal with $z^{-1}$ near the origin, we apply the gauge transform in \cite[Appendix A]{BM08}  to obtain a new spectral problem
\begin{equation}
\label{Phi-x}
\Phi_x=(-J(z)\sigma_3+V(x, z))\Phi
\end{equation}
where  $z\in \mathbb{R}\setminus \lbrace 0 \rbrace$ and $\Phi \in SL(2, \mathbb{C})$ and
$$V(x, z)= \twomat{\dfrac{iz}{4} (1-\cos f)  }{\dfrac{i z}{4} \sin f +\dfrac{1}{4}(f_x-f_t)  }{ \dfrac{iz}{4} \sin f-\dfrac{1}{4}(f_x-f_t)  }{-\dfrac{i z}{4} (1-\cos f) }. $$
Through the explicit forms of potentials $U$ and $V$, we will establish the long time asymptotics. For the major part of this paper,  we will  use focus on the computations for the initial data $ \vec{f}(0)\in H_{\sin}^{2,s}\left(\mathbb{R}\right)\times H^{1,s}\left(\mathbb{R}\right)$ with $1/2<s\leq 1$. For convenience,  we introduce the following function space
which is equivalent to { $ \vec{f}(0)\in H_{\sin}^{2,s}\left(\mathbb{R}\right)\times H^{1,s}\left(\mathbb{R}\right)$:
\begin{equation}
\label{space-1}
\mathcal{I}= \lbrace (f_0, f_1):  \sin f_0, \, 1-\cos f_0, \, f_{0, x}, \,  f_{0, xx}, \, f_1,\, f_{1, x} \in  L^{2,s}(\bbR) \rbrace.
\end{equation}
}
To see the equivalence, we claim that for any $s\geq 0$, we have $f\in H^{1,s}_\text{sin}$ if and only if $\sin{f},\,(1-\cos{f}),\,f_x\in L^{2,s}$.
\begin{proof}[Proof of the claim] \hspace*{\linewidth}
\begin{itemize}
\item \textit{Only if part}: suppose that $\int\left\langle x\right\rangle ^{2s}\sin^{2}\left(\frac{f}{2}\right)\,dx<\infty$. By double-angle formulas, one has
	\begin{align*}
	\int\left\langle x\right\rangle ^{2s}\sin^{2}\left(\frac{f}{2}\right)\,dx  =\frac{1}{2}\int\left\langle x\right\rangle ^{2s}\left(1-\cos f\right)\,dx \gtrsim\int\left\langle x\right\rangle ^{2s}\left(1-\cos f\right)^{2}\,dx
	\end{align*}
	and
	\begin{align*}
	\int\left\langle x\right\rangle ^{2s}\sin^{2}\left(\frac{f}{2}\right)\,dx & =\frac{1}{2}\int\left\langle x\right\rangle ^{2s}\left(1-\cos\left(f\right)\right)\,dx \gtrsim\int\left\langle x\right\rangle ^{2s}\left(1-\cos\left(f\right)\right)\left(1+\cos\left(f\right)\right)\,dx\\
	& \gtrsim\int\left\langle x\right\rangle ^{2s}\sin^{2}\left(f\right)\,dx.
	\end{align*}
	Therefore $
	\sin\left(f\right),\,1-\cos\left(f\right)\in L^{2,s}$. 
	\vspace{5mm}
\item \textit{If part:} For the other direction, we assume that $\sin{f},\,(1-\cos{f}),\,f_x\in L^{2,s}$. \begin{align*}
\int\left\langle x\right\rangle ^{2s}\sin^{2}\left(\frac{f}{2}\right)\,dx & \sim\int\left\langle x\right\rangle ^{2s}\left(1-\cos\left(f\right)\right)\,dx\sim\int\left\langle x\right\rangle ^{2s}\left(\frac{\sin^{2}(f)}{1+\cos\left(f\right)}\right)\,dx.
\end{align*}
Note that for any finite interval $[-M,M]$, the Cauchy-Schwarz inequality gives
\begin{equation*}
\int_{[-M,M]}\left\langle x\right\rangle ^{2s}\left(1-\cos\left(f\right)\right)\,dx\lesssim M^{\frac{2s+1}{2}}\left(\int_{[-M,M]}\left\langle x\right\rangle ^{2s}\left(1-\cos\left(f\right)\right)^{2}\,dx\right)^{\frac{1}{2}}.\label{eq:estiM1}
\end{equation*}
Therefore, it suffices to bound
\[
\int_{(-\infty,-M)\bigcup(M,+\infty)}\left\langle x\right\rangle ^{2s}\sin^{2}\left(\frac{f}{2}\right)\,dx
\]
for $M>0$ large enough. By symmetry, we only analyze the estimate
over $\left(M,+\infty\right)$. 

From the fundamental theorem of calculus, one has
\begin{align*}
\left|\cos\left(f(x)\right)-1\right| & \lesssim\int_{x}^{\infty}|\sin\left(f(y)\right)|\left|f'(y)\right|\,dy\\
 & \lesssim\left(\int_{x}^{\infty}\left\langle x\right\rangle ^{2s}\left|f'(y)\right|^{2}\,dy\right)^{\frac{1}{2}}\left(\int_{x}^{\infty}\left\langle x\right\rangle ^{2s}\left|\sin\left(f(y)\right)\right|^{2}\,dy\right)^{\frac{1}{2}}.
\end{align*}
Note that since the integrals $\left(\int_{0}^{\infty}\left\langle x\right\rangle ^{2s}\left|f'(y)\right|^{2}\,dy\right)^{\frac{1}{2}}$
and $\left(\int_{0}^{\infty}\left\langle x\right\rangle ^{2s}\left|\sin\left(f(y)\right)\right|^{2}\,dy\right)^{\frac{1}{2}}$
are both finite, one can pick $M$ large enough such that
\[
\left(\int_{x}^{\infty}\left\langle x\right\rangle ^{2s}\left|f'(y)\right|^{2}\,dy\right)^{\frac{1}{2}}\left(\int_{x}^{\infty}\left\langle x\right\rangle ^{2s}\left|\sin\left(f(y)\right)\right|^{2}\,dy\right)^{\frac{1}{2}}<1
\]
for all $x>M$. In this region, one has
\[
\left|1+\cos\left(f(x)\right)\right|\geq2-\left|\cos\left(f(x)\right)-1\right|>1,\,\,x>M.
\]
It follows that
\begin{align*}
\int_{(M,\infty)}\left\langle x\right\rangle ^{2s}\sin^{2}\left(\frac{f}{2}\right)\,dx&\lesssim\int_{(M,\infty)}\left\langle x\right\rangle ^{2s}\left(\frac{\sin^{2}(f)}{1+\cos\left(f\right)}\right)\,dx\\
&\lesssim \int_{(M,\infty)}\left\langle x\right\rangle ^{2s}\sin^{2}(f)\,dx<\infty.
\end{align*}
Putting the estimates together, we conclude
that
\[
\int\left\langle x\right\rangle ^{2s}\sin^{2}\left(\frac{f}{2}\right)\,dx<\infty
\]
which implies $f\in H^{1,s}_{\text{sin}}$ as desired.
\end{itemize}
\end{proof}

It has been shown in \cite{Kaup75} that if the initial condition belongs to the function space \eqref{space-1}, equation \eqref{Psi-x} admits bounded 
solutions for $z \in \mathbb{R}\setminus \lbrace 0 \rbrace$. There exist unique solutions $\Psi^\pm$ of \eqref{Psi-x} obeying the following space asymptotic conditions
$$\lim_{x \rarr \pm \infty} \Psi^\pm(x,z) e^{x J(z)\sigma_3} = \diagmat{1}{1},$$
and there is a matrix $S(z)$, the scattering matrix, with 
\begin{equation}
\label{Psi-T}
\Psi^+(x,z)=\Psi^-(x,z) S(z).
\end{equation}
The matrix $S(z)$ takes the form
\begin{equation} \label{matrixT}
 S(z) = \twomat{a(z)}{\bb(z)}{b(z)}{\ba(z)}
 \end{equation}
and  the determinant relation gives
$$ a(z)\ba(z) - b(z)\bb(z) = 1. $$
 By  uniqueness, the entries of $\Psi^\pm$ have the following symmetry relations: 
\begin{equation}
\label{symmetry-1}
\psi^\pm_{11}(z)=\overline{\psi^\pm_{22}(\overline{z})}, \quad \psi^\pm_{12}(z)=-\overline{\psi^\pm_{21}(\overline{z})},
\end{equation}
\begin{equation}
\label{symmetry-2}
\psi^\pm_{11}(z)={\psi^\pm_{22}(-{z})}, \quad \psi^\pm_{12}(z)=-\psi^\pm_{21}(-z).
\end{equation}
This leads to the symmetry relation of  the entries of $S$ (cf. \cite[Lemma A.8]{BM08}) :
\begin{align} \label{symmetry}
\ba(z)=\overline{a( \zbar )}, \quad \bb(z) = -\overline{ b(z)}, \quad \ba(z)=\overline{\ba(-z)}, \quad b(z)=\overline{b(-z)}
\end{align}
On $\mathbb{R}$, the determinant of $S(z)$ is given by
$$|a(z)|^2+ |b(z)|^2=1.$$
Making the change of variable 
$$\Psi^\pm=m^\pm e^{-xJ(z)\sigma_3 }$$
the system \eqref{L} then becomes 
\begin{equation}
\label{AKNS-m}
m_x^\pm= \left[ m^\pm, J(z)\sigma_3 \right]+Um^\pm.
\end{equation}
The standard AKNS method starts with the following two Volterra integral equations for the real $z$:
\begin{equation}
\label{IE-m-pm}
m^{\pm}(x, z)=I + \int_{\pm\infty}^x e^{(y-x) J(z) \ad \sigma_3  } \left[ U(y, z)m^{\pm}(y,z) \right] dy.
\end{equation}
and as a consequence (cf. \cite[(3.5c)]{DZ03})
\begin{equation}
\label{T-int}
S(z)=I - \int_\bbR e^{ yJ(z)\ad \sigma_3 } \left[ U(y, z)m^{+}(y,z) \right] dy.
\end{equation}
In \cite{Kaup75} it has been shown that  $m^{(+)}_1$ the first column of $m^+$ and $m^{(-)}_2$
the second column of $m^-$ have analytic extensions to $\bbC^- $. Similarly $m^{(-)}_1$ the first column of $m^-$ and $m^{(+)}_2$
the second column of $m^+$ have analytic extensions to $\bbC^+$.
It is also important to notice that the scattering matrix \eqref{matrixT} remains unchanged under the gauge transformation. So the scattering matrix \eqref{matrixT} can be extended to the entire $\bbR$.

By the standard inverse scattering theory, we formulate the reflection coefficient:
\begin{equation}
\label{reflection}
r(z)=-b(z)/\ba(z), \quad z\in\bbR.
\end{equation}
Also from the symmetry conditions \eqref{symmetry-1}-\eqref{symmetry-2} we deduce that
\begin{equation}
\label{minus}
r(-z)=\overline{r( z )}.
\end{equation}

Note that $\ba(z)$ and $a(z)$ has analytic continuation into the $\bbC^+$ and $\bbC^-$ half planes respectively.
From \eqref{Psi-T} we deduce that 
\begin{equation}
\label{ba-det}
\breve{a}(z)=\det\twomat{\psi^-_{11}(x, z )}{\psi^+_{12}(x, z)}{\psi^-_{21}(x, z )}{\psi^+_{22}(x,z)}.
\end{equation}
\begin{equation}
\label{a-det}
{a}(z)=\det\twomat{\psi^+_{11}(x,z)}{\psi^-_{12}(x,z)}{\psi^+_{21}(x,z)}{\psi^-_{22}(x,z)}.
\end{equation}
From \eqref{symmetry-2}-\eqref{symmetry} we read off directly that if $\ba(z_i)=0$ for some $z_i\in \bbC^+$, then $\overline{\ba(-  \overline{ z_i } )}=0$ by symmetry. Thus if $\ba(z_i)=0$, then either
\begin{itemize}
\item[(i)] $z_i$ is purely imaginary;\\
or
\item[(ii)] $-\overline{z_i}$ is also a zero $\ba$.
\end{itemize}
When $r\equiv 0$, Case (i) above corresponds to kinks/anti-kinks while Case (ii) introduces breathers.

\begin{remark}
\label{remark-generic}
It is proven in \cite{BC84} that for $n \times n$ AKNS spectral problems there is  an  open and dense subset $U_0 \subset L^1(\bbR)$ such that if the matrix potential belongs to $U_0$ , then the number of zeros of $\ba$ are finite,  and these zeros are simple and off the real axis. Later in Section \ref{sec: r} we are going to show that this \emph{generic} spectral property still holds for the system \eqref{L}. We restrict the initial data to such set in this paper.
\end{remark}
\begin{remark}
In \cite{Zhou95}, the author gives a functional analytic proof of the following statement:
if in equation \eqref{IE-m-pm} the kernel $U$ has
{$$\norm{U(x,z)}{L^\infty_z L^1_x(\mathbb{R} ) } <1$$}
then $a$ and $\ba$ have no zeros. Note that $L_x^{2,s}\subset L_x^1$ for $s>\frac{1}{2}$. In particular the small data condition  in the weighted energy space  $H^{1,s}_{\text{sin}}\times L^{2,s}$ with $s>1/2$ implies that the associated potential $U(x,z)$ satisfies the smallness condition in $L^1$ above. This rules out the possibility of breathers that have arbitrarily small norm in the weighted space.
\end{remark}

\begin{figure}[h!]
\caption{Zeros of $\ba$ and $a$ }
\vskip 0.4cm
\begin{center}
\begin{tabular}{ccc}


\setlength{\unitlength}{5.0cm}
\begin{picture}(1,1)

\put(0.5,0.5){\vector(1,0){0.25}}
\put(0.75,0.5){\line(1,0){0.25}}

\put(0.5,1){\line(0,-1){0.25}}
\put(0.5,0.5){\line(0,1){0.25}}

\put(0.5,0.5){\line(-1,0){0.25}}
\put(0.25,0.5){\line(-1,0){0.25}}

\put(0.5,0.25){\line(0,1){0.25}}
\put(0.5,0){\line(0,1){0.25}}

\put(0.5,0.5){\circle{0.025}}


\put(0.9,0.55){$\bbC^+$}
\put(0.9, 0.4){$\bbC^-$}
\put(0.55,0.9){$i\bbR$}




\put(0.7,0.8){{\color{red}\circle*{0.025}}}
\put(0.7,0.2){{\color{blue}\circle*{0.025}}}
\put(0.3,0.8){{\color{red}\circle*{0.025}}}
\put(0.3,0.2){{\color{blue}\circle*{0.025}}}

\put(0.6,0.7){{\color{red}\circle*{0.025}}}
\put(0.6,0.3){{\color{blue}\circle*{0.025}}}
\put(0.4,0.7){{\color{red}\circle*{0.025}}}
\put(0.4,0.3){{\color{blue}\circle*{0.025}}}

\end{picture}

 \\[0.2cm]
\end{tabular}

\vskip 0.2cm

\begin{tabular}{ccc}
Origin ({\color{black} $\circ$}) &
zeros of $\ba$ ({\color{red} $\bullet$})	&	
zeros of $a$  ( {\color{blue} $\bullet$}) 
\end{tabular}
\end{center}

\label{fig:spectra}
\end{figure}
Suppose that $\ba(z_i)=0$ for some $z_i\in\bbC^+$,  $i=1,2,..., N$, then we have the linear dependence of the columns :
\begin{align}
\label{b_i}
 \begin{bmatrix}
           \psi^-_{11}(x,z_i) \\
           \psi^-_{21}(x,z_i) \\
        \end{bmatrix}=b_i\begin{bmatrix}
           \psi^+_{12}(x, z_i) \\
           \psi^+_{22}(x,z_i) \\
        \end{bmatrix}
    \end{align}
  \begin{align} 
\label{b_0}
   \begin{bmatrix}
           m^-_{11}(x, z_i) \\
           m^-_{21}(x, z_i) \\
        \end{bmatrix}=b_i\begin{bmatrix}
           m^+_{12}(x, z_i) \\
           m^+_{22}(x, z_i) \\
        \end{bmatrix}e^{2x J(z_i)}.
  \end{align}
\begin{remark}
{
We assume that the zeros of $\ba$ are of order one and $\ba'(z_i)\neq 0$.
}
\end{remark}  

\subsubsection{Inverse Problem}
\label{subsec:inverse2}
In this subsection we construct the Beals-Coifman solutions needed for the RHP.  We need to find certain piece-wise analytic matrix functions. An obvious choice is 
\begin{equation}
 \begin{cases}
\left( m^{(-)}_1, m^{(+)}_2\right), \qquad \text{Im} z>0\\
\\
\left( m^{(+)}_1, m^{(-)}_2\right), \qquad \text{Im} z<0,
\end{cases}
\end{equation}
{ where $m^{(+)}_i$ is the $i$th column of $m^+$ and $m^{(-)}_i$ is the $i$th column of $m^-$, $i=1,2$.} Here we use the fact that 
$m^{(+)}_1$ 
and $m^{(-)}_2$
have analytic extension to $\bbC^- $. Similarly $m^{(-)}_1$ 
and $m^{(+)}_2$
have analytic extension to $\bbC^+$.
We want the solution to the RHP normalized as $x\rightarrow +\infty$, so we set
\begin{equation}
\label{BC}
M(z; x)= \begin{cases}
(m^{(-)}_1, m^{(+)}_2)\twomat{\ba^{-1}}{0}{0}{1}, \qquad \text{Im} z>0\\
(m^{(+)}_1, m^{(-)}_2)\twomat{1}{0}{0}{a^{-1}}, \qquad \text{Im} z<0.
\end{cases}
\end{equation}
We assume $a(z)\neq 0$ for all $z\in\mathbb{R}$ and define
\begin{equation}
\label{def:r}
{r}(z)=-\dfrac{b(z)}{\ba(z)}
\end{equation}
and by symmetry
$$\dfrac{\bb(z)}{a(z)}=\overline{r(z)}.$$
Following the same proof as \cite[Proposition A.14]{BM08}, we conclude that for $z\in \bbR$
\begin{equation}
\label{M+M-}
\left(  \dfrac{ m^{(-)}_1}{\ba}, m^{(+)}_2\right) \twomat{1}{0}{-e^{2xJ(z)}  \dfrac{b(z)}{\ba(z)} }{1}=\left( m^{(+)}_1,  \dfrac{ m^{(-)}_2}{a}\right) \twomat{1}{-e^{-2xJ(z)}\dfrac{\bb(z)}{a(z)} }{0}{1}
\end{equation}
Setting $M_{\pm}(z; x)=\lim_{\epsilon\to 0^+}M( z\pm i\epsilon, x)$, then $M_{\pm}$ satisfy the following jump condition on $\bbR$:
$$M_+(z; x)=M_-(z; x) \Twomat{1+|r(z)|^2}{e^{-2xJ(z)} \overline{r(z)} }{e^{2xJ(z)} r(z) }{1}.$$
Now we can calculate the residue at the pole $z_i$:
\begin{align}
\label{residue1}
\textrm{Res}_{z =z_i}M_{+}(z; x)&=\frac{1}{\breve{a}'(z_i)}\Twomat{m^-_{11}(x,z_i)}{0}{m^-_{21}(x,z_i)}{0}\\
\nonumber
                                    &=\frac{e^{2xJ{(z_i)}}b_i}{\breve{a}'(z_i)}\Twomat{m^+_{12}(x,z_i)}{0}{m^+_{22}(x,z_i)}{0}.
\end{align}
Similarly, at the pole $\overline{z_i}$:
\begin{align}
\label{residue2}
\textrm{Res}_{z =\overline{z_i}}\textrm{M}_{-}(z; x)
=-\frac{e^{-2xJ(\overline{z_i} ) }\overline{b_i}}{{a}'(\overline{z_i})}\Twomat{0}{m^+_{11}(x,\overline{z_i} ) }{0}{m^+_{21}(x,\overline{z_i})}.
\end{align}
If $z_i$ is not purely imaginary, $-\bar{z_i}$ and $-z_i$ are also poles, with 
\begin{align}
\label{residue3}
\textrm{Res}_{z =-\overline{z_i} }M_{+}(z; x)&=\frac{1}{\breve{a}'(-\overline{z_i} )}\Twomat{m^-_{11}(x,-\overline{z_i})}{0}{m^-_{21}(x,-\overline{z_i})}{0}\\
\nonumber
                                    &=\frac{e^{2xJ (-{\overline{z_i}} )}  \overline{b_i}   }{\breve{a}'(-\overline{z_i} )}\Twomat{m^+_{12}(x,-\overline{z_i})}{0}{m^+_{22}(x,-\overline{z_i})}{0}
\end{align}
and
\begin{align}
\label{residue4}
\textrm{Res}_{z =- {z_i}}\textrm{M}_{-}(z; x)
=- \frac{e^{-2x J(- {z_i} )} {b_i}}{{a}'(-{z_i})}\Twomat{0}{m^+_{11}(x, -{z_i} ) }{0}{m^+_{21}(x, -{z_i})}.
\end{align}
By symmetry $\breve{a}'(z_i)=\overline{a'(\overline{z_i})}$,
so we can define the norming constant
$$c_i=\frac{b_i}{\breve{a}'(z_i)}.$$
We will establish the following proposition which characterizes the map between the initial data and the scattering data in Subsection \ref{sec: r} :
{
\begin{proposition}
\label{prop:r}
If $\vec {f}(0)\in \mathcal{I}$ where $\mathcal{I}$ is given in \eqref{space-1}
then
\begin{itemize}
\item[1.] $r(z)\in H^{s}(\bbR)$;
\item[2.] $\lim_{z\to 0}{r(z)}/z=0$.
\end{itemize}
and we denote $r\in H^{s}_0(\bbR):= H^{s}(\bbR) \cap \lbrace r: \lim_{z\to 0}{r(z)}/z=0 \rbrace$ if $r(z)$ has the two properties above.
\end{proposition}
}
The importance of this proposition is two-fold. Not only it plays an important role in the reconstruction of the potential in Proposition \ref{prop:recon} but also it is the key ingredient in obtaining the error term in the long time asymptotics formulas. We define our set of  scattering data:
\begin{equation}
\label{scattering}
\mathcal{S}=\lbrace r(z),\lbrace i\zeta_k, \iota_k \rbrace_{k=1}^{N_1}, \lbrace z_j, c_j \rbrace_{j=1}^{N_2} \rbrace \in H_0^{s}(\bbR) \otimes \mathbb{C}^{2 N_1} \otimes\mathbb{C}^{ 2 N_2} .
\end{equation}
Here $\zeta_k>0$, $z_j=\rho_j e^{i \omega_j}$ with $ \rho_j>0$, $0<\omega_j<\frac{\pi}{2}$ and $\iota_k$ and $c_j$ are non-zero complex numbers playing the role of norming constants.

\subsection{Inverse Problem}
\label{subsec:inverse}

From the scattering data \eqref{scattering}, the standard direct scattering transform \cite[section 2.4]{Cheng} implies that $r(z)$, $c_j$ and $\iota_k$ have linear time evolution:
\begin{equation}
\label{time-evol}
r(z, t)=e^{1/2 (z+1/z)it}r(z), \quad c_j( t)=e^{1/2 (z_j+1/z_j)it}c_j, \quad \iota_k( t)=e^{1/2 (i\zeta_k+1/(i\zeta_k))it} \iota_k .
\end{equation}
 The long time asymptotics of sG equation will be obtained through a sequence of transformations of the following RHP:
\begin{problem}
\label{RHP-1}
For fixed $x\in\bbR$ and  $r(z)$ satisfying the two properties in Proposition \ref{prop:r}, find a meromorphic matrix $M(z, x, t)$ on $\bbC \setminus \bbR$ satisfying the following conditions:
\begin{enumerate}
\item[(i)] (Normalization) $M(z; x, t)\to I+\mathcal{O}(z^{-1})$ as $z\to \infty$.
\item[(ii)] (Jump relation) For each $z \in \bbR$, $M(z, x, t)$ has continuous non-tangential boundary value $M_\pm(z; x, t)$ as $z$ approaches $\bbR$ from $\bbC^\pm$ and the following jump relation holds
\begin{align}
\label{jump}
M_+(z; x, t) &=M_-(z; x, t)e^{-i\theta(z; x, t)\ad\sigma_3}v(z)\\
                   &=M_-(z; x, t)v_\theta(z)
\end{align}
where
$$v(z)=\Twomat{1+|r(z)|^2}{\overline{r(z)}} {r(z) }{1}$$
and 
\begin{equation}
\label{theta}
\theta(z; x, t)=\dfrac{1}{4}\left(  \left( z-\dfrac{1}{z}  \right)\dfrac{x}{t} +\left( z+\dfrac{1}{z}  \right) \right)t
\end{equation}
\item[(iii)] (Residue condition) For $k=1,2..., N_1$, $M(z; x, t)$ has simple poles at each $i\zeta_k, \overline{i\zeta_k}$ with
\begin{equation}
\label{res-1}
\Res_{i\zeta_k}M=\lim_{z\to i\zeta_k}M\twomat{0}{0}{e^{2i\theta(i\zeta_k)} \iota_k}{0}
\end{equation}
\begin{equation}
\label{res-2}
\Res_{\overline{i\zeta_k} }M=\lim_{z\to \overline{i\zeta_k} }M\twomat{0}{-e^{-2i\theta(\overline{i\zeta_k})} \overline{\iota_k}}{0}{0}.
\end{equation}
For $j=1,2,..., N_2$, $M(z; x, t)$ has simple poles at each $\pm z_j, \pm\overline{z_j}$ with
\begin{equation}
\label{res-3}
\Res_{z_j}M=\lim_{z\to z_j}M\twomat{0}{0}{e^{2i\theta(z_j)} c_j}{0},
\end{equation}
\begin{equation}
\label{res-4}
\Res_{\overline{z_j} }M=\lim_{z\to \overline{z_j} }M\twomat{0}{-e^{-2i\theta(\overline{z_j})} \overline{c_j}}{0}{0},
\end{equation}
\begin{equation}
\label{res-5}
\Res_{-z_j}M=\lim_{z\to -z_j}M\twomat{0}{e^{-2i\theta(-z_j)} c_j}{0} {0},
\end{equation}
\begin{equation}
\label{res-6}
\Res_{-\overline{z_j} }M=\lim_{z\to -\overline{z_j} }M\twomat{0}{0}{-e^{2i\theta(-\overline{z_j} )} \overline{c_j}}{0}.
\end{equation}
\end{enumerate}
\end{problem}
\begin{definition}
\label{genericity}
We say that the initial condition $\vec{f}_0$ is \emph{generic} if 
\begin{enumerate}
\item[1.] $\ba(z)$ and $a(z)$ associated to $\vec{f}_0$ only have finite and simple zeroes as stated in Remark \ref{remark-generic}.
\item[2.] For $\lbrace i\zeta_k \rbrace_{k=1}^{N_1}$ and  $\lbrace z_j \rbrace_{j=1}^{N_2}$ where  $z_j=\rho_j e^{i\omega_j}$, 
$$ \rho_j\neq \zeta_k, \, \rho_{j_1}\neq \rho_{j_2} $$
for all $j$, $k$. This will avoid the unstable structure in which kinks/breathers travel in the same velocity.
\end{enumerate}
\end{definition}
	\begin{remark}\label{rem:def1}
As it will become clear later on in our computations, the first requirement in the definition above is essential and the second one is just for the sake of simplicity. Our computations can also be applied without the second condition. For the NLS setting, see \cite{BJM16} \end{remark}
\begin{remark}
\label{re-arrange}
We arrange eigenvalues $\lbrace i\zeta_k \rbrace_{k=1}^{N_1}$ and  $\lbrace z_j \rbrace_{j=1}^{N_2}$ in the following way:
\begin{enumerate}
\item For $i\zeta_k$, $\zeta_k>0$, we have $\zeta_1<\zeta_2<...<\zeta_k<...<\zeta_{N_1}$.
\item For $\rho_j>0$, we have 
$$\rho_1<\rho_2<...<\rho_j<...<\rho_{\footnotesize{ N_2}}.$$
\end{enumerate}
\end{remark}

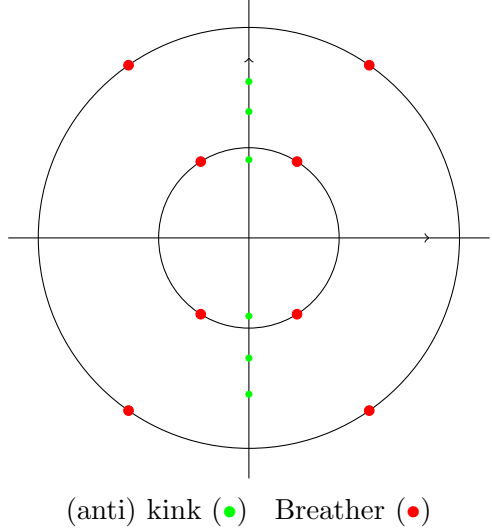
\begin{figure}[h]
\caption{kinks and breathers}
\begin{tikzpicture}[scale=0.8]
\draw [->] (-4,0)--(3,0);
\draw (4,0)--(3,0);
\draw [->] (0,-4)--(0,3);
\draw (0,3)--(0,4);
  
\draw	(0, 0)		circle[radius=1.5];	    
\draw	(0, 0)		circle[radius=3.5];	    

 \draw	[fill, green]  (0, 1.3)		circle[radius=0.05];	    
\draw	[fill, green]  (0, 2.1)		circle[radius=0.05];	    
\draw	[fill, green]  (0, 2.6)		circle[radius=0.05];	    

 \draw	[fill, green]  (0, -1.3)		circle[radius=0.05];	    
 \draw	[fill, green]  (0, -2)		circle[radius=0.05];	    

\draw	[fill, green]  (0, -2.6)		circle[radius=0.05];	   

\draw	[fill, red]  (2, 2.872 )		circle[radius=0.08];	 
\draw	[fill, red]  (-2, 2.872 )		circle[radius=0.08];	 
\draw	[fill, red]  (2, -2.872 )		circle[radius=0.08];	 
\draw	[fill, red]  (-2, -2.872 )		circle[radius=0.08];	 

\draw	[fill, red]  (0.8, 1.269 )		circle[radius=0.08];	 
\draw	[fill, red]  (0.8, -1.269 )		circle[radius=0.08];	 
\draw	[fill, red]  (-0.8, 1.269 )		circle[radius=0.08];	 
\draw	[fill, red]  (-0.8, -1.269 )		circle[radius=0.08];	 
    \end{tikzpicture}
 \begin{center}
  \begin{tabular} {ccc}
(anti) kink  ({\color{green} $\bullet$})	&	  
Breather ({\color{red} $\bullet$}) 
\end{tabular}
 \end{center}
\end{figure}

\begin{remark}
For each pole $i\zeta_k$ (or $z_j$) $ \in \mathbb{C}^+$, let $\tgamma_k$ (respectively $\gamma_j$) be a circle centered at $i\zeta_k$ ($z_j$) of sufficiently small radius to be in the open upper half-plane and to be disjoint from all other circles. By doing so we replace the residue conditions \eqref{res-1}-\eqref{res-6} of the Riemann-Hilbert problem with Schwarz invariant jump conditions across closed contours (see Figure \ref{figure-zeta}). The equivalence of this new RHP on augmented contours with the original one is a well-established result (see \cite{Zhou89} Sec 6). The purpose of this replacement is to
\begin{enumerate}
\item make use of the \textit{vanishing lemma} from \cite[Theorem 9.3]{Zhou89} ;
\item formulate the Beals-Coifman representation of the solution of \eqref{eq: sG}.
\end{enumerate}
We now rewrite the jump conditions of Problem \ref{RHP-1}:
$M(z; x, t)$ is analytic in $\mathbb{C}\setminus \Sigma$ where 
$$\Sigma= \bbR\cup  \left( \bigcup_{k=1}^{N_1} \tgamma_k  \right) \cup \left( \bigcup_{k=1}^{N_1} \tgamma_k^*  \right) \cup \left( \bigcup_{j=1}^{N_2} \pm \gamma_j  \right)\cup \left( \bigcup_{j=1}^{N_2} \pm \gamma_j^*  \right)$$
 is given in figure \ref{figure-zeta} below  and has continuous boundary values $M_\pm$ on $\Sigma$
and  $M_\pm$ satisfy
$$ M_+(z; x, t)=M_-(z; x, t)e^{-i\theta(z; x, t)\ad\sigma_3}v(z)$$
where 
\begin{align*}
v(z)=\Twomat{1+|r(z)|^2}{\overline{r(z)}} {r(z) }{1}, \quad z\in \bbR
\end{align*} 
and 
$$
v(z) = 	\begin{cases}
						\twomat{1}{0}{\dfrac{\iota_k }{z-i\zeta_k}}{1}	&	z\in \tgamma_k, \\
						\\
						\twomat{1}{\dfrac{\overline{\iota_k}}{z -\overline{i\zeta_k }}}{0}{1}
							&	z \in \tgamma_k^*
					\end{cases}
$$
and
$$
v(z) = 	\begin{cases}
						\twomat{1}{0}{\dfrac{c_j}{z-z_j}}{1}	&	z\in \gamma_j, \\
						\\
						\twomat{1}{\dfrac{\overline{c_j}}{z -\overline{z_j}}}{0}{1}
							&	z \in \gamma_j^*,\\
							\\
							\twomat{1}{\dfrac{-c_j}{z + z_j}}{0}{1}	&	z\in -\gamma_j, \\
						\\
						\twomat{1}{0}{\dfrac{-\overline{c_j}}{z +\overline{z_j }}}{1}
							&	z \in -\gamma_j^*.
					\end{cases}
$$					
\begin{figure}
\caption{The Augmented Contour $\Sigma$}
\vspace{.5cm}
\label{figure-zeta}
\begin{tikzpicture}[scale=0.75]
\draw[ thick] (0,0) -- (-3,0);
\draw[ thick] (-3,0) -- (-5,0);
\draw[thick,->,>=stealth] (0,0) -- (3,0);
\draw[ thick] (3,0) -- (5,0);
\node[above] at 		(2.5,0) {$+$};
\node[below] at 		(2.5,0) {$-$};
\node[right] at (3.5 , 2) {$\gamma_j$};
\node[right] at (3.5 , -2) {$\gamma_j^*$};
\node[left] at (-3.5 , 2) {$-\gamma_j^*$};
\node[left] at (-3.5 , -2) {$-\gamma_j$};
\draw[->,>=stealth] (-2.6,2) arc(360:0:0.4);
\draw[->,>=stealth] (3.4,2) arc(360:0:0.4);
\draw[->,>=stealth] (-2.6,-2) arc(0:360:0.4);
\draw[->,>=stealth] (3.4,-2) arc(0:360:0.4);
\draw [red, fill=red] (-3,2) circle [radius=0.05];
\draw [red, fill=red] (3,2) circle [radius=0.05];
\draw [red, fill=red] (-3,-2) circle [radius=0.05];
\draw [red, fill=red] (3,-2) circle [radius=0.05];
\node[right] at (5 , 0) {$\bbR$};
\draw [green, fill=green] (0, 1) circle [radius=0.05];
\draw[->,>=stealth] (0.3, 1) arc(360:0:0.3);
\draw [green, fill=green] (0, -1) circle [radius=0.05];
\draw[->,>=stealth] (0.3, -1) arc(0:360:0.3);
\draw [red, fill=red] (1,3) circle [radius=0.05];
\draw[->,>=stealth] (1.2, 3) arc(360:0:0.2);
\draw [red, fill=red] (-1,3) circle [radius=0.05];
\draw[->,>=stealth] (-0.8, 3) arc(360:0:0.2);
\draw [red, fill=red] (1,-3) circle [radius=0.05];
\draw[->,>=stealth] (1.2, -3) arc(0:360:0.2);
\draw [red, fill=red] (-1,-3) circle [radius=0.05];
\draw[->,>=stealth] (-0.8, -3) arc(0:360:0.2);

\node[above] at 		(0, 1.3) {$\tgamma_k$};
\node[below] at 		(0, -1.3) {$\tgamma_k^*$};
\end{tikzpicture}
\begin{center}
\begin{tabular}{ccc}
kink/antikink ({\color{green} $\bullet$})	&	
Breather ({\color{red} $\bullet$}) 
\end{tabular}
\end{center}
\end{figure}
It is well-known that $e^{-i\theta(z; x, t)\ad\sigma_3}v(z)$ admits the standard triangular factorization :
$$e^{-i\theta(z; x, t)\ad\sigma_3}v(z)=(I-w^-_{\theta})^{-1}(I+w^+_{\theta})$$ for triangular matrices $w_\theta^{\pm}$:

$$ w_\theta^+ = \lowermat{  { r(z) } e^{2i\theta(z)}}, \quad w_\theta^- = \uppermat{\overline{r(z)}e^{-2i\theta(z)}}\quad z\in\bbR $$
$$w_\theta^+ = \lowermat{\dfrac{\iota_k  e^{2i\theta(i\zeta_k) }}{z-i\zeta_k}}, \quad w_\theta^- = \lowermat{0}\quad z\in\tgamma_k $$
$$w_\theta^+ = \lowermat{0},\quad w_\theta^- =\uppermat{\dfrac{ \overline{\iota_i} \, e^{-2i \theta(\overline{i\zeta_k})} }{z-\overline{i\zeta_k}}}\quad z\in\tgamma^*_i $$

$$w_\theta^+ = \lowermat{\dfrac{c_j  e^{2i\theta(z_j) }}{z-z_j}}, \quad w_\theta^- = \lowermat{0}\quad z\in\gamma_j $$
$$w_\theta^+ = \lowermat{0},\quad w_\theta^- =\uppermat{\dfrac{ \overline{c_j} \, e^{-2i \theta(\overline{z_j})} }{z-\overline{z_j}}}\quad z\in\gamma^*_j$$
$$w_\theta^+ = \lowermat{0},\quad w_\theta^- =\uppermat{\dfrac{ -{c_j} \, e^{-2i \theta({-z_j})} }{z+ {z_j}}}\quad z\in-\gamma_j$$
$$\quad w_\theta^+ =\lowermat{\dfrac{ -\overline{c_j} \, e^{2i \theta(-\overline{z_j})} }{z+\overline{z_j}}}, \quad w_\theta^- = \uppermat{0},\quad z\in-\gamma^*_j.$$
We define {for $z\in \Sigma$}
\begin{align*}
\mu= M_-(I-w_\theta^- )^{-1}=M_+(I+w_\theta^+)^{-1}.
\end{align*}
Then the solvability of the RHP above is equivalent to the solvability of the following Beals-Coifman integral equation:
\begin{align}
\label{BC-int}
\mu(z; x,t) &= I +C_{w_\theta}\mu(z; x,t)\\
\nonumber
                &= I+C^+_\Sigma\mu w_\theta^- +C^-_\Sigma\mu w_\theta^+\\
                \nonumber
        &=I+C^+_\bbR\mu w_\theta^- +C^-_\bbR\mu w_\theta^+\\
        \nonumber
        &\qquad+ 
        \Twomat{\sum_{k=1}^{N_1} \dfrac{\mu_{12}(i\zeta_k )\iota_k e^{2i\theta(i\zeta_k)} }{z-i\zeta_k}  }
        { -\sum_{k=1}^{N_1} \dfrac{\mu_{11}( \overline{i\zeta_k}) {\overline{\iota_k}} e^{-2i\theta(   \overline{i\zeta_k }  )} }{z- \overline{i\zeta_k }}   }
        {\sum_{k=1}^{N_1} \dfrac{\mu_{22}(i\zeta_k )\iota_k e^{2i\theta(i\zeta_k )} }{z-i\zeta_k}  }
        {-\sum_{k=1}^{N_1} \dfrac{\mu_{21}( \overline{i\zeta_k }) {\overline{\iota_k}} e^{-2i\theta(   \overline{i\zeta_k}  )} }{z- \overline{i\zeta_k}} }\\
         \nonumber
        &\qquad + \Twomat{\sum_{j=1}^{N_2} \dfrac{\mu_{12}(z_j)c_j e^{2i\theta(z_j)} }{z-z_j}  }
        { -\sum_{j=1}^{N_2} \dfrac{\mu_{11}( \overline{z_j }) {\overline{c_j }} e^{-2i\theta(   \overline{z_j }  )} }{z- \overline{z_j }}   }
        {\sum_{j=1}^{N_2} \dfrac{\mu_{22}(z_j)c_j e^{2i\theta(z_j )} }{z-z_j }  }
        {-\sum_{j=1}^{N_2} \dfrac{\mu_{21}( \overline{z_j }) {\overline{c_j}} e^{-2i\theta(   \overline{z_j}  )} }{z- \overline{z_j}} }\\
         \nonumber
         &\qquad +\Twomat
        { -\sum_{j=1}^{N_2} \dfrac {\mu_{12}( -\overline{z_j }) {\overline{c_j }} e^{2i\theta(  - \overline{z_j }  )} }{z+ \overline{z_j }}   }
        {\sum_{j=1}^{N_2} \dfrac{\mu_{11}(-z_j)c_j e^{-2i\theta(-z_j)} }{z+z_j}  }
       {-\sum_{j=1}^{N_2} \dfrac{\mu_{22}( -\overline{z_j }) {\overline{c_j}} e^{2i\theta(  - \overline{z_j}  )} }{z + \overline{z_j}} }
        {\sum_{j=1}^{N_2} \dfrac{\mu_{21}(-z_j)c_j e^{-2i\theta(-z_j )} }{z+z_j }  }.
\end{align}
\begin{proposition}
\label{prop:recon}
Suppose that the reflection coefficient $r(z)\in H^{s}_0(\bbR)$ (cf. Proposition \ref{prop:r}), then Problem \ref{RHP-1} has a unique solution and the solution $M$ admits the following expansion in $z$:
\begin{equation}
\label{small z}
M(z; x,t)=M_0( x,t) + M_1 (z; x,t).
\end{equation}
 { Let $f$ be the unique solution to equation \eqref{eq: sG} (Theorem \ref{thm:gwpweighted}). Then from \eqref{small z} we obtain}
\begin{align}
\label{sin}
\sin f &=2m_{21}m_{22}\\
\label{cos}
\cos f&=1+2m_{12}m_{21}
\end{align}
where $m_{ij}$ are entries of
$$M_0 :=\twomat{m_{11}}{m_{12}}{m_{21}}{m_{22}}.$$
\end{proposition}
\begin{proof}
The existence and uniqueness of solution to Problem \ref{RHP-1} is standard \cite[P.895, problem 2.1]{BJM16}. By construction, we can see that $M(z; x, t) $ solves the equation \eqref{AKNS-m} for $z\in \bbC\setminus \lbrace 0 \rbrace$. To control the behavior of $M(z; x, t)$ near the origin, we invoke to the singular integral representation of $M(z; x, t)$:
\begin{align*}
M(z; x, t) &=I+ \dfrac{1}{2\pi i}\int_{\Sigma} \dfrac{\mu (w_\theta^+ + w_\theta^-)  }{s-z} ds\\
              &=\underbrace{\left( I+  \dfrac{1}{2\pi i}\int_{\Sigma} \dfrac{\mu (w_\theta^+ + w_\theta^-)  }{ s} ds \right) }_{M_0}+ \underbrace{ \left(   \dfrac{1}{2\pi i}\int_{\Sigma} \dfrac{\mu (w_\theta^+ + w_\theta^-)  }{s-z} ds -   \dfrac{1}{2\pi i}\int_{\Sigma} \dfrac{\mu (w_\theta^+ + w_\theta^-)  }{ s} ds  \right) }_{M_1} \\
              &=: M_0( x,t) + M_1 (z; x,t).
\end{align*}
The integrals make sense since $r(z)\in H^{s}_0(\bbR)$ and the fact that $\mu-I\in L^2_\Sigma$.  Following the same procedure in \cite[Theorem 4]{CVZ99},  we deduce that
\begin{align*}
M(z; x, t)_x &= z\left[J_1,   M_0( x,t) + M_1 (z; x,t)\right] + z^{-1} \left[J_2,   M_0( x,t) + M_1 (z; x,t)\right] \\
                 &= Q_1 \left(   M_0( x,t) + M_1 (z; x,t) \right) + z^{-1} Q_2  \left(   M_0( x,t) + M_1 (z; x,t) \right)
\end{align*}
where $J_1, J_2, Q_1, Q_2$ are the same as those in \cite[Theorem 4]{CVZ99}. If we multiply both sides by $z$ and assume
\begin{equation}
\label{lim-m1}
\lim_{z\to 0} M_1(z; x,t)=0
\end{equation}
and
\begin{equation}
\label{lim-m}
\lim_{z\to 0} z M(z; x,t)_x=0,
\end{equation}
we can establish
$$[J_2, M_0]+Q_2 M_0=0$$
from which \eqref{sin} and \eqref{cos} can be directly read off. The proof of \eqref{lim-m1} is a simple consequence of the dominated convergence theorem. To see this, we let
$z=i\sigma$ where $\sigma>0$, then
\begin{align*}
\left\vert \dfrac{r(s)}{s-z} \right\vert &= \left\vert \dfrac{(s + i\sigma)r(s)}{s^2+\sigma^2} \right\vert\\
                         &\leq \left\vert \dfrac{r(s)}{s} \right\vert + \left\vert \dfrac{\sigma r(s)}{s^2+\sigma^2} \right\vert\\
                          &\leq  \left\vert \dfrac{r(s)}{s} \right\vert + \left\vert \dfrac{\sigma s }{s^2+\sigma^2}\right\vert \left\vert \dfrac{r(s)}{s} \right\vert\\
                          & \leq \dfrac{3}{2}  \left\vert \dfrac{r(s)}{s} \right\vert.
\end{align*}
The last term is in $L^1$ by (1) and (2) in Proposition \ref{prop:r} applied to the regions with $|s|$ large and  $|s|$  small respectively.  For \eqref{lim-m}, using \eqref{BC-int}  a direct computation gives:
\begin{align*}
z M(z; x,t)_x &= \dfrac{1}{2\pi i}\int_{\bbR} \dfrac{z \mu_x (w_\theta^+ + w_\theta^-)  }{s-z} ds + \dfrac{1}{4\pi }\int_{\bbR} \dfrac{ zs \mu w_\theta^+   }{s-z} ds-\dfrac{1}{4\pi }\int_{\bbR} \dfrac{ zs \mu w_\theta^-   }{s-z}  \\
              & \quad  - \dfrac{1}{4\pi }\int_{\bbR} \dfrac{z\mu w_\theta^+   }{ s( s-z )} ds + \dfrac{1}{4\pi }\int_{\bbR} \dfrac{z\mu w_\theta^-  }{ s( s-z )} ds \\
              & \quad + z \times [\text{ $x$-derivative of the discrete part of \eqref{BC-int}}].
\end{align*}
We first mention that we ignore derivative of the discrete part since this part obviously vanishes as $z\to0$. The first two integrals hold and have zero limit since $\mu_x \in L^2_\bbR$ (see \cite[Lemma 6.2.2]{Liu}) and $r\in L^{2,1}(\bbR) $ by Proposition \ref{prop:r}. For the last integral, we notice that 
$$ \dfrac{1}{4\pi }\int_{\bbR} \dfrac{z\mu w_\theta^\pm  }{ s( s-z )} ds=  \dfrac{1}{4\pi } \left( \int_{\bbR} \dfrac{\mu w_\theta^\pm  }{s-z} ds -  \int_{\bbR} \dfrac{\mu w_\theta^\pm  }{ s} ds  \right)$$
which again goes to zero by the dominated convergence theorem similarly to the argument above with the explicit formulas of $w^{\pm}_{\theta}$ and the fact $(\mu-1)\in L^2$.
\end{proof}
\end{remark}

\subsection{Single kink/anti-kink and single breather solution}
If we assume $r\equiv 0$ and $\ba$ has exactly one simple zero at $z=i\zeta$, $\zeta>0$ and let $c=ib$ be the norming constant. Notice that $c$ is purely imaginary
and equation \eqref{eq: sG} admits the following single kink solution \cite{Cheng} :
\begin{subequations}
\begin{equation}
\label{kink-1}
\cos f=1+2m_{12}m_{21}=1-2\dfrac{b^2 e^{4\theta(i\zeta)}}{ \left( \zeta+ \dfrac{ b^2 e^{4\theta(i\zeta)} }{  4\zeta }   \right)^2 }
\end{equation}
\begin{equation}
\label{kink-2}
\sin f=2m_{21}m_{22}= \dfrac{2b e^{2\theta(i\zeta)}  \left( \zeta- \dfrac{ b^2 e^{4\theta(i\zeta)} }{  4\zeta }   \right)  }{ \left( \zeta+ \dfrac{ b^2 e^{4\theta(i\zeta)} }{  4\zeta }   \right)^2 }.
\end{equation}
\end{subequations}
If $b<0$ we have the kink solution and $b>0$ we have the anti-kink solution.

If we assume $r\equiv 0$ and $\ba$ has exactly two simple zeros at $z=\rho e^{i\omega}$ and $-\rho e^{-i\omega}$ and let $c=e^{p+iq}$ be the norming constant, then Equation \eqref{eq: sG} admits the following one-breather solution \cite[chapter 4]{Cheng} :
\begin{equation}
\label{1-breather}
f(x,t)=-4\arctan \left(  \dfrac{1}{2}e^{-K}\cos\left(  \dfrac{\cos\omega}{2} \theta(\rho)-q-\omega \right) \text{sech} \left( \dfrac{\sin \omega}{2} \tilde{\theta}(\rho)   -p-K \right)  \right) 
\end{equation}
with
\begin{align*}
e^{2K}=\dfrac{\cot^2 \omega}{4}.
\end{align*}
and
\begin{align*}
\tilde{\theta}(z; x,t)=\dfrac{1}{4} \left(  \left(  z+\dfrac{1}{z}  \right )x+\left( z-\dfrac{1}{z}  \right)t  \right)
\end{align*}
From above we observe that kink/anti-kink has velocity $\textrm{v}_k=(1-\zeta^2)/(1+\zeta^2)$,  breather has velocity $\textrm{v}_b=(1-\rho^2)/(1+\rho^2)$.

Now we make some observation on the stationary points given by the oscillatory phase and the velocities of kinks/anti-kinks and breathers.   We focus on velocities of breathers since the cases with kinks/anti-kinks are simpler. 

For a general complex number  $z=\xi+i\eta$, the real part of the phase is given by
\begin{equation}
\label{theta-re}
\text{Re}i\theta(z; x, t)=\dfrac{1}{4} \left[-\left(   1+\dfrac{x}{t} \right)\eta t +  \left( 1-\dfrac{x}{t} \right)\dfrac{\eta t}{\xi^2+\eta^2}   \right]
\end{equation}whence, for a given reference frame $\frac{x}{t}$
the stationary points  are 
\begin{equation}
\label{stationary}
\pm z_0=\pm \sqrt{\dfrac{t-x}{x+ t}  } .
\end{equation}
Let's fix the velocity $\textrm{v}_b=(1-\rho^2)/(1+\rho^2)$ of a breather and consider the reference frame with the same velocity $x/t=\textrm{v}_b$. Then the circle give by $\{z=\xi+i\eta: \xi^2+\eta^2=\rho^2\}$ passes through the stationary points \eqref{stationary}.

Conversely, if $a(z)$ has zeros on the circle 
$$ \xi^2+\eta^2=\left\vert \dfrac{t-x}{t+x} \right\vert$$ in the complex plane for some $x, t\in\mathbb{R}$. Then we expect the corresponding breather moves with the velocity $x/t$.

\begin{remark}
\label{painleve-soliton}
In \eqref{theta-re} if we choose $x>t$, then we have $z_0$ purely imaginary. And there is no kink/anti-kink and breathers in this region since $\textrm{v}_b,\textrm{v}_k<1$.
It is also clear that if we set $$x/t=\textrm{v}_{b_j}=(1-\rho^2_j)/(1+\rho^2_j),$$ then $ \text{Re} i\theta(z_j; x, t)=0$. This also give an alternative explanation that breathers and kinks/anti-kinks have no decay in time (up to translations).
\end{remark}

\section{The direct scattering map}
\label{sec: r}
In this section we prove Proposition \ref{prop:r}. As it will become clear later, the Sobolev bijectivity approach of \cite{Zhou98} will give a unified way of proof. 
Because of the factor $1/z$ in the spectral problem \eqref{Psi-x}, we divide our approach into two cases: $|z|<3/4$  and $|z|>1/2$. The purpose of the two constants are:
\begin{itemize}
    \item[1.] avoiding singularities during the analysis of the scattering map;
    \item[2.] guaranteeing the continuity of the scattering data and their derivatives.
\end{itemize}

\subsection{Away from the origin}For $z\in I_\infty\equiv \bbR\setminus [-1/2, 1/2]$,
 \eqref{IE-m-pm} is equivalent to the following integral equations:
\begin{subequations}
\begin{equation}
\label{m+}
m^+(x, z)=I+\int_{+\infty}^x e^{ (y-x)J(z)\ad \sigma_3 } \left[ U(y, z)m^+(y, z) \right] dy
\end{equation}
\begin{equation}
\label{m-}
m^-(x, z)=I+\int_{-\infty}^x e^{ (y-x)J(z)\ad \sigma_3} \left[  U(y, z)m^-(y, z) \right] dy
\end{equation}
\end{subequations}
and consequently the scattering matrix $S$ is given by:
\begin{equation}
\label{T-int1}
S(z)=I - \int_\bbR e^{ yJ(z)\ad \sigma_3}\left[ U(y, z)m^+(y, z) \right] dy.
\end{equation}
\begin{lemma}
\label{lemma:L2}
Suppose that $\vec {f}(0)\in \mathcal{I}$ where $\mathcal{I}$ is given in \eqref{space-1}
then $r(z)\in L^2(I_\infty) \cap L^\infty(I_\infty)$.
\end{lemma}
\begin{proof}
Clearly, we have the following operator bound:
\begin{equation}
    \left\|K^+_U \right\|_{L^{\infty}_x \rightarrow L^{\infty}_x} \leq\|U\|_{L^{1}_x(\bbR^+,   L^\infty_z(I_\infty))}
\end{equation}
where 
the explicit form of $K^+_U$ is given in \eqref{K_U-I}. And we can rewrite \eqref{m+} as
\begin{equation}
m^+(x, z)=\left(\mathbf{1}  -K^+_U\right)^{-1} \left( K^+_U I \right).
\end{equation}
And standard Volterra theory implies that 
\begin{equation}
\label{m-infty}
    \norm{m^+(x, z)}{L^\infty_x\left(\bbR^+; L^\infty_z(I_\infty\right) }< \infty.
\end{equation}
Recall from \eqref{def:r} the definition of the reflection coefficient $r$, to show that 
\begin{equation}
\label{est: r-1}
r(z)\in L^2(I_\infty) \cap L^\infty(I_\infty)
\end{equation}
we only need $S(z)-I\in L^2(I_\infty) \cap L^\infty(I_\infty)$.
We split $S(z)$ into three parts
\begin{align}
\label{S-K}
S(z) &=I-\int_\bbR e^{ yJ(z)\ad \sigma_3}\left[ U(y)\left(m^+(y, z) -I\right) \right] dy-\int_\bbR e^{ yJ(z)\ad \sigma_3}U(y)dy\\
\nonumber
                  &=I-\int_\bbR e^{ yJ(z)\ad \sigma_3 }\left[ U(y)\left(\mathbf{1}  -K^+_U\right)^{-1} \left( K^+_U I \right)\right] dy - \int_\bbR e^{ yJ(z)\ad \sigma_3}U(y)dy\\
                  &=:I- S_{1}(z) - S_{2}(z)
\end{align}
From \eqref{m-infty} it is easy to deduce that $S(z)-I\in L^\infty(I_\infty)$.  We then show that $S_2(z) \in L^2_z(I_\infty)$. Explicitly, $S_2(z)$ is given by the following formula:
\begin{equation}
S_2(z)=\twomat{\int_\bbR \dfrac{i}{4z} (\cos f-1) dy  }{ \int_\bbR e^{ iy (z/2-1/(2z)) } \left[\dfrac{i}{4z} \sin f-\dfrac{1}{4}(f_x+f_t) \right] dy }{ \int_\bbR e^{-iy (z/2-1/(2z)) } \left[\dfrac{i}{4z} \sin f +\dfrac{1}{4}(f_x+f_t) \right] dy}{\int_\bbR \dfrac{i}{4z} (-\cos f+1) dy}
\end{equation}
We only have to consider part of the off-diagonal entries without the $1/z$ factor:
\begin{equation}
\int_\bbR  e^{-iy (z/2-1/(2z)) }  (f_x+f_t) dy.
\end{equation}
Letting $\lambda=z-1/z$, then for $z\in (1/2, \infty)$, $\lambda\in (-3/2, \infty)$. Since we have the $L^\infty$ bound on $I_\infty \equiv \bbR\setminus [-1/2, 1/2] $, we only work on $z\in (1, \infty)$ which means $\lambda>1$. We first observe that
\begin{equation}
g(\lambda)=\int_{\bbR}  e^{-(1/2)iy \lambda }  (f_x(y)+f_t(y) ) dy
\end{equation}
belongs to $L^2_\lambda(\bbR)$ by Plancherel's theorem. Then let $\mathbf{f}(z)=g(z-1/z)=g(\lambda)$, we have 
\begin{equation}
\label{z-lambda}
\int_1^{\infty}|\mathbf{f}(z)|^2 dz\leq 
\int_{\bbR^+} |g(\lambda)|^2 \left( \dfrac{1}{2} +\dfrac{\lambda}{2\sqrt{\lambda^2+4}} \right) d\lambda <\infty.
\end{equation}
For  $S_1(z)$, we first study 
\begin{align}
\label{K_U-I}
 (K^+_U I)(y, z) & =\int_{+\infty}^y e^{(  w-y  )J(z)\ad \sigma_3} U(y) dw \\
 \nonumber
                       &=(K^{1,+}_{U_1} I)(y, z) +(K^{2,+}_{U_2} I)(y, z) \\
                       \nonumber
                       & :=\twomat{ \int_{+\infty}^y \dfrac{i}{4z} (\cos f-1) dw  }{ \int_{+\infty}^y e^{ i( w-y ) (z/2-1/(2z)) } \left[\dfrac{i}{4z} \sin f \right] dw }{ \int_{+\infty}^y e^{-i ( w-y ) (z/2-1/(2z)) } \left[\dfrac{i}{4z} \sin f \right] dw }{\int_{+\infty}^y \dfrac{i}{4z} (-\cos f+1) dw }\\
                       \nonumber
                       & + \twomat{0 }{ \int_{+\infty}^y  e^{ i ( w-y )  (z/2-1/(2z)) } \left[-\dfrac{1}{4}(f_x+f_t) \right] dw }{ \int_{+\infty}^y  e^{-i ( w-y )  (z/2-1/(2z)) } \left[\dfrac{1}{4}(f_x+f_t) \right] dw }{0}\
\end{align}
We only have to study the terms without the $1/z$ factor in the potential:
\begin{equation}
(K^{2,+}_{U_2} I)(y, z)_{21} :=\int_{+\infty}^y  e^{i( w-y )\ (z/2-1/(2z)) }  (f_x(w)+f_t(w)) dw.
\end{equation}
Again we set $\lambda=z-1/z $ and by the standard Fourier theory,
\begin{align}
 \label{test-phi}
 \left\Vert(K^{2,+}_{U_2} I)(y, \cdot )_{21} \right\Vert_{L^\infty_y L^2_\lambda} & =\sup_{\substack{\phi\in C_0^\infty\\ \left\Vert \phi \right\Vert_{L^2_\lambda}=1 }} \left\vert   \int_\bbR \phi(\lambda)  \left( \int_{+\infty}^y  e^{ (1/2) i( w-y ) \lambda }  (f_x+f_t) dw \right) d\lambda \right\vert \\
 \nonumber
                                                                                        &\lesssim \sup_{\substack{\phi\in C_0^\infty\\ \left\Vert \phi \right\Vert_{L^2_\lambda}=1 }}  \int_y^{+ \infty}  | \hat{\phi}(y-w )| \left\vert f_x(w)+f_t(w) \right\vert dw\\
  \nonumber                                                                                      
                                                                                        &\leq \left\Vert f_x+f_t \right\Vert_{L^2_y}
 \end{align}
 Also the  standard Volterra theory gives the following operator norm:
\begin{equation}
\label{resolvent-K}
\norm{\left(\mathbf{1} -K^+_U \right)^{-1}}{L^\infty_y \left( \bbR^+;L^2_\lambda  \right)\to L^\infty_y \left( \bbR^+;L^2_\lambda \right)  }\leq e^{\norm{U}{ L^1_y \left( \bbR^+;L^\infty_\lambda  \right)}}.
\end{equation}
Thus
\begin{align}
 \label{resolvent-K1}
\norm{m^+-I}{L^\infty_y \left( \bbR^+;L^2_\lambda \right)}&=\norm{\left( \mathbf{1}  -K^+_U \right)^{-1}K^+_U I}{L^\infty_y \left( \bbR^+;L^2_\lambda \right)}\\
\nonumber
&\lesssim e^{\norm{U}{L^1_y \left( \bbR^+;L^2_\lambda \right)}}\norm{U}{L^2_y \left( \bbR^+;L^2_\lambda \right)}. 
\end{align}
An application of \textit{Minkowski's inequality} implies $S_1\in L^2_\lambda(\bbR)$. This shows that 
\begin{equation}
\label{energy-1}
S(z)-I \in L^2(I_\infty).
\end{equation}
\end{proof}

Then we turn to the derivative of the reflection coefficient $r(z)$ for $z\in I_\infty $.
\begin{lemma}
\label{lemma:Hs-infty}
Suppose that $\vec {f}(0)\in \mathcal{I}$ where $\mathcal{I}$ is given in \eqref{space-1}
then $r(z)\in H^s (I_\infty)$.
\end{lemma}
\begin{proof}
We first notice that the scattering data are given by:
\begin{align}
    \label{b-det}
    b(z)&= m^-_{11}(0,z)m^+_{21}(0,z) - m^+_{11}(0,z) m^-_{21}(0,z) \\
\label{ba-det}
    \ba(z) &=m^-_{22}(0,z)m^+_{11}(0,z) - m^-_{12}(0,z) m^+_{21}(0,z)
\end{align}
So we only need to show $ m^\pm(0, z)$ in certain appropriate Sobolev spaces. By the standard Volterra theory, $\norm{m^\pm(x,\lambda)}{L^\infty_x L^\infty_z}<\infty$. From the difference quotient characterization of the Sobolev space, we will show that
\[
\left\Vert   m^\pm(x,z+h)-m^\pm(x,z)  \right\Vert _{L_{z}^{2}}\lesssim \left|h\right|^{s}\left\Vert \left\langle x\right\rangle ^{s}U\right\Vert _{H^{2}}.
\]
Before showing this, we will study the bound on the following derivative:
\begin{equation}
    \label{linear-1}
   \norm{ \dfrac{\partial}{\partial z}(K^\pm_U I)}{L^\infty\left(\bbR^\pm ; L^2_{z}\right)}<\infty.
\end{equation}
 We only deal with the $-$ sign. If we take the derivative of \eqref{K_U-I} with respect to $z$, we obtain the following term with an $1/z$ factor. Notice that in this case $(\cdot)\sin(\cdot)\notin L^1 $. Denote
\begin{align*}
h_1^- (x, z) &= \dfrac{\partial}{\partial z} \left( \int_{-\infty}^x e^{ i(x-y) (z/2-1/(2z)) } \left[\dfrac{i}{4z} \sin f \right] dy \right)\\
                      &= \int_{-\infty}^x  \dfrac{i}{2}(x-y) e^{ i(x-y) (z/2-1/(2z)) } \left[\dfrac{i}{4z}\sin f \right] dy\\
                      &\quad +\int_{-\infty}^x  \dfrac{i}{2 z^2}(x-y) e^{ i(x-y) (z/2-1/(2z)) } \left[\dfrac{i}{4z}\sin f \right] dy\\
                      &\quad - \int_{-\infty}^x e^{ i(x-y) (z/2-1/(2z)) } \left[\dfrac{i}{4z^2} \sin f \right] dy\\
                      &=:h_{1,1}^-(x, z) + h_{1,2}^-(x, z) +h_{1,3}^-(x, z). 
\end{align*}
We will only show that $h_{1,1}^- \in L^\infty_x \left( \bbR^-; L^2_z ((1, \infty) ) \right)$. The estimates for other terms in $H_1^-$ are similar. Indeed, setting $\lambda=z-1/z$
\begin{align*}
\norm{h_{1,1}^- (x,\lambda)}{L^2_\lambda} &=\sup_{\substack{\phi\in C_0^\infty\\ \left\Vert \phi \right\Vert_{L^2}=1 }} \left\vert   \int_\bbR \phi(\lambda)  \left( \int_{-\infty}^x \dfrac{i}{2}(x-y) e^{ i(x-y) \lambda/2 } \left[\dfrac{i}{4 (\lambda + \sqrt{\lambda^2 +4 })  }\sin f(y) \right] dy  \right) d\lambda \right\vert\\
                                                           &\lesssim \sup_{\substack{\phi\in C_0^\infty\\ \left\Vert \phi \right\Vert_{L^2}=1 }} \int^x_{ -\infty} \left \vert \mathcal{F} \left[  \dfrac{\phi (\cdot) }{ 4( (\cdot)+\sqrt{(\cdot)^2+4 }   )  } \right] (x-y)  \right\vert  \left\vert x-y \right\vert   \left\vert \sin f(y) \right\vert dy\\
                                                           &\lesssim \sup_{\substack{\phi\in C_0^\infty\\ \left\Vert \phi \right\Vert_{L^2}=1 }}\int^x_{ -\infty} \left \vert \mathcal{F} \left[  \dfrac{\phi (\cdot) }{ 4( (\cdot)+\sqrt{(\cdot)^2+4 }   )  } \right]  (x-y) \right\vert  \left\vert y \right\vert   \left\vert \sin f(y) \right\vert dy
\end{align*}
where we used the fact that $|x-y|<|y|$ for $y<x<0$.  Finally an application of the Schwarz inequality gives
\begin{equation}
\label{H_1}
\norm{h_{1,1}^-(x,\lambda)}{L_x^\infty\left( \bbR^+;L^2_\lambda \right)}\leq \norm{\sin f}{L^{2, 1} }.
\end{equation}
We then proceed to write $$n^\pm(x,z):=m^\pm(x,z+h)-m^\pm(x,z)$$
and define the operator 
\begin{equation}
    K^\pm_U[\mathbf{f}](x, z):=\int_{\pm\infty}^x e^{i\left(y-x\right)J(z)\ad\sigma_3}\left[ U\left(y\right) \mathbf{f}(y)\right]\,dy.
\end{equation}
We are only dealing with the "$-$" sign since the proof for the $+$ part follows similarly. With notations above, we deduce that for $x\leq 0$,
\begin{align}
\label{eq:gsplit}
    &(\mathbf{1}-K^-_U)n^-(x, z)\\
    \quad &= \int_{-\infty}^{x} \left(e^{\left(y-x\right)J(z+h)\ad\sigma_3}-e^{\left(y-x\right)J(z)\ad\sigma_3}\right)\left[U\left(y, z\right)\left(m^-\left(y,z+h\right)-I\right) \right]\,dy\\
    \nonumber
&\quad +\int_{-\infty}^{x}e^{\left(y-x\right)J(z+h)\ad\sigma_3} \left[\left(U(y, z+h)-U(y,z)\right)\left(m^-\left(y,z+h\right)-I\right) \right]\,dy\\
    \nonumber
      &\quad+\int_{-\infty}^{x} \left(e^{\left(y-x\right)J(z+h)\ad\sigma_3}-e^{\left(y-x\right)J(z)\ad\sigma_3}\right)U\left(y, z\right)\,dy\\
      \nonumber
      &\quad +\int_{-\infty}^{x} e^{\left(y-x\right)J(z+h)\ad\sigma_3}\left( U(y, z+h)-U(y, z)\right)\,dy\\
      & =:T_{1}\left(x,z; h\right)+T_{2}\left(x,z; h\right)+ T_{3}\left(x,z; h\right)+T_{4}\left(x,z; h\right).
\end{align}
It is obvious that 
\begin{align}
    \norm{T_4\left(x,z; h\right) }{L^2_z} &\lesssim |h|\norm{U(x, z)}{L^{2,s}_x}.
\end{align}
For $T_3\left(x,z; h\right)$, given estimates \eqref{test-phi} and \eqref{linear-1} above, applying the complex interpolation to the linear
operator $K^-_U I$, see Bergh- L\"ofstr\"om \cite{BL},  we obtain that
\[
\left\Vert K^-_U I(x,\cdot)\right\Vert _{L^\infty \left( \bbR^+; H^{s}_z \right) }\lesssim\left\Vert U \right\Vert _{L^{2,s}},
\]
whence, from the difference quotient characterization of Sobolev
spaces,
\begin{align}
\left\Vert T_{3}\left(x,z; h\right) \right\Vert _{L_{z}^{2}}
&\lesssim\left\Vert V\right\Vert _{H^{1,s}}\left|h\right|^{s}.\label{eq:M3S}
\end{align}
For the estimate on $T_2$ one needs $ \norm{m^--I}{ L^2_y L^2_z ( ( 1, \infty) ) } $ to be finite. To see this we recall that
\begin{equation}
\label{double norm}
m^--I = {\left( \mathbf{1}  -K^-_U \right)^{-1}K^-_U I}.
\end{equation}
Using \eqref{test-phi} and the decay of $U(y,z)$ we have that for $s>1/2$
\begin{align*}
    \norm{K^-_U I(y,\cdot)}{L^2_\lambda} &\leq \left(  \int_{-\infty}^y |w|^{2s}|U(w)|^2 dw \right)^{1/2}|y|^{-s}\\
                     & \leq \norm{U}{L^{2,s}_y(\bbR)}|y|^{-s}
\end{align*}
and we can conclude that 
\begin{align}
\label{K-double}
\norm{m^--I}{ L^2_y \left(\bbR^-; L^2_z ( ( 1, \infty) ) \right)} & \lesssim \norm{U}{L^{2,s}_x(\bbR)} \norm{\left(\mathbf{1} -K^-_U \right)^{-1}}{L^\infty_y \left( \bbR^-;L^2_\lambda  \right)\to L^\infty_y \left( \bbR^-;L^2_\lambda \right)   }\\
\nonumber
&\quad \times \left( \int \langle y\rangle^{-2s} dy\right)^{1/2}\\
\nonumber
& \lesssim \norm{U}{L_y^{2,s}\left(\bbR; L^\infty_\lambda \right) }e^{\norm{U}{L_y^{1}\left(\bbR; L^\infty_\lambda \right)}}.
\end{align}
An application of Cauchy-Schwarz inequality leads to 
\begin{align}
    \norm{T_2\left(x,z; h\right) }{L^\infty_x (\bbR^-; L^2_z)} &\lesssim |h|\norm{U(x, z)}{L^{2,s}_x}.
\end{align}
For $T_1$ we combine \eqref{K-double} with the inequality $|e^{i\theta}-1|<|\theta|^s$ to obtain for $z>1$.
\begin{align}
    \norm{T_1\left(x,z; h\right)}{L^\infty_x (\bbR^-; L^2_z)} &\lesssim \left\vert J(z+h)-J(z) \right\vert^s  \norm{U}{L_y^{2,s}\left(\bbR; L^\infty_\lambda \right) }e^{\norm{U}{L_y^{1}\left(\bbR; L^\infty_\lambda \right)}}\\
    \nonumber 
      &\leq |h|^s \norm{U}{L_y^{2,s}\left(\bbR; L^\infty_\lambda \right) }e^{\norm{U}{L_y^{1}\left(\bbR; L^\infty_\lambda \right)}}.
\end{align}
\end{proof}
\subsection{Near the origin}For $z\in I_0 \equiv (-3/4, 3/4)$, after performing the gauge transformation, we analyze  the following integral equation derived from the spectral problem \eqref{Phi-x}:
\begin{subequations}
\begin{equation}
\label{m+_}
\underline{m}^+(x, z)=I+\int_{+\infty}^x e^{ (y-x)J(z)\ad \sigma_3 } \left[ V(y, z)\underline{m}^+(y, z) \right] dy
\end{equation}
\begin{equation}
\label{m-_}
\underline{m}^-(x, z)=I+\int_{-\infty}^x e^{ (y-x)J(z)\ad \sigma_3} \left[  V(y, z)\underline{m}^-(y, z) \right] dy.
\end{equation}
\end{subequations}
We first have the following lemma:
\begin{lemma}
\label{lemma:L2-0}
Suppose that $\vec {f}(0)\in \mathcal{I}$ where $\mathcal{I}$ is given in \eqref{space-1}
then $r(z)\in L^2(I_0) \cap L^\infty(I_0)$.
\end{lemma}
Then we move on to prove the following lemma:
\begin{lemma}
\label{lemma:Hs-0}
Suppose that $\vec {f}(0)\in \mathcal{I}$ where $\mathcal{I}$ is given in \eqref{space-1}
then $r(z)\in H^s (I_0)$.
\end{lemma}

\begin{proof}
The proof of this proposition is similar to the proof of  Lemma \ref{lemma:Hs-infty}. The key difference is that the phase has certain nontrivial behavior near the origin. As in Section 2, we focus on $\underline{m}^-$ and write $$n^\pm(x,z; h):=m^\pm(x,z+h)-m^\pm(x,z)$$
and  
\begin{align}
\label{eq:gsplit}
  &  (\mathbf{1}-K^-_V)\underline{n}(x, z)\\
  \nonumber
   \quad &= \int_{-\infty}^{x} \left(e^{\left(y-x\right)J(z+h)\ad\sigma_3}-e^{\left(y-x\right)J(z)\ad\sigma_3}\right)\left[V\left(y, z\right)\left(\underline{m}^-\left(y,z+h\right)-I\right) \right]\,dy\\
    \nonumber
&\quad +\int_{-\infty}^{x}e^{\left(y-x\right)J(z+h)\ad\sigma_3} \left[\left(V(y, z+h)-V(y,z)\right)\left(\underline{m}^-\left(y,z+h\right)-I\right) \right]\,dy\\
    \nonumber
      &\quad+\int_{-\infty}^{x} \left(e^{\left(y-x\right)J(z+h)\ad\sigma_3}-e^{\left(y-x\right)J(z)\ad\sigma_3}\right)V\left(y, z\right)\,dy\\
      \nonumber
      &\quad +\int_{-\infty}^{x} e^{\left(y-x\right)J(z+h)\ad\sigma_3}\left( V(y, z+h)-V(y, z)\right)\,dy\\
      \nonumber
      & =:\tilde{T}_{1}\left(x,z; h\right)+\tilde{T}_{2}\left(x,z; h\right)+ \tilde{T}_{3}\left(x,z; h\right)+\tilde{T}_{4}\left(x,z; h\right).
\end{align}
It is obvious that 
\begin{align}
    \norm{\tilde{T}_4\left(x,z; h\right) }{L^2_z} &\lesssim |h|\norm{V(x, z)}{L^{2,s}_x}.
\end{align}
Similarly, using the same argument as in the estimate \eqref{K-double}, we obtain
\begin{align}
    \norm{\tilde{T}_2\left(x,z; h\right) }{L^\infty_x (\bbR^-; L^2_z)} &\lesssim |h|\norm{V(x, z)}{L^{2,s}_x}.
\end{align}
We then turn to
the linear part $\tilde{T}_3$. Consider the linear operator $L$ defined by the following expression:
\[
L[V] (x,z):=K_V[1](x, z)=\intop_{-\infty}^{x}e^{i\left(x-y\right)\left(z-\frac{1}{z}\right)\ad\sigma_3}V\left(y, z\right)\,dy.
\] 
Note that if we take derivative with in the $z$ variable, the $1/z$ term in the phase will bring down a $1/z^2$ factor. To begin with, we need the following change of variable:
$$z\mapsto \gamma = \dfrac{1}{z}.$$
First of all, we observe that
\begin{align*}
\int_{\left|z\right|<3/4}\left|\intop_{-\infty}^{x}e^{(y-x)J(z)\ad \sigma_3}V\left(y\right)\,dy\right|^{2}dz & \lesssim\int_{\left|\gamma\right|>4/3}\left|\intop_{-\infty}^{x}e^{i\left(x-y\right)\gamma \ad\sigma_3 }V\left(y\right)\,dy\right|^{2}\frac{1}{\gamma^{2}}d\gamma\\
 & \lesssim\left\Vert V\right\Vert _{L^{2}}^{2,s}
\end{align*}
and from integration by parts
\begin{align*}
\int_{\left|z\right|<3/4}\left|\frac{d}{dz}\intop_{-\infty}^{x}e^{(y-x)J(z)\ad \sigma_3}V\left(y\right)\,dy\right|^{2}dz & \lesssim\int_{\left|\gamma\right|>4/3}\gamma^{4}\left|\intop_{-\infty}^{x}e^{i\left(x-y\right)\gamma\ad\sigma_3}\left(x-y\right)V\left(y\right)\,dy\right|^{2}\frac{1}{\gamma^{2}}d\gamma\\
 & =\int_{\left|\gamma\right|>4/3}\gamma^{2}\left|\intop_{-\infty}^{x}e^{i\left(x-y\right)\gamma\ad\sigma_3}\left(x-y\right)V\left(y\right)\,dy\right|^{2}d\gamma\\
 & \sim\left\Vert \left\langle x\right\rangle V\right\Vert _{H^{1}}^{2}
    \end{align*}
where in the last line we applied the Plancherel's theorem.

Given estimates above, applying the complex interpolation to the linear
operator $L$, see Bergh- L\"ofstr\"om \cite{BL},  we obtain that
\[
\left\Vert L[V](x,\cdot)\right\Vert _{H^{s}}\lesssim\left\Vert V\right\Vert _{H^{s,s}}\lesssim\left\Vert V\right\Vert _{H^{1,s}},
\]
whence, from the difference quotient characterization of Sobolev
spaces,
\begin{align}
\left\Vert \tilde{T}_{2}\left(x,z; h\right) \right\Vert _{L_{z}^{2}}
&\lesssim\left\Vert V\right\Vert _{H^{1,s}}\left|h\right|^{s}.\label{eq:M3S}
\end{align}
For $\tilde{T}_1$, integration by parts  leads to
\begin{align}
&\intop_{-\infty}^{x}e^{(y-x)J(z)\ad \sigma_3}V\left(y\right)\left(\underline{m}^-\left(y,z\right)-I\right)\,dy\\
& \quad =\intop_{-\infty}^{x}-\left(\frac{z}{z^{2}-1}\right) \left(\frac{d}{dy}e^{(y-x)J(z)\ad \sigma_3} \right)V\left(y\right)\left(\underline{m}^-\left(y,z\right)-I\right)\,dy\nonumber \\
 & \quad=-\left(\frac{z}{z^{2}-1}\right)V\left(x\right)\left(\underline{m}^-\left(x,z\right)-I\right)\nonumber \\
 &\quad +\left(\frac{z}{z^{2}-1}\right)\intop_{-\infty}^{x}e^{(y-x)J(z)\ad \sigma_3}\frac{d}{dy}\left(V\left(y\right)\left(\underline{m}^-\left(y,z\right)-I\right)\right)\,dy\nonumber \\
 &\quad =:\tilde{T}_{11}\left(x,z\right)+\tilde{T}_{12}\left(x.z\right).\label{eq:diffs1}
\end{align}
For the  first term above $\tilde{T}_{11}\left(x,z; h\right)$, we bound:
\begin{align}
\left\Vert \tilde{T}_{11}\left(x,z+h\right)-\tilde{T}_{11}\left(x,z\right)\right\Vert _{L_{z}^{2}} & \lesssim\left\Vert \left(\frac{z+h}{\left(z+h\right)^{2}-1}\right)V\left(x\right)(\underline{m}^-\left(x,z\right)-I)\right. \\
&\quad \left. -\left(\frac{z}{z^{2}-1}\right)V\left(x\right)(\underline{m}^-\left(x,z\right)-I)\right\Vert _{L_{z}^{2}}\nonumber \\
 & \lesssim h\text{\ensuremath{\left|V(x)\right|}}\left\Vert V\left(x\right)(\underline{m}^-\left(x,z\right)-I)\right\Vert _{L_{z}^{2}}.\label{eq:M1S}
\end{align}
Then we analyze the second term $\tilde{T}_{12}\left(x,z\right)$. Taking the
difference, explicitly, we have
\begin{align*}
\tilde{T}_{12}\left(x,z+h\right)-\tilde{T}_{12}\left(x,z\right) & =\left(\frac{z+h}{\left(z+h\right)^{2}-1}-\frac{z}{z^{2}-1}\right)\intop_{-\infty}^{x}e^{(y-x)J(z+h)\ad \sigma_3}\frac{d}{dy}\left(V\left(y\right)(\underline{m}^-\left(x,z\right)-I)\right)\,dy\\
 & -\left(\frac{z}{z^{2}-1}\right)\intop_{-\infty}^{x}\left(e^{(y-x)J(z+h)\ad \sigma_3}-e^{(y-x)J(z)\ad \sigma_3}\right)\frac{d}{dy}\left(V\left(y\right)(\underline{m}^-\left(x,z\right)-I)\right)\,dy\\
 & =:\tilde{T}_{12,1}+\tilde{T}_{12,2}.
\end{align*}
For $\tilde{T}_{12,2}$, taking the absolute value, one has
\[
\left|\tilde{T}_{12,2}\right|\lesssim\left|\left(\frac{z}{z^{2}-1}\right)\right|\left|z\right|^{-2s}2^{1-s}\left|h\right|^{s}\intop_{-\infty}^{x}|x-y| ^{s}\left|\frac{d}{dy}\left(V\left(y\right)\left(\underline{m}^-\left(x,z\right)-I\right)\right)\right|\,dy.
\]
It suffices to bound the $L_{z}^{2}$ norm of the RHS above. Leaving the $|h|^s$ factor aside we make a change of variable $\gamma=1/z$. For brevity we are only dealing with the following inequalities explicitly
\begin{align}
\label{m-deriv-1}
   \left\Vert \intop_{-\infty}^{x}|x-y|^s
  |\left[ (f_x-f_t)(\underline{m}^-_{11}-1)  \right]_y|\,dy\right\Vert _{L_{\gamma}^{2}} &\leq \norm{\langle \cdot \rangle^s (f_x - f_t) }{H^{1}} \norm{\underline{m}^-_{11}-1}{L^2_y L^2_\gamma}\\
  \nonumber
  & \quad +  \norm{\langle \cdot \rangle^s (f_x-f_t) }{L^{2}} \norm{(\underline{m}^-_{11}-1)_{y}}{L^2_y L^2_\gamma}.
\end{align}
\begin{align}
\label{m-deriv-2}
   \left\Vert \intop_{-\infty}^{x}|x-y|^s
  |\left[ (f_x- f_t)(\underline{m}^-_{21})  \right]_y|\,dy\right\Vert _{L_{\gamma}^{2}} &\leq \norm{\langle \cdot \rangle^s (f_x - f_t) }{H^{1}} \norm{\underline{m}^-_{21}}{L^2_y L^2_\gamma}\\
  \nonumber
  & \quad +  \norm{\langle \cdot \rangle^s (f_x-f_t) }{L^{2}} \norm{(\underline{m}^-_{21})_{y}}{L^2_y L^2_\gamma}.
\end{align}
 Direct calculation gives:
\begin{align}
\label{double norm 11-x}
\underline{m}^-_{11 x}(x, \gamma )=\dfrac{1}{4i}\left[  V_{11}\underline{m}_{11}^- + V_{12}\underline{m}_{21}^- \right].
\end{align}
We deduce 
$$\underline{m}_{11x}^-\in L^2_x L^2_\gamma ((1,\infty) ) $$
from the fact that $V_{11}$ contains a $1/\gamma$ factor and inequality \eqref{K-double}. For the remaining term,
\begin{align}
\underline{m}^-_{21 x}(x, \gamma ) &=\dfrac{1}{4i} \int_{-\infty}^x e^{i (x-y)  (1/(2\gamma)-\gamma/2) } \left[   V_{21y}\underline{m}_{11}^- + V_{21}\underline{m}_{11y}^-  +V_{22y}\underline{m}_{21}^+  \right] dy\\
  &\quad +\dfrac{1}{4i} \int_{-\infty}^x e^{i (x-y)  (1/(2\gamma)-\gamma/2) }   V_{22}\underline{m}_{21y}^- dy\\
  &=P(x, \gamma)+\dfrac{1}{4i} \int_{-\infty}^x e^{i (x-y)  (1/(2\gamma)-\gamma/2) }   V_{22}\underline{m}_{21y}^- dy.
\end{align}
Similar to that of \eqref{K-double}, we have 
\begin{equation}
\label{double norm-12}
\norm{ \underline{m}^-_{21 x} }{ L^2_x L^2_\gamma ( ( 1, \infty) ) } \leq \norm{ P(x, \gamma) }{ L^2_x L^2_\gamma ( ( 1, \infty) ) }+\norm{K_{22} \underline{m}_{21y}^-  }{ L^2_x L^2_\gamma ( ( 1, \infty) ) }.
\end{equation}
where 
$$ \left[ K_{22} g \right](x, \lambda) =\dfrac{1}{4i} \int_{-\infty}^x  e^{i (x-y)  (1/(2\gamma)-\gamma/2) }   \dfrac{i }{4\gamma} (1-\cos f) g(y) dy.$$
Following the same procedure used to establish \eqref{K-double}, we conclude that
$$\underline{m}_{21x}^-\in L^\infty_x L^2_\gamma ((1,\infty) ) $$
and arrive at the bounds in \eqref{m-deriv-1}-\eqref{m-deriv-2}. From the analysis above, it follows that
\begin{equation}
\left\Vert \tilde{T}_{12,2}\right\Vert _{L_{z}^{2}}\lesssim2^{1-s}\left|h\right|^{s}\left\Vert V\right\Vert _{H^{1,s}}.\label{eq:M22S}
\end{equation}
Similarly, for the first term:
\begin{equation}
\left\Vert \tilde{T}_{12,1}\right\Vert _{L_{z}^{2}}\lesssim |h|^s \left\Vert \frac{d}{dy}\left(V\left(y\right)(m\left(x,z\right)-1)\right)\right\Vert _{L_{x}^{\infty}L_{z}^{2}} \lesssim\left|h\right|^{s}\left\Vert V\right\Vert _{H^{1,s}}.\label{eq:M21S}
\end{equation}

Combining \eqref{eq:M3S},\eqref{eq:M1S}  \eqref{eq:M22S} \eqref{eq:M21S}
 with the resolvent estimate \eqref{resolvent-K}, it follows that
\[
\left\Vert m(x,z+h)-m(x,z)\right\Vert _{L_{z}^{2}}\lesssim\left|h\right|^{s}\left\Vert V\right\Vert _{H^{1,s}}
\]
as desired.
\end{proof}
\subsection{Proof of Proposition \ref{prop:r}}
Lemmas \ref{lemma:L2}-\ref{lemma:Hs-0} lead to the first part of Proposition \ref{prop:r}. Now we turn to part (2) of Proposition \ref{prop:r}.
\begin{lemma}
\label{lemma:r2-0}
If $\vec{f}(0)\in \mathcal{I}$ where $\mathcal{I}$ is given in \eqref{space-1}
then $\lim_{z\to 0}{r(z)}/z=0$.
\end{lemma}
\begin{proof}
We have already proven that 
\begin{equation}
\label{partialS}
{ S(\gamma)}\in H^{s}(I_\infty)
\end{equation}
and by the Sobolev embedding, $H^s\subset C^{s-1/2}$ and  this leads to the following limit:
\begin{align*}
\lim_{z\to 0^+} S(z) -I &=\lim_{\gamma \to +\infty} S(\gamma)-I =0,\\
\lim_{z\to 0^-} S(z)-I &=\lim_{\gamma \to -\infty} S(\gamma)-I =0.
\end{align*}
To see
\begin{equation}
\label{r=0}
\lim_{z\to 0}\dfrac{r(z)}{z}=\lim_{\gamma\to \infty} \gamma r(\gamma)=0,
\end{equation}
we can just show that 
$ \gamma r(\gamma) \in L^2 $ and is uniformly continuous. We will work with
\begin{align}
S_{21}(\gamma) &= \left( \int_\bbR e^{ yJ(z)\ad \sigma_3} \left[ V(y)\left(\underline{m}^+(y, z) -I\right) \right] dy \right)_{21} + 
\left( \int_\bbR e^{- yJ(z)\ad \sigma_3}V(y)dy \right)_{21}\\
\nonumber
      &=: S^1_{21}(\gamma)  +S^2_{21}(\gamma)
\end{align}
and show that 
\begin{equation}
\label{S-small g}
S^1_{21}(\gamma) \in L^{2,1}_{\gamma }(( 1, \infty )).
\end{equation}
We write
\begin{align*}
\gamma S^1_{21}(\gamma) &= \int_\bbR \gamma e^{ iy (1/(2\gamma)-\gamma/2)   } \left[ V_{21}(y) (\underline{m}_{11}^+(y)-1 ) +V_{22}(y)\underline{m}_{21}^+(y)\right] dy.
\end{align*}
Again we only deal with the term without the $1/\gamma$ factor:
\begin{align}
\label{uni-cont}
  \int_\bbR  \gamma e^{ iy (1/(2\gamma)-\gamma/2)   } \left[ V_{21} (\underline{m}^+_{11}-1)  \right] dy &=- 2 \int_\bbR \dfrac{  d \left[  e^{iy(1/(2\gamma) -\gamma/2)   }  \right] } {dy}\left[ V_{21} (\underline{m}^+_{11}-1)  \right] dy\\
  \nonumber
       & \quad +\int_\bbR \dfrac{i}{ \gamma} e^{ iy(1/(2\gamma)-\gamma/2) } \left[ V_{21} (\underline{m}^+_{11}-1)  \right] dy\\
       \nonumber
       &= 2 \int_\bbR   e^{iy(1/(2\gamma) -\gamma/2)   }  \left[ V_{21 x} (\underline{m}^+_{11}-1)  \right] dy\\
       \nonumber
         & \quad +\int_\bbR e^{iy(1/(2\gamma) -\gamma/2)   }  V_{21} \underline{m}^+_{11y}  dy\\
         \nonumber
            & \quad +\int_\bbR \dfrac{i}{ \gamma} e^{ iy(1/(2\gamma)-\gamma/2) } \left[ V_{21} (\underline{m}_{11}-1)  \right] dy.
\end{align}
Then by Minkowski's inequality
$$\norm{\gamma S^1_{21}(\gamma) }{L^\infty_x L^2} \lesssim \norm{f_x - f_t}{L^{2,1}} \norm{\underline{m}^+_{11x} }{ L^\infty_x L^2_\gamma ((1,\infty) ) } + \norm{f_{xx} - f_{tx} }{L^{2,1}} \norm{\underline{m}^+_{11}-1 }{ L^\infty_x L^2_\gamma ((1,\infty) ) } $$
and the treatment of $S_{21}^2$  is the standard Fourier theory. The uniform continuity of follows directly from the integral representation of \eqref{uni-cont}.
\end{proof}

\begin{proof}[Proof of Proposition \ref{prop:r}]
Combining the four lemmas \ref{lemma:L2}-\ref{lemma:r2-0} completes the proof of Proposition \ref{prop:r}.
\end{proof} 
\begin{remark}
\label{rmk-generic}
We point out that a byproduct of Proposition \ref{prop:r} is the generic property mentioned in Remark \ref{remark-generic}. Indeed the proofs required here are similar to those of \cite[Section 4]{Liu} as long as we take care of the  behavior of $\ba$ and $a$ near the origin. We have proven in particular that
$$\ba-1, \, a-1 \in H^{s}(\bbR)$$
and
$$\lim_{z\to 0}a= \lim_{z\to 0}\ba=\pm 1 $$
where the $\pm$ depends on the boundary value of $f$ at $+\infty$, so we can infer that $\ba$ and $a$  are bounded away from $0$ near the origin. This will exclude the case that small perturbation of the matrix potential in \eqref{Psi-x} leading to new eigenvalues/spectral singularities. So we conclude that the space $\mathcal{I}$ in \eqref{space-1} has an open and dense subset $\mathcal{G}$ such that the generic properties listed in Remark \ref{remark-generic} are satisfied. This is another way to rule out the appearance of breathers with arbitrarily small norm in the sense of space \eqref{space-1}.
\end{remark}
\begin{remark}
From our proof of Proposition \ref{prop:r} that to make sense of the integral
$$\int_0^1\left \vert \dfrac{r(z)}{z} \right\vert^2 dz =\int_1^\infty \vert r(\gamma)\vert^2 d\gamma$$
we only need $\vec{f}(0)\in H_{\sin}^{1,s}\times L^{2,s}, s>1/2$. This fact will be useful in Section \ref{sec:stability}.
\end{remark}

\section{Conjugation}
\label{sec:prep}
From this section onwards we will assume that the initial condition $\vec{f}(0)$ of equation \eqref{eq: sG} belongs to $ \mathcal{G}$ described in remark \ref{rmk-generic}. Having characterized the properties of the scattering data in Proposition \ref{prop:r}, we begin the calculation of the long time asymptotics. Along a characteristic line $x=\text{v}t$ for $|\text{v}|<1$  we have the following signature table:
\begin{figure}[H]
\label{sig-table}
\caption{Signature Table}
\begin{tikzpicture}[scale=0.7]
\draw [->] (-4,0)--(3,0);
\draw (4,0)--(3,0);
\draw [->] (0,-4)--(0,3);
\draw (0,3)--(0,4);
 

 \draw	[fill, green]  (0, 2)		circle[radius=0.05];	    
 \draw	[fill, green]  (0, 2.6)		circle[radius=0.05];	    

 \draw	[fill, green]  (0, -2)		circle[radius=0.05];	    
 \draw	[fill, green]  (0, -2.6)		circle[radius=0.05];	    
 
 \draw	[fill, red]  (1, 2)		circle[radius=0.06];	    
 \draw	[fill, red]  (1, -2)		circle[radius=0.06];	    
 \draw	[fill, red]  (-1, 2)		circle[radius=0.06];	    
 \draw	[fill, red]  (-1, -2)		circle[radius=0.06];	    
  \draw	[fill, red]  (2, 2.5)		circle[radius=0.06];	    
 \draw	[fill, red]  (2, -2.5)		circle[radius=0.06];	    
 \draw	[fill, red]  (-2, 2.5)		circle[radius=0.06];	    
 \draw	[fill, red]  (-2, -2.5)		circle[radius=0.06];	    
 \draw (0, 0) circle[radius=2.236];

   \node [below] at (1.9,0) {\footnotesize $z_0$};
    \node [below] at (-1.9,0) {\footnotesize $-z_0$};
    \node[above]  at (0, 0.5) {\footnotesize $\text{Re}(i\theta)>0$};
    
      \node[below]  at (0, -0.5) {\footnotesize $\text{Re}(i\theta)<0$};
     
     \node[above]  at (0, 3) {\footnotesize $\text{Re}(i\theta)<0$};
     \node[below]  at (0, -3) {\footnotesize $\text{Re}(i\theta)>0$};
    \end{tikzpicture}
 \begin{center}
  \begin{tabular}{ccc}
kink/anti-kink ({\color{green} $\bullet$})	&	
Breather ({\color{red} $\bullet$}) 
\end{tabular}
 \end{center}
\end{figure}
In the figure above, we have chosen 
$$\mathrm{v}_\ell=\dfrac{x}{t}= \dfrac { 1-\rho^2_\ell}  {1+\rho^2_\ell } $$
where $  \lbrace  z_j \rbrace_{j=1}^{N_2} \ni z_\ell=\rho_\ell e^{i\omega_\ell}$ with $1\leq \ell\leq N_2$. Recall the following set:
\begin{equation}
\label{B-set}
\mathcal{B}_\ell=\lbrace z_j, i\zeta_k: ~  \zeta_k <\rho_\ell, \rho_j <\rho_\ell  \rbrace 
\end{equation}
and
\begin{equation}
\label{mathcal-z}
\tilde{\mathcal{Z}}=\lbrace i\zeta_k \rbrace_{k=1}^{N_1}, \quad \hat{\mathcal{Z}}=\lbrace z_j \rbrace_{k=1}^{N_2}, \quad  \mathcal{Z}=\tilde{\mathcal{Z}} \cup\hat{ \calZ}.
\end{equation}
Also define
\begin{equation}
\label{Up}
\Upsilon=\text{min}\lbrace \text{min}_{z,z'\in \calZ} |z-z'|, \quad \text{dist}(\calZ, \bbR)   \rbrace.
\end{equation}
We observe that for all $i\zeta_k, z_j\in \mathcal{B}_\ell $, 
$$\text{Re}(i\theta(i\zeta_k))>0,\, \text{Re}(i\theta(z_j))>0, \, \text{Re}(i\theta(- \overline{z_j } ))>0 .$$
Then we introduce a new matrix-valued function
\begin{equation}
\label{m1}
m^{(1)}(z;x,t) = M(z;x,t) [\delta(z)]^{-\sigma_3} 
\end{equation}
where $\delta(z)$  solves 
the scalar RHP 
Problem \ref{prob:RH.delta} below:

\begin{problem}
\label{prob:RH.delta}
Given $\pm z_0 \in \bbR$ and $r \in H^{s}_0(\bbR)$, find a scalar function 
$\delta(z) = \delta(z; z_0)$, meromorphic for
$z \in \bbC \setminus [-z_0, z_0]$ with the following properties:
\begin{enumerate}
\item		$\delta(z) \rarr 1$ as $z \rarr \infty$,
\item		$\delta(z)$ has continuous boundary values $\delta_\pm(z) =\lim_{\eps \to 0^+} \delta(z \pm i\eps)$ for $z \in \bbR$,
\item		$\delta_\pm$ obey the jump relation
			$$ \delta_+(z) = \begin{cases}
											\delta_-(z)  \left(1 + \left| r(z) \right|^2 \right),	&	 z\in (-z_0, z_0),\\
											\delta_-(z), &	z \in \bbR\setminus (-z_0, z_0),
										\end{cases}
			$$
\item		$\delta(z)$ has simple poles at $ i\zeta_k, z_j, -\overline{z_j} \in \mathcal{B}_\ell$.
\end{enumerate}
\end{problem}

\begin{lemma}
\label{lemma:delta}
Suppose $r \in H^{s}_0(\bbR)$. Then we have the following conclusions:
\begin{itemize}
\item[(i)]		Problem \ref{prob:RH.delta} has the unique solution
\begin{equation}
\label{RH.delta.sol}
\delta(z) =\left( \prod_{i\zeta_k\in B_\ell} \dfrac{z- \overline {i\zeta_k }}{ z-i\zeta_k}\right) \left(\prod_{z_j\in B_\ell} \dfrac{z-\overline{z_j}}{z-z_j} \dfrac{z+z_j}{z+\overline{z_j}}\right)  e^{\chi(z)}  
\end{equation}
where
\begin{equation}
\label{chi}
\chi(z)=\dfrac{1}{2\pi i}\int_{-z_0}^{z_0}\dfrac{\log( 1+|r(s)|^2)}{s-z} {d s}.
\end{equation}
Moreover, we can rewrite 
\begin{equation}
\label{exi}
e^{\chi(z)}=\left(  \dfrac{z-z_0}{z+z_0} \right)^{i\kappa} e^{\breve{\chi}(z)}
\end{equation}
where $\kappa $ is given by 
\begin{equation}
\kappa=-\frac{1}{2\pi}\log\left(1+\left|r\left( z_0 \right)\right|^{2}\right), \label{kappa}
\end{equation}
and
\begin{align*}
\breve{\chi}(z) &=\dfrac{1}{2\pi i}\int_{-z_0}^{z_0} \log \left( \dfrac{  1+|r(s )|^2 } { 1+|r(z_0)|^2   } \right) \dfrac{ ds} {s-z} 
\end{align*}
Here we have chosen the branch of the logarithm with $-\pi  < \arg(z) < \pi$. 
\bigskip
\item[(ii)] For $z\in \bbC\setminus [-z_0, z_0]$
\begin{equation*}
\delta(z) =\left(\overline{\delta(\zbar)}\right)^{-1}.
\end{equation*}
\bigskip

\item[(iii)] As $z\to 0$ nontangentially, 
$$\delta(0)= (-1)^\ell. $$

\item[(iv)]Along any ray of the form $ \lbrace z\in \mathbb{C}|\, z=\pm z_0+ e^{i\phi}\bbR^+ \rbrace$ with $0<\phi<\pi$ or $\pi < \phi < 2\pi$,  one has
				
				$$ 
						 \left| \delta(z) -\left(  \dfrac{z-z_0}{z+z_0} \right)^{i\kappa}  \delta_0(\pm z_0)\right| 
						 		\leq C_r
						 |z \mp z_0|^{1/2} $$
						 where
\begin{align*}
		\delta_0(\pm z_0) &=\left( \prod_{i\zeta_k \in B_\ell} \dfrac{\pm z_0- \overline {i\zeta_k }}{ \pm z_0-i\zeta_k}\right) \left(\prod_{z_j\in B_\ell} \dfrac{\pm z_0-\overline{z_j}}{\pm z_0-z_j} \dfrac{\pm z_0+z_j}{\pm z_0+\overline{z_j}}\right)  e^{  \breve{\chi}( \pm z_0) }
\end{align*}

and the implied constant depends on $r$ through its $H^{1}(\bbR)$-norm  
				and is independent of $\pm z_0\in \bbR$.
				\end{itemize}
\end{lemma}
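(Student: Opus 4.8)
The plan is to prove the four items of Lemma~\ref{lemma:delta} in turn, with part (i) carrying the bulk of the work and (ii)--(iv) following from short computations and standard Cauchy-integral estimates.

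For part (i): taking logarithms turns the multiplicative jump in Problem~\ref{prob:RH.delta} into the additive scalar problem $\log\delta_+-\log\delta_-=\log(1+|r|^2)$ on $(-z_0,z_0)$, homogeneous off the interval; its Plemelj solution is $e^{\chi(z)}$ with $\chi$ as in \eqref{chi}, analytic on $\bbC\setminus[-z_0,z_0]$, tending to $1$ at infinity, and pole-free. To install the prescribed poles I multiply by the rational prefactor in \eqref{RH.delta.sol}: since each $z_k$ is purely imaginary and since $|z-\zbar_j|=|z-z_j|$, $|z+z_j|=|z+\zbar_j|$ for real $z$, every such factor has unit modulus and is continuous on $\bbR$, so the product carries the jump of $e^\chi$ unchanged while producing exactly the required simple poles (and zeros at the conjugate points); hence \eqref{RH.delta.sol} solves the problem. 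For uniqueness I would note that $H^1\hookrightarrow C^{1/2}$ forces $\chi$, near $\pm z_0$, to differ from a constant multiple of $\log(z\mp z_0)$ by a bounded term, so $\delta$ and $\delta^{-1}$ are bounded there (the modulus of $(z\mp z_0)^{i\kappa}$ is $e^{-\kappa\arg(z\mp z_0)}$, bounded with the chosen branch); the quotient of two solutions is then entire, bounded, and $\to1$, so $\equiv1$ by Liouville. The factorization \eqref{exi} is a direct computation: writing $\log(1+|r(s)|^2)=\log\tfrac{1+|r(s)|^2}{1+|r(z_0)|^2}+\log(1+|r(z_0)|^2)$ in \eqref{chi}, using $\tfrac{1}{2\pi i}\int_{-z_0}^{z_0}\tfrac{ds}{s-z}=\tfrac{1}{2\pi i}\log\tfrac{z-z_0}{z+z_0}$ and $\log(1+|r(z_0)|^2)=-2\pi\kappa$ from \eqref{kappa}, produces the branch factor $\big(\tfrac{z-z_0}{z+z_0}\big)^{i\kappa}$ times $e^{\breve{\chi}(z)}$. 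I record for later use that by the symmetry $r(-z)=\overline{r(z)}$ in \eqref{minus} the weight $|r|^2$ is even, so the integrand of $\breve{\chi}$ vanishes at \emph{both} endpoints $\pm z_0$.

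For parts (ii) and (iii): setting $\tilde\delta(z):=(\overline{\delta(\zbar)})^{-1}$ and using that $\log(1+|r|^2)$ is real-valued, one checks that $\tilde\delta$ has the same normalization, jump, and pole/zero data as $\delta$, so $\tilde\delta\equiv\delta$ by the uniqueness from part (i); this gives (ii). For (iii) I would evaluate \eqref{RH.delta.sol} at $z=0$ along a non-tangential ray: since $r(0)=0$ (Proposition~\ref{prop:r}) there is no jump of $\chi$ at the origin, and since $\log(1+|r(s)|^2)$ is even the integrand of $\chi(0)$ is odd, whence $\chi(0)=0$; the rational factors give $\tfrac{0-\zbar_k}{0-z_k}=-1$ for each purely imaginary $z_k\in\calB_\ell$ and $\tfrac{0-\zbar_j}{0-z_j}\cdot\tfrac{0+z_j}{0+\zbar_j}=1$ for each breather quartet, so $\delta(0)=(-1)^{l}$ with $l$ the number of purely imaginary (kink) eigenvalues lying in $\calB_\ell$.

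For part (iv): using the factorization from (i), write $\delta(z)=\big(\tfrac{z-z_0}{z+z_0}\big)^{i\kappa}B(z)e^{\breve{\chi}(z)}$ with $B$ the rational prefactor, so the quantity to estimate equals $\big(\tfrac{z-z_0}{z+z_0}\big)^{i\kappa}\big(B(z)e^{\breve{\chi}(z)}-B(\pm z_0)e^{\breve{\chi}(\pm z_0)}\big)$, where $B(\pm z_0)e^{\breve{\chi}(\pm z_0)}=\delta_0(\pm z_0)$; since $\big|\big(\tfrac{z-z_0}{z+z_0}\big)^{i\kappa}\big|=e^{-\kappa\arg\frac{z-z_0}{z+z_0}}$ is bounded on $\bbC\setminus[-z_0,z_0]$, it suffices to show $Be^{\breve{\chi}}$ is $C^{1/2}$ near $\pm z_0$ with the stated dependence. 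Here $B$ is analytic in a fixed neighborhood of $\bbR$ (all eigenvalues lie at distance $\geq\Upsilon>0$ from $\bbR$), hence Lipschitz there, and $\breve{\chi}$ is the Cauchy transform over $(-z_0,z_0)$ of $g(s)=\log\tfrac{1+|r(s)|^2}{1+|r(z_0)|^2}$, which vanishes at $\pm z_0$ and satisfies $|g(s)-g(s')|\lesssim\norm{r}{L^\infty}[r]_{C^{1/2}}|s-s'|^{1/2}\lesssim\norm{r}{H^1(\bbR)}^{2}|s-s'|^{1/2}$. The classical endpoint estimate for Cauchy transforms of Hölder functions vanishing at the endpoint (Muskhelishvili; cf.\ \cite{BC84,DM08}) then gives $|\breve{\chi}(z)-\breve{\chi}(\pm z_0)|\lesssim[g]_{C^{1/2}}|z\mp z_0|^{1/2}$ along any ray $\pm z_0+e^{i\phi}\bbR^+$; rescaling $s=z_0u$ to the fixed interval $[-1,1]$ shows the implied constant is independent of $z_0$, since the factor $|z_0|^{1/2}$ picked up by the rescaled Hölder seminorm cancels against $(|z\mp z_0|/|z_0|)^{1/2}$. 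Finally $|e^{\breve{\chi}(z)}-e^{\breve{\chi}(\pm z_0)}|\leq e^{\norm{\breve{\chi}}{L^\infty}}|\breve{\chi}(z)-\breve{\chi}(\pm z_0)|$ with $\norm{\breve{\chi}}{L^\infty}$ controlled by $\norm{r}{H^1(\bbR)}$; combining with the Lipschitz bound on $B$ gives the estimate. The main obstacle is exactly this uniformity in $z_0$: every bound must be routed through the scale-covariant quantities $[r]_{C^{1/2}}$ and $\norm{r}{L^\infty}$ (each bounded by $\norm{r}{H^1(\bbR)}$) rather than through $\|r\|_{H^1}$ of a rescaled coefficient, and the Cauchy-transform endpoint estimate must be produced with an absolute constant on the normalized interval so that the $z_0$-dependence cancels.
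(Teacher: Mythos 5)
Your proof is correct and follows essentially the same route as the paper: a Plemelj/Liouville argument for (i)--(ii), explicit evaluation at $0$ for (iii), and for (iv) factoring $\delta = B(z)\bigl(\tfrac{z-z_0}{z+z_0}\bigr)^{i\kappa}e^{\breve{\chi}(z)}$, bounding the branch factor, and reducing to a $C^{1/2}$ Cauchy-transform endpoint estimate (the paper cites Lemma~23 of Beals--Deift--Tomei where you cite Muskhelishvili; these are interchangeable). Your explicit rescaling argument for the $z_0$-uniformity via the scale-covariant quantities $\norm{r}{L^\infty}$, $[r]_{C^{1/2}}$, and your observation that $g$ vanishes at \emph{both} endpoints because $|r|^2$ is even, make visible two facts the paper leaves implicit in its citation.
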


\begin{proof}
The proofs of (i)-(ii) are formal computations. For (iii), we use the fact that as $z\to 0$
\begin{align*}
\dfrac{z- \overline{i\zeta_k}}{z-i\zeta_k} &=\dfrac{\overline{i\zeta_k} }{i\zeta_k}=-1\\
\dfrac{z-\overline{z_j}}{z-z_j} \dfrac{z+z_j}{z+\overline{z_j}} &=\dfrac{-\overline{z_j}}{-z_j} \dfrac{z_j}{\overline{z_j}}=1\
      \end{align*}
and $\lim_{s\to 0}r(s)/s=0$. We also need to invoke to the evenness of $|r(z)|^2$ which implies that 
\begin{equation}
\chi(0)=\dfrac{1}{2\pi i}\int_{-z_0}^{z_0}\dfrac{\log( 1+|r(s)|^2)}{s} {d\zeta} =0.
\end{equation}
 To establish (iv), we first note that 
$$ \left\vert \left( \dfrac{z-z_0}{z+z_0} \right)^{i\kappa}\right\vert \leq e^{\pi \kappa}.$$
To bound the difference $e^{\chi(z)}-e^{\chi(\pm z_0)}$, notice that
\begin{align*}
\left\vert e^{\chi(z)}-e^{\chi(\pm z_0)}\right\vert &\leq\left\vert e^{\chi(\pm z_0)}\right\vert 
\left\vert e^{\chi(z)-\chi(\pm z_0)}-1 \right\vert\\
     &\lesssim \left\vert \chi(z)-\chi(\pm z_0) \right\vert\\
       &\lesssim \norm{r}{H^s(\bbR)} |z\pm z_0|^{s-1/2}
    \end{align*}
    where the last inequality follows from \cite[Lemma 3.4]{CuPe}.
\end{proof}

It is straightforward to check that if $M(z;x,t)$ solves Problem \ref{RHP-1}, then the new matrix-valued function $m^{(1)}(z;x,t)=M(z;x,t)[\delta(z)]^{-\sigma_3}$ is the solution to the  following RHP.  

\begin{problem}
\label{prob:mkdv.RHP1}
Given 
$$\mathcal{S}=\lbrace r(z),\lbrace i\zeta_k, \iota_k \rbrace_{k=1}^{N_1}, \lbrace z_j, c_j \rbrace_{j=1}^{N_2} \rbrace \in H^{s}_0(\bbR)\otimes \mathbb{C}^{2 N_1} \otimes \mathbb{C}^{ 2 N_2}$$ 
and the augmented contour $\Sigma$ in Figure \ref{figure-Sigma}
and set
$$\dfrac{x}{t}=\dfrac { 1-\rho^2_\ell}  {1+\rho^2_\ell }$$
where $ \lbrace z_j\rbrace_{j=1}^{N_2} \ni z_\ell=\rho_\ell e^{\omega_\ell}$, 
find a matrix-valued function $m^{(1)}(z;x,t)$ on $\bbC \setminus\Sigma$ with the following properties:
\begin{enumerate}
\item		$m^{(1)}(z;x,t) \rarr I$ as $|z| \rarr \infty$,
\item		$m^{(1)}(z;x,t)$ is analytic for $z \in  \bbC \setminus\Sigma$
			with continuous boundary values
			$m^{(1)}_\pm(z;x,t)$.
\item		On $\bbR$, the jump relation $$m^{(1)}_+(z;x,t)=m^{(1)}_-(z;x,t)	
			e^{-i\theta\ad\sigma_3}v^{(1)}(z)$$
			holds,
			 where $$v^{(1)}(z) = [\delta_-(z)]^{\sigma_3} v(z) [\delta_+(z)]^{-\sigma_3}.$$
			 \noindent
			The jump matrix $e^{-i\theta\ad\sigma_3} v^{(1)} $ is factorized as 
			\begin{align}
			\label{mkdv.V1}
			e^{-i\theta\ad\sigma_3}v^{(1)}(z)	=
			\begin{cases}
					\Twomat{1}{0}{\dfrac{\delta_-^{-2}  r}{1+|r|^2}  e^{2i\theta}}{1}
					\Twomat{1}{\dfrac{\delta_+^2 \rbar}{1+|r|^2} e^{-2i\theta}}{0}{1},
						& z \in (-z_0, z_0),\\
						\\
						\Twomat{1}{\rbar \delta^2 e^{-2i\theta}}{0}{1}
					\Twomat{1}{0}{r \delta^{-2} e^{2i\theta}}{1},
						& z \in(-\infty, -z_0)\cup (z_0,\infty) .
			\end{cases}
			\end{align}
\item Let $\delta(z)$ be the solution to Problem \ref{prob:RH.delta} and we have the following jump conditions
$m^{(1)}_+(z;x,t)=m^{(1)}_-(z;x,t)	
			e^{-i\theta\ad\sigma_3}v^{(1)}(z)$
			where for $i\zeta_k \in \mathcal{B}_\ell $
$$
e^{-i\theta\ad\sigma_3}v^{(1)}(z) = 	\begin{cases}
						\twomat{1}{\dfrac{ \left[ (1/\delta)'(i\zeta_k) \right]^{-2} }{\iota_k  (z-i\zeta_k)}e^{-2i\theta} }{0}{1}	&	z\in \tgamma_k, \\
						\\
						\twomat{1}{0}{\dfrac{ \left[ \delta'\left( \overline{i\zeta_k } \right) \right]^{-2}}{\overline{\iota_k}  (z -\overline{i\zeta_k })} e^{2i\theta}}{1}
							&	z \in \tgamma_k^*
					\end{cases}
$$
and for $i\zeta_k \in \lbrace i\zeta_k \rbrace_{k=1}^{N_1} \setminus \mathcal{B}_\ell $	
$$
e^{-i\theta\ad\sigma_3}v^{(1)}(z) = 	\begin{cases}
						\twomat{1}{\dfrac{\iota_k \left[ \delta(i\zeta_k) \right]^{-2} }{z-i\zeta_k} e^{2i\theta(i\zeta_k)}}{0}{1}	&	z\in \tgamma_k, \\
						\\
						\twomat{1}{0} { \dfrac{\overline{\iota_k} \left[ \delta(\overline{i\zeta_k })\right]^2 }{z -\overline{i\zeta_k }} e^{-2i\theta (\overline{i\zeta_k })} } {1}
							&	z \in \tgamma_k^*
					\end{cases}
$$

and for $z_j\in \mathcal{B}_\ell $
$$
e^{-i\theta\ad\sigma_3}v^{(1)}(z)= 	\begin{cases}
						\twomat{1}{\dfrac{ \left[ (1/\delta)'(z_j)\right]^{-2} }{c_j  (z-z_j)}e^{-2i\theta(z_j)} }{0}{1}	&	z\in \gamma_j, \\
						\\
						\twomat{1}{0}{\dfrac{ \left[\delta'( \overline{z_j } )\right]^{-2}}{\overline{c_j}  (z -\overline{z_j })} e^{2i\theta( \overline{z_j } )}}{1}
							&	z \in \gamma_j^*,\\
							\\
							\twomat{1}{0}{-\dfrac{ \left[\delta'( -{z_j } )\right]^{-2}}{ c_j  (z + z_j ) } e^{2i\theta( -{z_j } )}}{1}	&	z\in -\gamma_j, \\
						\\
						\twomat{1} {-\dfrac{ \left[ (1/\delta)'(  -\overline{z_j } )\right]^{-2} }{ \overline{c_j}  (z +\overline{z_j } ) }  e^{-2i\theta(  -\overline{z_j } )} }{0}{1}
							&	z \in -\gamma_j^*
					\end{cases}
$$								
and for $z_j \in \lbrace z_j \rbrace_{j=1}^{N_2} \setminus \mathcal{B}_\ell $	
$$
e^{-i\theta\ad\sigma_3}v^{(1)}(z)= 	\begin{cases}
						\twomat{1}{0}{\dfrac{c_j \left[ \delta(z_j)\right]^{-2} }{z-z_j} e^{2i\theta(z_j)}}{1}	&	z\in \gamma_j, \\
						\\
						\twomat{1}{\dfrac{\overline{c_j}\left[ \delta(\overline{z_j})\right]^2 }{z -\overline{z_j}} e^{-2i\theta(\overline{z_j})} }{0}{1}
							&	z \in \gamma_j^*,\\
							\\
							\twomat{1}{\dfrac{-c_j \, \left[ \delta(-{z_j})\right]^{2} }{z + z_j} e^{-2i\theta(-{z_j})}  }{0}{1}	&	z\in -\gamma_j, \\
						\\
						\twomat{1}{0}{\dfrac{-\overline{c_j} \, \left[\delta(-\overline{z_j})\right]^{-2}e^{2i\theta(-\overline{z_j})} }{z +\overline{z_j }}}{1}
							&	z \in -\gamma_j^*.
					\end{cases}
$$					
\end{enumerate}
\end{problem}
\begin{remark}
\label{circles}
We set
\begin{equation}
\label{Gamma}
\Gamma=  \left( \bigcup_{k=1}^{N_1} \tgamma_k  \right) \cup \left( \bigcup_{k=1}^{N_1} \tgamma_k^*  \right) \cup \left( \bigcup_{j=1}^{N_2} \pm \gamma_j  \right)\cup \left( \bigcup_{j=1}^{N_2} \pm \gamma_j^*  \right).
\end{equation}
From the signature table Figure  \ref{sig-table} 
and the triangularities of the jump matrices, we observe that along the characteristic line $x=\mathrm{v}t$ where $\mathrm{v}={ (1-\rho^2_\ell )} / { (1+\rho^2_\ell) }$,  by choosing the radius of each element of $\Gamma$ small enough, we have for $z\in \Gamma \setminus \left( \pm \gamma_\ell \cup \pm \gamma^*_\ell   \right)   $ 
$$ |e^{-i\theta\ad\sigma_3}v^{(1)}(z) -I|\lesssim e^{-ct},  \quad t\to \infty.$$
For technical purpose which will become clear later, we want that the radius of each element of $\Gamma$ less than $\Upsilon/3$ where $\Upsilon$ is given by \eqref{Up}. Also we make each element of $\Gamma$ is invariant under the Schwarz reflection.
\end{remark}

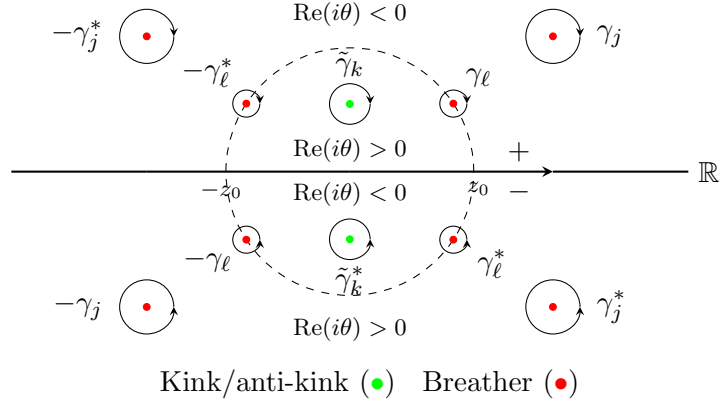
\begin{figure}
\caption{The Augmented Contour $\Sigma$}
\vspace{.5cm}
\label{figure-Sigma}
\begin{tikzpicture}[scale=0.9]
 \draw[ thick] (0,0) -- (-3,0);
\draw[ thick] (-3,0) -- (-5,0);
\draw[thick,->,>=stealth] (0,0) -- (3,0);
\draw[ thick] (3,0) -- (5,0);
\node[above] at 		(2.5,0) {$+$};
\node[below] at 		(2.5,0) {$-$};
\node[right] at (3.5 , 2) {$\gamma_j$};
\node[right] at (3.5 , -2) {$\gamma_j^*$};
\node[left] at (-3.5 , 2) {$-\gamma_j^*$};
\node[left] at (-3.5 , -2) {$-\gamma_j$};
\draw[->,>=stealth] (-2.6,2) arc(360:0:0.4);
\draw[->,>=stealth] (3.4,2) arc(360:0:0.4);
\draw[->,>=stealth] (-2.6,-2) arc(0:360:0.4);
\draw[->,>=stealth] (3.4,-2) arc(0:360:0.4);
\draw [red, fill=red] (-3,2) circle [radius=0.05];
\draw [red, fill=red] (3,2) circle [radius=0.05];
\draw [red, fill=red] (-3,-2) circle [radius=0.05];
\draw [red, fill=red] (3,-2) circle [radius=0.05];
\node[right] at (5 , 0) {$\bbR$};

\draw (0,0) [dashed] circle [ radius=1.83];

\draw [green, fill=green] (0, 1) circle [radius=0.05];
\draw[->,>=stealth] (0.3, 1) arc(360:0:0.3);
\draw [green, fill=green] (0, -1) circle [radius=0.05];
\draw[->,>=stealth] (0.3, -1) arc(0:360:0.3);
\node[above] at 		(0, 1.25) {$\tgamma_k$};
\node[below] at 		(0, -1.25) {$\tgamma_k^*$};
 \node [below] at (1.9,0) {\footnotesize $z_0$};
    \node [below] at (-1.9,0) {\footnotesize $-z_0$};
     \draw	[fill, red]  (1.529, 1.006)		circle[radius=0.05];	  
     \draw[->,>=stealth] (1.729, 1.006) arc(360:0:0.2);
   \draw	[fill, red]  (-1.529, 1.006)	circle[radius=0.05];	  
    \draw[->,>=stealth] (-1.329, 1.006) arc(360:0:0.2);
     \draw	[fill, red]  (1.529, -1.006)		circle[radius=0.05];	  
     \draw[->,>=stealth] (1.729, -1.006) arc(0:360:0.2);
   \draw	[fill, red]  (-1.529, -1.006)	circle[radius=0.05];	  
   \draw[->,>=stealth] (-1.329, -1.006) arc(0:360:0.2); 
    \node[above]  at (0, 0) {\footnotesize $\text{Re}(i\theta)>0$};
    
      \node[below]  at (0, 0) {\footnotesize $\text{Re}(i\theta)<0$};
    
     \node[above]  at (0, 2) {\footnotesize $\text{Re}(i\theta)<0$};
     \node[below]  at (0, -2) {\footnotesize $\text{Re}(i\theta)>0$};
     \node[above] at 		(1.9, 1.1) {$\gamma_\ell$};
\node[above] at 		(-2.1, 1.1) {$-\gamma_\ell^*$};
\node[below] at 		(-2.1, -1.0) {$-\gamma_\ell$};
\node[below] at 		(2.1, -1.0) {$\gamma_\ell^*$};
\end{tikzpicture}
\begin{center}
\begin{tabular}{ccc}
Kink/anti-kink ({\color{green} $\bullet$})	&	
Breather ({\color{red} $\bullet$}) 
\end{tabular}
\end{center}
\end{figure}

\section{Contour Deformation}
\label{sec:mixed}

We now perform contour deformation on Problem \ref{prob:mkdv.RHP1}, following the standard procedure outlined in \cite{DZ93} and also \cite{BJM16} in the presence of  discrete spectrum.
Since the phase function \eqref{theta} has two critical points
at $\pm z_0$, our new contour is chosen to be
\begin{equation}
\label{new-contour}
\Sigma^{(2)} = \Sigma_1 \cup \Sigma_2 \cup \Sigma_3 \cup \Sigma_4\cup  \Sigma_5 \cup \Sigma_6 \cup \Sigma_7 \cup \Sigma_8
\end{equation}
shown in Figure \ref{new-contour} (Also see \cite[Figure 8]{CVZ99}). 

\begin{figure}[H]
\label{fig:contour-def}
\caption{Deformation from $\mathbb{R}$ to $\Sigma^{(2)}$}
\vskip 15pt
\begin{tikzpicture}[scale=0.9]

\draw 	[->,thick,>=stealth] 	(-3.564, 0)-- (-5.296, 0);	
\draw 	[thick] 	 (-5.296, 0)--(-7.028,0 );

\draw 	[thick] 	(3.564, 0)-- (5.296, 0);	
\draw 	[->,thick,>=stealth] 	(7.028,0 )-- (5.296, 0);

\draw 	[->,thick,>=stealth] 	(-3.564, 0)-- (-1.732, 0);	
\draw 	[thick] 	 (-1.732, 0)--(0,0 );

\draw 	[->,thick,>=stealth] 	(0, 0)-- (1.732, 0);	
\draw 	[thick] 	 (1.732, 0)--(3.564,0 );

\draw[thick] 			(3.564 ,0) -- (5.296, 1);								
\draw[->,thick,>=stealth] 	(   7.028 ,2  )--	(5.296, 1);

\draw[->,thick,>=stealth] 			(1.732 ,1)--(2.598 ,0.5);
\draw[thick]                                 (2.598, 0.5)--(3.564 ,0); 

\draw[->,thick,>=stealth] 			(1.732 ,-1)--(2.598 ,-0.5);
\draw[thick]                                 (2.598, -0.5)--(3.564 ,0); 

\draw[->,thick,>=stealth] 			(0,0)--(1.732 ,1);

\draw[thick] 			(3.564 ,0) -- (5.296, -1);								
\draw[->,thick,>=stealth] 	(   7.028 , -2  )--	(5.296, -1);

\draw[->,thick,>=stealth] 			(0,0)--(1.732 ,-1);

\draw[->,thick,>=stealth] 			 (-3.564 ,0)--(-1.732 ,-1) ;
\draw[->,thick,>=stealth] 			(-1.732 ,-1)--(-0.866 ,-0.5);
\draw[thick]                                 (0,0)--(-0.866 ,-0.5);
\draw[->,thick,>=stealth] 			 (-3.564 ,0)--(-1.732 ,1) ;
\draw[->,thick,>=stealth] 			(-1.732 ,1)--(-0.866 ,0.5);
\draw[thick]                                 (0,0)--(-0.866 ,0.5);

\draw[->,thick,>=stealth] 			(-3.564 ,0) -- (-5.296, 1);							
\draw[thick] 	(   -7.028 ,2  )--	(-5.296, 1);
\draw[->,thick,>=stealth] 			(-3.564 ,0) -- (-5.296, -1);							
\draw[thick] 	(   -7.028 , -2  )--	(-5.296, -1);
\draw[thick] (4,0) arc(0:30:0.6);
\node[right] at (4.2,0.15){$\pi/6$};

\draw	[fill]							(-3.564 ,0)		circle[radius=0.1];	
\draw	[fill]							(3.564, 0)		circle[radius=0.1];
\draw							(0,0)		circle[radius=0.1];
\node[below] at (-3.564,-0.1)			{$-z_0$};
\node[below] at (3.564,-0.1)			{$z_0$};
\node[right] at (5, 1.6)					{$\Sigma^{(2)}_1$};
\node[left] at (-5, 1.6)					{$\Sigma^{(2)}_2$};
\node[left] at (-5,-1.6)					{$\Sigma^{(2)}_3$};
\node[right] at (5,-1.6)				{$\Sigma^{(2)}_4$};
\node[left] at (-2.4, 1)					{$\Sigma^{(2)+}_5$};
\node[left] at (-2.4,-1)					{$\Sigma^{(2)+}_7$};
\node[left] at (-0.5, 1.2)					{$\Sigma^{(2)-}_5$};
\node[left] at (-0.5,-1.2)					{$\Sigma^{(2)-}_7$};
\node[right] at (0.5,1.2)					{$\Sigma^{(2)-}_6$};
\node[right] at (0.5,-1.2)					{$\Sigma^{(2)-}_8$};
\node[right] at (2.4, 1)					{$\Sigma^{(2)+}_6$};
\node[right] at (2.4,-1)					{$\Sigma^{(2)+}_8$};

\node[above] at (0, 1.5)				{$\Omega_1$};
\node[below] at (0, -1.5)				{$\Omega_2$};

\node[right] at (5, 0.5)				{$\Omega_3$};
\node[right] at (5, -0.5)				{$\Omega_7$};
\node[right] at (1.73, 0.5)				{$\Omega_8$};
\node[right] at (1.73, -0.5)				{$\Omega_4$};
\node[left] at (-1.73, 0.5)				{$\Omega_9$};
\node[left] at (-1.73, -0.5)				{$\Omega_5$};
\node[left] at (-5, 0.5)				{$\Omega_6$};
\node[left] at (-5, -0.5)				{$\Omega_{10}$};
\node[below] at (0, -0.1)				{0};

\end{tikzpicture}

\end{figure}
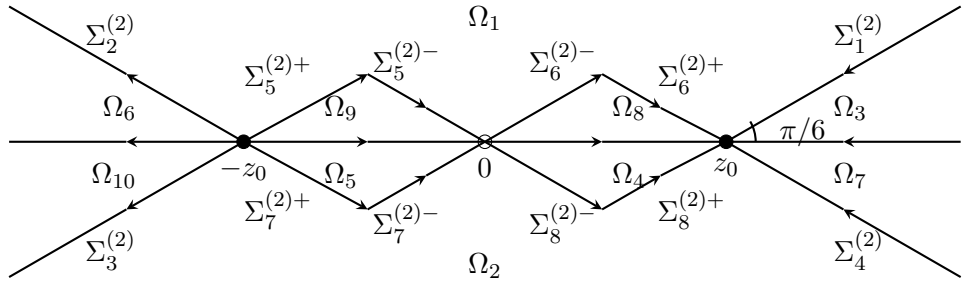

For some technical reasons (see Remark \ref{radius}), we define the following smooth cutoff function:
\begin{equation}
\label{chi-z}
\Xi_{\calZ}(z)=\begin{cases}
1 &\quad \text{dist}(z, \calZ\cup \calZ^*)\leq\Upsilon/3\\
0 &\quad \text{dist}(z, \calZ\cup \calZ^*)>2\Upsilon/3 .\\

\end{cases}
\end{equation}
Here recall that $\calZ$ is given by \eqref{mathcal-z} and $\Upsilon$ is defined in \eqref{Up}.
We now introduce another matrix-valued function $m^{(2)}$:
$$ m^{(2)}(z) = m^{(1)}(z)  \calR^{(2)}(z). $$
Here $\calR^{(2)}$ will be chosen to remove the jump on the real axis and bring about new analytic jump matrices with the desired exponential decay 
along the contour $\Sigma^{(2)}$. Straight forward computations give
\begin{align*}
m^{(2)}_+	&=m^{(1)}_+ \calR^{(2)}_+ \\
				&= m^{(1)}_- \left( e^{-i\theta\ad\sigma_3} v^{(1)} \right) \calR^{(2)}_+ \\
				&= m^{(2)}_- \left(\calR^{(2)}_-\right)^{-1}
						\left( e^{-i\theta\ad\sigma_3} v^{(1)} \right) \calR^{(2)}_+.
\end{align*}
We want to make sure that the following condition is satisfied
$$ 
\left(\calR^{(2)}_-\right)^{-1} \left( e^{-i\theta\ad\sigma_3} v^{(1)} \right) \calR^{(2)}_+ = I
$$
where $\calR_\pm^{(2)}$ are the boundary values of $\calR^{(2)}(z)$ as $z$ approaches $\sigma^{(2)}$ from the $\pm$ side of the contour. In this case the jump matrix associated to $m^{(2)}_\pm$ will be the identity matrix on $\bbR$ .

We can easily check that the function $e^{2i\theta}$ is exponentially decreasing on $\Sigma_3$  $\Sigma_4$, $\Sigma_5$, $\Sigma_6$ and increasing on $\Sigma_1$, $\Sigma_2$, $\Sigma_7$, $\Sigma_8$ while the reverse is true for $e^{-2i\theta}$. 
Letting
\begin{equation} \label{eta}
\eta(z; \pm z_0) = \left( \prod_{i\zeta_k\in B_\ell } \dfrac{\pm z_0- \overline {i\zeta_k }}{ \pm z_0-i\zeta_k}\right) \left(\prod_{z_j\in B_\ell} \dfrac{\pm z_0-\overline{z_j}}{ \pm z_0-z_j} \dfrac{\pm z_0+z_j}{\pm z_0+\overline{z_j}}\right) \left( \dfrac{ z-z_0}{z+z_0} \right)^{i\kappa}
\end{equation}
\begin{equation} \label{eta-0}
\eta_0(\pm z_0) = \left( \prod_{ i\zeta_k\in B_\ell    } \dfrac{\pm z_0- \overline {i\zeta_k }}{ \pm z_0-i\zeta_k}\right) \left(\prod_{z_j\in B_\ell} \dfrac{\pm z_0-\overline{z_j}}{ \pm z_0-z_j}\dfrac{\pm z_0+z_j}{\pm z_0+\overline{z_j}}\right) 
\end{equation}
and we define $\calR^{(2)}$ as follows (Figure \ref{fig R-2+}-\ref{fig R-2-}): 
 the functions $R_1$, $R_2$, $R_3$, $R_4$, $R_5$, $R_6$, $R_7$, $R_{8}$ satisfy 
\begin{align}
\label{R1}
R_1(z)	&=	\begin{cases}
							-{r(z)} [\delta(z)]^{-2}			
								&	z \in (z_0,\infty)\\[10pt]
						-{r(z_0 )} e^{-2\bchi(z_0)} [\eta(z; z_0)]^{-2}(1-\Xi_\calZ)
								&	z	\in \Sigma_1,
					\end{cases}\\[10pt]
\label{R2}
R_2(z)	&=	\begin{cases}
						-{r(z)} [\delta(z)]^{-2}		
								&	z \in (-\infty, -z_0)\\[10pt]
						-{r(-z_0 )} e^{-2\bchi(-z_0)} [\eta(z; -z_0)]^{-2} (1-\Xi_\calZ)
								&	z	\in \Sigma_2,
					\end{cases}\\[10pt]
\label{R3}
R_3(z)	&=	\begin{cases}
						-\overline{r(z)} [\delta(z)]^{2}			
								&	z \in (-\infty, -z_0)\\[10pt]
						-\overline{r(-z_0)} e^{2\bchi(-z_0)} [\eta(z; z_0)]^{2} (1-\Xi_\calZ)
								&	z	\in \Sigma_3,
					\end{cases}\\[10pt]
\label{R4}
R_4(z)	&=	\begin{cases}
						-\overline{r(z)}[\delta(z)]^{2}			
								&	z \in (z_0, \infty)\\[10pt]
						-\overline{r(z_0)} e^{2\bchi(z_0)} [\eta(z; z_0)]^{2} (1-\Xi_\calZ)
								&	z	\in \Sigma_4,
					\end{cases}
\end{align}

\begin{align}
\label{R5}
R_5(z)	&=	\begin{cases}
						\dfrac{\delta_+^{2}(z)  \overline{r(z)} }{1+ |r(z)|^2}		
								& z \in (-z_0, z_0)\\[10pt]
					{\dfrac{e^{2\bchi(-z_0)} [\eta(z; -z_0)]^{2} \overline{r(-z_0)} }{1+ |r(-z_0)|^2}} (1-\Xi_\calZ)
						& z \in \Sigma_5^+\\
						0 & z \in \Sigma_5^-
					\end{cases}
					\\[10pt]
\label{R8+}
R_6(z)	&=	\begin{cases}
						\dfrac{\delta_+^{2}(z)  \overline{r(z)} }{1+ |r(z)|^2}		
								& z \in (-z_0, z_0)\\[10pt]
					{\dfrac{e^{2\bchi(z_0)} [\eta(z; z_0)]^{2} \overline{r(z_0)} }{1+ |r(z_0)|^2}} (1-\Xi_\calZ)
						& z \in \Sigma_6^-\\
						0 & z \in \Sigma_6^+
					\end{cases}
					\\[10pt]
\label{R7-}
R_{7}(z)	&=	\begin{cases}
						\dfrac{\delta_-^{-2}(z)r(z)}{1+|r(z)|^2}		
								& z \in (-z_0, z_0)\\[10pt]
					\dfrac{e^{-2\bchi(-z_0)} [\eta(z; -z_0)]^{-2} r(-z_0)}{1+|r(-z_0)|^2}  (1-\Xi_\calZ)\quad
                                  & z \in \Sigma_7^+\\
                                  0  &z \in \Sigma_7^-
						\end{cases}
						\\[10pt]
\label{R8-}
R_8(z)	&=	\begin{cases}
						\dfrac{\delta_-^{-2}(z)r(z)}{1+|r(z)|^2}		
								& z \in (-z_0, z_0)\\[10pt]
					\dfrac{e^{-2\bchi(z_0)} [\eta(z; z_0)]^{-2} r(z_0)}{1+|r(z_0)|^2}  (1-\Xi_\calZ)\quad
                                  & z \in \Sigma_8^+\\
                                  0 &  z \in \Sigma_8^-.
                                  \end{cases}
\end{align}

{
\SixMatrix{The Matrix  $\calR^{(2)}$ for Region I, near $z_0$}{fig R-2+}
	{\twomat{1}{0}{R_1 e^{2i\theta}}{1}}
	{\twomat{1}{R_6 e^{-2i\theta}}{0}{1}}
	{\twomat{1}{0}{R_8 e^{2i\theta}}{1}}
	{\twomat{1}{R_4 e^{-2i\theta}}{0}{1}}
}

{
\sixmatrix{The Matrix  $\calR^{(2)}$ for Region I, near $-z_0$}{fig R-2-}
	{\twomat{1}{R_{5} e^{-2i\theta}}{0}{1}}
	{\twomat{1}{0}{R_2 e^{2i\theta}}{1}}
	{\twomat{1}{R_3 e^{-2i\theta}}{0}{1}}
	{\twomat{1}{0} {R_{7} e^{2i\theta}}{1}}
}

Each $R_i(z)$ in $\Omega_i$ is constructed in such a way that the jump matrices on the contour and $\dbar R_i(z)$ enjoys the property of exponential decay as $t\to \infty$.
We formulate Problem \ref{prob:mkdv.RHP1} into a mixed RHP-$\dbar$ problem. In the following sections we will separate this mixed problem into a localized RHP and a pure $\dbar$ problem whose long-time contribution to the asymptotics of $f(x,t)$ is  smaller than the leading term.

The following lemma (\cite[Proposition 2.1]{DM08}) will be used in the error estimates of  the
$\bar \partial$-problem in Section \ref{sec:dbar}.

We first denote the entries that appear in \eqref{R1}--\eqref{R8-} by
\begin{align}
\label{func;r-p-1}
p_1(z)=p_2(z)	&=	-r(z).	&
p_3(z)=p_4(z)	&=	- \overline{r(z)},&\\
\label{func;r-p-2}
p_{5}(z)=p_{6}(z)	&= \dfrac{ \overline{r(z)}}{1 + |r(z)|^2},& p_{7}(z)=p_{8}(z)	&= \dfrac{r(z)}{1+|r(z)|^2}.
\end{align}

For technical purpose, we further split the regions $\Omega_4$, $\Omega_5$, $\Omega_8$ and $\Omega_9$ in Figure \ref{fig:contour-2'}:
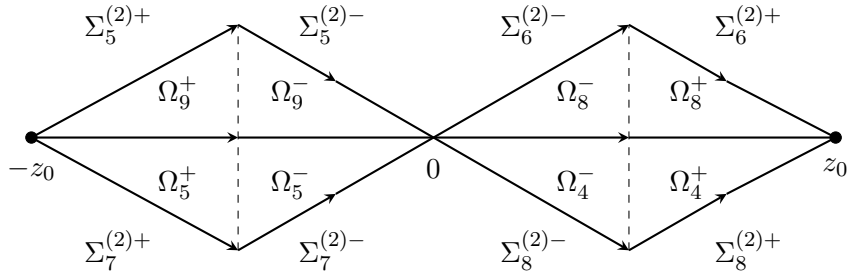
\begin{figure}[H]
\caption{Splitting the regions }
\vskip 15pt
\begin{tikzpicture}[scale=1.5]

\draw 	[->,thick,>=stealth] 	(-3.564, 0)-- (-1.732, 0);	
\draw 	[thick] 	 (-1.732, 0)--(0,0 );

\draw 	[->,thick,>=stealth] 	(0, 0)-- (1.732, 0);	
\draw 	[thick] 	 (1.732, 0)--(3.564,0 );

\draw[->,thick,>=stealth] 			(0,0)--(1.732 ,1);

\draw[->,thick,>=stealth] 			(0,0)--(1.732 ,-1);

\draw[->,thick,>=stealth] 			 (-3.564 ,0)--(-1.732 ,-1) ;
\draw[->,thick,>=stealth] 			(-1.732 ,-1)--(-0.866 ,-0.5);
\draw[thick]                                 (0,0)--(-0.866 ,-0.5);
\draw[->,thick,>=stealth] 			 (-3.564 ,0)--(-1.732 ,1) ;
\draw[->,thick,>=stealth] 			(-1.732 ,1)--(-0.866 ,0.5);
\draw[thick]                                 (0,0)--(-0.866 ,0.5);

\draw[->,thick,>=stealth] 			(1.732 ,1)--(2.598 ,0.5);
\draw[thick]                                 (2.598, 0.5)--(3.564 ,0); 

\draw[->,thick,>=stealth] 			(1.732 ,-1)--(2.598 ,-0.5);
\draw[thick]                                 (2.598, -0.5)--(3.564 ,0);

\draw	[fill]							(-3.564 ,0)		circle[radius=0.05];	
\draw	[fill]							(3.564, 0)		circle[radius=0.05];

\node[below] at (-3.564,-0.1)			{$-z_0$};
\node[below] at (3.564,-0.1)			{$z_0$};

\node[left] at (-2.4, 1)					{$\Sigma_5^{(2)+}$};
\node[left] at (-2.4,-1)					{$\Sigma_7^{(2)+}$};
\node[left] at (-0.5, 1)					{$\Sigma_5^{(2)-}$};
\node[left] at (-0.5,-1)					{$\Sigma_7^{(2)-}$};
\node[right] at (0.5,1)					{$\Sigma_6^{(2)-}$};
\node[right] at (0.5,-1)					{$\Sigma_8^{(2)-}$};
\node[right] at (2.4, 1)					{$\Sigma_6^{(2)+}$};
\node[right] at (2.4,-1)					{$\Sigma_8^{(2)+}$};

\node[right] at (2, 0.4)				{$\Omega_8^{+}$};
\node[right] at (2, -0.4)				{$\Omega_4^{+}$};
\node[left] at (-2, 0.4)				{$\Omega_9^{+}$};
\node[left] at (-2, -0.4)				{$\Omega_5^{+}$};

\node[right] at (1, 0.4)				{$\Omega_8^-$};
\node[right] at (1, -0.4)				{$\Omega_4^-$};
\node[left] at (-1, 0.4)				{$\Omega_9^-$};
\node[left] at (-1, -0.4)				{$\Omega_5^-$};
\node[below] at (0, -0.1)				{$0$};

\draw [dashed]  (-1.732, -1)--(-1.732, 1);
\draw [dashed]  (1.732, -1)--(1.732, 1);

\end{tikzpicture}
\label{fig:contour-2'}
\end{figure}
\begin{lemma}
\label{lemma:dbar.Ri}
Suppose $r \in H^{s}_0(\bbR)$ as stated in proposition \ref{prop:r}. There exist functions $R_i$ on $\Omega_3, \Omega_6, \Omega_7, \Omega_{10}$ and $\Omega_4^+, \Omega_5^+, \Omega_8^+, \Omega_9^+$ satisfying \eqref{R1}--\eqref{R8-}, so that
$$ 
|\dbar R_i(z)| \lesssim 
	 |\overline{\partial}[\mathbf{p}_i(\Real z)]| + |z-\breve{z}|^{-1/2}  +\dbar \left( \Xi_\calZ(z) \right) , 	
				$$ 
where $\breve{z}=\pm z_0$. And for functions $R_i$ on $ \Omega_4^-, \Omega_5^-, \Omega_8^-, \Omega_9^-$  we have
$$ 
|\dbar R_i(z)| \lesssim 
	 |\overline{\partial}[\mathbf{p}_1(\Real z)]| +\dfrac{\mathbf{p}_1(\Real z)}{ |z|}  +\dbar \left( \Xi_\calZ(z) \right) .
			$$ 
All the implied constants are uniform for $r $ in a bounded subset of $H^{s}(\bbR)$.
\end{lemma}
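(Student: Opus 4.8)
The plan is to follow the non-analytic extension procedure of Dieng--McLaughlin \cite{DM08} (as used in \cite{BJM16}), adapted to the two coordinate patches forced by the $1/z$ term in the phase \eqref{theta}. It suffices to treat one region in each family: the outer sectors $\Omega_3,\Omega_6,\Omega_7,\Omega_{10}$ together with the outer halves $\Omega_4^+,\Omega_5^+,\Omega_8^+,\Omega_9^+$ adjacent to the stationary points $\pm z_0$, and the inner halves $\Omega_4^-,\Omega_5^-,\Omega_8^-,\Omega_9^-$ adjacent to the real axis near the origin; the remaining regions follow by relabeling and by the Schwarz symmetries of $\Sigma^{(2)}$, of $\Xi_\calZ$, and of $\delta$. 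In each case $R_i$ will be a smooth angular interpolation, cut off by the factor $(1-\Xi_\calZ)$ so that $\calR^{(2)}\equiv I$ near the circles $\gamma_k,\gamma_j$ (thereby preserving the residue data) and $R_i\to 0$ at the relevant infinities.

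For a representative outer region, take $\Omega_3$ bounded by $(z_0,\infty)\subset\bbR$ and by $\Sigma_1$, write $z=z_0+\rho e^{i\phi}$, and fix a smooth $\mathcal{K}=\mathcal{K}(\phi)$ equal to $1$ on the $\bbR$--edge and $0$ on $\Sigma_1$; set
\[
R_1(z)=\big(1-\Xi_\calZ(z)\big)\Big[\mathcal{K}(\phi)\,p_1(\Real z)\,\delta(z)^{-2}+\big(1-\mathcal{K}(\phi)\big)\,p_1(z_0)\,e^{-2\bchi(z_0)}\,\eta(z;z_0)^{-2}\Big].
\]
By the definition \eqref{chi-z} of $\Xi_\calZ$ and the choice \eqref{Up} of $\Upsilon$ (which force $\Xi_\calZ\equiv0$ on $(z_0,\infty)$), this has exactly the boundary values \eqref{R1}. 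Using $\dbar=\tfrac{1}{2}e^{i\phi}\big(\partial_\rho+\tfrac{i}{\rho}\partial_\phi\big)$, the $\dbar$-derivative splits into three pieces: the radial derivative hitting $p_1(\Real z)=p_1(z_0+\rho\cos\phi)$, bounded by $|p_1'(\Real z)|$ times the prefactor $\delta(z)^{-2}$, which is uniformly bounded on $\Omega_3$ by Lemma \ref{lemma:delta}; the angular derivative $\tfrac{1}{\rho}\mathcal{K}'(\phi)$ multiplying the difference $p_1(\Real z)\delta(z)^{-2}-p_1(z_0)e^{-2\bchi(z_0)}\eta(z;z_0)^{-2}$, which is $O\!\big(|z-z_0|^{1/2}\big)$ since $r\in H^1(\bbR)\hookrightarrow C^{1/2}(\bbR)$ gives $|p_1(\Real z)-p_1(z_0)|\lesssim|z-z_0|^{1/2}$ and Lemma \ref{lemma:delta}(iv) gives the same Hölder bound for the prefactor, so this piece is $\lesssim|z-z_0|^{-1/2}$; and the term in which $\dbar$ falls on $\Xi_\calZ$, contributing $\dbar(\Xi_\calZ(z))$ times a bounded factor. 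This yields the first estimate with $\xi=z_0$; for $\Omega_6,\Omega_7,\Omega_{10}$ one replaces $z_0\mapsto-z_0$, $r\mapsto\bar r$, $\delta^{-2}\mapsto\delta^2$ as dictated by \eqref{R2}--\eqref{R4}, and for $\Omega_4^+,\Omega_5^+,\Omega_8^+,\Omega_9^+$, which meet $(-z_0,z_0)$ near $\pm z_0$, the same computation applies with $p_i\in\{p_5,\dots,p_8\}$ and the $\delta_\pm$-prefactors of \eqref{R5}--\eqref{R8-}, the transition function being chosen so that the pieces match across the dashed cut of Figure \ref{fig:contour-2'}.

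The genuinely new feature, and the step I expect to be the main obstacle, is the family of inner half-regions near $z=0$: here $0$ lies on the cut $[-z_0,z_0]$, the sign structure of $\Real(i\theta)$ degenerates as $z\to0$ because of the $1/z$ term, and one cannot center the extension at $\pm z_0$. Instead one uses polar coordinates $z=\rho e^{i\phi}$ about the origin and interpolates between the real-axis data $p_i(\Real z)\,\delta_\pm(z)^{\mp2}$ and the value prescribed on the corresponding $\Sigma_j^-$. Two inputs from Proposition \ref{prop:r} are essential here: first, $\lim_{z\to0}r(z)/z=0$, hence $r(0)=0$ and $p_i(0)=0$ for each of $p_5,\dots,p_8$, so that $|p_i(\Real z)|=|p_i(\Real z)-p_i(0)|\lesssim|\Real z|^{1/2}\le|z|^{1/2}$ and the $\tfrac{1}{\rho}\partial_\phi$ contribution of $\dbar$ is exactly $\lesssim|z|^{-1/2}$; second, $\delta(0)=(-1)^\ell$ together with Remark \ref{rmk-generic} (whence $a$, $\ba$, and therefore $\delta$, are bounded and nonvanishing in a fixed neighborhood of $0$ off $[-z_0,z_0]$), which keeps the prefactors bounded. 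The remaining $\dbar$-term is again $\dbar(\Xi_\calZ(z))$ times a bounded factor. Finally, every implied constant above has been reduced to the Sobolev embedding constant, to $\|r\|_{H^1(\bbR)}$, or to the uniform-in-$\pm z_0$ bounds of Lemma \ref{lemma:delta}(iv), so the constants are uniform for $r$ in a bounded subset of $H^1(\bbR)$, completing the proof.
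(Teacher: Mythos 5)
Your construction of $R_1$ is algebraically identical to the paper's: the paper writes $R_1=\big(f_1+[p_1(\Real z)-f_1]\mathcal{K}\big)\delta^{-2}(1-\Xi_\calZ)$ with $f_1=p_1(z_0)e^{-2\bchi(z_0)}\eta(z;z_0)^{-2}\delta(z)^2$, which after multiplying through is exactly your convex-combination form; the three-term split of $\dbar R_1$, the use of Lemma~\ref{lemma:delta}(iv) for the angular term, and, for the inner half-regions, the crucial observation that $p_i(0)=0$ (so the prescribed value on $\Sigma_j^-$ is $0$ and the $\rho^{-1}\partial_\phi$ term is $\lesssim|z|^{-1/2}$) all match the paper's proof. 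The one small improvement in your write-up is that you explicitly credit the H\"older estimate on $p_i(\Real z)$ to the Sobolev embedding $H^1(\bbR)\hookrightarrow C^{1/2}(\bbR)$, whereas the paper cites only Lemma~\ref{lemma:delta}(iv), which technically controls only the $\delta$-prefactor; your attribution is the more complete one.
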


\begin{proof}
We define 
\begin{align}
\label{smooth-r}
   \mathbf{r}(z)= \begin{cases}
    r(\Real z); \quad \Imag z=0\\
   \left[ \mathcal{P}_{\Imag z}* r\right](\Real z); \quad \Imag z\neq 0
    \end{cases}
\end{align}
where we introduce an approximation to identity function ${P}(x)$ 
with compact support and
\begin{align}
   \int_\bbR\mathcal{P}(x) dx=1
\end{align}
and for $z=u+iv$
\begin{equation}
    \left[ \mathcal{P}_{\Imag z}* r\right](\Real z)=\dfrac{1}{v}\int_\bbR  \mathcal{P}\left(\dfrac{u-t}{v}  \right)r(t) dt.
\end{equation}
We use $\mathbf{p}_i$, $i=1,2...8$ to denote functions of the form as in \eqref{func;r-p-1}-\eqref{func;r-p-2} where $r$ is replaced by $\mathbf{r}$. We first prove the lemma for $R_1$. Define $f_1(z)$ on $\Omega_3$ by
$$ f_1(z) = p_1(z_0) e^{-2\chi(z_0)} [\eta(z; z_0)]^{-2} [\delta(z)]^{2} $$
and let
\begin{equation}
\label{interpol}
\ R_1(z) = \left( f_1(z) + \left[\mathbf{p}_1(z) - f_1(z) \right] \mathcal{K}(\phi) \right) [\delta(z)]^{-2} 
(1-\Xi_\calZ)
\end{equation}
where $\phi = \arg (z-z_0)$ and $\mathcal{K}$ is a smooth function on $(0, \pi/6)$ with
$$ 
\mathcal{K}(\phi)=
	\begin{cases}
			1
			&	z\in [0, \pi/24], \\
			0
			&	z \in[\pi/12, \pi/6].
	\end{cases}
$$ 
 It is easy to see that $R_1$ as constructed has the boundary values \eqref{R1}.
Writing $z-z_0= \rho e^{i\phi}$, we have
$$ \dbar = \frac{1}{2}\left( \frac{\dee}{\dee u} + i \frac{\dee}{\dee v} \right)
			=	\frac{1}{2} e^{i\phi} \left( \frac{\dee}{\dee \rho} + \frac{i}{\rho} \frac{\dee}{\dee \phi} \right).
$$
We calculate
\begin{align*}
\dbar R_1 (z) & = \dfrac{1}{2}\left(  \overline{\partial}[\mathbf{p}_1(z)] \mathcal{K}(\phi)  ~ [\delta(z)]^{-2} -
		\left[ \mathbf{p}_1(z) - f_1(z) \right][\delta(z)]^{-2}  \frac{ie^{i\phi}}{|z-z_0|}  \mathcal{K}'(\phi) \right)\\
		 & \quad \times \left( 1-\Xi_\calZ  \right) e^{2i\theta}-\left( f_1(z) + \left[ \mathbf{p}_1(z) - f_1(z) \right] \mathcal{K}(\phi) \right)[ \delta(z)]^{-2} \dbar \left( \Xi_\calZ(z) \right) e^{2i\theta}.
\end{align*}
We also have the following estimate:
\begin{align*}
  \left[ \mathcal{P}_{\Imag z}* r\right](\Real z)-r(z_0) &=  \dfrac{1}{v}\int_\bbR  \mathcal{P}\left(\dfrac{u-t}{v}  \right)\left(r(t)-r(z_0)\right) dt\\
  &=\dfrac{1}{v}\int_{B(u, v)}\mathcal{P}\left(\dfrac{u-t}{v}  \right)\left(r(t)-r(z_0)\right) dt\\
  &\lesssim \dfrac{1}{v}\int_{B(u, v)}\mathcal{P}\left(\dfrac{u-t}{v} \right)\norm{r}{H^{s}(\bbR)}|t-z_0|^{s-1/2} dt\\
   &\lesssim |z-z_0|^{s-1/2} 
  \end{align*}

Given that $\Xi(z)$ is infinitely smooth and compactly supported, it follows from Lemma \ref{lemma:delta} (iv) that
$$ 
 \left|\left( \dbar R_1 \right)(z)  \right| \lesssim
\left[  \left\vert \overline{\partial}[\mathbf{p}_1( z)]\right\vert + |z-z_0|^{s-3/2} +\dbar \left( \Xi_\calZ(z) \right) \right] |e^{2i\theta}|.
$$
 We then move to region $\Omega_8^-$.
We set
$$ f_8(z) = p_8(0) e^{-2\chi(0)} [\eta(z; 0)]^{-2} [\delta(z)]^{2} =0$$
and calculate
\begin{align*}
\dbar R_8 (z) &=\dbar  \left(\mathbf{p}_8(\Real z) \mathcal{K}(\phi) [ \delta(z)]^{-2} (1-\Xi_\calZ)  \right) \\
                     & = \left( \frac{1}{2} \overline{\partial}[\mathbf{p}_8(z)] \mathcal{K}(\phi)  ~ [\delta(z)]^{-2} -
	\mathbf{p}_8( z)[\delta(z)]^{-2}  \frac{ie^{i\phi}}{| z |}  \mathcal{K}'(\phi) \right) \left( 1-\Xi_\calZ  \right)\\
		 &\quad - \mathbf{p}_8( z)  \mathcal{K}(\phi)  [\delta(z)]^{-2} \dbar \left( \Xi_\calZ(z) \right) .
\end{align*}
The estimates in the remaining regions are analogous.
\end{proof}
\begin{remark}
We note that the interpolation defined through \eqref{interpol} introduces  new jumps on $\Sigma'^{(2)}_9$ and $\Sigma'^{(2)}_{10}$  with  the jump matrix given by 
\begin{equation}
\label{jump v9}
v_9=\begin{cases} I, & z\in \left(-i ( z_0/2) \tan(\pi/24) , i ( z_0/2) \tan(\pi/24) \right)\\
\\
  \unitupper{ (R_5^+-R_5^-)e^{-2i\theta} },  & z\in \left(i( z_0/2) \tan(\pi/24)-z_0/2, iz_0/(2\sqrt{3} ) -z_0/2\right) \\
        \\
          \unitupper{ (R_6^--R_6^+)e^{-2i\theta} },  & z\in \left(i ( z_0/2)  \tan(\pi/24)+z_0/2,  iz_0/(2\sqrt{3} )+z_0/2 \right) \\
        \\
        \unitlower{ (R_7^+ -R_7^-)e^{2i\theta} } , & z\in  \left( -i( z_0/2)  \tan(\pi/24)-z_0/2, - iz_0/(2\sqrt{3} )-z_0/2 \right) \\
        \\
         \unitlower{ (R_8^--R_8^+)e^{2i\theta} },  & z\in  \left ( -i( z_0/2) \tan(\pi/24)+z_0/2, -iz_0/(2\sqrt{3} )+z_0/2 \right)  .
\end{cases}
\end{equation}
\end{remark}

\begin{figure}[H]
\caption{$\Sigma'^{(2)}$}
\vskip 15pt
\begin{tikzpicture}[scale=0.8]

\draw[thick] 			(3.564 ,0) -- (5.296, 1);								
\draw[->,thick,>=stealth] 	(   7.028 ,2  )--	(5.296, 1);

\draw[->,thick,>=stealth] 			(0,0)--(1.732 ,1);

\draw[thick] 			(3.564 ,0) -- (5.296, -1);								
\draw[->,thick,>=stealth] 	(   7.028 , -2  )--	(5.296, -1);

\draw[->,thick,>=stealth] 			(0,0)--(1.732 ,-1);

\draw[->,thick,>=stealth] 			 (-3.564 ,0)--(-1.732 ,-1) ;

\draw[->,thick,>=stealth] 			 (-3.564 ,0)--(-1.732 ,1) ;

\draw[->,thick,>=stealth] 			(-3.564 ,0) -- (-5.296, 1);							
\draw[thick] 	(   -7.028 ,2  )--	(-5.296, 1);
\draw[->,thick,>=stealth] 			(-3.564 ,0) -- (-5.296, -1);							
\draw[thick] 	(   -7.028 , -2  )--	(-5.296, -1);

\draw[->,thick,>=stealth] 			(-1.732 , -1) -- (-1.732, 0);	
\draw	[thick]	(-1.732 , 0) -- (-1.732, 1);	
\draw[->,thick,>=stealth] 			(1.732 , -1) -- (1.732, 0);	
\draw	[thick]	(1.732 , 0) -- (1.732, 1);

\draw[->,thick,>=stealth] 			(1.732 ,1)--(2.598 ,0.5);
\draw[thick]                                 (2.598, 0.5)--(3.564 ,0); 

\draw[->,thick,>=stealth] 			(1.732 ,-1)--(2.598 ,-0.5);
\draw[thick]                                 (2.598, -0.5)--(3.564 ,0);

\draw	[fill]							(-3.564 ,0)		circle[radius=0.1];	
\draw	[fill]							(3.564, 0)		circle[radius=0.1];

\draw[->,thick,>=stealth] 			(-1.732 ,1)--(-0.866 ,0.5);
\draw[thick]                                 (0,0)--(-0.866 ,0.5);

\draw[->,thick,>=stealth] 			(-1.732 ,-1)--(-0.866 ,-0.5);
\draw[thick]                                 (0,0)--(-0.866 ,-0.5);

\node[below] at (-3.564,-0.1)			{$-z_0$};
\node[below] at (3.564,-0.1)			{$z_0$};

\node[right] at (5, 1.6)					{$\Sigma_1'^{(2)} $};
\node[left] at (-5, 1.6)					{$\Sigma_2'^{(2)}$};
\node[left] at (-5,-1.6)					{$\Sigma_3'^{(2)}$};
\node[right] at (5,-1.6)				{$\Sigma_4'^{(2)}$};
\node[left] at (-1,1.4)					{$\Sigma_5'^{(2)}$};
\node[left] at (-1,-1.4)					{$\Sigma_7'^{(2)}$};
\node[right] at (1,1.4)					{$\Sigma_6'^{(2)}$};
\node[right] at (1,-1.4)					{$\Sigma_8'^{(2)}$};
\node[right] at (-1.732, 0) {$\Sigma_9'^{(2)} $};
\node[right] at (1.732, 0) {$\Sigma_{10}'^{(2)} $};
\node[below] at (0, -0.2) {$0 $};

\end{tikzpicture}
\label{fig:contour-2}
\begin{center}
  \begin{tabular}{ccc}
adding $\Sigma'^{(2)}_9$ and $\Sigma'^{(2)}_{10}$ to $\Sigma^{(2)}$
\end{tabular}
 \end{center}
\end{figure}

The unknown $m^{(2)}$ satisfies a mixed $\dbar$-RHP. We first identify the jumps of $m^{(2)}$ along the contour $\Sigma'^{(2)}$. Recall that $m^{(1)}$ is analytic along the contour,  the jumps are determined entirely by 
$\mathcal{R}^{(2)}$, see \eqref{R1}--\eqref{R8-}. Away from $\Sigma'^{(2)}$, using the triangularity of $\mathcal{R}^{(2)}$, we   have that 
\begin{equation}
\label{N2.dbar}
 \dbar m^{(2)} = m^{(2)} \left( \calR^{(2)} \right)^{-1} \dbar \calR^{(2)} = m^{(2)} \dbar \calR^{(2)}. 
 \end{equation}
 
 \begin{remark}
 \label{radius}
By construction of $\mathcal{R}^{(2)}$ (see \eqref{R1}-\eqref{R8-} and \eqref{interpol}) and the choice of the radius of the circles in the set $ \Gamma$ (see Remark \ref{circles}), the right multiplication of $\mathcal{R}^{(2)}$ to $m^{(1)}$ will not change the jump conditions on circles in the set $ \Gamma$ . Thus over circles in the set $ \Gamma$, $m^{(2)}$ has the same jump matrices as given by (4) of Problem \ref{prob:mkdv.RHP1}.
\end{remark}

\begin{problem}
\label{prob:DNLS.RHP.dbar}
Given $r \in H_0^{s}(\bbR)$, find a matrix-valued function $m^{(2)}(z;x,t)$ on $\bbC \setminus  \left( \Sigma'^{(2)} \cup \Gamma \right)$ where the contour $\Gamma$ is given by \eqref{Gamma} with the following properties:
\begin{enumerate}
\item		$m^{(2)}(z;x,t) \rarr I$ as $ z \rarr \infty$ in $ \bbC \setminus \left( \Sigma'^{(2)} \cup \Gamma \right)$.
\item		$m^{(2)}(z;x,t)$ is continuous for $z \in  \bbC \setminus\left( \Sigma'^{(2)} \cup \Gamma \right)$ 
			with continuous boundary values 
			$m^{(2)}_\pm(z;x,t) $
			(where $\pm$ is defined by the orientation in Figure \ref{fig:contour-def}).
\item		The jump relation $m^{(2)}_+(z;x,t)=m^{(2)}_-(z;x,t)	
			e^{-i\theta\ad\sigma_3}v^{(2)}(z)$ holds, where
			$e^{-i\theta\ad\sigma_3}v^{(2)}(z)	$ is given in and part (4) of Problem \ref{prob:mkdv.RHP1}  and \eqref{jump v9} (see also Figure \ref{fig:jumps-1}-\ref{fig:jumps-2}).
\item		The equation 
			$$
			\dbar m^{(2)} = m^{(2)} \, \dbar \calR^{(2)}
			$$ 
			holds in $\bbC \setminus  \left( \Sigma'^{(2)} \cup \Gamma \right)$, where
			$$
			\dbar \calR^{(2)}=
				\begin{doublecases}
					\Twomat{0}{0}{(\dbar R_1) e^{2i\theta}}{0}, 	& z \in \Omega_3	&&
					\Twomat{0}{(\dbar R_5)e^{-2i\theta}}{0}{0}	,	& z \in \Omega_9	\\
					\\
					\Twomat{0}{0}{(\dbar R_2)e^{2i\theta}}{0},	&	z \in \Omega_6	&&
					\Twomat{0}{(\dbar R_6)e^{-2i\theta}}{0}{0}	,	&	z	\in \Omega_8	 \\
					\\
					\Twomat{0}{(\dbar R_3) e^{-2i\theta}}{0}{0}, 	& z \in \Omega_{10}	&&
					\Twomat{0}{0} {(\dbar R_7)e^{2i\theta}}  {0}	,	& z \in \Omega_5	\\
					\\
					\Twomat{0} {(\dbar R_4)e^{-2i\theta}} {0}{0},	&	z \in \Omega_7	&&
					\Twomat{0}{0}{(\dbar R_8)e^{2i\theta}}{0}	,	&	z	\in \Omega_4 \\
					\\
					0	&\hspace{-5pt} z\in \Omega_1\cup\Omega_2	.
				\end{doublecases}
			$$
\end{enumerate}
\end{problem}

The following picture is an illustration of the jump matrices of RHP Problem \ref{prob:DNLS.RHP.dbar}. For brevity we ignore the discrete scattering data.

\begin{figure}[H]
\caption{Jump Matrices  $v^{(2)}$  for $m^{(2)}$ near $z_0$ }
\vskip 15pt
\begin{tikzpicture}[scale=0.7]
\draw[dashed] 				(-6,0) -- (6,0);							
\draw [thick] 	(0,0 )-- (1.732, 1);						
\draw [->,thick,>=stealth]   (3.464,2 ) -- (1.732, 1);
\draw [thick] 	(0,0 )-- (1.732, -1);						
\draw  [->,thick,>=stealth]  (3.464,-2 ) -- (1.732, -1);
\draw [thick] 	(0,0 )-- (-1.732, 1);						
\draw [->,thick,>=stealth] (-3.464,2 ) -- (-1.732, 1);
\draw [thick] 	(0,0 )-- (-1.732, -1);						
\draw [->,thick,>=stealth]  (-3.464, -2 ) -- (-1.732, -1);
\draw[fill]						(0,0)	circle[radius=0.075];		
\node [below] at  			(0,-0.15)		{$z_0$};
\node[right] at					(3.2,3)		{$\unitlower{R_1 e^{2i\theta}}$};
\node[left] at					(-3.2,3)		{$\unitupper{R_6 e^{-2i\theta}}$};
\node[left] at					(-3.2,-3)		{$\unitlower{R_8 e^{2i\theta}}$};
\node[right] at					(3.2,-3)		{$\unitupper{R_4 e^{-2i\theta}}$};
\node[left] at					(2.5, 1.8)		{$\Sigma_1$};
\node[right] at					(-2.5,1.8)		{$\Sigma_6$};
\node[right] at					(-2.5,-1.8)		{$\Sigma_8$};
\node[left] at					(2.5,-1.8)		{$\Sigma_4$};
\end{tikzpicture}
\label{fig:jumps-1}
\end{figure}

\begin{figure}[H]
\caption{Jump Matrices  $v^{(2)}$  for $m^{(2)}$ near $-z_0$}
\vskip 15pt
\begin{tikzpicture}[scale=0.7]
\draw[dashed] 				(-6,0) -- (6,0);							
\draw [->,thick,>=stealth] 	(0,0 )-- (1.732, 1);						
\draw  [thick]  (3.464,2 ) -- (1.732, 1);
\draw [->,thick,>=stealth] 	(0,0 )-- (1.732, -1);						
\draw  [thick]  (3.464,-2 ) -- (1.732, -1);
\draw [->,thick,>=stealth] 	(0,0 )-- (-1.732, 1);						
\draw  [thick]  (-3.464,2 ) -- (-1.732, 1);
\draw [->,thick,>=stealth] 	(0,0 )-- (-1.732, -1);						
\draw  [thick]  (-3.464, -2 ) -- (-1.732, -1);
\draw[fill]						(0,0)	circle[radius=0.075];		
\node [below] at  			(0,-0.15)		{$-z_0$};
\node[right] at					(3.2,3)		{$\unitupper{R_5 e^{-2i\theta}}$};
\node[left] at					(-3.2,3)		{$\unitlower{R_2 e^{2i\theta}}$};
\node[left] at					(-3.2,-3)		{$\unitupper{R_3 e^{-2i\theta}}$};
\node[right] at					(3.2,-3)		{$\unitlower{R_7 e^{2i\theta}}$};
\node[left] at					(2.5, 1.8)		{$\Sigma_5$};
\node[right] at					(-2.5, 1.8)		{$\Sigma_2$};
\node[right] at					(-2.5,-1.8)		{$\Sigma_3$};
\node[left] at					(2.5,-1.8)		{$\Sigma_7$};
\end{tikzpicture}
\label{fig:jumps-2}
\end{figure}
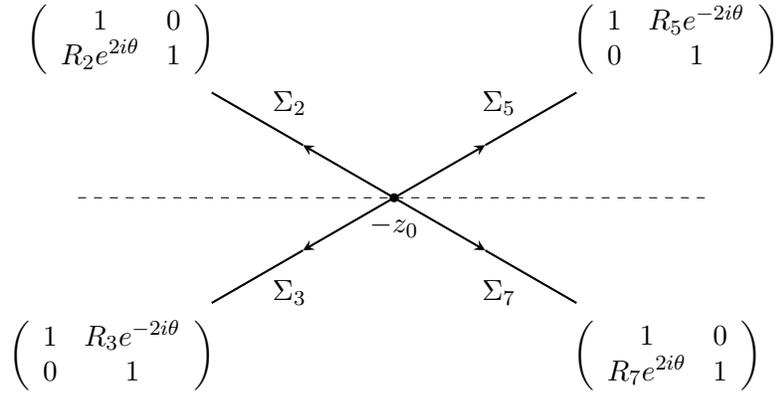

%
%

\section{The Localized Riemann-Hilbert Problem}
\label{sec:local}
We perform the following factorization of $m^{(2)}$:
\begin{equation}
\label{factor-LC}
m^{(2)} = m^{(3)} m^{\RHP}.
\end{equation}
Here we require that $m^{(3)} $ to be the solution of the pure $\dbar$-problem, hence no jump, and $ m^{\RHP}$ solution of the localized  RHP Problem \ref{MKDV.RHP.local} below
with the jump matrix $v^\RHP=v^{(2)}$. The current section focuses on finding $ m^{\RHP}$.
\begin{problem}
\label{MKDV.RHP.local}
Find a $2\times 2$ matrix-valued function $m^\RHP(z; x,t)$, analytic on $\bbC \setminus   \left( \Sigma'^{(2)} \cup \Gamma \right)$,
with the following properties:
\begin{enumerate}
\item	$m^\RHP(z;x,t) \rarr I$ as $|z| \rarr \infty$ in $\bbC \setminus (\Sigma'^{(2)}\cup\Gamma)$, where $I$ is the $2\times 2$ identity matrix,
\item	$m^\RHP(z; x,t)$ is analytic for $z \in \bbC \setminus  (\Sigma'^{(2)}\cup\Gamma)$ with continuous boundary values $m^\RHP_\pm$
		on $\Sigma'^{(2)}\cup\Gamma $,
\item	The jump relation $m^\RHP_+(z;x,t) = m^\RHP_-(z; x,t ) v^\RHP(z)$ holds on $\Sigma'^{(2)}\cup\Gamma $, where
		\begin{equation*}	
		v^\RHP(z) =	v^{(2)}(z).
		\end{equation*}
\end{enumerate}
\end{problem}

\begin{figure}[H]
\caption{ $\Sigma'^{(2)}\cup\Gamma$ }

\vskip 15pt
\begin{tikzpicture}[scale=0.9]

\draw[thick] 		(3.564 ,0) -- (5.296, 1);								
\draw[->,thick,>=stealth] [red]	(   7.028 ,2  )--	(5.296, 1);

\draw[->,thick,>=stealth] 	 [red]			(0,0)--(1.732 ,1);

\draw[thick] 			(3.564 ,0) -- (5.296, -1);								
\draw[->,thick,>=stealth]  [red]		(   7.028 , -2  )--	(5.296, -1);

\draw[->,thick,>=stealth] 		 [red]		(0,0)--(1.732 ,-1);

\draw[->,thick,>=stealth] 			 (-3.564 ,0)--(-1.732 ,-1) ;

\draw[->,thick,>=stealth] 			 (-3.564 ,0)--(-1.732 ,1) ;

\draw[->,thick,>=stealth] 			(-3.564 ,0) -- (-5.296, 1);							
\draw[thick]  [red]		(   -7.028 ,2  )--	(-5.296, 1);
\draw[->,thick,>=stealth] 			(-3.564 ,0) -- (-5.296, -1);							
\draw[thick]  [red]		(   -7.028 , -2  )--	(-5.296, -1);

\draw[->,thick,>=stealth]  [red]				(-1.732 , -1) -- (-1.732, 0);	
\draw	[thick]  [red]		(-1.732 , 0) -- (-1.732, 1);	
\draw[->,thick,>=stealth] 	 [red]			(1.732 , -1) -- (1.732, 0);	
\draw	[thick]	 [red]	 (1.732 , 0) -- (1.732, 1);

\draw [blue][dashed] (0,0) circle[radius=3.564];

\draw	[fill]							(-3.564 ,0)		circle[radius=0.06];	
\draw	[fill]							(3.564, 0)		circle[radius=0.06];

\node[below] at (-3.564,-0.1)			{$-z_0$};
\node[below] at (3.564,-0.1)			{$z_0$};

\node[right] at (5, 1.6)					{$\Sigma_1'^{(2)} $};
\node[left] at (-5, 1.6)					{$\Sigma_2'^{(2)}$};
\node[left] at (-5,-1.6)					{$\Sigma_3'^{(2)}$};
\node[right] at (5,-1.6)				{$\Sigma_4'^{(2)}$};
\node[left] at (-2.1, 1)					{$\Sigma_5^{(2)+}$};
\node[left] at (-2.4,-1)					{$\Sigma_7^{(2)+}$};
\node[left] at (-0.2, 1)					{$\Sigma_5^{(2)-}$};
\node[left] at (-0.2,-1)					{$\Sigma_7^{(2)-}$};
\node[right] at (0.2,1)					{$\Sigma_6^{(2)-}$};
\node[right] at (0.2,-1)					{$\Sigma_8^{(2)-}$};
\node[right] at (2.1, 1)					{$\Sigma_6^{(2)+}$};
\node[right] at (2.1,-1)					{$\Sigma_8^{(2)+}$};
\node[right] at (-1.732, 0) {$\Sigma_9'^{(2)} $};
\node[right] at (1.732, 0) {$\Sigma_{10}'^{(2 )} $};
\node[below] at (0, -0.1) {0};

 \draw [red, fill=red] (1, 1.732) circle [radius=0.05];
\draw[->,>=stealth] [red] (1.2, 1.732) arc(360:0:0.2);
\draw [red, fill=red] (-1,1.732) circle [radius=0.05];
\draw[->,>=stealth] [red] (-0.8, 1.732) arc(360:0:0.2);
\draw [red, fill=red] (1,-1.732) circle [radius=0.05];
\draw[->,>=stealth] [red] (1.2, -1.732) arc(0:360:0.2);
\draw [red, fill=red] (-1,-1.732) circle [radius=0.05];
\draw[->,>=stealth] [red] (-0.8, -1.732) arc(0:360:0.2);
\draw [green, fill=green]  (0, 2.7) circle [radius=0.05];
\draw[->,>=stealth] [red] (0.3, 2.7) arc(360:0:0.3);
\draw [green, fill=green] (0, -2.7) circle [radius=0.05];
\draw[->,>=stealth] [red]  (0.3, -2.7) arc(0:360:0.3);
\draw[->,>=stealth] (-2.6,2) arc(360:0:0.4);
\draw[->,>=stealth] (3.4,2) arc(360:0:0.4);
\draw[->,>=stealth] (-2.6,-2) arc(0:360:0.4);
\draw[->,>=stealth] (3.4,-2) arc(0:360:0.4);
\draw [red, fill=red] (-3,2) circle [radius=0.1];
\draw [red, fill=red] (3,2) circle [radius=0.1];
\draw [red, fill=red] (-3,-2) circle [radius=0.1];
\draw [red, fill=red] (3,-2) circle [radius=0.1];

\draw[->,thick,>=stealth] [red]			(-1.732 ,1)--(-0.866 ,0.5);
\draw[thick]           [red]                      (0,0)--(-0.866 ,0.5);

\draw[->,thick,>=stealth] [red]			(-1.732 ,-1)--(-0.866 ,-0.5);
\draw[thick]           [red]                      (0,0)--(-0.866 ,-0.5);

\draw[->,thick,>=stealth] 			(1.732 ,1)--(2.598 ,0.5);
\draw[thick]                                 (2.598, 0.5)--(3.564 ,0); 

\draw[->,thick,>=stealth] 			(1.732 ,-1)--(2.598 , -0.5);
\draw[thick]                                 (2.598, -0.5)--(3.564 ,0); 

\end{tikzpicture}
\label{fig:contour-d}
\begin{center}
  \begin{tabular}{ccc}
$v^{(2)}$ decays exponentially on red contours
\end{tabular}
 \end{center}
\end{figure}
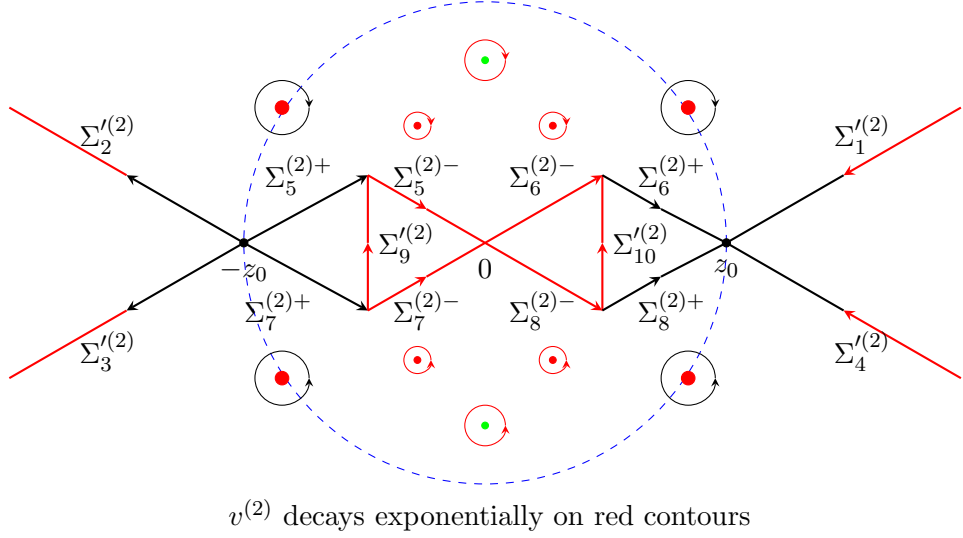
We will eventually construct a classical solution to problem \ref{MKDV.RHP.local}. We make the following observations.  On figure \ref{fig:contour-d}, for some fixed $\eps>0$, we define 
\begin{align*}
L_\eps &=\lbrace z: z=u z_0 e^{i\pi/6},  0 \leq u\leq 1/ \sqrt{3} \rbrace\\
           &\cup   \lbrace z: z=u z_0 e^{i 5\pi/6},  0\leq u\leq  1/ \sqrt{3} \rbrace \\
           & \cup \lbrace z: z= z_0+ u z_0 e^{i\pi/6}, \eps\leq u <+\infty\rbrace\\
             &\cup   \lbrace z: z=-z_0+u z_0 e^{5i\pi/6},  \eps\leq u < +\infty \rbrace \\
           \Sigma'&= \left(\Sigma'^{(2)}\setminus (L_\eps\cup {L_\eps^*}\cup \Sigma'^{(2)}_9 \cup\Sigma'^{(2)}_{10} )\right) \cup \left( \pm \gamma_\ell  \right)  \cup \left( \pm \gamma^*_\ell\right) . 
\end{align*}

\begin{figure}[H]
\caption{ $\Sigma'$}
\vskip 15pt
\begin{tikzpicture}[scale=1.2]

\draw[thick]		(2,0) -- (2.866, 0.5);								

\draw[thick] 		(-2,0) -- (-2.866, 0.5);					

\draw[thick] 		(-2,0) -- (-2.866, -0.5);					

\draw[thick]		(2,0) -- (2.866,-0.5);								

\draw[thick] 	(-1.134, 0.5)--(-2, 0) ; 
\draw[thick] 	(-1.134, -0.5)--(-2, 0) ;

\draw[thick]	(2,0) -- (1.134, 0.5);
\draw[thick]	(2,0) -- (1.134, -0.5);

\draw	[fill]							(-2,0)		circle[radius=0.1];	
\draw	[fill]							(2,0)		circle[radius=0.1];
\node[below] at (-2,-0.1)			{$-z_0$};
\node[below] at (2,-0.1)			{$z_0$};

\draw [blue] [dashed] (0,0) circle[radius=2];	

\draw [red, fill=red] (1, 1.732) circle [radius=0.07];
\draw[->,>=stealth]  (1.2, 1.732) arc(360:0:0.2);
\draw [red, fill=red] (-1,1.732) circle [radius=0.07];
\draw[->,>=stealth]  (-0.8, 1.732) arc(360:0:0.2);
\draw [red, fill=red] (1,-1.732) circle [radius=0.07];
\draw[->,>=stealth] (1.2, -1.732) arc(0:360:0.2);
\draw [red, fill=red] (-1,-1.732) circle [radius=0.07];
\draw[->,>=stealth] (-0.8, -1.732) arc(0:360:0.2);

  \node[above]  at (0, 0) {\footnotesize $\text{Re}(i\theta)>0$};
    
      \node[below]  at (0, 0) {\footnotesize $\text{Re}(i\theta)<0$};
    
     \node[above]  at (0, 2.5) {\footnotesize $\text{Re}(i\theta)<0$};
     \node[below]  at (0, -2.5) {\footnotesize $\text{Re}(i\theta)>0$};
     \draw[dashed ] (0,0) -- (-3,0);
\draw[ dashed] (-3,0) -- (-5,0);
\draw[->,>=stealth] [dashed](0,0) -- (3,0);
 \draw[ dashed] (3,0) -- (5,0);
 \node[right] at (5,0) {$\bbR$};
\end{tikzpicture}
\label{fig:sigma'}
\end{figure}

Here $\Sigma'$ is the black portion of the contour $\Sigma'^{(2)}\cup\Gamma $ given in Figure \ref{fig:contour-d}. Our goal is to build explicitly solvable models out of this contour. Now we decompose $w^{ ( 2 )}_\theta = v^{ ( 2 )}_\theta -I $ into two parts:
\begin{equation}
w^{ ( 2 )}_\theta=w^e+w'
\end{equation}
where $w'=w^{ ( 2 )}_\theta\restriction_{  \Sigma' }$ and $w^e=w^{ ( 2 )}_\theta\restriction_{   \left( \Sigma^{(2)}\cup\Gamma \right) \setminus\Sigma'   }$. 

Near $\pm z_0$, we write
$$\text{Re} i\theta(z; x, t)=\dfrac{1}{2}( \text{Im} z ) \left( \dfrac{1}{1+z_0^2}  \right) \left(   \dfrac{z_0^2}{(\text{Re}z )^2 + (\text{Im}z )^2 } -1\right)$$
and set 
\begin{equation}
\label{tau}
\tau=\dfrac{t z_0}{1+z_0^2}.
\end{equation}
On $L_\eps$, away from $\pm z_0$, for $i=1,  2, 7, 8$ we estimate:
\begin{equation}
\label{Ri-decay}
\left\vert R_i e^{2i\theta} \right\vert \leq C_r e^{-k \eps \tau}.
\end{equation}
Similarly, on ${L_\eps^*}$  for $j=3,  4, 5, 6$
\begin{equation}
\label{Rj-decay}
\left\vert R_j e^{-2i\theta} \right\vert  \leq C_r e^{-k \eps \tau}.
\end{equation}
Also notice that
on $\Sigma'^{(2)}_9$ and $\Sigma^{(2)}_{10}$  by the construction of  $\mathcal{K}(\phi)$ and $v_9$, one obtains
\begin{equation}
\label{R9-decay}
\left\vert v_9 -I \right\vert \lesssim e^{-c\tau}.
\end{equation}
Combining Remark \ref{circles} with the discussion above we conclude that 
\begin{equation}
\label{expo}
|w^e|\lesssim e^{-c\tau}.
\end{equation}

\subsection{Construction of local parametrices}
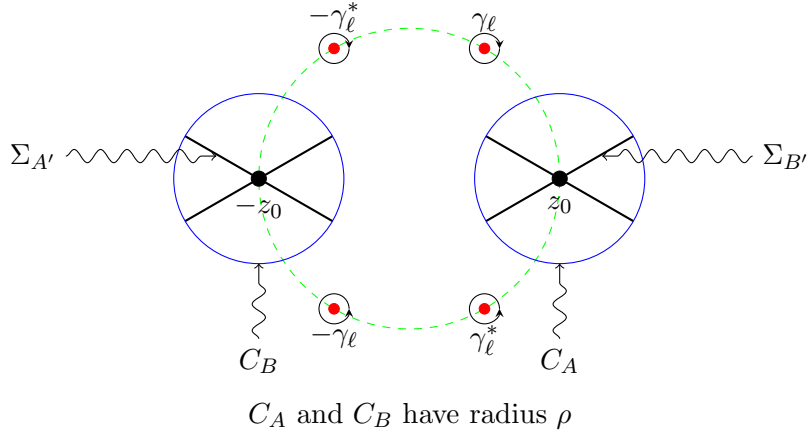
\begin{figure}[H]
\caption{ $\Sigma'=\Sigma_{A'}\cup \Sigma_{B'}\cup \pm \gamma_\ell \cup \gamma^*_\ell$}
\vskip 15pt
\begin{tikzpicture}[ photon/.style={decorate,decoration={snake,post length=0.8mm}} ]

\draw [green] [dashed] (0,0) circle [radius=2];

\draw  [thick]   (2.9794, 0.5655)-- (1.02, -0.5655);
\draw  [thick]   (2.9794, -0.5655)-- (1.02, 0.5655);
\draw  [thick]   (-2.9794, 0.5655)-- (-1.02, -0.5655);
\draw  [thick]   (-2.9794, -0.5655)-- (-1.02, 0.5655);
\draw	[fill]						(-2,0)		circle[radius=0.1];	
\draw	[fill]					(2,0)		circle[radius=0.1];
\draw		[blue]				(-2,0)		circle[radius=1.131];	
\draw		[blue]					(2,0)		circle[radius=1.131];
\node[below] at (-2,-0.1)			{$-z_0$};
\node[below] at (2,-0.1)			{$z_0$};
\node[left] at (-4.56, 0.3)					{$\Sigma_{A'}$};
\node[right] at (4.56, 0.3)				{$\Sigma_{B'}$};

\draw [red, fill=red] (1, 1.732) circle [radius=0.07];
\draw[->,>=stealth]  (1.2, 1.732) arc(360:0:0.2);
\draw [red, fill=red] (-1,1.732) circle [radius=0.07];
\draw[->,>=stealth] (-0.8, 1.732) arc(360:0:0.2);
\draw [red, fill=red] (1,-1.732) circle [radius=0.07];
\draw[->,>=stealth] (1.2, -1.732) arc(0:360:0.2);
\draw [red, fill=red] (-1,-1.732) circle [radius=0.07];
\draw[->,>=stealth]  (-0.8, -1.732) arc(0:360:0.2);

\draw[->,photon] ( 4.565,0.3) --  (2.565, 0.3); 
\draw[->,photon] ( -4.565,0.3) --  (-2.565, 0.3); 
\draw[->,photon] (-2, -2.131) --  (-2, -1.131); 
\draw[->,photon] (2, -2.131) --  (2, -1.131); 
\node [below] at  (-2, -2.131)  {$C_B$};
\node [below] at  (2, -2.131) {$C_A$};
\node [above] at (1, 1.82) {$\gamma_\ell$};
\node [above] at (-1, 1.82) {$-\gamma_\ell^*$};
\node [below] at (-1, -1.82) {$-\gamma_\ell$};
\node [below] at (1, -1.82)  {$\gamma_\ell^*$};
\end{tikzpicture}
\begin{center}
  \begin{tabular}{ccc}
$C_A$ and $C_B$ have radius $\rho$
\end{tabular}
\end{center}
\label{fig:contour-2}
\end{figure}
In this subsection we construct some local parametrices which are needed to obtain $m^\RHP$. To achieve this, we build the solutions of the following three exactly solvable RHPs:
\begin{problem}
\label{prob:mkdv.br}
Find a matrix-valued function $m^{(br)}(z;x,t)$ on $\bbC \setminus\Sigma$ with the following properties:
\begin{enumerate}
\item		$m^{(br)}(z;x,t) \rarr I$ as $|z| \rarr \infty$,
\item		$m^{(br)}(z;x,t)$ is analytic for $z \in  \bbC \setminus ( \pm \gamma_\ell \cup \pm\gamma^*_\ell ) $
			with continuous boundary values
			$m^{(br)}_\pm(z;x,t)$.
\item On $ \pm \gamma_\ell \cup \pm\gamma^*_\ell$, let $\delta(z)$ be the solution to Problem \ref{prob:RH.delta} and we have the following jump conditions
$m^{(br)}_+(z;x,t)=m^{(br)}_-(z;x,t)	
			e^{-i\theta\ad\sigma_3}v^{(br)}(z)$
			where
$$
e^{-i\theta\ad\sigma_3}v^{(br)}(z)= 	\begin{cases}
						\twomat{1}{0}{\dfrac{c_\ell \left[ \delta(z_\ell ) \right]^{-2} }{z-z_\ell} e^{2i\theta(z_\ell )}}{1}	&	z\in \gamma_\ell, \\
						\\
						\twomat{1}{\dfrac{\overline{c_\ell} \left[ \delta(\overline{z_\ell })\right]^2 }{z -\overline{z_\ell }} e^{-2i\theta (\overline{z_\ell })} }{0}{1}
							&	z \in \gamma_\ell^*\\
							\\
							\twomat{1}{\dfrac{-c_\ell \,\left[ \delta(-{z_\ell })\right]^{2} }{z + z_\ell } e^{-2i\theta (-{z_\ell })}  }{0}{1}	&	z\in -\gamma_\ell, \\
						\\
						\twomat{1}{0}{\dfrac{-\overline{c_\ell} \, \left[ \delta(-\overline{z_\ell })\right]^{-2}e^{2i\theta(-\overline{z_\ell })} }{z +\overline{z_\ell }}}{1}
							&	z \in -\gamma_\ell^* .
					\end{cases}
$$					
\end{enumerate}
\end{problem}

\begin{problem}
\label{prob:mkdv.A}
Find a matrix-valued function $m^{A'}(z;x,t)$ on $\bbC \setminus\Sigma_A'$ with the following properties:
\begin{enumerate}
\item		$m^{A'}(z;x,t) \rarr I$ as $ z \rarr \infty$.
\item		$m^{A' }(z;x,t)$ is analytic for $z \in  \bbC \setminus \Sigma_A' $
			with continuous boundary values
			$m^{A'}_\pm(z;x,t)$.
\item On $ \Sigma_A'$ we have the following jump conditions
$$m^{A'}_+(z;x,t)=m^{A'}_-(z;x,t)	
			e^{-i\theta\ad\sigma_3}v^{A'}(z)$$
			where $v^{A'}=v^{(2)}\restriction _{\Sigma_A' }$.
\end{enumerate}
\end{problem}			

\begin{problem}
\label{prob:mkdv.B}
Find a matrix-valued function $m^{B'}(z;x,t)$ on $\bbC \setminus\Sigma_B'$ with the following properties:
\begin{enumerate}
\item		$m^{B'}(z;x,t) \rarr I$ as $ z \rarr \infty$.
\item		$m^{B'}(z;x,t)$ is analytic for $z \in  \bbC \setminus \Sigma_B' $
			with continuous boundary values
			$m^{B’}_\pm(z;x,t)$.
\item On $ \Sigma_B'$ we have the following jump conditions
$$m^{B'}_+(z;x,t)=m^{B’}_-(z;x,t)	
			e^{-i\theta\ad\sigma_3}v^{B'}(z)$$
			where $v^{B’}=v^{(2)}\restriction_{\Sigma_B' }$.
\end{enumerate}
\end{problem}		
We first study the solution to Problem \ref{prob:mkdv.br}. Since this problem consists of only discrete data, \eqref{BC-int} reduces to a linear system. More explicitly, we have a closed system:
\begin{align}
\label{BC-int-br}
\twomat{ \mu_{11}(\overline{z_l} ) }{\mu_{12}( {z_l} )}{\mu_{21}(\overline{z_l} )}{\mu_{22}( {z_l} )} &= I +\Twomat { \dfrac{\mu_{12}(z_l )c_l  \left[ \delta(z_\ell )\right]^{-2} e^{2i\theta(z_l )} }{\overline{z_l} -z_l }  }
        { -\dfrac{\mu_{11}( \overline{z_l }) {\overline{c_l }} \left[ \delta(\overline{z_\ell })\right]^2 e^{-2i\theta(   \overline{z_l }  )} }{z_l- \overline{z_l }}   }
        {\dfrac{\mu_{22}(z_l ) c_l\left[ \delta(z_\ell )\right]^{-2} e^{2i\theta(z_l )} }{ \overline{z_l} -z_l }  }
        {- \dfrac{\mu_{21}( \overline{z_l }) {\overline{c_l }}\left[  \delta(\overline{z_\ell })\right]^2 e^{-2i\theta(   \overline{z_l }  )} }{z_l- \overline{z_l }} }\\
         \nonumber
         &\qquad +\Twomat
        { -\dfrac {\mu_{12}( -\overline{z_l }) {\overline{c_l }} \left[ \delta(-\overline{z_\ell })\right]^{-2} e^{2i\theta(  - \overline{z_l }  )} }{\overline{z_l} + \overline{z_l }}   }
        {\dfrac{\mu_{11}(-z_l )c_l \left[ \delta(-{z_\ell })\right]^{2}  e^{-2i\theta(-z_j)} }{z_l+z_l }  }
       {- \dfrac{\mu_{22}( -\overline{z_l }) {\overline{c_l }} \left[ \delta(-\overline{z_\ell })\right]^{-2} e^{2i\theta(  - \overline{z_l }  )} }{\overline{z_l}  + \overline{z_l }} }
        {\dfrac{\mu_{21}(-z_l )c_l \left[ \delta(-{z_\ell })^{2}\right]  e^{-2i\theta(-z_l )} }{z_l+z_l }  },
\end{align}
\begin{align}
\label{BC-int-br-}
\twomat{ \mu_{11}( -{z_l} ) }{\mu_{12}( -\overline{z_l} )}{\mu_{21}(-{z_l} )}{\mu_{22}(-\overline {z_l} )} &= I +\Twomat { \dfrac{\mu_{12}(z_l )c_l \left[ \delta(z_\ell )\right]^{-2} e^{2i\theta(z_l )} }{-{z_l} -z_l }  }
        { -\dfrac{\mu_{11}( \overline{z_l }) {\overline{c_l }} \left[ \delta(\overline{z_\ell })\right]^2 e^{-2i\theta(   \overline{z_l }  )} }{- \overline{z_l } - \overline{z_l } }   }
        {\dfrac{\mu_{22}(z_l )c_l \left[ \delta(z_\ell )\right]^{-2} e^{2i\theta(z_l )} }{ -{z_l} -z_l }  }
        {- \dfrac{\mu_{21}( \overline{z_l }) {\overline{c_l }} \left[ \delta(\overline{z_\ell })\right]^2 e^{-2i\theta(   \overline{z_l }  )} }{- \overline{z_l } - \overline{z_l }} }\\
         \nonumber
         &\qquad +\Twomat
        { -\dfrac {\mu_{12}( -\overline{z_l }) {\overline{c_l }} \left[ \delta(-\overline{z_\ell })\right]^{-2} e^{2i\theta(  - \overline{z_l }  )} }{-{z_l} + \overline{z_l }}   }
        {\dfrac{\mu_{11}(-z_l )c_l\left[ \delta(-{z_\ell })\right]^{2} e^{-2i\theta(-z_j)} }{-\overline{z_l} +z_l }  }
       {- \dfrac{\mu_{22}( -\overline{z_l }) {\overline{c_l }} \left[ \delta(-\overline{z_\ell })\right]^{-2} e^{2i\theta(  - \overline{z_l }  )} }{-{z_l}  + \overline{z_l }} }
        {\dfrac{\mu_{21}(-z_l )c_l \left[\delta(-{z_\ell })\right]^{2} e^{-2i\theta(-z_l )} }{-\overline{z_l} +z_l }  }.
\end{align}
Given that 
$$\delta(z)=\left(  \overline{ \delta (\overline{z}) } \right)^{-1},  $$
the Schwarz invariant condition of the jump matrices $ e^{-i\theta\ad\sigma_3}v^{(br)}(z) $ is satisfied and the solvability of this linear system \eqref{BC-int-br}-\eqref{BC-int-br-} follows. And we arrive at the following expressions:
\begin{equation}
\label{br-0}
m^{br}(0)=\twomat {m_{11}^{br}(0)   }{m_{12}^{br}(0)  }{m_{21}^{br}(0)  }{ m_{22}^{br}(0)   }
\end{equation}
where
\begin{align*}
m_{11}^{br}(0) &=m_{22}^{br}(0) \\
                       &= \dfrac{\left(1-\dfrac{\left\vert c_l \left[  \delta(z_\ell )\right]^{-2} e^{2i\theta(z_l )}\right\vert^2}{4z_\ell \zbar_\ell }-\dfrac{\left\vert c_l  \left[\delta(z_\ell )\right]^{-2}  e^{2i\theta(z_l )}\right\vert^2}{(z_\ell-\zbar_\ell   )^2}\right)^2
				-\left(\dfrac{c_l \left[ \delta(z_\ell )\right]^{-2} e^{2i\theta(z_l )}}{2z_\ell}+\dfrac{ {\overline{c_l } } \left[ \delta(\overline{z_\ell })\right]^2  e^{-2i\theta(   \overline{z_l }  )}   }{2\zbar_\ell }\right)^2} {\left(1-\dfrac{\left\vert c_l \left[ \delta(z_\ell )\right]^{-2}  e^{2i\theta(z_l )} \right\vert^2}{4z_\ell \zbar_\ell }-\dfrac{\left\vert c_l \left[ \delta(z_\ell )\right]^{-2}  e^{2i\theta(z_l )} \right\vert^2}{(z_\ell -\zbar_\ell)^2}\right)^2
				+\left(\dfrac{c_l \left[ \delta(z_\ell )\right]^{-2} e^{2i\theta(z_l )}}{2z_\ell}+\dfrac{ {\overline{c_l } } \left[ \delta(\overline{z_\ell })\right]^2  e^{-2i\theta(   \overline{z_l }  )}   }{2\zbar_\ell }\right)^2 }
\end{align*}

\begin{align*}
m_{21}^{br}(0) &=-m_{12}^{br}(0)\\
    &=-\dfrac{2\left(\dfrac{ c_l \left[ \delta(z_\ell )\right]^{-2}  e^{2i\theta(z_l )}   }{2z_\ell}+\dfrac{ \overline{c_l } \left[ \delta(\overline{z_\ell })\right]^2   e^{-2i\theta(   \overline{z_l }  )}    }{2 \zbar_\ell}\right)\left(1-\dfrac{\left\vert c_l \left[ \delta(z_\ell )\right]^{-2}  e^{2i\theta(z_l )} \right\vert^2   }{4z_\ell \zbar_\ell}-\dfrac{\left\vert c_l \left[ \delta(z_\ell )\right]^{-2}  e^{2i\theta(z_l )} \right\vert^2}{(z_\ell-\zbar_\ell)^2}\right)} {\left(1-\dfrac{\left\vert c_l \left[ \delta(z_\ell )\right]^{-2}  e^{2i\theta(z_l )} \right\vert^2}{4z_\ell \zbar_\ell}-\dfrac{\left\vert c_l \left[ \delta(z_\ell )\right]^{-2}  e^{2i\theta(z_l )} \right\vert^2}{(z_\ell-\zbar_\ell)^2}\right)^2
				+\left(\dfrac{ c_l \left[  \delta(z_\ell )\right]^{-2}  e^{2i\theta(z_l ) }   }{2z_\ell}+\dfrac{ \overline{c_l }  \left[\delta(\overline{z_\ell })\right]^2  e^{-2i\theta(   \overline{z_l }  )}  }{2 \zbar_\ell}\right)^2}.
\end{align*}
Recall that 
$$z_\ell=\rho_\ell e^{i\omega_\ell}=\xi_\ell+ i\eta_\ell.$$
We split $\theta(z_n; x, t)$ into real and imaginary parts,
\begin{equation}
		\begin{split}
				\theta(z_\ell; x, t)&=\theta_\mathbb{R}(z_\ell; x, t)+i\theta_\mathbb{I}(z_\ell; x, t),\\
				\text{with}\quad&\theta_\mathbb{R}(\xi_\ell,\eta_\ell; x, t)=
				-\frac{1}{4}\left[\left(\eta_\ell+\frac{\eta_\ell}{\xi^2_\ell+\eta^2_\ell}\right)x
				+\left(\eta_\ell-\frac{\eta_\ell}{\xi^2_\ell+\eta^2_\ell}\right)t\right],\\
				&\theta_\mathbb{I}(\xi_\ell,\eta_\ell; x, t)=
				\frac{1}{4}\left[\left(\xi_\ell+\frac{\xi_\ell}{\xi^2_\ell+\eta^2_\ell}\right)x
				+\left(\xi_\ell-\frac{\xi_\ell}{\xi^2_\ell+\eta^2_\ell}\right)t\right].\\
		\end{split}
\end{equation}
Therefore,
\begin{equation}
\begin{split}
				&1-\dfrac{\left\vert c_l \left[ \delta(z_\ell )\right]^{-2}  e^{2i\theta(z_l )} \right\vert^2}{4z_\ell \zbar_\ell}-\dfrac{\left\vert c_l \left[ \delta(z_\ell )\right]^{-2}  e^{2i\theta(z_l )} \right\vert^2}{(z_\ell-\zbar_\ell)^2}
				=e^{2\theta_\mathbb{R}}\left(e^{-2\theta_\mathbb{R}}
				+\dfrac{\xi_\ell^2\left\vert c_l  \left[\delta(z_\ell )\right]^{-2}  \right\vert^2e^{2\theta_\mathbb{R}}}{4\eta_\ell^2(\xi_\ell^2+\eta_\ell^2)}\right)\\
				&\qquad=\dfrac{\xi_\ell \left\vert c_l \left[ \delta(z_\ell )\right]^{-2} \right\vert e^{2\theta_\mathbb{R}}}{2\eta_\ell \sqrt{\xi_\ell^2+\eta_\ell^2}}
				2\cosh\left(2\theta_\mathbb{R}(\xi_\ell,\eta_\ell; x, t)+\mathrm{log}\left[\frac{\xi_\ell \left\vert c_l \left[ \delta(z_\ell )\right]^{-2}  \right\vert}{2\eta_\ell \sqrt{\xi_\ell^2+\eta_\ell^2}}\right]\right)\\
				&\dfrac{c_l \left[ \delta(z_\ell )\right]^{-2}  e^{2i\theta(z_l )} }{2z_\ell}+\dfrac{ \overline{c_l } \left[ \delta(\overline{z_\ell })\right]^2   e^{-2i\theta(   \overline{z_l }  )}    }{2 \zbar_\ell } \\
				&=\dfrac{\left\vert c_l  \left[\delta(z_\ell )\right]^{-2}   \right\vert e^{2\theta_\mathbb{R}}}{2\sqrt{\xi_\ell^2+\eta_\ell^2}}
				2\cos\left(2\theta_\mathbb{I}(\xi_\ell,\eta_\ell; x, t)+\arg\left(  c_l  \left[\delta(z_\ell )\right]^{-2}     \right)-\mathrm{arctan}\left(\frac{\eta_\ell }{\xi_\ell}\right)\right)
		\end{split}
\end{equation}
Finally,
\begin{align}
\label{br-cos}
\cos(u)=&1-\dfrac{8\,\dfrac{\eta_\ell^2}{\xi_\ell^2}\,\mathrm{sech}\left[2\theta_\mathbb{R}(z_\ell; x, t)+h_\ell\right]^2\cos\left[2\theta_\mathbb{I}(z_\ell; x, t)+\alpha_\ell\right]^2}
				{\left(1+\frac{\eta_\ell^2}{\xi_\ell^2}\mathrm{sech}\left[2\theta_\mathbb{R}(z_n; x, t)+ h_\ell\right]^2\cos\left[2\theta_\mathbb{I}(z_\ell; x, t )+\alpha_\ell\right]^2\right)^2},\\
				\label{br-sin}
				\sin(u)=&-4\frac{\eta_\ell}{\xi_\ell }\mathrm{sech}\left[2\theta_\mathbb{R}(z_\ell; x, t)+ h_\ell \right]\cos\left[2\theta_\mathbb{I}(z_\ell; x, t)+\alpha_\ell\right]\\
				\nonumber
				&\times\dfrac{
				\left(1-\dfrac{\eta_\ell^2}{\xi_\ell^2}\mathrm{sech}\left[2\theta_\mathbb{R}(z_\ell; x, t)+ h_\ell \right]^2\cos\left[2\theta_\mathbb{I}(z_\ell; x, t )+\alpha_\ell\right]^2\right)}
				{\left(1+\dfrac{\eta_\ell^2}{z_\ell; x, t}\mathrm{sech}\left[2\theta_\mathbb{R}(z_\ell; x, t)+ h_\ell \right]^2\cos\left[2\theta_\mathbb{I}(z_\ell; x, t)+\alpha_\ell\right]^2\right)^2},
\end{align}
with
\begin{equation}
\begin{split}
				2\theta_\mathbb{R}(z_\ell; x, t)=&-\frac{\eta_\ell}{2}\left[\left(1+\frac{1}{\rho_\ell^2}\right)x+\left(1-\frac{1}{  \rho_\ell^2}\right)t\right],\\
				2\theta_\mathbb{I}(z_\ell; x, t)=&\frac{\xi_\ell }{2}\left[\left(1+\frac{1}{\rho_\ell^2}\right)x+\left(1-\frac{1}{ \rho_\ell^2}\right)t\right],\\
				h_\ell=&\mathrm{log}\left(\frac{\xi_\ell \left\vert c_l \left[ \delta(z_\ell )\right]^{-2} \right\vert  }{2\eta_\ell \sqrt{\xi_\ell^2+\eta_\ell^2}}\right),\\
				\alpha_\ell=&\arg\left(  c_l  \left[\delta(z_\ell )\right]^{-2} \right)-\mathrm{arctan}\left(\frac{\eta_\ell}{\xi_\ell}\right).
		\end{split}
\end{equation}
We then find the solution to Problem \ref{prob:mkdv.A} and Problem \ref{prob:mkdv.B}.
We mention that unlike solving the model problem related to the NLS RHP, in this place, a direct scaling does not lead to the exactly solvable models. So we give the complete picture. Extend the contours $\Sigma_{A'}$ and $\Sigma_{B'}$ to
\begin{subequations}
\begin{equation}
\widehat{\Sigma }_{A'}=\left\lbrace\lbrace z=-z_0+z_0 u e^{\pm i\pi /6}: -\infty<u<\infty \right\rbrace,
\end{equation}
\begin{equation}
\widehat{\Sigma }_{B'}=\left\lbrace z=z_0+z_0 u e^{\pm i 5\pi /6}: -\infty<u<\infty \right\rbrace
\end{equation}
\end{subequations}
respectively and define $\hat{v}^{A'}$, $\hat{v}^{B'}$ on $\widehat{\Sigma }_{A'}$, $\widehat{\Sigma }_{B'}$ through
\begin{subequations}
\begin{equation}
\hat{v}^{A‘} =\begin{cases}
v^{A'}(z), &\quad z\in \Sigma_{A'}\subset \widehat{\Sigma}_{A'}, \\
       0,   &\quad z\in \widehat{\Sigma}_{A'}\setminus {\Sigma}_{A'},
\end{cases}
\end{equation}
\begin{equation}
\hat{v}^{B'} =\begin{cases}
v^{B'}(z), &\quad z\in \Sigma_{B'}\subset \widehat{\Sigma}_{B'} \\
       0,   &\quad z\in \widehat{\Sigma}_{B'}\setminus {\Sigma}_{B'}.
\end{cases}
\end{equation}
\end{subequations}
\begin{figure}[H]
\caption{$\Sigma_A,\Sigma_B$}
\vskip 15pt
\begin{tikzpicture}[scale=0.6]
\draw[dashed] 				(-4,0) -- (4,0);							
\draw [->,thick,>=stealth] 	(0,0 )-- (1.732, 1);						
\draw  [thick]  (3.464,2 ) -- (1.732, 1);
\draw [->,thick,>=stealth] 	(0,0 )-- (1.732, -1);						
\draw  [thick]  (3.464,-2 ) -- (1.732, -1);
\draw [->,thick,>=stealth] 	(0,0 )-- (-1.732, 1);						
\draw  [thick]  (-3.464,2 ) -- (-1.732, 1);
\draw [->,thick,>=stealth] 	(0,0 )-- (-1.732, -1);						
\draw  [thick]  (-3.464, -2 ) -- (-1.732, -1);

\draw[fill]						(0,0)	circle[radius=0.075];		
\node [below] at  			(0,-0.15)		{$0$};
\node[left] at					(2.5, 2.3)		{$\Sigma_A^1$};
\node[right] at					(-2.5,2.3)		{$\Sigma_A^2$};
\node[right] at					(-2.5,-2.3)		{$\Sigma_A^3$};
\node[left] at					(2.5,-2.3)		{$\Sigma_A^4$};
\end{tikzpicture}
\qquad
\begin{tikzpicture}[scale=0.6]
\draw[dashed] 				(-4,0) -- (4,0);							
\draw[dashed] 				(-4,0) -- (4,0);							
\draw [thick] 	(0,0 )-- (1.732, 1);						
\draw [->,thick,>=stealth]   (3.464,2 ) -- (1.732, 1);
\draw [thick] 	(0,0 )-- (1.732, -1);						
\draw  [->,thick,>=stealth]  (3.464,-2 ) -- (1.732, -1);
\draw [thick] 	(0,0 )-- (-1.732, 1);						
\draw [->,thick,>=stealth] (-3.464,2 ) -- (-1.732, 1);
\draw [thick] 	(0,0 )-- (-1.732, -1);						
\draw [->,thick,>=stealth]  (-3.464, -2 ) -- (-1.732, -1);

\draw[fill]						(0,0)	circle[radius=0.075];		
\node [below] at  			(0,-0.15)		{$0$};
\node[left] at					(2.5, 2.3)		{$\Sigma_B^1$};
\node[right] at					(-2.5,2.3)		{$\Sigma_B^2$};
\node[right] at					(-2.5,-2.3)		{$\Sigma_B^3$};
\node[left] at					(2.5,-2.3)		{$\Sigma_B^4$};
\end{tikzpicture}
\label{fig:jumps-A-B}
\end{figure}
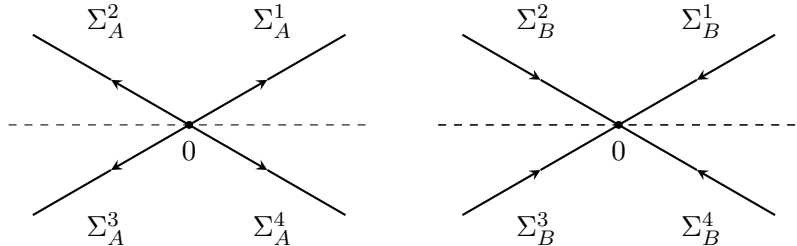
 Let $\Sigma_A$ and $\Sigma_B$ denote the contours
\begin{equation*}
\left\lbrace z=z_0 u e^{\pm i\pi/6} : -\infty<u<\infty \right\rbrace
\end{equation*}
with the same orientation as those of $\Sigma_{A'}$ and $\Sigma_{B'}$ respectively.  On 
$\widehat{\Sigma}_{A'}$ ($\widehat{\Sigma}_{B'}$) we carry out the following change of variable 
$$z\mapsto \zeta=    \sqrt{\dfrac{2t   }{ (1+z_0^2)z_0  } }  (z \pm z_0) $$
and introduce the scaling operators 
\begin{subequations}
\begin{equation}
\begin{cases}
N_A:  L^{2} ( \widehat{\Sigma}_{A'} ) \rightarrow L^2(\Sigma_A)\\
f(z)\mapsto (N_A f)(z)=f\left(  \sqrt{\dfrac{ (1+z_0^2)z_0  }{2t   }}  \zeta -z_0  \right),
\end{cases}
\end{equation}
\begin{equation}
\begin{cases}
N_B:  L^{2} ( \widehat{\Sigma}_{B'} ) \rightarrow L^2(\Sigma_B)\\
f(z)\mapsto (N_B f)(z)=f\left(\sqrt{\dfrac{ (1+z_0^2)z_0  }{2t   }}\zeta + z_0  \right).
\end{cases}
\end{equation}
\end{subequations}
We also define
\begin{equation}
\label{operator a-b}
1_A=1\restriction_{\Sigma_A}, \quad 1_B=1\restriction_{\Sigma_B}.
\end{equation}
We first consider the case $\Sigma_B$. The rescaling gives 
\begin{equation*}
N_B \left( e^{\chi(z_0)} \eta(z; z_0)  e^{-it\theta}\right)=\delta^0_B\delta^1_B(\zeta)
\end{equation*}
with 
\begin{align*}
\delta^0_B &= (8\tau)^{-i\kappa/2} e^{-i\tau} e^{\chi(z_0)} \eta_0(z_0) \\
\delta^1_B(\zeta) &= \zeta^{i\kappa} \left( \dfrac{2 z_0}{\sqrt{\dfrac{ (1+z_0^2)z_0  }{2t   }}\zeta+2z_0}\right)^{i\kappa} \exp\left[ {(-i\zeta^2/4) \left(1 -\dfrac{ z_0^4 }{\sqrt{2} \xi^4 \sqrt{\tau} }\zeta\right) } \right] 
\end{align*}
where 
$$\xi=z_0+k \sqrt{\dfrac{ (1+z_0^2)z_0  }{2t   }}\zeta, \quad 0<k<1. $$
Note that $\delta^0_B(z)$ is independent of $z$ and that $|\delta^0_B(z)|=1$. 
Set 
\begin{align*}
\Delta^0_B &=(\delta_B^0)^{\sigma_3}\\
w^B(\zeta)     &=(\Delta^0_B)^{-1}  (N_B \hat{w}^{B'})\Delta^0_B
\end{align*}
and define the operator $B: L^2(\Sigma_B) \to  L^2(\Sigma_B)$
\begin{align*}
B &=C_{ w^B}\\
  &= C^+ \left( \cdot (\Delta ^0_B)^{-1} (N_B \hat{w}^{B'}_- )  \Delta ^0_B \right)+ C^- \left( \cdot (\Delta ^0_B)^{-1} (N_B \hat{w}^{B'}_+)  \Delta ^0_B \right).
\end{align*}
On 
\begin{align*}
L_B\cup \overline{L}_B & =\left \lbrace z=u z_0 \sqrt{\dfrac{2t   }{ (1+z_0^2)z_0  } }  e^{-i\pi/6}: -\eps<u< \eps \right\rbrace\\
       &\quad \cup\left \lbrace z=u z_0  \sqrt{\dfrac{2t   }{ (1+z_0^2)z_0  } }  e^{i\pi/6}: -\eps<u<\eps \right \rbrace
\end{align*}
From the list of entries stated in \eqref{R1}, \eqref{R4}, \eqref{R8+} and \eqref{R8-}, we have
\begin{align}
\label{wB-1}
\left(  (\Delta_B^0)^{-1} \left(N_B \hat{w}^{B'}_- \right)  \Delta_B^0 \right)(\zeta) &= \twomat{0}{0}{ -r(z_0) \left[ \delta^1_B( \zeta ) \right]^{-2} }{0},\\
\label{wB-2}
\left(  (\Delta_B^0)^{-1} \left(N_B \hat{w}^{B'}_- \right)  \Delta_B^0 \right)(\zeta ) &= \twomat{0}{0}{
\dfrac{r(z_0)}{1 +  |r(z_0)|^2} \left[  \delta^1_B( \zeta )\right]^{-2} }{0},\\
\label{wB-3}
\left(  (\Delta_B^0)^{-1} \left(N_B \hat{w}^{B'}_+ \right)  \Delta_B^0 \right)(\zeta) &= \twomat{0}{-\overline{ r(z_0)} \left[  \delta^1_B(\zeta  )\right]^{2} } {0}{0},\\
\label{wB-4}
\left(  (\Delta_B^0)^{-1} \left(N_B \hat{w}^{B'}_+ \right)  \Delta_B^0 \right)(\zeta) &= \twomat{0}
{\dfrac{ \overline{r(z_0)}}{1 +  |r(z_0)|^2}\left[ \delta^1_B(\zeta)\right]^{2} }{0}{0}.
\end{align}
\begin{lemma}
\label{delta-B }
Let $\nu$ be a small but fixed positive number with $0<2\nu<1$. Then 
$$\left\vert  \left[ \delta^1_B(\zeta) \right]^{\pm 2}-\zeta^{\pm 2i\kappa} e^{\mp i \zeta^2/2 } \right\vert \leq c |e^{\mp i\nu \zeta^2/2} \zeta^3 |\tau^{-1/2} $$
and as a consequence
\begin{equation}
\norm{ \left[\delta^1_B(\zeta)\right]^{\pm 2}-\zeta^{\pm 2i\kappa} e^{\mp i \zeta^2/2} }{L^1\cap L^2 \cap L^\infty} \leq c \tau^{-1/2} 
\end{equation}
where the $\pm$ sign corresponds to $\zeta \in L_B$ and $\zeta\in \overline{L}_B$ respectively. Moreover, 
\begin{equation}
\label{zeta-tau}
\left\vert \zeta^{\pm 2i\kappa} e^{\mp i \zeta^2/2}\right\vert \lesssim \left\vert e^{\mp i\nu \zeta^2/2} \right\vert e^{-\eps^2(1-\nu)\tau}\lesssim  \left\vert e^{\mp i\nu z^2/2} \right\vert  \tau^{-1/2}
\end{equation}
where the $\pm$ sign corresponds to $\zeta \in  (\Sigma_B^1\cup \Sigma_B^3)\setminus L_B$ and $ \zeta\in (\Sigma_B^2\cup \Sigma_B^4)\setminus \overline{L}_B$ respectively. 
\end{lemma}
\begin{proof}
We only deal with the $-$ sign. One can write
\begin{align*}
&\left[\delta^1_B(\zeta)\right]^{- 2}-\zeta^{- 2i\kappa} e^{ i \zeta^2/2}\\
 & \quad = e^{i\nu \zeta^2/2}\left( e^{i\nu \zeta^2/2} \left[  \left( \dfrac{2 z_0}{\sqrt{\dfrac{ (1+z_0^2)z_0  }{2t   }}\zeta+2z_0}\right)^{-2i\kappa} \zeta^{-2i\kappa} \exp \left[ (i (1-2\nu  )\zeta^2/2) \left(1 -\dfrac{ z_0^4 }{\sqrt{2}  (1-2\nu )  \xi^4 \sqrt{\tau} }\zeta\right)\right]  \right. \right. \\
&\qquad \qquad \qquad \qquad \qquad \left. \left.  -\zeta^{-2i\kappa }  e^{i(1-2\nu)\zeta^2/2}\right]   \right).
 \end{align*}
Each of the terms in the expression above is uniformly bounded for fixed $z_0$ (cf. \cite[Appendix ]{CVZ99}). 
Following the proof of \cite[Lemma 3.35]{DZ93}, we estimate 
$$\left\vert e^{i\nu \zeta^2/2} \left(  \left( \dfrac{2 z_0}{\sqrt{\dfrac{ (1+z_0^2)z_0  }{2t   }}\zeta+2z_0} \right)^{-2i\kappa} -1  \right) \right\vert \leq c |e^{i\nu \zeta^2/2} |\tau^{-1/2}$$
and 
\begin{align*}
& \left\vert e^{i\nu \zeta^2/2} \zeta^{-2i\kappa} \left(  e^{(i (1-2\nu  )\zeta^2/2) \left(1 -\dfrac{ z_0^4 }{\sqrt{2}  (1-2\nu )  \xi^4 \sqrt{\tau} }\zeta\right) }-  e^{i(1-2\nu)\zeta^2/2} \right) \right\vert \\
 &\quad \leq c |e^{i\nu \zeta^2/2} |\tau^{-1/2} 
\end{align*} as desired. And the inequality in \eqref{zeta-tau} is an easy consequence of \eqref{Ri-decay}-\eqref{Rj-decay}.
\end{proof}
We then consider the case $\Sigma_A$. Again the rescaling gives 
\begin{equation*}
N_A \left( e^{\chi(z_0)} \eta(z; z_0)  e^{-it\theta}\right)=\delta^0_A \delta^1_A(\zeta)
\end{equation*}
with 
\begin{align*}
\delta^0_A &= (8\tau)^{i\kappa/2} e^{i\tau} e^{\chi(-z_0)} \eta_0(-z_0)\\
\delta^1_A (\zeta) &= \zeta^{-i\kappa} \left( \dfrac{2 z_0}{\sqrt{\dfrac{ (1+z_0^2)z_0  }{2t   }}\zeta+2z_0}\right)^{-i\kappa} e^{( i\zeta^2/4) \left(1 +\dfrac{ z_0^4 }{\sqrt{2} \xi^4 \sqrt{\tau} }\zeta\right) }.
\end{align*}
Note that $\delta^0_A$ is independent of $\zeta$ and that $|\delta^0_A|=1$.
Set
\begin{align*}
\Delta^0_A &=(\delta_A^0)^{\sigma_3}\\
w^A(\zeta)     &=(\Delta^0_A)^{-1}  (N_A \hat{w}^{A'})\Delta^0_A
\end{align*}
and define the operator $A: L^2(\Sigma_A) \to  L^2(\Sigma_A)$
\begin{align*}
A &=C_{ (\Delta^0_A)^{-1}  (N_A \hat{w}^{A'})\Delta^0_A }\\
  &= C^+ \left( \cdot (\Delta ^0_A)^{-1} (N_A \hat{w}^{A'}_- )  \Delta ^0_A \right)+ C^- \left( \cdot (\Delta ^0_A)^{-1} (N_A \hat{w}^{A'}_+)  \Delta ^0_A \right).
\end{align*}
On 
\begin{align*}
L_A\cup \overline{L}_A & =\left\lbrace z=u z_0 \sqrt{\dfrac{2t   }{ (1+z_0^2)z_0  } }  e^{-i5\pi/6}: -\eps<u<\eps \right\rbrace\\
       &\quad \cup \left\lbrace z=u z_0\sqrt{\dfrac{2t   }{ (1+z_0^2)z_0  } }  e^{i 5\pi/6}: -\eps<u<\eps \right\rbrace
\end{align*}
we have from the list of entries stated in \eqref{R2}, \eqref{R3}, \eqref{R5} and \eqref{R7-}
\begin{align}
\left(  (\Delta_A^0)^{-1} \left(N_A \hat{w}^{A'}_- \right)  \Delta_A^0 \right)(z) &= \twomat{0}{0}{-r(-z_0) \left[  \delta^1_A(z)\right]^{-2} }{0},\\
\left(  (\Delta_A^0)^{-1} \left(N_A \hat{w}^{A'}_- \right)  \Delta_A^0 \right)(z ) &= \twomat{0}{0}{
\dfrac{r(-z_0)}{1 + |r(z_0)|^2} \left[ \delta^1_A(z)\right]^{-2} }{0},\\
\left(  (\Delta_A^0)^{-1} \left(N_A \hat{w}^{A'}_+ \right)  \Delta_A^0 \right)(z) &= \twomat{0}{-\overline{ r(-z_0)}\left[ \delta^1_A(z)\right]^{2} } {0}{0},\\
\left(  (\Delta_A^0)^{-1} \left(N_A \hat{w}^{A'}_+ \right)  \Delta_A^0 \right)(z) &= \twomat{0}
{\dfrac{ \overline{r(-z_0)}}{1+ |r(z_0)|^2}\left[ \delta^1_A(z)\right]^{2} }{0}{0}.
\end{align}
\begin{lemma}
\label{delta-A}
Let $\nu$ be a small but fixed positive number with $0<2\nu<1$. Then 
$$ \left\vert  \left[  \delta^1_A(\zeta)\right]^{\pm 2}-(-\zeta)^{\mp 2i\kappa} e^{\pm i \zeta^2/2} \right\vert\leq  c |e^{\pm i\nu \zeta^2/2} |\tau^{-1/2}  $$
and as a consequence,
\begin{equation}
\norm{\left[ \delta^1_A(\zeta)\right]^{\pm 2}-(-\zeta)^{\mp 2i\kappa} e^{\pm i\zeta^2/2}}{L^1\cap L^2 \cap L^\infty}  \leq c \tau^{-1/2} 
\end{equation}
where the $\pm$ sign corresponds to $\zeta\in L_A$ and $\zeta\in \overline{L}_A$ respectively. Moreover,
\begin{equation}
\label{zeta-tau'}
\left\vert (-\zeta)^{\pm 2i\kappa} e^{\mp i \zeta^2/2}\right\vert \lesssim \left\vert e^{\mp i\nu z^2/2} \right\vert e^{-\eps^2(1-\nu)\tau} \lesssim \left\vert e^{\mp i\nu z^2/2} \right\vert  \tau^{-1/2}
\end{equation}
where the $\pm$ sign corresponds to $\zeta \in  (\Sigma_A^2\cup \Sigma_A^3)\setminus L_A$ and $ \zeta\in (\Sigma_A^1\cup \Sigma_A^4)\setminus \overline{L}_A$ respectively. 
\end{lemma}
We now define
\begin{align*}
w^{A^0}(\zeta) &=\lim_{\tau\to\infty} (\Delta^0_A)^{-1}  (N_A \hat{w}^{A'})\Delta^0_A(\zeta),\\
w^{B^0}(\zeta) &=\lim_{\tau\to\infty} (\Delta^0_B)^{-1}  (N_B \hat{w}^{B'})\Delta^0_B(\zeta),\\
A^0 &=C^+(\cdot w^{A^0}_-)+C^-(\cdot w^{A^0}_+),\\
B^0 &=C^+(\cdot w^{B^0}_-)+C^-(\cdot w^{B^0}_+).
\end{align*}
\begin{proposition}
\label{prop:resolvent}
\begin{equation}
\norm{({1}_A-A)^{-1} }{L^2(\Sigma_A)} , \, \norm{(1_B-B)^{-1}}{L^2(\Sigma_B)} \leq c 
\end{equation}
as $\tau\to\infty$.
\end{proposition}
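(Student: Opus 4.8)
The plan is to prove the uniform bound by comparing $A$ and $B$ with their $\tau\to\infty$ limits $A^0$ and $B^0$, which are the Beals--Coifman operators of the exactly solvable parabolic--cylinder model Riemann--Hilbert problems, and then running a second--resolvent perturbation argument. Three standing facts will be used repeatedly: (i) $C^{\pm}$ is bounded on $L^2(\Sigma_A)$ and on $L^2(\Sigma_B)$ with a constant independent of $\tau$, since $\Sigma_A$ and $\Sigma_B$ are fixed finite unions of rays (the rescaling $z\mapsto\zeta$ leaves the $L^2$ operator norm of a Cauchy transform invariant); (ii) for $w,\tilde w\in L^\infty$ one has $\norm{C_w-C_{\tilde w}}{L^2\to L^2}\lesssim\norm{w-\tilde w}{L^\infty}$; and (iii) the estimates of Lemma \ref{delta-A} and Lemma \ref{delta-B } together with \eqref{zeta-tau}--\eqref{zeta-tau'}.

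First I would establish that $1_A-A^0$ and $1_B-B^0$ are invertible on $L^2(\Sigma_A)$, $L^2(\Sigma_B)$ with uniformly bounded inverses. After the conjugation by $(\Delta^0_A)^{\sigma_3}$, $(\Delta^0_B)^{\sigma_3}$ (which has modulus one and drops out), the jump matrices $w^{A^0},w^{B^0}$ depend only on $\kappa$ and on the values $r(\pm z_0)$; by \eqref{kappa}, the relation \eqref{minus}, and the embedding $H^1(\bbR)\hookrightarrow L^\infty(\bbR)$ these parameters range over a fixed compact set as $r$ varies in a bounded subset of $H^1(\bbR)$. The associated model RHP on the cross is exactly solvable, its solution being expressed through parabolic cylinder functions, hence it has a unique solution for every admissible parameter value. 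By the standard equivalence between solvability of such an RHP and invertibility of $1-C_w$ on $L^2$ (the operator $1-C_w$ is Fredholm of index zero, so injectivity forces bijectivity; cf.\ \cite{Zhou89}), $1_A-A^0$ and $1_B-B^0$ are invertible, and continuity of the solution map in the finitely many parameters over a compact set yields a uniform bound $\norm{(1_A-A^0)^{-1}}{L^2(\Sigma_A)},\ \norm{(1_B-B^0)^{-1}}{L^2(\Sigma_B)}\le c_0$.

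Next I would show $\norm{A-A^0}{L^2(\Sigma_A)\to L^2(\Sigma_A)}$ and $\norm{B-B^0}{L^2(\Sigma_B)\to L^2(\Sigma_B)}$ are $O(\tau^{-1/2})$. Split $\Sigma_B$ into $L_B\cup\overline L_B$ (the rescaled image of $\Sigma_{B'}$), on which $w^B-w^{B^0}$ has entries $\delta^1_B(\zeta)^{\pm2}-\zeta^{\pm2i\kappa}e^{\mp i\zeta^2/2}$, of size $O(\tau^{-1/2})$ in $L^\infty$ by Lemma \ref{delta-B }, and the complementary rays, on which $w^B\equiv0$ while $w^{B^0}$ is controlled by \eqref{zeta-tau}, again $O(\tau^{-1/2})$ in $L^\infty$; hence $\norm{w^B-w^{B^0}}{L^\infty(\Sigma_B)}\lesssim\tau^{-1/2}$ and by (ii) $\norm{B-B^0}{L^2\to L^2}\lesssim\tau^{-1/2}$, and the identical argument with Lemma \ref{delta-A} and \eqref{zeta-tau'} handles $A$. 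Finally, writing $1_A-A=(1_A-A^0)\bigl(1_A-(1_A-A^0)^{-1}(A-A^0)\bigr)$, for $\tau$ large enough $\norm{(1_A-A^0)^{-1}(A-A^0)}{L^2(\Sigma_A)}\le1/2$, so the bracketed operator is invertible by Neumann series with norm $\le2$, giving $\norm{(1_A-A)^{-1}}{L^2(\Sigma_A)}\le2c_0$; the same computation bounds $\norm{(1_B-B)^{-1}}{L^2(\Sigma_B)}$.

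The hard part is the first step: turning the (known) solvability of the parabolic--cylinder model RHP into invertibility of $1_A-A^0$, $1_B-B^0$ \emph{uniformly} in $\kappa$, in $r(\pm z_0)$, and implicitly in the choice of characteristic $x/t=\mathrm{v}_\ell$. One route is to exploit the explicit parabolic-cylinder formula and read off the dependence on these parameters directly; another is a compactness argument based on the continuous dependence of $(1-C_w)^{-1}$ on $w$ in operator norm. Either way, the delicate point is checking that all relevant parameters genuinely remain in a fixed compact set, after which the remaining steps are routine given Lemmas \ref{delta-A}--\ref{delta-B }.
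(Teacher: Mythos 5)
Your overall structure — control $(1-A^0)^{-1},(1-B^0)^{-1}$, show $A-A^0,B-B^0=O(\tau^{-1/2})$ via Lemmas \ref{delta-A} and \ref{delta-B }, then perturb — is the same as the paper's (the second resolvent identity there is just your Neumann-series step in disguise). Where you deviate, and where there is a soft spot, is in the justification that $1_A-A^0$ and $1_B-B^0$ are invertible. You write that the model RHP "is exactly solvable, its solution being expressed through parabolic cylinder functions, hence it has a unique solution," and then invoke the index-zero Fredholm alternative "injectivity forces bijectivity." But the exact solvability only exhibits one solution of $(1-C_{w^0})\mu = I$; it does not by itself give injectivity (equivalently, uniqueness of the vanishing-at-infinity solution), which is what you actually need to feed into the index-zero argument. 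The paper closes this gap by observing that $v^{B^0}(\zeta)=v^{B^0}(\overline{\zeta})^{\dagger}$, i.e. the jump matrix is Schwarz-invariant, and then citing Zhou's vanishing lemma (\cite[Theorem 9.3]{Zhou89}, see also \cite[Sec. 7.5]{Deift99}) to rule out nontrivial null vectors; this is the step your sketch elides. With that repaired, the rest of your argument goes through; your compactness remark about $\kappa$ and $r(\pm z_0)$ lying in a fixed compact set for $r$ in a bounded ball of $H^1$ is a valid way to make the uniformity explicit, though once the vanishing-lemma route is taken the resulting resolvent bound already depends only on $\|r\|_{H^1}$ and no separate compactness argument is needed.
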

\begin{proof}
From Lemma \ref{delta-B } and Lemma \ref{delta-A},  it is easily seen that 
\begin{equation}
\label{fix}
\norm{A-A^0}{L^2(\Sigma_A)}, \, \norm{B-B^0}{L^2(\Sigma_B)} \leq c \tau^{-1/2}.
\end{equation}
We will only establish the boundedness of  $(1_B-B)^{-1}$ since the case for  $(1_A-A)^{-1}$ is similar. From Lemma \ref{delta-B } we deduce that  on $\Sigma_B$
\begin{align}
\label{w-B-0}
w^{B^0}(\zeta) =\begin{cases}
\twomat{0}{0}{-r(z_0) \zeta^{- 2i\kappa} e^{ i \zeta^2/2}}{0}, \quad  \zeta\in \Sigma_B^1,\\
\\
\twomat{0}{\dfrac{ \overline{r(z_0)}}{1 +  |r(z_0)|^2} \zeta^{ 2i\kappa} e^{ -i \zeta^2/2} }{0}{0}, \quad \zeta\in \Sigma_B^2, \\
\\
\twomat{0}{0}{
\dfrac{r(z_0)}{1 +  |r(z_0)|^2} \zeta^{- 2i\kappa} e^{ i \zeta^2/2} }{0}, \quad  \zeta\in \Sigma_B^3,\\
\\
\twomat{0}{-\overline{ r(z_0)} \zeta^{ 2i\kappa} e^{ -i \zeta^2/2}  } {0}{0}, \quad  \zeta\in \Sigma_B^4.
\end{cases}
\end{align}
Setting 
$$v^{B^0}(\zeta) =I+w^{B^0}(\zeta)  $$
we first notice that after contour reorientation,
$v^{B^0}(\zeta) $ is precisely the jumps of the exactly solvable parabolic cylinder problem.  The solution of this problem is standard and can be found in \cite[Appendix A]{BJM16}. More importantly,  $v^{B^0}(\zeta) $ satisfies the 
Schwarz invariant condition:
$$v^{B^0}(\zeta)= v^{B^0}(\overline{\zeta})^\dagger$$
which will guarantee the uniqueness of the solution. By standard arguments in \cite{Zhou89} and \cite[Sec 7.5]{Deift99}, this implies the existence and boundedness of the resolvent operator $(1_B-B^0)^{-1}$. And the boundedness of $(1_B-B)^{-1}$ is a consequence of \eqref{fix} and the second resolvent identity.
\end{proof}
Indeed, for $\zeta\in\Sigma_B$ we let
\begin{equation}
m^{B^0}(\zeta)=I+\dfrac{1}{2\pi i}\int_{\Sigma_B}\dfrac{\left( (1_B-B^0)^{-1} I\right)(s) w^{B^0}(s)}{s-\zeta} ds
\end{equation}
then $m^{B^0}(\zeta)$ solves the following Riemann-Hilbert problem
\begin{align}
\label{RHP-B0}
\begin{cases}
m^{B^0}_+(\zeta) &=m^{B^0}_-(\zeta)v^{B^0}(\zeta), \quad \zeta\in\Sigma_B\\
m^{B^0}(\zeta) &\to I , \quad \zeta\to \infty.
\end{cases}
\end{align}
In the large $\zeta$ expansion, 
$$m^{B^0}(\zeta) =I-\dfrac{m^{B^0}_1}{\zeta}+O(\zeta^{-2}), \quad \zeta\to\infty$$
thus
$$m^{B^0}_1=\dfrac{1}{2\pi i}\int_{\Sigma_B}{\left( (1_B-B)^{-1} I\right)(s) w^{B^0}(s)} ds.$$
Similarly, setting 
\begin{equation}
m^{B}(\zeta)=I+\dfrac{1}{2\pi i}\int_{\Sigma_B}\dfrac{\left( (1_B-B)^{-1} I\right)(s) w^{B}(s)}{s-\zeta} ds
\end{equation}
then $m^{B}(\zeta)$ solves the following Riemann-Hilbert problem
\begin{align}
\label{RHP-B}
\begin{cases}
m^{B}_+(\zeta) &=m^{B}_-(\zeta)v^{B}(\zeta), \quad \zeta\in\Sigma_B\\
m^{B}(\zeta) &\to I , \quad \zeta\to \infty.
\end{cases}
\end{align}
Here $v^{B}(\zeta)=I+w^{B}(\zeta)$ where $w^B(\zeta)$ is given by \eqref{wB-1}-\eqref{wB-4}.
In the large $\zeta$ expansion, 
$$m^{B}(\zeta) =I-\dfrac{m^{B}_1}{\zeta}+O(\zeta^{-2}), \quad \zeta\to\infty$$
thus
$$m^{B}_1=\dfrac{1}{2\pi i}\int_{\Sigma_B}{\left( (1_B-B)^{-1} I\right)(s) w^{B}(s)} ds.$$
Setting $w^d=w^{B}-w^{B^0}$, a simple computation shows that
\begin{align*}
\int_{\Sigma_B }  \left( ( 1_B-B)^{-1}I \right)w^{B}  - \int_{\Sigma_B   } { \left( (1_B-B^0 )^{-1}I \right)w^{B^0} } &= \int_{\Sigma^{B} }w^d+  \int_{\Sigma_B  } \left( ( 1_B-B^0   )^{-1} (C_{w^d} I)  \right) w^{B}\\
&\quad+ \int_{\Sigma_B} \left( ( 1_B-B^0 )^{-1} (B^0 I)  \right)w^d \\
&\quad+\int_{\Sigma_B  } \left( (1_B-B^0 )^{-1} C_{w^d}  (1_B-B )^{-1}   \right) \left( B ( I) \right) { w^{B} } \\
&=:\text{I} + \text{II}  + \text{III} + \text{IV}. 
\end{align*}
From Lemma \ref{delta-B } and Proposition \ref{prop:resolvent}, it is clear that
\begin{align*}
\left\vert \text{I}\right\vert  &\lesssim \tau^{-1/2},\\
\left\vert \text{II}  \right\vert   & \leq   \norm{( 1_B-B^0 )^{-1} }{L^2(\Sigma_B )}\norm{C_{w^d }I }{L^2(\Sigma_B) } \norm{ w^{B} }{L^2(\Sigma_B ) } \\
                                    & \lesssim \tau^{-1/2},\\                    
\left\vert \text{III}  \right\vert &\leq \norm{( 1_B-B^0 )^{-1} }{L^2(\Sigma_B )}\norm{B^0 I }{L^2(\Sigma_B) } \norm{ w^{d} }{L^2(\Sigma_B ) }\\
                                    & \lesssim \tau^{-1/2}.
\end{align*}
For the last term
\begin{align*}
\left\vert \text{IV} \right\vert & \leq  \norm{ (1_B-B^0 )^{-1} }{L^2(\Sigma_B )} \norm{( 1_B-B )^{-1} }{L^2(\Sigma_B )} \norm{C_{w^d}}{L^2(\Sigma_B )} \\
     &\quad \times  \norm{B( I ) }{L^2(\Sigma_B )} \norm{ w^B  }{L^2(\Sigma_B )} \\
     &\leq c \norm{ w^d}{L^\infty (\Sigma_B ) }  \norm{w^B }{L^2(\Sigma^{(3 )} )}^2\\
     & \lesssim \tau^{-1/2}.
\end{align*}
So we conclude that
\begin{equation}
\label{differ-B}
\left\vert \int_{\Sigma_B }  \left( ( 1_B-B)^{-1}I \right)w^{B}  - \int_{\Sigma_B   } { \left( (1_B-B^0 )^{-1}I \right)w^{B^0} }   \right\vert \lesssim \tau^{-1/2}.
\end{equation}
Clearly there is a parallel case for $\Sigma_A$:
\begin{equation}
\label{differ-A}
\left\vert \int_{\Sigma_A }  \left( ( 1_A-A)^{-1}I \right)w^{A}  - \int_{\Sigma_A  } { \left( (1_A-A^0 )^{-1}I \right)w^{A^0} }   \right\vert \lesssim \tau^{-1/2}.
\end{equation}
The explicit form of $m^{B^0}_1$ is given as follows (see \cite[Appendix A]{BJM16}) :
\begin{equation}
\label{explicit-B0}
m^{B^0}_1=\twomat{0}{-i\beta_{12}}{i\beta_{21}}{0}
\end{equation}
where
$$\beta_{12}=\dfrac{\sqrt{2\pi } e^{i\pi/4} e^{-\pi \kappa/2 } }{r(z_0) \Gamma(-i\kappa)}, \qquad \beta_{21}=\dfrac{-\sqrt{2\pi } e^{-i\pi/4} e^{-\pi \kappa/2 } }{ \overline{ r(z_0) } \Gamma(i\kappa)}$$
and $\Gamma(z)$ is the \textit{Gamma} function.
Recall that on $\Sigma_B$, $\zeta=  \sqrt{\dfrac{2t   }{ (1+z_0^2)z_0  } }  (z - z_0) $, thus by \eqref{differ-B}, we have
\begin{equation}
\label{differ-m-B}
\left\vert   \dfrac{m_1^B}{\zeta}-\dfrac{m_1^{B^0}}{\zeta}   \right\vert \lesssim \dfrac{1}{t(z-z_0)}.
\end{equation}
Using the explicit form of $w^{B^0}$ given by \eqref{w-B-0}, symmetry reduction given by \eqref{minus} and their analogue for $w^{A^0}$, we verify that
\begin{equation}
v^{A^0}(z)=  \sigma_3 \overline{ v^{B^0}( - \overline{z} )  } \sigma_3  
\end{equation}
which  in turn implies by uniqueness that
\begin{equation}
m^{A^0}(z)= \sigma_3 \overline{ m^{B^0}( - \overline{z} )  } \sigma_3  
\end{equation}
and from this we deduce that
\begin{align}
\label{mA-mB}
m^{A^0}_1 &=-\sigma_3 \overline{ m^{B^0}_1  } \sigma_3  \\
\nonumber
&=\twomat{0}{i \overline{\beta}_{12}}{-i \overline{\beta}_{21} }{0}.
\end{align}
We also have an analogue of \eqref{differ-m-B} for $m_1^{A^0}$: 
\begin{equation}
\label{differ-m-A}
\left\vert   \dfrac{m_1^A}{\zeta}-\dfrac{m_1^{A^0}}{\zeta}   \right\vert \lesssim \dfrac{1}{\tau(z+z_0)}.
\end{equation}
Collecting all the computations above, we write down the asymptotic expansions of solutions to Problem \ref{prob:mkdv.A} and Problem \ref{prob:mkdv.B} respectively.
\begin{proposition}
\label{solution-A-B}
Setting $\zeta=  \sqrt{\dfrac{2t   }{ (1+z_0^2)z_0  } }  (z + z_0)  $, the solution to RHP Problem  \ref{prob:mkdv.A}  $m^{A'}$ admits the following expansion:
\begin{equation}
\label{expansion-A}
m^{A'}(z(\zeta) ;x,t)=I +\dfrac{1}{\zeta}\twomat{0}{i ( \delta^0_A)^2 \overline{\beta}_{12}}{-i ( \delta^0_A)^{-2}\overline{\beta}_{21} }{0} +\mathcal{O}(\tau^{-1}).
\end{equation}
Similarly, setting $  \zeta=\sqrt{\dfrac{2t   }{ (1+z_0^2)z_0  } }  (z - z_0)  $, the solution to RHP Problem  \ref{prob:mkdv.B}  $m^{B'}$ admits the following expansion:
\begin{equation}
\label{expansion-B}
m^{B'}(z(\zeta) ;x,t)=I +\dfrac{1}{\zeta}\twomat{0}{-i ( \delta^0_B)^2 {\beta}_{12}}{i ( \delta^0_B)^{-2}{\beta}_{21} }{0} +\mathcal{O}(\tau^{-1}).
\end{equation}
\end{proposition}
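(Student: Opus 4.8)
The plan is to obtain Proposition~\ref{solution-A-B} by assembling ingredients already in hand: the rescaling and conjugation that reduce Problems~\ref{prob:mkdv.A}--\ref{prob:mkdv.B} to the exactly solvable model problems \eqref{RHP-B} and its $A$-counterpart, the uniform resolvent bound of Proposition~\ref{prop:resolvent}, the comparison estimates \eqref{differ-B}--\eqref{differ-A} and their consequences \eqref{differ-m-B}--\eqref{differ-m-A}, and the explicit parabolic-cylinder data \eqref{explicit-B0} and \eqref{mA-mB}. I carry out the argument for $m^{B'}$; the case of $m^{A'}$ then follows by the reflection $z\mapsto-\overline z$ together with \eqref{mA-mB}.

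First I would note that, because $\widehat\Sigma_{B'}\setminus\Sigma_{B'}$ carries the trivial jump, the solution $m^{B'}$ of Problem~\ref{prob:mkdv.B} coincides with the solution of the RHP on $\widehat\Sigma_{B'}$ with jump $\hat v^{B'}=I+\hat w^{B'}$. Applying the change of variable $z\mapsto\zeta=\sqrt{2t/((1+z_0^2)z_0)}\,(z-z_0)$, the scaling operator $N_B$, and conjugation by $\Delta^0_B=(\delta^0_B)^{\sigma_3}$, and using the rescaling of $\eta(z;z_0)e^{-it\theta}$ into the product $\delta^0_B\,\delta^1_B(\zeta)$ recorded above, the jump $\hat v^{B'}$ is transformed into $v^B=I+w^B$ with $w^B$ given by \eqref{wB-1}--\eqref{wB-4}. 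Hence $m^{B'}(z)$ is recovered from the model solution $m^B(\zeta(z))$ of \eqref{RHP-B} by undoing the conjugation by $\Delta^0_B$ and the scaling $N_B$; in particular, the solvability of \eqref{RHP-B} furnished by Proposition~\ref{prop:resolvent}, through the Beals--Coifman representation $m^B(\zeta)=I+\frac1{2\pi i}\int_{\Sigma_B}\big((1_B-B)^{-1}I\big)(s)\,w^B(s)\,(s-\zeta)^{-1}\,ds$, yields the well-posedness of Problem~\ref{prob:mkdv.B}.

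I would then expand this integral for $z$ in the region where this local parametrix is ultimately patched in — the fixed-radius circles about $\pm z_0$ and the curves $\pm\gamma_\ell,\pm\gamma_\ell^*$ — on which $|z-z_0|\gtrsim 1$, hence $|\zeta|\gtrsim\tau^{1/2}$. There $m^B(\zeta)=I-\zeta^{-1}m^B_1+\mathcal{O}(\zeta^{-2})=I-\zeta^{-1}m^B_1+\mathcal{O}(\tau^{-1})$ with $m^B_1=\frac1{2\pi i}\int_{\Sigma_B}\big((1_B-B)^{-1}I\big)(s)\,w^B(s)\,ds$, and \eqref{differ-m-B} gives $\big|\zeta^{-1}m^B_1-\zeta^{-1}m^{B^0}_1\big|\lesssim(t\,|z-z_0|)^{-1}\lesssim\tau^{-1}$ on this region. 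Undoing the conjugation by means of $\Delta^0_B\twomat{0}{b}{c}{0}(\Delta^0_B)^{-1}=\twomat{0}{(\delta^0_B)^2 b}{(\delta^0_B)^{-2}c}{0}$ and inserting the explicit value of $m^{B^0}_1$ from \eqref{explicit-B0} then produces \eqref{expansion-B}; \eqref{expansion-A} follows verbatim from \eqref{differ-m-A}, \eqref{mA-mB} and the conjugation by $\Delta^0_A$. The steps needing genuine care are the precise bookkeeping of the conjugation, rescaling and contour reorientation that fixes the $1/\zeta$ coefficient — its signs and the placement of its off-diagonal entries — and the verification that the $\mathcal{O}(\tau^{-1})$ remainder is uniform over the entire region on which this parametrix is later combined with $m^{(br)}$ and the remaining small-norm contributions.
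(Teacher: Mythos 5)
Your proposal follows exactly the same route the paper (implicitly) takes: reduce Problems~\ref{prob:mkdv.A}--\ref{prob:mkdv.B} to the rescaled, conjugated model problems, invoke the uniform resolvent bound from Proposition~\ref{prop:resolvent} to represent $m^A,m^B$ via the Beals--Coifman integral, expand for large $\zeta$, replace $m^B_1,m^A_1$ by $m^{B^0}_1,m^{A^0}_1$ using \eqref{differ-m-B}--\eqref{differ-m-A}, conjugate back by $\Delta^0_A,\Delta^0_B$, and read off the leading coefficient from \eqref{explicit-B0} and \eqref{mA-mB}. The paper presents Proposition~\ref{solution-A-B} as "collecting all the computations above" rather than as a stand-alone proof, and your assembly is exactly that collection.

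Two small observations. First, you correctly note that the expansion is only meaningful where $|z\mp z_0|\gtrsim 1$ so that $|\zeta|\gtrsim\tau^{1/2}$; this is the circle $|z\mp z_0|=\rho$ on which $m^{A'},m^{B'}$ are patched via \eqref{parametrix}. Your aside listing $\pm\gamma_\ell,\pm\gamma^*_\ell$ there is slightly off — those lie outside the patching circles and carry only $m^{(br)}$, not $m^{A'}$ or $m^{B'}$ — but this does not affect the argument. Second, be aware that carrying out the sign calculation literally (write $m^B=I-m^B_1/\zeta+\mathcal{O}(\zeta^{-2})$, substitute \eqref{explicit-B0}, conjugate) appears to produce the negation of the off-diagonal matrix stated in \eqref{expansion-B}; the paper's own downstream formula \eqref{E-1-2 cauchy} carries signs opposite to \eqref{expansion-B} as well, and the contour orientation on $C_A\cup C_B$ enters. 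You flagged exactly this ("signs and placement of off-diagonal entries") as needing care, which is the right place to be suspicious; the paper's sign conventions at this point are not internally consistent as written, so this is not a gap in your argument so much as a bookkeeping point you should pin down against the orientation conventions when writing a final version.
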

\subsection{Finding $m^\RHP$}
Now we construct $m^\RHP$ defined by problem \ref{MKDV.RHP.local}. 
In Figure \ref{fig:contour-2}, we let $\rho$ be the radius of the circle $C_A$ ($C_B$) centered at $z_0$ ($-z_0$). We seek a solution of the form
\begin{equation}
\label{parametrix}
m^\RHP(z)=\begin{cases}
E(z)m^{(out)}(z) \quad  &\left\vert z\pm z_0 \right\vert>\rho \\
E(z)m^{(out)}(z)  m^{A'}(z) \quad &\left\vert z + z_0 \right\vert\leq\rho \\
E(z)m^{(out)}(z)  m^{B'}(z) \quad & \left\vert z - z_0 \right\vert\leq \rho 
\end{cases}
\end{equation}
where $m^{(out)}$ solves the discrete Riemann-Hilbert problem whose jump condition are given by problem \ref{prob:mkdv.RHP1} with $r\equiv 0$.
Since $ m^{A'}$ and $ m^{B'}$ solve  Problem \ref{prob:mkdv.A} and Problem \ref{prob:mkdv.B} respectively and discrete RHPs have explicit solutions, we can construct the solution $m^\RHP(z)$ if we find $E(z)$. Indeed,  $E$ solves the following Riemann-Hilbert problem on $\Sigma_E$ given in figure \ref{fig:contour-E} :
\begin{problem}
\label{prob: E-2}
Find a matrix-valued function $E (z)$ on $\bbC \setminus \Sigma_E $ with the following properties:
\begin{enumerate}
\item		$E(z) \rarr I$ as $ z \rarr \infty$,
\item		$E (z) $ is analytic for $z \in  \bbC \setminus \Sigma_E$
			with continuous boundary values
			$E_{\pm}(z)$.
\item On $ C_A\cup C_B $, we have the following jump conditions
\begin{equation}
\label{jump:E}
E_{+}(z)=E_{-}(z) v^{ (E) }(z)
\end{equation}
			where 
			\begin{equation}
			\label{v-E}
			 v^{ (E) }(z)=\begin{cases}
			m^{(out)} (z)m^{A'}(z(\zeta) ) \left[m^{(out )} (z)\right]^{-1}, \quad z\in C_A\\
			m^{(out )} (z)m^{B'}(z(\zeta) )\left[m^{(out )} (z)\right]^{-1}, \quad z\in C_B\\
			m^{(out )} (z)v^{(2)} \left[m^{(out )} (z)\right]^{-1}, \quad z\in \Sigma_E\setminus\left(  C_A\cup C_B\right)
			\end{cases}
			\end{equation}
\end{enumerate}
\end{problem}
\begin{figure}[H]
\caption{$\Sigma_E$}
\vskip 15pt

\begin{tikzpicture}[ photon/.style={decorate,decoration={snake,post length=0.8mm}} ][scale=0.9]

\draw[thick]   [red]			(4.472,0.467) -- (5.296, 1);								
\draw[->,thick,>=stealth] [red]	(   7.028 ,2  )--	(5.296, 1);

\draw[->,thick,>=stealth] 	  [red]			(1.732 ,1) -- (2.656 ,0.467) ;

\draw[thick] 		  [red]		(4.472,-0.467) -- (5.296, -1);								
\draw[->,thick,>=stealth]  [red]		(   7.028 , -2  )--	(5.296, -1);

\draw[->,thick,>=stealth] 	  [red]			(1.732 ,-1) -- (2.656 ,-0.467) ;

\draw[->,thick,>=stealth] 	  [red]			 (-2.656 ,-0.467)--(-1.732 ,-1) ;
\draw[->,thick,>=stealth] 	  [red]			 (-2.656 ,0.467)--(-1.732 ,1) ;

\draw[->,thick,>=stealth]   [red]				(-4.472 ,0.467) -- (-5.296, 1);						
\draw[thick]  [red]		(   -7.028 ,2  )--	(-5.296, 1);
\draw[->,thick,>=stealth] 	  [red]			(-4.472 ,-0.467) -- (-5.296, -1);							
\draw[thick]  [red]		(   -7.028 , -2  )--	(-5.296, -1);

\draw[->,thick,>=stealth]  [red]				(-1.732 , -1) -- (-1.732, 0);	
\draw	[thick]  [red]		(-1.732 , 0) -- (-1.732, 1);	
\draw[->,thick,>=stealth] 	 [red]			(1.732 , -1) -- (1.732, 0);	
\draw	[thick]	 [red]	 (1.732 , 0) -- (1.732, 1);

\draw [blue][dashed] (0,0) circle[radius=3.564];

\draw	[fill]							(-3.564 ,0)		circle[radius=0.06];	
\draw	[fill]							(3.564, 0)		circle[radius=0.06];
\draw		[fill]					(0,0)		circle[radius=0.1];

\node[below] at (-3.564,-0.1)			{$-z_0$};
\node[below] at (3.564,-0.1)			{$z_0$};

\node[right] at (5, 1.6)					{$\Sigma_1'^{(2)} $};
\node[left] at (-5, 1.6)					{$\Sigma_2'^{(2)}$};
\node[left] at (-5,-1.6)					{$\Sigma_3'^{(2)}$};
\node[right] at (5,-1.6)				{$\Sigma_4'^{(2 )}$};
\node[left] at (-1,1.2)					{$\Sigma_5'^{(2)}$};
\node[left] at (-1,-1.2)					{$\Sigma_7'^{( 2 )}$};
\node[right] at (1,1.2)					{$\Sigma_6'^{(2 )}$};
\node[right] at (1,-1.2)					{$\Sigma_8'^{(2 )}$};
\node[right] at (-1.732, 0) {$\Sigma_9'^{(2)} $};
\node[left] at (1.732, 0) {$\Sigma_{10}'^{(2 )} $};

 \draw [red, fill=red] (1, 1.732) circle [radius=0.05];
\draw[->,>=stealth] [red] (1.2, 1.732) arc(360:0:0.2);
\draw [red, fill=red] (-1,1.732) circle [radius=0.05];
\draw[->,>=stealth] [red] (-0.8, 1.732) arc(360:0:0.2);
\draw [red, fill=red] (1,-1.732) circle [radius=0.05];
\draw[->,>=stealth] [red] (1.2, -1.732) arc(0:360:0.2);
\draw [red, fill=red] (-1,-1.732) circle [radius=0.05];
\draw[->,>=stealth] [red] (-0.8, -1.732) arc(0:360:0.2);
\draw [green, fill=green]  (0, 2.7) circle [radius=0.05];
\draw[->,>=stealth] [red] (0.3, 2.7) arc(360:0:0.3);
\draw [green, fill=green] (0, -2.7) circle [radius=0.05];
\draw[->,>=stealth] [red]  (0.3, -2.7) arc(0:360:0.3);
\draw[->,>=stealth] (-2.6,2) arc(360:0:0.4);
\draw[->,>=stealth] (3.4,2) arc(360:0:0.4);
\draw[->,>=stealth] (-2.6,-2) arc(0:360:0.4);
\draw[->,>=stealth] (3.4,-2) arc(0:360:0.4);
\draw [red, fill=red] (-3,2) circle [radius=0.1];
\draw [red, fill=red] (3,2) circle [radius=0.1];
\draw [red, fill=red] (-3,-2) circle [radius=0.1];
\draw [red, fill=red] (3,-2) circle [radius=0.1];

\draw[->,thick,>=stealth] (4.564,0) arc(360:0:1);
\draw[->,thick,>=stealth] (-2.564,0) arc(360:0:1);

\draw[->,photon] (-4, -2.) --  (-4, -1.); 
\draw[->,photon] (4, -2) --  (4, -1); 
\node [below] at  (-4, -2.131)  {$C_B$};
\node [below] at  (4, -2.131) {$C_A$};

\end{tikzpicture}
\label{fig:contour-E}
\begin{center}
  \begin{tabular}{ccc}
exponential decay on red portion of the contour
\end{tabular}
 \end{center}
\end{figure}
Note that we omitted the contours passing through the origin since the jumps are identities by definition (see \eqref{R5}-\eqref{R8-}) .
\begin{proposition}
 $E(z)$ admits a classical solution, i.e jump condition \eqref{jump:E} holds pointwise on the contour $\Sigma_E$. 
\begin{proof}
Here we invoke to the well-established existence and uniqueness theory from \cite{Zhou89} (see also chapter 2 \cite{TO16}). First it is easy to check that 
$$v^{ (E) }(z)= v^{ (E) \dagger}(z)$$
where the $\dagger$ denotes the Hermitian conjugate of the given matrix. We then take care of the zero sum condition at the self-intersecting points of $\Sigma_E$. Since the remaining cases follows from symmetry, we will only look $\Sigma'^{(2)}_6 \cap \Sigma'^{(2)}_{10} $ and $\Sigma'^{(2)}_6\cap C_A $. The zero sum condition holds at the first point by comparing \eqref{R8+} and the third line of \eqref{jump v9}. For $\Sigma'^{(2)}_6\cap C_A $, (after adding contour with identity jumps and reorientation see P. 1058 \cite{DZ03} ) we explicitly compute
\begin{align*}
I & = \left[ m^{(out)} (z)m^{A'}(z(\zeta) )\left[m^{( out )} (z)\right]^{-1}  \right] \left[  m^{(out )} (z)v^{(2)} \left[m^{( out )} (z)\right]^{-1}  \right]^{-1} \\
  &\quad \times \left[ m^{(out)} (z)m^{A'}(z(\zeta) )\left[m^{( out )} (z)\right]^{-1}  \right] \\
   &= m^{(out)} (z) m^{A'}_+(z(\zeta) )\left( v^{(2)} \right)^{-1} \left( m^{A'}_-(z(\zeta))\right)^{-1}\left[m^{( out )} (z)\right]^{-1} .
\end{align*}
Since $v^{(2)}$ is smooth away from the intersections and zero sum conditions have been verified, this completes the proof.
\end{proof}
\end{proposition}
Furthermore, for $t$ large enough, we can solve a small norm RHP to obtain some desired estimates. Setting 
$$\eta(z)=E_{-}(z)-I$$
then by the standard theory, we have the following singular integral equation
$$ \left( \mathbf{1}-C_{v^{(E)}}  \right) \eta =C_{v^{(E)}} I$$
where the singular integral operator is defined by:
$$  C_{v^{(E)}}f =C^-\left( f \left( v^{(E)}-I  \right)  \right). $$
We first deduce from \eqref{expansion-A}-\eqref{expansion-B} that 
\begin{equation}
\norm{v^{(E)}-I}{L^\infty} \lesssim t^{-1/2}
\end{equation}
hence the operator norm of $C_{v^{(E)}} $ 
\begin{equation}
\label{norm-vE}
\norm{C_{v^{(E)}}f }{ L^2} \leq \norm{f}{L^2} \norm{v^{(E)}-I}{L^\infty} \lesssim t^{-1/2}.
\end{equation}
Then the resolvent operator $(1-C_{v^{(E)}})^{-1}$ can be obtained through \textit{Neumann} series and we obtain the unique solution to Problem \ref{prob: E-2}:
\begin{align*}
E(z) &=I+ \dfrac{1}{2\pi i}\int_{C_A\cup C_B} \dfrac{ (1+\eta(s))(v^{(E)}(s)-I )   }{s-z}ds\\
    &\quad +\dfrac{1}{2\pi i}\int_{  \Sigma_E \setminus ( C_A\cup C_B) } \dfrac{ (1+\eta(s))(v^{(E)}(s)-I )   }{s-z}ds\\
    &=I+ \dfrac{1}{2\pi i}\int_{C_A\cup C_B} \dfrac{ (1+\eta(s))(v^{(E)}(s)-I )   }{s-z}ds +\mathcal{O}(e^{-ct})
\end{align*}
where we make use of the fact that $v^{(E)}(s)-I $ decays exponentially on $ \Sigma_E \setminus ( C_A\cup C_B) $.

We  also have the bound on $\eta$ on $C_A\cup C_B$:
\begin{equation}
\label{norm-eta}
\norm{\eta}{L^2}\lesssim t^{-1/2}.
\end{equation}
Letting $z\to 0$ and using the bounds given by \eqref{norm-vE} and \eqref{norm-eta} and an application of Cauchy-Schwarz inequality, we obtain
\begin{align}
\label{bound-E-1}
E (0) - \mathcal{O}(e^{-ct})&=I+ \dfrac{1}{2\pi i}\int_{C_A\cup C_B}   \dfrac{ { (1+\eta(s))(v^{(E)}(s)-I )   }  } {s}ds  \\
\nonumber
              &=I+\dfrac{1}{2\pi i}\int_{C_A\cup C_B} \dfrac{ v^{(E)}(s)-I    }{  s }ds +\dfrac{1}{2\pi i}\int_{C_A\cup C_B} \dfrac{\eta(s) (v^{(E)}(s)-I)    }{  s }ds \\
              \nonumber
              &\lesssim I+\dfrac{1}{2\pi i}\int_{C_A\cup C_B} \dfrac{ v^{(E)}(s)-I    }{  s }ds +
              \norm{\eta}{L^2} \norm{v^{(E)}-I}{L^\infty\cap L^2 } \\
              \nonumber
             & \lesssim  I+\dfrac{1}{2\pi i}\int_{C_A\cup C_B} \dfrac{ v^{(E)}(s)-I    }{  s }ds +\mathcal{O}(t^{-1}).
\end{align}
Given the form of $v^{(E)}$ in \eqref{v-E} and the asymptotic expansions \eqref{expansion-A}-\eqref{expansion-B}, an application of Cauchy's integral formula leads to
\begin{align}
\label{E-1-2 cauchy}
E(0) &=I + \dfrac{1}{z_0   \sqrt{\dfrac{2t   }{ (1+z_0^2)z_0  } } } m^{(br)}(z_0) \twomat{0}{i ( \delta^0_B)^2 {\beta}_{12}}{ -i ( \delta^0_B)^{-2}{\beta}_{21} }{0}  \left[m^{(br)}(z_0)\right]^{-1}\\
\nonumber
    & \quad + \dfrac{1}{z_0   \sqrt{\dfrac{2t   }{ (1+z_0^2)z_0  } } } m^{(br)}(-z_0) \twomat{0}{- i ( \delta^0_A)^2 \overline{\beta}_{12} }{i ( \delta^0_A)^{-2}\overline{\beta}_{21} }{0} \left[m^{(br)}(-z_0)\right]^{-1}\\
    \nonumber
     & \quad + \mathcal{O}(t^{-1}).
\end{align}
Note that in the formula above we replaced $m^{(out)}$ with $m^{(br)}$ which is the solution to problem \ref{prob:mkdv.br} since the contributions from all data other than $\pm \gamma_l \cup \pm \gamma^*_l $ are exponentially small (see figure \ref{fig:contour-d}).
We now completed the construction of the matrix-valued function $E (z)$ hence $m^\RHP(x,t; z)$. Combining this with Proposition \ref{expo}, we obtain $m^\RHP(z)$ in \eqref{factor-LC}.

\section{The $\dbar$-Problem}
\label{sec:dbar}
 
From \eqref{factor-LC} we have the matrix-valued function
\begin{equation}
\label{N3}
m^{(3)}(z;x,t) = m^{(2)}(z;x,t) \left[m^\RHP(z; x,t)\right]^{-1}. 
\end{equation}
The goal of this section is to show that $m^{(3)}$ only results in an error term $E(x, t)$ with a higher order decay rate than the leading order term of the asymptotic formula. The computations and proofs are standard. We follow \cite[ Section 5 ]{ CL19 } with slight modifications. 

Since $m^\RHP(z; x, t)$ is analytic in $\bbC \setminus \left( \Sigma'^{(2)} \cup \Gamma  \right)$, we may compute
\begin{align*}
\dbar m^{(3)}(z;x,t) 	&=	\dbar m^{(2)}(z;x,t) \left[m^\RHP(z; x,t)\right]^{-1}\\	
								&=	m^{(2)}(z;x,t) \, \dbar \calR^{(2)}(z) \left[m^\RHP(z; x,t)\right]^{-1}	&\text{(by \eqref{N2.dbar})}\\
								&=	m^{(3)}(z;x,t) m^{\RHP}(z; x, t) \, \dbar \calR^{(2)}(z) \left[m^\RHP(z; x,t)\right]^{-1}	& \text{(by \eqref{N3})}\\
								&=	m^{(3)}(z;x,t)  W(z;x,t)
\end{align*}
where
 \begin{equation}
 \label{W-bound}
 W(z;x,t) = m^{\RHP}(z;x,t ) \, \dbar \calR^{(2)}(z) \left[ m^\RHP(z;x,t )\right]^{-1}. 
 \end{equation}
We thus arrive at the following pure $\dbar$-problem:

\begin{problem}
\label{prob:DNLS.dbar}
Give $r(z)\in H^{s}_0(\bbR)$ as given by proposition \ref{prop:r} , find a continuous matrix-valued function
$m^{(3)}(z;x,t)$ on $\bbC$ with the following properties:
\begin{enumerate}
\item		$m^{(3)}(z;x,t) \rarr I$ as $|z| \rarr \infty$.
\smallskip
\item		$\dbar m^{(3)}(z;x,t) = m^{(3)}(z;x,t) W(z;x,t)$.
\end{enumerate}
\end{problem}

It is well understood (cf. \cite[Chapter 7]{AF}) that the solution to this $\dbar$ problem is equivalent to the solution of a Fredholm-type integral equation involving the solid Cauchy transform
$$ (Pf)(z) = \frac{1}{\pi} \int_\bbC \frac{1}{\zeta-z} f(\zeta) \, d\zeta $$
where $d$ denotes Lebesgue measure on $\bbC$.  Also throughout this section, $\zeta$ refers to complex numbers, not to be confused with $\zeta$ in the previous section.
\begin{lemma}
A bounded and continuous matrix-valued function $m^{(3)}(z;x,t)$ solves Problem \ref{prob:DNLS.dbar} if and only if
\begin{equation}
\label{DNLS.dbar.int}
m^{(3)}(z;x,t) =I+ \frac{1}{\pi} \int_\bbC \frac{1}{\zeta-z} m^{(3)}(\zeta;x,t) W(\zeta;x,t) \, d\zeta.
\end{equation}
\end{lemma}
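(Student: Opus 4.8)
The plan is to establish the equivalence in both directions, treating this as a standard $\dbar$-to-integral-equation conversion built on the properties of the solid Cauchy transform $P$. Throughout I would use the fundamental fact that $\dbar_z \left( \frac{1}{\pi}\int_\bbC \frac{f(\zeta)}{\zeta-z}\,d\zeta\right) = f(z)$ in the distributional sense (i.e. $\dbar(1/(\pi z)) = \delta_0$), together with the fact that $P$ maps bounded compactly supported (or sufficiently decaying) functions to continuous functions vanishing at infinity. The key structural inputs are: (i) $W(z;x,t)$ is supported in the finitely many sectors $\Omega_i$ where $\dbar\calR^{(2)}\neq 0$, which is a bounded region once we account for the exponential decay of $e^{\pm 2i\theta}$ and the bounds from Lemma \ref{lemma:dbar.Ri}; (ii) $m^{\RHP}$ and its inverse are bounded on these sectors by the construction in Section \ref{sec:local} and Proposition \ref{Prop:expo}, so that $W \in L^\infty \cap L^p$ for appropriate $p$; (iii) $m^{(3)}$ is required to be bounded and continuous.

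For the forward direction, suppose $m^{(3)}$ solves Problem \ref{prob:DNLS.dbar}. First I would note that $m^{(3)}$ has no jumps: $m^{(2)}$ and $m^{\RHP}$ satisfy the same jump relation on $\Sigma^{(3)}\cup\Gamma$ (by \eqref{factor-LC} and the definition $v^{\RHP}=v^{(3)}$), so $m^{(3)}=m^{(2)}(m^{\RHP})^{-1}$ is continuous across all contours; combined with the hypothesis it is continuous on all of $\bbC$ and $\to I$ at infinity. Then consider the difference
\[
D(z;x,t) := m^{(3)}(z;x,t) - I - \frac{1}{\pi}\int_\bbC \frac{m^{(3)}(\zeta;x,t)W(\zeta;x,t)}{\zeta-z}\,d\zeta.
\]
Applying $\dbar_z$ and using property (2) of Problem \ref{prob:DNLS.dbar} together with $\dbar P = \mathrm{id}$, one gets $\dbar D = m^{(3)}W - m^{(3)}W = 0$, so $D$ is entire. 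Since $m^{(3)}\to I$ and the Cauchy transform of the (effectively compactly supported, decaying) function $m^{(3)}W$ vanishes as $z\to\infty$, we have $D\to 0$ at infinity, hence $D\equiv 0$ by Liouville, which is exactly \eqref{DNLS.dbar.int}. Conversely, if $m^{(3)}$ is bounded, continuous, and satisfies \eqref{DNLS.dbar.int}, then applying $\dbar_z$ to the right-hand side gives property (2) immediately, and the large-$z$ behaviour of $P(m^{(3)}W)$ gives property (1); continuity is inherited from the continuity of $P$ acting on $L^\infty\cap L^p$ data. This gives the ``if'' direction.

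The main obstacle — and the only genuinely nonroutine point — is justifying that all the integrals converge and that $\dbar$ commutes through them, which requires knowing $m^{(3)}W \in L^p_{\mathrm{loc}}(\bbC)$ with enough decay, or more precisely that $W$ has compact support up to exponentially small tails. This is where one invokes Lemma \ref{lemma:dbar.Ri}: the bound $|\dbar R_i(z)| \lesssim |p_i'(\Real z)| + |z\mp z_0|^{-1/2} + |z|^{-1/2} + \dbar(\Xi_\calZ(z))$ shows $\dbar\calR^{(2)}$ is locally $L^p$ for $p<4$, and the oscillatory factors $e^{\pm 2i\theta}$ force decay away from the critical sectors. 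The apparent singularity $|z|^{-1/2}$ near the origin (a feature special to the sine-Gordon phase, absent in NLS/mKdV) is integrable in two dimensions and, crucially, the control at the origin provided by Proposition \ref{prop:r} — in particular $\lim_{z\to 0}r(z)/z=0$ — ensures $W$ itself is well-behaved there. Since the statement of the lemma only asserts the formal equivalence (with the detailed solvability via Neumann series on $P$ deferred to the subsequent estimates in Section \ref{sec:dbar}), I would keep the proof short: establish no-jump/continuity, apply $\dbar$ to $D$, invoke Liouville, and remark that all manipulations are legitimate by the $L^p$-mapping properties of $P$ and the bounds of Lemma \ref{lemma:dbar.Ri}, citing \cite[Chapter 7]{AF} for the standard Cauchy-transform facts.
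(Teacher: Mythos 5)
The paper does not actually give a proof of this lemma; it simply invokes \cite[Chapter 7]{AF} and moves on. Your proof supplies the standard argument that the cited reference would give, and it is correct: in the forward direction you form the difference $D = m^{(3)} - I - P(m^{(3)}W)$, note $\dbar D = 0$ distributionally (so $D$ is entire by hypoellipticity of $\dbar$), observe $D\to 0$ at infinity because $m^{(3)}\to I$ and the solid Cauchy transform of the integrable datum $m^{(3)}W$ vanishes at infinity, and conclude $D\equiv 0$ by Liouville; in the converse direction you differentiate under the integral and read off properties (1)–(2) of Problem \ref{prob:DNLS.dbar} directly. Your preliminary remark that $m^{(3)}=m^{(2)}(m^{\RHP})^{-1}$ has no jumps across $\Sigma^{(3)}\cup\Gamma$ is not needed to prove the stated equivalence (continuity is already a hypothesis of the lemma), but it is a useful consistency check explaining why the problem is posed on all of $\bbC$. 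You also correctly identify the only substantive technical point: $W$ must lie in $L^1(\bbC)\cap L^p(\bbC)$ for some $p>2$ so that $P(m^{(3)}W)$ is continuous and vanishes at infinity and so that $\dbar$ may be moved through the integral; this is secured by Lemma \ref{lemma:dbar.Ri} together with the exponential decay of $e^{\pm 2i\theta}$ in the sectors and, near the origin (the sine-Gordon-specific wrinkle), by the integrability of $|z|^{-1/2}$ in two dimensions and the control $\lim_{z\to 0}r(z)/z = 0$ from Proposition \ref{prop:r}. In short, your proof is the standard one and fills in exactly what the paper leaves to the citation.
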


 Using the integral equation formulation \eqref{DNLS.dbar.int}, we will prove:

\begin{proposition}
\label{prop:N3.est}
Suppose that $r \in H^{s}_0(\bbR)$.
Then, for $t\gg 1$, there exists a unique solution $m^{(3)}(z;x,t)$ for Problem \ref{prob:DNLS.dbar} with the property that 
\begin{equation}
\label{N3.exp}
m^{(3)}(z;x,t) =   m^{(3)}_1(x,t) +m^{(3)}_2(z; x,t)
\end{equation}
with 
\begin{equation}
\label{0-dbar}
\lim_{z\to 0} m^{(3)}_2(z; x,t)=0.
\end{equation}
Here
\begin{equation}
\label{N31.est}
\left| m^{(3)}_1(x,t) \right| \lesssim \tau^{-s}
\end{equation}
 where the implicit constant in \eqref{N31.est} is uniform for 
$r$ in a bounded subset of $H^{1,1}_0(\bbR)$.
\end{proposition}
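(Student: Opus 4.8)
The plan is to set up and solve the integral equation \eqref{DNLS.dbar.int} by a Neumann series argument, which forces a careful estimate of the solid Cauchy operator $\zeta \mapsto P(\,\cdot\, W)$ acting on $L^\infty(\bbC)$. First I would establish that the operator norm of $f \mapsto P(fW)$ on $L^\infty$ is small for $t \gg 1$; concretely, one shows $\|P(fW)\|_{L^\infty} \lesssim \tau^{-1/4}\|f\|_{L^\infty}$. For this the key input is the pointwise bound \eqref{W-bound} together with the boundedness of $m^\RHP$ and $(m^\RHP)^{-1}$ (which follows from Proposition \ref{Prop:expo}, Proposition \ref{solution-A-B}, and the solvability of Problem \ref{prob:mkdv.br}), so that $|W(\zeta;x,t)| \lesssim |\dbar \calR^{(2)}(\zeta)|$ and hence by Lemma \ref{lemma:dbar.Ri} we have, in each sector $\Omega_j$,
\[
|W(\zeta;x,t)| \lesssim \left( |p_j'(\Real \zeta)| + |\zeta \mp z_0|^{-1/2} + |\zeta|^{-1/2} + |\dbar \Xi_\calZ(\zeta)| \right) |e^{\pm 2i\theta(\zeta)}|.
\]
Here the term $|\zeta|^{-1/2}$ — which is absent in the NLS and mKdV analyses and is exactly the new feature flagged in the introduction — is controlled precisely because $r(z)/z \to 0$ as $z \to 0$ by Proposition \ref{prop:r}, so the would-be singularity at the origin is integrable against the Cauchy kernel; and the cutoff contribution $\dbar \Xi_\calZ$ is supported away from $\calZ$ on a compact set and produces only an exponentially small $e^{-ct}$ term. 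Splitting the $\bbC$-integral defining $P(fW)$ over the sectors and parametrizing $\zeta = z_0 + u e^{i\phi}$ (resp. $\zeta = u e^{i\phi}$ near the origin), one uses $\Real(i\theta) \lesssim -u^2 \tau$ along the deformed rays together with Cauchy–Schwarz / Hölder in $u$ against the $H^1$ (hence $L^2$, $L^\infty$) control of $p_j'$ and the elementary integrals $\int_0^\infty u^{-1/2} e^{-u^2\tau}\,du \sim \tau^{-1/4}$ and $\iint |\zeta - z|^{-1}\,d\zeta$ over a bounded region, to conclude the $\tau^{-1/4}$ decay of the operator norm, uniformly for $r$ in a bounded subset of $H^1(\bbR)$.

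Once the operator norm is $o(1)$, the Neumann series gives existence and uniqueness of a bounded continuous $m^{(3)}$ solving \eqref{DNLS.dbar.int}, with $\|m^{(3)} - I\|_{L^\infty} \lesssim \tau^{-1/4}$. Next I would extract the decomposition \eqref{N3.exp}: write
\[
m^{(3)}(z;x,t) = I + \frac{1}{\pi}\int_\bbC \frac{m^{(3)}(\zeta) W(\zeta)}{\zeta - z}\,d\zeta
 = \left( I + \frac{1}{\pi}\int_\bbC \frac{m^{(3)} W}{\zeta}\,d\zeta \right) + \left( \frac{1}{\pi}\int_\bbC m^{(3)} W \left( \frac{1}{\zeta - z} - \frac{1}{\zeta} \right) d\zeta \right),
\]
calling the first parenthesis $m^{(3)}_1(x,t)$ and the second $m^{(3)}_2(z;x,t)$. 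The splitting is legitimate — i.e. $\int_\bbC \zeta^{-1} m^{(3)} W\,d\zeta$ converges — precisely because $W$ carries no worse than a $|\zeta|^{-1/2}$ singularity at the origin (again Proposition \ref{prop:r}), which is the same mechanism used in Proposition \ref{prop:recon}; this convergence issue is, I expect, the main delicate point that distinguishes the sine-Gordon case from the standard AKNS treatments, though it reduces to the already-proven small-$z$ behavior of $r$. The limit \eqref{0-dbar} then follows by dominated convergence: $\frac{1}{\zeta - z} - \frac{1}{\zeta} = \frac{z}{\zeta(\zeta - z)}$ tends to $0$ pointwise as $z \to 0$ and is dominated (after the usual $z = i\sigma$ trick, exactly as in the proof of \eqref{lim-m1}) by an $L^1_{\mathrm{loc}} + L^1$ function uniformly in $z$ small, using $|m^{(3)} W| \lesssim |W| \in L^1(\bbC)$.

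Finally, for the sharp rate \eqref{N31.est} I would re-estimate $m^{(3)}_1 = I + \frac{1}{\pi}\int_\bbC \zeta^{-1} m^{(3)}(\zeta) W(\zeta)\,d\zeta$ more carefully than the crude $\tau^{-1/4}$: replacing $m^{(3)}$ by $I$ produces an error of size $\|m^{(3)} - I\|_{L^\infty} \cdot \|\zeta^{-1} W\|_{L^1} \lesssim \tau^{-1/4}\cdot \tau^{-1/2} = \tau^{-3/4}$, while the leading contribution $\frac{1}{\pi}\int_\bbC \zeta^{-1} W(\zeta)\,d\zeta$ is itself $O(\tau^{-3/4})$: in each sector one does the $u$-integral exploiting the extra decay from $e^{\pm 2i\theta}$ and gains an additional $\tau^{-1/2}$ from integrating the exponential (one power of $\tau^{-1/4}$ from the $|\zeta|^{-1/2}$-type term and a further $\tau^{-1/2}$ from the $\zeta$-weight and the Gaussian), and the $\dbar\Xi_\calZ$ piece is $O(e^{-ct})$. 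Carrying out these sector-by-sector one-dimensional integrals — bounding each against $\|p_j'\|_{L^2}$, $\|p_j'\|_{L^\infty}$, or $\|r\|_{H^1}$ via Cauchy–Schwarz — and summing over the finitely many sectors (including the four extra ones near the origin that the $1/z$ phase forces, cf. Figure \ref{fig:contour-2'}) yields the claimed $|m^{(3)}_1(x,t)| \lesssim \tau^{-3/4}$ with constant uniform in $r$ over bounded subsets of $H^1(\bbR)$. I would model the bookkeeping on \cite[Section 5]{CL19}, with the only genuinely new work being the origin sectors and the repeated appeal to Proposition \ref{prop:r} for the $|\zeta|^{-1/2}$ estimates.
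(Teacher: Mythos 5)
Your plan reproduces the paper's proof essentially step for step: establish $\|K_W\|_{L^\infty\to L^\infty}\lesssim\tau^{-1/4}$ from the pointwise bound on $W$ and Lemma~\ref{lemma:RHP.bd}, solve by Neumann series, split the Cauchy kernel $\frac{1}{\zeta-z}=\frac{1}{\zeta}+\frac{z}{\zeta(\zeta-z)}$ to get the decomposition, prove \eqref{0-dbar} by dominated convergence along $z=i\sigma$, and then bound $\int_\bbC|\zeta|^{-1}|W|\,d\zeta\lesssim\tau^{-3/4}$ sector-by-sector, invoking $r(z)/z\to 0$ from Proposition~\ref{prop:r} to tame the origin. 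One bookkeeping slip worth fixing: in the final estimate you split $m^{(3)}=I+(m^{(3)}-I)$ and quote $\|\zeta^{-1}W\|_{L^1}\lesssim\tau^{-1/2}$ for the remainder while simultaneously claiming the leading term $\int\zeta^{-1}W$ is $O(\tau^{-3/4})$ --- but since $W$ carries no oscillation, those two statements cannot both be the governing bound on $\|\zeta^{-1}W\|_{L^1}$; what is actually proved (and what you should just use directly with $\|m^{(3)}\|_\infty\lesssim 1$, making the splitting superfluous) is $\|\zeta^{-1}W\|_{L^1}\lesssim\tau^{-3/4}$, which in $\Omega_8^-$ comes from the $p_6(u)/|\zeta|^2$ term via H\"older and boundedness of $r(u)/u$, not from a $|\zeta|^{-1/2}$ singularity.
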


\begin{proof} Given Lemmas \ref{lemma:dbar.R.bd}--\ref{lemma:N31.est},
 as in \cite{LPS}, we first show that, for large $t$, the integral operator $K_W$ 
defined by
\begin{equation*}
\left( K_W f \right)(z) = \frac{1}{\pi} \int_\bbC \frac{1}{\zeta-z} f(\zeta) W(\zeta) \, d \zeta
\end{equation*}
is bounded by
\begin{equation}
\label{dbar.int.est1}
\norm{K_W}{L^\infty \rarr L^\infty} \lesssim \tau^{(1-2s)/4}
\end{equation}
where the implied constants depend only on $\norm{r}{H_0^{s}}$.  This is the goal of Lemma \ref{lemma:KW}.
It implies that 
\begin{equation}
\label{N3.sol}
m^{(3)} = ( \mathbf{1} -K_W)^{-1}I
\end{equation}
exists as an $L^\infty$ solution of \eqref{DNLS.dbar.int}.
We  finally in Lemma \ref{lemma:N31.est} estimate \eqref{N31.est}
where the constants are uniform in $r$ belonging to a bounded subset of $H_0^{s}(\bbR)$.
Estimates \eqref{N3.exp}, \eqref{N31.est}, and \eqref{dbar.int.est1} result from the  bounds obtained in the next four lemmas. For \eqref{N3.exp}, we can decompose $m^{(3)}(z;x,t) $:
\begin{align*}
m^{(3)}(z;x,t) &=I+ \frac{1}{\pi} \int_\bbC \frac{1}{\zeta-z} m^{(3)}(\zeta;x,t) W(\zeta;x,t) \, d\zeta\\
                   &= \underbrace{ \left( I+  \frac{1}{\pi} \int_\bbC \frac{1}{\zeta} m^{(3)}(\zeta;x,t) W(\zeta;x,t) \, d\zeta \right)}_{m_1^{(3)}} \\
                   &+\underbrace{ \left( \frac{1}{\pi} \int_\bbC \frac{1}{\zeta-z} m^{(3)}(\zeta;x,t) W(\zeta;x,t) \, d\zeta - \frac{1}{\pi} \int_\bbC \frac{1}{\zeta} m^{(3)}(\zeta;x,t) W(\zeta;x,t) \, d\zeta\right) }_{m_2^{(3)}}\\
                   &= : m^{(3)}_1(x,t) +m^{(3)}_2(z; x,t).
\end{align*}
We set $z=i\sigma$ and let $\sigma\to 0$. Then near the origin, simple geometrical observation gives
\begin{align*}
\left\vert \dfrac{1}{s-z} \right\vert &= \left\vert \dfrac{s}{s-z} \right\vert \left\vert \dfrac{1}{s} \right\vert \\
                                                    &= \left\vert \dfrac{u^2+v^2}{u^2+(v-\sigma)^2 } \right\vert^{1/2} \left\vert \dfrac{1}{u^2+v^2} \right\vert^{1/2}\\
                                                    &\leq  \dfrac{2}{\sqrt{3}}\left\vert \dfrac{1}{s} \right\vert.
\end{align*}
 So the dominated convergence theorem will lead to \eqref{0-dbar} if we can show that the integral representation of $m_1^{(3)}$ make sense which will be proven in Lemma \ref{lemma:N31.est}. 
\end{proof}
For simplicity we only work with regions $\Omega_8$ and $\Omega_3$. For technical reasons we further divide $\Omega_3$ into two parts. See Figure \ref{fig:three reigons} below.

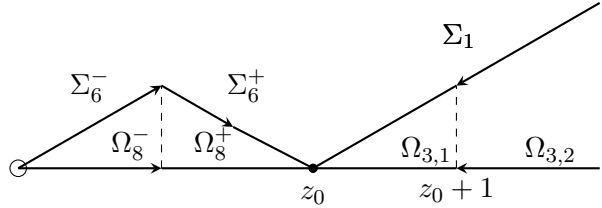
\begin{figure}[H]
\caption{Four regions }
\vskip 15pt
\begin{tikzpicture}[scale=1.1]

\draw 	[thick] 	(3.564, 0)-- (5.296, 0);	
\draw 	[->,thick,>=stealth] 	(7.028,0 )-- (5.296, 0);
\draw [dashed]  (5.296, 0)--(5.296, 1);
\node[below] at   (5.296, 0) {$z_0+1$};


\draw 	[->,thick,>=stealth] 	(0, 0)-- (1.732, 0);	
\draw 	[thick] 	 (1.732, 0)--(3.564,0 );

\draw[thick] 			(3.564 ,0) -- (5.296, 1);								
\draw[->,thick,>=stealth] 	(   7.028 ,2  )--	(5.296, 1);

\draw[->,thick,>=stealth] 			(1.732 ,1)--(2.598 ,0.5);
\draw[thick]                    (2.598, 0.5)--(3.564 ,0); 
\draw[->,thick,>=stealth] 		(0,0)--(1.732 ,1);

%

\draw	[fill]							(3.564, 0)		circle[radius=0.05];
\draw							(0,0)		circle[radius=0.1];

\node[below] at (3.564,-0.1)			{$z_0$};

\node[right] at (0.5,1)					{$\Sigma_6^-$};
\node[right] at (2.4, 1)					{$\Sigma_6^+$};
\node[right] at (5, 1.6)					{$\Sigma_1$};

\node[right] at (2, 0.3)				{$\Omega_8^+$};

\node[right] at (4.5, 0.2)				{$\Omega_{3, 1}$};
\node[right] at (6, 0.2)				{$\Omega_{3,2}$};

\node[right] at (1, 0.3)				{$\Omega_8^-$};
\node[right] at (5, 1.6)					{$\Sigma_1$};

\draw [dashed]  (1.732, 0)--(1.732, 1);

\end{tikzpicture}
\label{fig:three reigons}
\end{figure}

\begin{lemma}
\label{lemma:dbar.R.bd}
\begin{equation}
\label{dbar.R_1.bd}
\left| \dbar R_1 e^{ 2i\theta}  \right|
\lesssim		
\begin{cases}
 	\left( |\dbar [\mathbf{p}_1(z)]| + |z - z_0 |^{s-3/2} + \left\vert \dbar \left( \Xi_\calZ(z) \right)  \right\vert \right) e^{-uv\tau };  z\in \Omega_{3,1} \\
 	\\
 	\left( |\dbar [\mathbf{p}_1(z)]| + |z - z_0 |^{s-3/2} + \left\vert \dbar \left( \Xi_\calZ(z) \right)  \right\vert \right) e^{-v\tau };  z\in \Omega_{3,2} 
		\end{cases}
\end{equation}
\begin{equation}
\label{dbar.R_6.bd}
\left| \dbar R_6 e^{- 2i\theta}  \right|
\lesssim		
\begin{cases}
 	\left( |\dbar [\mathbf{p}_6( z)]| +\dfrac{\mathbf{p}_6(z)}{ |z  | }+ \left\vert \dbar \left( \Xi_\calZ(z) \right)  \right\vert \right) e^{-v \tau / u^2 }; \, z\in \Omega_8^-\\
 	\\
 	\left( |\dbar [\mathbf{p}_6(z)]| + |z - z_0 |^{s-3/2} + \left\vert \dbar \left( \Xi_\calZ(z) \right)  \right\vert \right) e^{-|u v| \tau }; \, z\in \Omega_8^+
		\end{cases}
\end{equation}
\end{lemma}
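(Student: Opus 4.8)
The plan is to derive both inequalities by feeding the pointwise estimates of Lemma \ref{lemma:dbar.Ri} for $|\dbar R_1|$ and $|\dbar R_6|$ into an elementary analysis of the oscillatory modulus $|e^{\pm 2i\theta}|=e^{\pm 2\Real(i\theta)}$ on the sectors in question. The only genuinely new ingredient is the sign and size of $\Real(i\theta)$ near the stationary points, for which I would use the formula recorded in Section \ref{sec:local},
\[
\Real\!\big(i\theta(z;x,t)\big)=\frac{t\,\Imag z}{2(1+z_0^2)}\left(\frac{z_0^2}{|z|^2}-1\right),
\]
together with the parameter $\tau=tz_0/(1+z_0^2)$ from \eqref{tau}. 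Along the characteristic under consideration $z_0=\rho_\ell$ is a fixed positive number, so all $z_0$-dependent constants are harmless and $\tau\asymp t$. With this in hand the proof reduces to: (i) quoting $|\dbar\calR^{(2)}_1|\lesssim|p_1'(\Real z)|+|z-z_0|^{-1/2}+|\dbar(\Xi_\calZ)|$ on $\Omega_3$ and the companion estimate for $\dbar\calR^{(2)}_6$ on $\Omega_8$ from Lemma \ref{lemma:dbar.Ri} (whose near-origin interpolant produces the factor $p_6(\Real z)/|z|$ on $\Omega_8^-$ and $|z-z_0|^{-1/2}$ on $\Omega_8^+$), and (ii) multiplying by the exponential bounds established below.

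For $R_1$ I would write $z=z_0+u+iv\in\Omega_3$ with $u,v>0$ and $v\le u/\sqrt3$ (the opening half-angle of $\Sigma_1$), so that $|z|^2-z_0^2=2z_0u+u^2+v^2$ and
\[
-\Real(i\theta)=\frac{tv}{2(1+z_0^2)}\cdot\frac{2z_0u+u^2+v^2}{|z|^2}.
\]
On $\Omega_{3,1}$, where $0<u\le1$, one has $|z|^2$ bounded and $2z_0u+u^2+v^2\gtrsim z_0u\gtrsim u$, hence $-\Real(i\theta)\gtrsim uv\tau$ and $|e^{2i\theta}|\lesssim e^{-c\,uv\tau}$; on $\Omega_{3,2}$, where $u\ge1$, the bound $|z|^2-z_0^2\ge1$ forces $1-z_0^2/|z|^2\ge c(z_0)>0$ uniformly, so $-\Real(i\theta)\gtrsim v\tau$ and $|e^{2i\theta}|\lesssim e^{-c\,v\tau}$. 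Combining with step (i) gives \eqref{dbar.R_1.bd}. I want to stress that the upgrade from $e^{-uv\tau}$ to $e^{-v\tau}$ on $\Omega_{3,2}$ is essential: it is what makes the $\dbar$-integral of Section \ref{sec:dbar} converge once one integrates over the unbounded range $u\ge1$.

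The $R_6$ estimate runs identically, now with $\Real(i\theta)\ge0$. For $z=z_0-u+iv\in\Omega_8^+$ with $u,v$ small (which one may assume, possibly after adjusting the opening of the sectors), $z_0^2-|z|^2=2z_0u-u^2-v^2\gtrsim z_0u$ and $|z|^2\asymp z_0^2$, giving $\Real(i\theta)\gtrsim uv\tau$ and $|e^{-2i\theta}|\lesssim e^{-c\,uv\tau}$, which with $|\dbar R_6|\lesssim|p_6'(\Real z)|+|z-z_0|^{-1/2}+|\dbar(\Xi_\calZ)|$ yields the second line of \eqref{dbar.R_6.bd}. For $z\in\Omega_8^-$ one stays in a disc of radius $\le z_0/2$ about the origin, so $z_0^2-|z|^2\ge\tfrac34 z_0^2$ while $|z|^2$ remains bounded, whence $\Real(i\theta)\gtrsim v\tau$ and $|e^{-2i\theta}|\lesssim e^{-c\,v\tau}$; with $|\dbar R_6|\lesssim|p_6'(\Real z)|+p_6(\Real z)/|z|+|\dbar(\Xi_\calZ)|$ this gives the first line of \eqref{dbar.R_6.bd}. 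The eight remaining regions $\Omega_4,\dots,\Omega_7,\Omega_9,\Omega_{10}$ follow at once from the symmetries $z\mapsto-z$ and $z\mapsto\bar z$, under which $\theta$ and the $R_i$ transform as in Section \ref{sec:mixed}, so no separate computation is needed.

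The main obstacle I anticipate is not analytic depth but the bookkeeping near $z=0$ inside $\Omega_8^-$, where the phase $\theta$ carries the $1/z$ singularity: one must check that the elementary lower bounds for $\Real(i\theta)$ persist all the way down to the origin and that the polynomial factor $p_6(\Real z)/|z|$ is locally integrable there. Both follow from $r\in H^1\hookrightarrow C^{1/2}$ together with $\lim_{z\to0}r(z)/z=0$ (Proposition \ref{prop:r})---in particular $|r(\Real z)|\lesssim|\Real z|^{1/2}$ forces $p_6(\Real z)/|z|\lesssim|z|^{-1/2}$---but it is exactly here that the sine-Gordon analysis departs from the NLS and mKdV model problems and must be carried out explicitly.
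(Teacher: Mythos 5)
Your proposal is correct and follows essentially the same route as the paper: you quote the pointwise bounds from Lemma \ref{lemma:dbar.Ri} and then bound $\Real(i\theta)$ region by region using the explicit formula $\Real(i\theta)=\frac{t\,\Imag z}{2(1+z_0^2)}\left(\frac{z_0^2}{|z|^2}-1\right)$, which is exactly the structure of the paper's argument. The only small imprecision is the radius of the disc containing $\Omega_8^-$ (the paper uses $|z|^2\le z_0^2/3$, i.e.\ $|z|\le z_0/\sqrt{3}$, rather than $|z|\le z_0/2$), but this changes only the numerical constant in $z_0^2-|z|^2\gtrsim z_0^2$ and not the conclusion $\Real(i\theta)\gtrsim v\tau$.
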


\begin{proof}
 In $\Omega_3$, set $z=(u + z_0)+iv$ and $0\leq v<u$.
\begin{align*}
\text{Re}(2i\theta)  &= \dfrac{1}{2}( \text{Im} z ) \left( \dfrac{1}{1+z_0^2}  \right) \left(   \dfrac{z_0^2}{(\text{Re}z )^2 + (\text{Im}z )^2 } -1\right)t \\ 
                            & =\dfrac{1}{2} v \left( \dfrac{1}{1+z_0^2}  \right) \left(   \dfrac{ -u^2-2z_0 u -v^2}{ ( u+z_0)^2+ v^2 } \right)t\\
                            &\leq -\dfrac{1}{2}v \dfrac{1}{1+z_0^2}\dfrac{u^2+2z_0 u}{( u+z_0)^2+v^2}t
                           \end{align*}
 Near $z_0$ we take
 \begin{equation}
 \label{omega8}
 \text{Re}(2i\theta) \leq - \dfrac{1}{1+z_0^2}\dfrac{z_0 uv}{( u+z_0)^2+v^2}t \lesssim -|u||v|\tau
 \end{equation}
and away from $z_0$, we take
\begin{equation}
 \label{omega8'}
 \text{Re}(2i\theta) \leq -\dfrac{1}{2}v \dfrac{1}{1+z_0^2}\dfrac{u^2 }{( u+z_0)^2+v^2}t \lesssim -|v|\tau.
 \end{equation}
In $\Omega_8^-$, we set $z=u+iv$ and use the facts that $u\geq 0$, $v\geq 0$ and $u^2+v^2\leq z_0^2/3$ to deduce
\begin{align*}
-\text{Re}(2i\theta)  &=- \dfrac{1}{2}( \text{Im} z ) \left( \dfrac{1}{1+z_0^2}  \right) \left(   \dfrac{z_0^2}{(\text{Re}z )^2 + (\text{Im}z )^2 } -1\right)t \\ 
                            & =-\dfrac{1}{2} v \left( \dfrac{1}{1+z_0^2}  \right) \left(   \dfrac{z_0^2 }{ u^2+ v^2 } -1\right)t\\
                            &\leq -\dfrac{z_0^2vt}{6(1+z_0^2)u^2} .\\
                            \end{align*}
Finally, in $\Omega_8^+$, we set $z=(u + z_0)+iv$ and notice that $- z_0/2 <u<0$ and $|u|>|v|$
\begin{align*}
-\text{Re}(2i\theta)  &=- \dfrac{1}{2}( \text{Im} z ) \left( \dfrac{1}{1+z_0^2}  \right) \left(   \dfrac{z_0^2}{(\text{Re}z )^2 + (\text{Im}z )^2 } -1\right)t \\ 
                            & =\dfrac{1}{2} v \left( \dfrac{1}{1+z_0^2}  \right) \left(   \dfrac{ u^2 + 2z_0 u + v^2}{ ( u+z_0)^2+ v^2 } \right)t\\
                            &\leq \dfrac{3\sqrt{3}-1}{4\sqrt{3}} \dfrac{z_0}{1+z_0^2}\dfrac{ uv }{( u+z_0)^2+v^2}t\\
                            & \lesssim - |u||v|\tau.
                           \end{align*}
Estimates \eqref{dbar.R_1.bd} and  \eqref{dbar.R_6.bd}  then follow from Lemma \ref{lemma:dbar.Ri}. 

\end{proof}

\begin{lemma}
\label{lemma:RHP.bd}For the localized Riemann-Hilbert problem from Problem \ref{MKDV.RHP.local}, we have
\begin{align}
\label{RHP.bd1}
\norm{m^\RHP(\dotarg; x, t)}{\infty}	&	\lesssim		1,\\[5pt]
\label{RHP.bd2}
\norm{m^\RHP(\dotarg; x,t )^{-1}}{\infty}	&	\lesssim	1.
\end{align}
All implied constants are uniform  for  $r$ in a bounded subset of $H^{s}_0(\bbR)$.
\end{lemma}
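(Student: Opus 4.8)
## Proof Proposal for Lemma \ref{lemma:RHP.bd}

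The plan is to exploit the explicit construction of $m^\RHP$ from the parametrix \eqref{parametrix}, namely $m^\RHP_*(z) = E_2(z) m^{(br)}(z)$ outside the disks $C_A, C_B$, and $m^\RHP_*(z) = E_2(z) m^{(br)}(z) m^{A'}(z)$ (resp. $m^{B'}(z)$) inside, together with the relation $m^\RHP = E_1 m^\RHP_*$ from Proposition \ref{Prop:expo}. Since $\|m^\RHP\|_\infty$ and $\|(m^\RHP)^{-1}\|_\infty$ are controlled by the corresponding norms of each factor, it suffices to bound $E_1$, $E_2$, $m^{(br)}$, $m^{A'}$, $m^{B'}$ and their inverses uniformly. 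First I would record that $E_1 = I + \mathcal{O}(e^{-ct})$ by Proposition \ref{Prop:expo}, so $E_1^{\pm 1}$ are uniformly bounded for $t$ large. Next, for $E_2$: the jump matrix $v^{(E)}$ satisfies $\|v^{(E)} - I\|_{L^\infty} \lesssim t^{-1/2}$ by \eqref{expansion-A}--\eqref{expansion-B}, hence the associated Cauchy operator has small norm \eqref{norm-vE} and the Neumann series gives $\eta = (1 - C_{v^{(E)}})^{-1}I$ with $\|\eta\|_{L^2}$ bounded; then the integral representation of $E_2$ together with the standard Cauchy-operator $L^2 \to L^\infty$ off-contour estimate yields $E_2(z) = I + \mathcal{O}(t^{-1/2})$ uniformly on $z$ bounded away from $C_A \cup C_B$, and $\det E_2 \equiv 1$ gives the inverse bound.

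The heart of the matter is the uniform boundedness of $m^{(br)}$, $m^{A'}$, $m^{B'}$. For $m^{(br)}$: this is the solution of the purely discrete Riemann--Hilbert problem \ref{prob:mkdv.br}, which reduces to the finite linear system \eqref{BC-int-br}--\eqref{BC-int-br-}; since the jump matrices are Schwarz-invariant ($\delta(z) = (\overline{\delta(\bar z)})^{-1}$), the vanishing lemma of \cite[Theorem 9.3]{Zhou89} guarantees unique solvability, and the explicit closed-form solution (cf. \eqref{br-0} and the formulas for $m^{br}_{ij}(0)$) shows the entries are rational in $z$ with poles only at the $\pm z_\ell, \pm \overline{z_\ell}$, hence bounded on the relevant region after the circles $\gamma_\ell$ are excised; the determinant being $1$ controls the inverse. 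For $m^{A'}$ and $m^{B'}$: after the rescaling $z \mapsto \zeta$ these become the model parabolic-cylinder problems with jump $v^{B^0}$ (resp. $v^{A^0}$), again Schwarz-invariant, whose solutions are classical and given in \cite[Appendix A]{BJM16}; uniform boundedness of the resolvents $(1_A - A)^{-1}, (1_B - B)^{-1}$ is exactly Proposition \ref{prop:resolvent}, so $\|m^{A'}\|_\infty, \|m^{B'}\|_\infty \lesssim 1$ follows from the Cauchy-integral representation, and the $L^\infty$ bounds on $w^A, w^B$ from Lemma \ref{delta-B } and Lemma \ref{delta-A} keep all constants uniform for $r$ in a bounded subset of $H^{1}(\bbR)$.

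Finally I would assemble the pieces: on $|z \pm z_0| > \rho$ write $m^\RHP = E_1 E_2 m^{(br)}$ and multiply the three bounds; on $|z - z_0| \le \rho$ insert the extra factor $m^{B'}$ (resp. $m^{A'}$ on $|z + z_0| \le \rho$) and do likewise; the inverse bound \eqref{RHP.bd2} follows the same way using that each factor has determinant $1$, so its inverse is its adjugate, which obeys the same sup bound. I expect the main obstacle to be bookkeeping the uniformity: one must check that the implied constants in Proposition \ref{prop:resolvent}, in the estimates of Lemma \ref{delta-B } and Lemma \ref{delta-A}, and in the Neumann-series bound for $E_2$ all depend on $r$ only through $\|r\|_{H^1}$ and not on $z_0$ or $t$, and that the excised circles $\gamma_\ell$ (of radius $< \Upsilon/3$) keep $m^{(br)}$ away from its poles with a gap bounded below independently of the data in the admissible (generic) class. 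Everything else is routine application of the small-norm Riemann--Hilbert machinery already set up in the preceding sections.
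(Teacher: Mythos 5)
Your proposal is correct and follows exactly the route the paper intends: the paper's own ``proof'' is the single sentence ``The proof of this lemma is a consequence of the previous section,'' and your argument is precisely the assembly of the parametrix factorization $m^{\RHP} = E_1 E_2 m^{(br)} m^{A'/B'}$ together with the bounds from Proposition \ref{Prop:expo}, \eqref{norm-vE}, Proposition \ref{prop:resolvent}, and the explicit $m^{(br)}$ formulas that the paper leaves implicit.
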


The proof of this lemma is a consequence of the previous section.

\begin{lemma}
\label{lemma:KW}
Suppose that $r\in H_0^{s}(\bbR)$.
Then, the estimate \eqref{dbar.int.est1}
holds, where the implied constant is uniform  for  $r$ in a bounded subset of $H_0^{s}(\bbR)$.
\end{lemma}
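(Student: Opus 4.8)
The plan is to estimate the operator norm of $K_W$ on $L^\infty$ by exploiting the factorization $W = m^{\RHP} \, \dbar\calR^{(2)} \, (m^{\RHP})^{-1}$ together with the pointwise bounds on $\dbar\calR^{(2)}$ coming from Lemma \ref{lemma:dbar.Ri} and Lemma \ref{lemma:dbar.R.bd}, and the uniform bounds on $m^{\RHP}$ and $(m^{\RHP})^{-1}$ from Lemma \ref{lemma:RHP.bd}. First I would reduce matters to a scalar estimate: for $f\in L^\infty$,
\[
\left| (K_W f)(z) \right| \leq \norm{f}{\infty} \, \frac{1}{\pi} \int_{\bbC} \frac{|W(\zeta;x,t)|}{|\zeta - z|} \, dA(\zeta) \lesssim \norm{f}{\infty} \, \frac{1}{\pi} \int_{\bbC} \frac{|\dbar\calR^{(2)}(\zeta)|}{|\zeta - z|} \, dA(\zeta),
\]
using $\norm{m^{\RHP}}{\infty}, \norm{(m^{\RHP})^{-1}}{\infty} \lesssim 1$. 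So the whole problem is to show $\int_{\bbC} |\zeta - z|^{-1} |\dbar\calR^{(2)}(\zeta)| \, dA(\zeta) \lesssim \tau^{-1/4}$ uniformly in $z$.

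Next I would localize: by symmetry it suffices to work in the representative sectors $\Omega_8$ and $\Omega_3$ (split further into $\Omega_{3,1}, \Omega_{3,2}$ as in Figure \ref{fig:three reigons}), the other sectors being handled identically after reflection. In each sector I insert the bound from Lemma \ref{lemma:dbar.R.bd}, namely $|\dbar\calR^{(2)} e^{\pm 2i\theta}| \lesssim \big( |p_i'(\Real\zeta)| + |\zeta \mp z_0|^{-1/2} + |\Xi_\calZ(\zeta)| \big) e^{-c|u||v|\tau}$ (or with $e^{-c|v|\tau}$ away from the stationary point), where $\zeta = (u \pm z_0) + iv$. The integral then splits into three pieces according to the three terms in the parenthesis. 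Parametrizing $dA(\zeta) = du\, dv$ and using the elementary inequality $\norm{|\zeta - z|^{-1}}{L^q(\text{line } v = \text{const})} \lesssim |v|^{1/q - 1}$ for $1 < q < 2$, one applies Hölder in the $u$-variable: the term with $|p_i'(\Real\zeta)|$ is controlled by $\norm{p_i'}{L^2} = \norm{r'}{L^2} \lesssim \norm{r}{H^1}$ (this is where the $H^1$ hypothesis enters), the term with $|\zeta \mp z_0|^{-1/2} = |u + iv|^{-1/2}$ is in $L^q_u$ near the vertex for $q < 2$, and the term with $|\Xi_\calZ|$ is bounded and compactly supported. In all three cases the remaining $v$-integral is of the form $\int_0^\infty |v|^{1/q - 1} e^{-c|v|^a \tau} \, dv$ with $a \in \{1, 2\}$; scaling $v \mapsto \tau^{-1/a} v$ yields a power $\tau^{-1/(aq')}$ with a little care needed on the exponents, and optimizing $q \in (1,2)$ gives the claimed rate $\tau^{-1/4}$ (the exponent $1/4$ arising precisely because the worst piece, from $\Omega_{3,1}$ or $\Omega_8^+$ with the double decay $e^{-c|u||v|\tau}$, only produces $\tau^{-1/4}$ after the joint integration, while the single-decay regions $\Omega_{3,2}, \Omega_8^-$ give the faster $\tau^{-1/2}$).

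The main obstacle I anticipate is bookkeeping in the region with the weak ``product'' exponential decay $e^{-c|u||v|\tau}$ near $\pm z_0$: there one cannot simply integrate out $u$ and $v$ independently, and one must be attentive to the competition between the non-integrable-looking singularity $|u+iv|^{-1/2}$ of $\dbar\calR^{(2)}$, the singularity $|\zeta - z|^{-1}$ of the Cauchy kernel, and the degeneracy of the decay rate as $u \to 0$ or $v \to 0$. The standard device (as in \cite{DM08}, \cite{BJM16}, \cite{CL19}) is to first do the $u$-integral with Hölder for fixed $v$, absorbing $|\zeta - z|^{-1}$ and $|u|^{-1/2}$ into complementary $L^q$/$L^{q'}$ norms while keeping $e^{-c|u||v|\tau} \le e^{-c\eps |v|\tau}$ once $|u| \ge \eps$ and crudely bounding the $|u| < \eps$ part, then doing the $v$-integral; choosing $q$ close to $2$ is what forces the final exponent to be $1/4$ rather than better. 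I would also note that the uniformity of all constants in $r$ over bounded subsets of $H^1(\bbR)$ is automatic since every estimate only uses $\norm{r}{H^1}$ through $\norm{r'}{L^2}$, $\norm{r}{L^\infty}$, and the $H^1$-dependent constants already recorded in Lemma \ref{lemma:delta} and Lemma \ref{lemma:dbar.Ri}.
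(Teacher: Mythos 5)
Your architecture matches the paper's: reduce $\norm{K_W}{L^\infty\to L^\infty}$ to a uniform bound on $\int_\bbC |z-\zeta|^{-1}|W(\zeta)|\,dA(\zeta)$, factor out the $L^\infty$ bounds on $m^{\RHP}$ and $(m^{\RHP})^{-1}$ from Lemma \ref{lemma:RHP.bd}, insert the pointwise bounds on $\dbar\calR^{(2)}$ from Lemma \ref{lemma:dbar.R.bd}, decompose the integrand into the three contributions ($|p_i'|$, the inverse square-root singularity, and $\dbar\Xi_\calZ$), run H\"older in the $u$-variable against the Cauchy kernel, and then estimate the remaining $v$-integral, finding $\tau^{-1/4}$ in the product-decay sectors near $\pm z_0$ and $\tau^{-1/2}$ elsewhere. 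This is precisely the paper's scheme, which cites \cite[Prop.\ D.1]{BJM16} for $\Omega_{3,1},\Omega_8^+$ and carries out the H\"older computations directly in $\Omega_8^-$ and $\Omega_{3,2}$.

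One step of your write-up, however, is misleading and would fail if followed literally. In the product-decay sectors you propose to split $|u|\gtrless\eps$, keep $e^{-c|u||v|\tau}\le e^{-c\eps|v|\tau}$ on the far part, and ``crudely bound'' the near part. If ``crude'' means dropping the exponential on $\{|u|<\eps\}$, that region contributes a $\tau$-independent $O_\eps(1)$ quantity (the Cauchy kernel times $|\zeta\mp z_0|^{-1/2}$ is locally integrable but has no $\tau$-gain), which destroys the estimate; and you do not say to let $\eps$ depend on $\tau$. The device that actually produces your ``$a=2$'' Gaussian profile, and that \cite{BJM16} uses, is not a split in $u$ at all but the cone constraint $u\ge\sqrt{3}|v|$ valid throughout $\Omega_{3,1}\cup\Omega_8^+$: it gives $e^{-uv\tau}\le e^{-\sqrt{3}v^2\tau}$ uniformly, all the way to the vertex. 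After that, Cauchy--Schwarz in $u$ (which is forced at $q'=2$ for the $|p_i'|$ piece, since only $r'\in L^2$ is available; for the $|\cdot|^{-1/2}$ piece any $q$ slightly above $2$ works) yields $\int_0^\infty|v-\beta|^{-1/2}e^{-cv^2\tau}\,dv\sim\tau^{-1/4}$. Your scaling computation with $a=2$ is consistent with this mechanism, so you likely have the right picture in mind; just replace the $\eps$-split by the cone bound when writing it out. A minor arithmetic slip as well: scaling $v\mapsto\tau^{-1/a}v$ in $\int|v|^{1/q-1}e^{-c|v|^a\tau}\,dv$ gives $\tau^{-1/(aq)}$, not $\tau^{-1/(aq')}$.
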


\begin{proof}
To prove \eqref{dbar.int.est1}, first note that
\begin{align}
 \norm{K_W f}{\infty} &\leq \norm{f}{\infty} \int_\bbC \frac{1}{|z-\zeta|}|W(\zeta)| \, dm(\zeta) 
                                \end{align}
so that we need only estimate the right-hand integral. We will prove the estimate in the region $ z\in\Omega_3$ first. From \eqref{W-bound}, it follows
$$ |W(\zeta)| \leq \norm{m^{\RHP}}{\infty} \norm{(m^{\RHP})^{-1}}{\infty} \left| \dbar R_1\right| |e^{2i\theta}|.$$
Setting $z=\alpha+i\beta$ and $\zeta=(u+z_0)+iv$, the region $\Omega_{3,1}$ corresponds to $u\geq \sqrt{3}v \geq 0 $. We then have from \eqref{dbar.R_1.bd} \eqref{RHP.bd1}, and \eqref{RHP.bd2} that
$$
 \int_{\Omega_{3,1}}  \frac{1}{|z-\zeta|} |W(\zeta)| \, d\zeta  \lesssim  I_1 + I_2 +I_3
$$
where
\begin{align*}
I_1 	&=	\int_0^{1/\sqrt{3}} \int_{\sqrt{3} v}^1 \frac{1}{|z-\zeta|} |\overline{\partial}[\mathbf{p}_1(\zeta)]| e^{-uv\tau} \, du \, dv, \\[5pt]
I_2	&=	\int_0^{1/\sqrt{3}} \int_{\sqrt{3} v}^1 \frac{1}{|z-\zeta|} \left| u+iv \right|^{s-3/2} e^{- uv\tau } \, du \, dv,\\
I_3	&=	\int_0^{1/\sqrt{3}} \int_{\sqrt{3}v}^1 \frac{1}{|z-\zeta|} \left|  \dbar (\Xi_\calZ(\zeta  ) ) \right| e^{- uv\tau} \, du \, dv.
\end{align*}
It now follows from \cite[proof of Lemma 3.7]{CuPe} that
$$
|I_1|, \, |I_2|, \, |I_3| \lesssim \tau^{(1-2s)/4}.
$$
It then follows that
\begin{equation}
\label{omega31}
\int_{\Omega_{3,1}} \frac{1}{|z-\zeta|} |W(\zeta)| \, d\zeta \lesssim \tau^{(1-2s)/4}
\end{equation}
Similar estimates hold for the integrations over the remaining $\Omega_8^+$:
\begin{equation}
\label{omega8+}
 \int_{\Omega_{8}^+} \frac{1}{|z-\zeta|} |W(\zeta)| \, d\zeta \lesssim \tau^{(1-2s)/4}. 
\end{equation}
Now we turn to region $\Omega_8^-$ and write
$$
\int_{\Omega_{8}^-}  \frac{1}{|z-\zeta|} |W(\zeta)| \, d\zeta  \lesssim  I_1 + I_2 +I_3
$$
where
\begin{align*}
I_1 	&=	\int_0^{z_0/(2\sqrt{3} )} \int_{\sqrt{3} v}^{z_0/2} \frac{1}{|z-\zeta|} |\overline{\partial}[\mathbf{p}_6(\zeta)]| e^{-v \tau / u^2} \, du \, dv, \\[5pt]
I_2	&=	\int_0^{z_0/(2\sqrt{3})} \int_{\sqrt{3} v}^{z_0/2}  \frac{1}{|z-\zeta|} \left| u+iv \right|^{s-3/2} e^{-v \tau / u^2 } \, du \, dv,\\
I_3	&=	\int_0^{z_0 / (2\sqrt{3}) } \int_{\sqrt{3}v}^{z_0/2}  \frac{1}{|z-\zeta|} \left|  \dbar (\Xi_\calZ(\zeta  ) ) \right| e^{-v \tau / u^2} \, du \, dv.
\end{align*}
We can rewrite
\begin{equation}
  \left[ \mathcal{P}_{\Imag \zeta}* r\right](\Real \zeta)=\dfrac{1}{2\pi}\int_\bbR e^{i\xi u}\hat{r}(\xi)\hat{\mathcal{P}}(v\xi) d\xi
  \end{equation}
  and calculate that 
  \begin{align}
      \dbar[ \left[ \mathcal{P}_{v}* r\right](u) ]=\dfrac{1}{2}\int_\bbR i\xi\left(  e^{i\xi u}\hat{r}(\xi)\hat{\mathcal{P}}(v\xi) + e^{i\xi u}\hat{r}(\xi)\hat{\mathcal{P}}'(v\xi) \right) d\xi.
      \end{align}
An application of the \textit{Plancheral}'s theorem leads to
      \begin{equation}
        \norm{ \dbar[ \left[ \mathcal{P}_{v}* r\right](u) }{L^2_u}\lesssim \norm{r}{H^s(\bbR)} v^{s-1}.
      \end{equation}
Setting $z=\alpha+i\beta$, then using the fact that
$$\norm{\dfrac{1}{z-\zeta}}{L^2_u(v, \infty)}^2\leq \dfrac{\pi}{|v-\beta|}$$
we estimate
\begin{align*}
|I_1| &\lesssim \int_0^\infty  \int_{\sqrt{3} v}^\infty \dfrac{\overline{\partial}[\mathbf{p}_6(\zeta)]}{|\zeta-z|}e^{-v \tau / u^2}dudv\\
&\lesssim \norm{r}{H^s}\int_0^\infty \dfrac{e^{-v\tau}}{v^{1-s}|v-\beta|^{1/2}} dv\\
&\lesssim \tau^{1/2-s}.
\end{align*}
For $I_2$, we choose some appropriate $p$ and use \textit{H\"older}'s inequality to bound
\begin{align*}
|I_2|  & \lesssim \int_0^\infty e^{-v\tau} v^{1/p+(s-3/2)}|v-\beta|^{1/q-1}dv\\
        &=  \int_0^\beta e^{-v\tau} v^{1/p+(s-3/2)}|v-\beta|^{1/q-1}dv +  \int_\beta^\infty e^{-v\tau} v^{1/p+(s-3/2)}|v-\beta|^{1/q-1}dv\\
        &\leq \int_0^1\beta^{1/2}e^{-\beta w\tau}w^{1/p+(s-3/2)}(1-w)^{1/q-1}dw +\int_\beta^\infty e^{-v\tau} v^{1/p+(s-3/2)}|v-\beta|^{1/q-1}dv\\
        &\lesssim  \tau^{1/2-s}.
\end{align*}
The estimate on $I_3$ is similar to that of $I_1$.
So we arrive at
\begin{equation}
\label{omega8-}
\int_{\Omega_{8}^-}  \frac{1}{|z-\zeta|} |W(\zeta)| \, d\zeta  \lesssim \tau^{1/2-s}.
\end{equation}

Similar procedure gives the following estimate on $\Omega_{3,2}$
\begin{equation}
\label{omega32}
\int_{\Omega_{3,2}}  \frac{1}{|z-\zeta|} |W(\zeta)| \, d\zeta  \lesssim \tau^{1/2-s}.
\end{equation}
Combining \eqref{omega31}-\eqref{omega32} leads to \eqref{dbar.int.est1}.
\end{proof}

\begin{lemma}
\label{lemma:N31.est}
The estimate   \eqref{N31.est} 
 holds with constants uniform in $r$ in a bounded subset of $H_0^{s}(\bbR)$.
\end{lemma}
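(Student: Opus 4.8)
The plan is to estimate the solid Cauchy transform appearing in $m^{(3)}_1$ directly. Recall from the proof of Proposition \ref{prop:N3.est} that $m^{(3)}_1(x,t)=I+\pi^{-1}\int_\bbC\zeta^{-1}m^{(3)}(\zeta;x,t)W(\zeta;x,t)\,d\zeta$, so \eqref{N31.est} follows once I show $\int_\bbC|\zeta|^{-1}|W(\zeta;x,t)|\,d\zeta\lesssim\tau^{-3/4}$ with the implied constant depending on $r$ only through $\norm{r}{H^1}$. First I would invoke Lemma \ref{lemma:KW} and the Neumann series \eqref{N3.sol} to get $\norm{m^{(3)}}{\infty}\lesssim 1$ uniformly, and then \eqref{W-bound}, Lemma \ref{lemma:RHP.bd} and \eqref{N2.dbar} to bound $|W(\zeta;x,t)|\lesssim|\dbar\calR^{(2)}(\zeta)|\,|e^{\pm2i\theta}|$, reducing the claim to $\int_\bbC|\zeta|^{-1}|\dbar\calR^{(2)}(\zeta)|\,|e^{\pm2i\theta}|\,d\zeta\lesssim\tau^{-3/4}$.

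Since $\dbar\calR^{(2)}$ vanishes on $\Omega_1\cup\Omega_2$, I would decompose the remaining domain into the sectors $\Omega_3,\dots,\Omega_{10}$, each split into a piece adjacent to $\pm z_0$ and a piece adjacent to the origin as in Figures \ref{fig:contour-2'} and \ref{fig:three reigons}, and by the Schwarz symmetry of the configuration treat only (say) $\Omega_{3,1},\Omega_{3,2},\Omega_8^+,\Omega_8^-$. On $\Omega_{3,1}\cup\Omega_{3,2}\cup\Omega_8^+$, which lie at distance $\gtrsim z_0$ from $0$, the weight $|\zeta|^{-1}$ is bounded, so one is reduced to the same double integrals as in Lemma \ref{lemma:KW} but \emph{without} the Cauchy kernel $|z-\zeta|^{-1}$. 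Inserting the pointwise bounds of Lemma \ref{lemma:dbar.R.bd} (in coordinates $\zeta=(u+z_0)+iv$, $\sqrt3 v\le u$) — the $|p_i'(\Real\zeta)|$ term handled by Cauchy--Schwarz in $u$ using $p_i'\in L^2$ for $r\in H^1$, the $|\zeta\mp z_0|^{-1/2}$ and $|\dbar\Xi_\calZ|$ terms directly — together with the rescalings $w=\tau^{1/2}v$, $s=uv\tau$ and splitting the $v$-integral at $v\sim\tau^{-1/2}$, yields $\lesssim\tau^{-3/4}$ on $\Omega_{3,1}$ and an even faster rate on $\Omega_{3,2}$ and $\Omega_8^+$ thanks to the stronger $e^{-|v|\tau}$, resp.\ $e^{-|u||v|\tau}$, decay there; these are precisely the calculations of \cite[proof of Proposition D.1]{BJM16} and \cite[Section 5]{CL19}.

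The delicate step — and the one I expect to be the main obstacle — is the origin sector $\Omega_8^-$ (and its analogues $\Omega_9^-$, etc.), where $|\zeta|^{-1}$ is genuinely singular; this is the feature special to sine-Gordon caused by the $1/z$ in the phase \eqref{theta}, and it is where Proposition \ref{prop:r} is used. By Lemma \ref{lemma:dbar.R.bd}, $|\dbar\calR^{(2)}_6 e^{-2i\theta}|\lesssim\big(|p_6'(\Real\zeta)|+|p_6(\Real\zeta)|/|\zeta|+|\dbar\Xi_\calZ|\big)e^{-|v|\tau}$; since $\lim_{z\to0}r(z)/z=0$ (Proposition \ref{prop:r}(2)) forces $r(0)=0$ and $|p_6(\Real\zeta)|\lesssim|\Real\zeta|\le|\zeta|$ near $0$, the product $|\zeta|^{-1}\cdot|p_6(\Real\zeta)|/|\zeta|$ is dominated by $|\zeta|^{-1}$, which is locally integrable on $\bbC$. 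I would then sharpen the phase bound on $\Omega_8^-$, as in the proof of Lemma \ref{lemma:dbar.R.bd}, to $-\Real(2i\theta)\lesssim -v z_0\tau/(u^2+v^2)$, so that the super-fast decay near $\zeta=0$ absorbs the remaining $|\zeta|^{-3/2}$-type singularity and $\int_{\Omega_8^-}|\zeta|^{-1}|\dbar\calR^{(2)}_6 e^{-2i\theta}|\,d\zeta\lesssim\tau^{-1}$. Summing the finitely many sectors gives the claimed $\tau^{-3/4}$, with uniformity over bounded subsets of $H^1(\bbR)$ inherited from $\norm{p_i'}{L^2}\lesssim\norm{r}{H^1}$, the uniform constants in Lemmas \ref{lemma:dbar.Ri}, \ref{lemma:dbar.R.bd}, \ref{lemma:RHP.bd}, \ref{lemma:KW}, and $\norm{m^{(3)}}{\infty}\lesssim1$. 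Apart from this reconciliation of the singular weight $|\zeta|^{-1}$ at the origin with the required decay, the whole argument is the standard $\dbar$-steepest-descent double-integral bookkeeping, whose slowest rate $\tau^{-3/4}$ comes from the $\Omega_{3,1}$-type regions near $\pm z_0$.
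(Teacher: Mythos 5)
Your proposal follows the paper's overall strategy: reduce to $\int_\bbC|\zeta|^{-1}|W(\zeta;x,t)|\,d\zeta\lesssim\tau^{-3/4}$, handle the sectors near $\pm z_0$ (where $|\zeta|^{-1}$ is bounded) by the $\dbar$-steepest descent bookkeeping of \cite{BJM16}, and treat the sector $\Omega_8^-$ abutting the origin separately using $r(0)=0$ from Proposition~\ref{prop:r}. The genuine difference is in how $\Omega_8^-$ is handled. The paper keeps the coarse phase bound $|e^{-2i\theta}|\leq e^{-|v|\tau}$ from Lemma~\ref{lemma:dbar.R.bd} and compensates by choosing exponents carefully: for the $|p_6'(u)|$ term it splits $|\zeta|^{-1}=(u^2+v^2)^{-1/6}(u^2+v^2)^{-1/3}$ and pays with a weighted integral $\int_0^{z_0/2}|p_6'(u)|^2u^{-2/3}\,du$ (argued finite via $u\mapsto 1/u$), giving $\tau^{-5/6}$; for the $|p_6(u)|/|\zeta|$ term it applies H\"older with $p=4/3$ and $|p_6(u)|/u$ bounded, giving $\tau^{-3/4}$. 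You instead sharpen the phase to $-\Real(2i\theta)\lesssim -cv\tau/(u^2+v^2)$ on $\Omega_8^-$ (correct, since there $z_0^2/(u^2+v^2)-1\gtrsim z_0^2/(u^2+v^2)$) and use $|p_6(u)|\lesssim u$, reducing the worst singularity to $|\zeta|^{-1}$; the inner $v$-integral over $[0,u/\sqrt3]$ then yields a factor $u^2/\tau$, and all three contributions ($|p_6'|$, $|p_6|/|\zeta|$, $\dbar\Xi_\calZ$) come out $O(\tau^{-1})$ using only $p_6'\in L^2$ and $r(0)=0$, bypassing the paper's weighted $L^2$ estimate on $p_6'$. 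Both routes give the needed $\tau^{-3/4}$ overall; yours is somewhat cleaner near the origin and correctly predicts a faster rate there. One caveat: with the coarse phase $e^{-v\tau}$ and plain Cauchy--Schwarz, the $|p_6'|$ term on $\Omega_8^-$ only delivers $\tau^{-1/2}$, so your sharpened phase estimate is not a luxury but an essential ingredient that a full write-up must supply explicitly; also, the passing mention of a ``$|\zeta|^{-3/2}$-type singularity'' is a slip — as you say earlier, the singularity you actually reduce to on $\Omega_8^-$ is $|\zeta|^{-1}$.
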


\begin{proof}
We have defined in \eqref{N3.1}
\begin{equation}
\label{N3.1}
m^{(3)}_1(x,t) = I+\dfrac{1}{\pi} \int_{\bbC}  \dfrac{  m^{(3)}(\zeta;x,t) W(\zeta;x,t)}{\zeta} \, d\zeta . 
\end{equation}
From the representation formula \eqref{N3.1} and Lemma \ref{lemma:KW} we have
$$ \left|m^{(3)}_1(x,t) \right| \lesssim I+ \int_\bbC \dfrac{ |W(\zeta;x,t)|}{|\zeta|} \, d\zeta. $$
We will bound this integral by $\tau^{-3/4}$ modulo constants with the required uniformities. Again we only work with $\Omega_8$ and $\Omega_3$ as shown in Figure
\ref{fig:three reigons}. In $\Omega_8^-$
$$
\int_{\Omega_8^-} \dfrac{ |W(\zeta;x,t)| }{|\zeta|}\, d \zeta	\lesssim  I_1+I_2 +I_3
$$
where
\begin{align*}
I_1 	&=	\int_0^{z_0/(2\sqrt{3} )} \int_{\sqrt{3} v}^{z_0/2} \dfrac{1}{|\zeta|} |\overline{\partial}[\mathbf{p}_6(\zeta)]| e^{-v \tau / u^2} \, du \, dv, \\[5pt]
I_2	&=	\int_0^{z_0/(2\sqrt{3})} \int_{\sqrt{3} v}^{z_0/2}  \dfrac{1}{|\zeta|^2 }|\mathbf{p}_6(\zeta)|  e^{-v \tau / u^2 } \, du \, dv,\\
I_3	&=	\int_0^{z_0 / (2\sqrt{3}) } \int_{\sqrt{3}v}^{z_0/2}  \dfrac{1}{|\zeta|} \left|  \dbar (\Xi_\calZ(\zeta  ) ) \right| e^{-v \tau / u^2} \, du \, dv.
\end{align*}
For $I_1$ we estimate that 
\begin{align*}
I_1 	&=	\int_0^{z_0/(2\sqrt{3} )} \int_{\sqrt{3} v}^{z_0/2} \dfrac{1}{|\zeta|} |\overline{\partial}[\mathbf{p}_6(\zeta)]| e^{-v \tau / u^2} \, du \, dv\\
        &\leq \int_0^{\infty}  \left( \int_0^{z_0/2} {|\overline{\partial}[\mathbf{p}_6(\zeta)] |^2}du  \right)^{1/2} \left( \int_{\sqrt{3} v}^{z_0/2} \dfrac{ e^{-2v \tau / u^2} }{ u^2  } du  \right)^{1/2} dv\\
        &\lesssim  \int_0^{\infty}  \norm{r}{H^s(\bbR)} v^{s-1}  \left( \int_{{2/z_0} }^{\infty} {e^{-4vw^2\tau}} dw  \right)^{1/2} dv\\
        &  \lesssim  \int_0^{\infty}  v^{s-1}  \left( \int_{{2/z_0} }^{\infty} {e^{-4vw\tau}} dw  \right)^{1/2} dv\\
        & \lesssim \tau^{-1/2}\int_0^{\infty} \dfrac{e^{-(8/z_0)v\tau}}{v^{3/2-s}} dv\\
         & \lesssim \tau^{-1/2-1+(3/2-s)}\int_0^{\infty} \dfrac{e^{-(8/z_0)m}}{m^{3/2-s}} dm\\
         &\lesssim \tau^{-s}.
        \end{align*}
For $I_2$ we rewrite and apply \textit{H\"older}'s inequality with $p=4/3$:
\begin{align}
\label{est-r-0}
I_2	 &= \int_0^{z_0/(2\sqrt{3} )}  \int_{\sqrt{3} v}^{z_0/2} \dfrac{ |\mathbf{P}_6(\zeta)|}{ |u^2+v^2|^{1/2}  }\dfrac{e^{-v\tau/u^2}}{| u^2+v^2  |^{1/2}  } \, du \, dv\\
\nonumber
   &\lesssim \norm{r}{H^s(\bbR)} \int_0^{z_0/2} \dfrac{1}{u^2} \int_{ 0}^{u/\sqrt{3}} e^{-v\tau/u^2} dv   du\\
   \nonumber
    & \lesssim \tau^{-1}.
 \end{align}
 Notice that we have shown that $r(z)/z$ is bounded near the origin in \eqref{r=0}. Thus $\mathbf{P}_6(u)/u$ is bounded on the interval $(0, z_0/2)$.
 And 
 $$I_3\lesssim \tau^{-s}$$
 follows from the same argument as that of $I_1$. So we establish
 \begin{equation}
 \label{omega8-'}
 \int_{\Omega_8^-} \dfrac{ |W(\zeta;x,t)| }{|\zeta|}\, d \zeta	 \lesssim \tau^{-s}.
 \end{equation}
 Given the fact that $|\zeta|\geq z_0/2 $, it then follows from previous calculations that
\begin{align}
 \int_{\Omega_8^+} \dfrac{ |W(\zeta;x,t)| }{|\zeta|}\, d \zeta	  &\lesssim \tau^{-s}, \\
  \int_{\Omega_{3,1}} \dfrac{ |W(\zeta;x,t)| }{|\zeta|}\, d \zeta	 &\lesssim \tau^{-(2s+1)/4}.
\end{align}
 We finally turn to region $\Omega_{3,2}$. In this region,
$$
\int_{\Omega_{3,2}} \dfrac{ |W(\zeta;x,t)| }{|\zeta|}\, d \zeta	\lesssim  I_1+I_2 +I_3
$$
where
\begin{align*}
I_1 	&=	\int_0^{\infty} \int_{z_0+1}^{\infty} \dfrac{1}{|\zeta|} |\overline{\partial}[\mathbf{p}_1(\zeta)]| e^{-v\tau} \, du \, dv, \\[5pt]
I_2	&=	\int_0^{\infty} \int_{z_0+1}^{\infty}\dfrac{1}{|\zeta-z_0|^{3/2-s}} \dfrac{1}{|\zeta|} e^{- v\tau } \, du \, dv,\\
I_3	&=	\int_0^{\infty} \int_{z_0+1}^{\infty} \dfrac{1}{|\zeta|} \left|  \dbar (\Xi_\calZ(\zeta  ) ) \right| e^{- v\tau} \, du \, dv.
\end{align*}
For $I_1$ we rewrite and apply the Cauchy-Schwarz inequality:
\begin{align*}
I_1 &=\int_0^{\infty} \int_{z_0+1}^{\infty} \dfrac{1}{\sqrt{u^2+v^2}} |\overline{\partial}[\mathbf{p}_1 (\zeta)]| e^{-v\tau} \, du \, dv,\\
     &\lesssim  \int_0^{\infty} v^{s-1}e^{-v\tau}  dv\lesssim \tau^{-s}.
 \end{align*}
 Similarly,
 \begin{align*}
I_2 &=\int_0^{\infty} \int_{z_0+1}^{\infty} \dfrac{1}{\left[{(u-z_0)^2+v^2} \right]^{(3-2s)/4}}\dfrac{1}{\sqrt{u^2+v^2}} e^{-v\tau} \, du \, dv,\\
     &\lesssim  \int_0^{\infty} e^{-v\tau} \left( \int_1^\infty \dfrac{1}{u^{(5-2s)/2}} du \right) dv \lesssim \tau^{-1}.
 \end{align*}
 And 
 $$I_3\lesssim \tau^{-1}$$
 follows from the same argument as that of $I_1$. So we establish
 \begin{equation}
 \label{omega32'}
 \int_{\Omega_{3,2}} \dfrac{ |W(\zeta;x,t)| }{|\zeta|}\, d \zeta	 \lesssim \tau^{-(2s+1)/4}.
 \end{equation}
 Combining \eqref{omega8-'}-\eqref{omega32'} we arrive at the inequality in \eqref{N31.est}.
\end{proof}

\section{Long-Time Asymptotics inside the Light Cone}
\label{sec:large-time}

We now put together our previous results and formulate the long-time asymptotics of $f(x,t)$  inside the light cone.  Undoing all transformations we carried out previously,  we get back $M$:
\begin{equation}
\label{N3.to.N}
M(z;x,t) = m^{(3)}(z;x,t) m^\RHP(z; z_0) \left( \calR^{(2)}(z)\right)^{-1} \delta(z)^{\sigma_3}.
\end{equation}
By Proposition \ref{prop:recon}, the non-zero term in the small-$z$ expansion for $M(z;x,t)$ will formulate the solution to the sine-Gordon equation.  In the previous sections, we have proven:
\begin{lemma}
\label{lemma:N.to.NRHP.asy}
For $z=i\sigma$ and $\sigma \rarr 0^+$, the following asymptotic relations hold
\begin{align}
\label{N.asy}
m^{(3)}(0; x,t) 				&=	I +\mathcal{O}\left( \tau^{-(2s+1)/4}\right) \\
\label{expan-mLC}
m^\RHP (0; x, t)  &= E (0; x, t)m^{(br)}(0; x, t)\\
 \nonumber
             &= \left( I+\mathcal{O} \left( \tau^{-1/2}\right) \right) m^{(br)}(0; x, t)\\
             \label{delta.sigma.asy} 
\delta(0)^{\sigma_3}  &= \twomat{(-1)^l}{0}{0}{ (-1)^{-l} }     \\
\nonumber
          \calR^{(2)}(z) &=I.
\end{align}

\end{lemma}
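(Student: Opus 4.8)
\textbf{Proof proposal for Lemma \ref{lemma:N.to.NRHP.asy}.}

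The plan is to collect the small-$z$ (more precisely, $z = i\sigma$, $\sigma\downarrow 0$) behaviour of each of the four factors appearing in the chain of transformations \eqref{N3.to.N} and read them off from the results already established in the preceding sections. Each line of the statement is essentially a ``consequence of'' statement, so the work is bookkeeping with one genuine analytic input per factor. I would organize the argument factor by factor, in the order they appear.

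First, for \eqref{N.asy}: this is exactly Proposition \ref{prop:N3.est}. The decomposition $m^{(3)} = m^{(3)}_1 + m^{(3)}_2$ with $\lim_{z\to 0}m^{(3)}_2(z;x,t)=0$ (this is \eqref{0-dbar}, proved via the dominated-convergence/geometry estimate $|1/(s-z)|\le \tfrac{2}{\sqrt3}|1/s|$ valid on $z=i\sigma$) gives $m^{(3)}(0;x,t) = m^{(3)}_1(x,t)$, and then \eqref{N31.est} gives $m^{(3)}_1(x,t) = I + \mathcal O(\tau^{-3/4})$. So this line is immediate. Second, the factorization $m^\RHP = E_1 m^\RHP_*$ in \eqref{expan-mLC} is Proposition \ref{Prop:expo}, which gives $E_1(x,t;z)=I+\mathcal O(e^{-ct})$ uniformly, in particular at $z=0$ (note $\tau \sim t$ along the fixed characteristic so $e^{-ct}$ and $e^{-c\tau}$ are interchangeable up to constants). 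Then the further factorization $m^\RHP_* = E_2\, m^{(br)}$ near the origin is \eqref{parametrix}: since $|z\pm z_0| = z_0 > \rho$ at $z=0$, we are in the top branch $m^\RHP_*(z)=E_2(z)m^{(br)}(z)$, and the bound $E_2(0;x,t)=I+\mathcal O(\tau^{-1/2})$ is exactly what \eqref{norm-vE}–\eqref{E-1-2 cauchy} yield (the solution of Problem \ref{prob: E-2} via Neumann series plus Cauchy's formula, with $\|v^{(E)}-I\|_{L^\infty}\lesssim t^{-1/2}$). Chaining these gives the three displayed lines of \eqref{expan-mLC}.

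Third, for \eqref{delta.sigma.asy}: this is Lemma \ref{lemma:delta}(iii), which states $\delta(0) = (-1)^l$ as $z\to 0$ nontangentially (the proof there uses that each Blaschke-type factor $\tfrac{z-\bar z_k}{z-z_k}\to -1$ and $\tfrac{z-\bar z_j}{z-z_j}\tfrac{z+z_j}{z+\bar z_j}\to 1$, together with $\chi(0)=0$, which in turn uses $\lim_{s\to0}r(s)/s=0$ from Proposition \ref{prop:r} and the evenness of $|r|^2$). Raising to the $\sigma_3$ power gives $\delta(0)^{\sigma_3}=\mathrm{diag}((-1)^l,(-1)^{-l})$. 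Fourth and finally, $\calR^{(2)}(z)=I$ at $z=0$: from the explicit construction \eqref{R1}--\eqref{R8-} and \eqref{interpol}, each $R_i$ is supported in $\Omega_i$, none of which contains the point $z=0$ (the regions $\Omega_1,\Omega_2$ adjacent to the imaginary axis carry the trivial jump, and $\calR^{(2)}=I$ there by Problem \ref{prob:DNLS.RHP.dbar}(4)); hence $\calR^{(2)}(0)=I$, and in particular $\calR^{(2)}(0)^{-1}=I$.

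The only subtle point — and hence the ``main obstacle'' — is making sure all four limits are taken along the \emph{same} nontangential approach $z=i\sigma$, $\sigma\downarrow 0$, and that they are compatible: specifically, one must check that $z=i\sigma$ lies in the region where the top branch of \eqref{parametrix} applies (clear, since $z_0>0$ is fixed and $\rho<z_0$), that it avoids all the augmented contours $\Sigma^{(3)}\cup\Gamma$ near the origin (true for $\sigma$ small, since the circles $\gamma_k,\gamma_k^*$ are centered at points bounded away from $0$ by $\Upsilon$), and that the $\dbar$-correction limit \eqref{0-dbar} genuinely holds along this ray, which is the content of the geometric estimate in the proof of Proposition \ref{prop:N3.est}. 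Once these consistency checks are in place, the lemma follows by simply substituting the four factor-asymptotics into \eqref{N3.to.N}; I would state it as such and refer back to the cited results rather than reproving anything.
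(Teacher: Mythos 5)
Your proposal is correct and reconstructs exactly the implicit argument the paper relies on (the paper states this lemma as a bookkeeping collection of the preceding results without spelling out a separate proof block). The factor-by-factor dispatch is right: \eqref{N.asy} is Proposition~\ref{prop:N3.est} via $m^{(3)}(0)=m^{(3)}_1$ and \eqref{N31.est}; the chain $m^{\RHP}=E_1 m^{\RHP}_*=E_1 E_2 m^{(br)}$ on the ray $z=i\sigma$ (which lies in the top branch of \eqref{parametrix}) uses Proposition~\ref{Prop:expo} and \eqref{norm-vE}--\eqref{E-1-2 cauchy}; $\delta(0)^{\sigma_3}$ is Lemma~\ref{lemma:delta}(iii); and $\calR^{(2)}=I$ on the imaginary axis near the origin because $z=i\sigma$ sits in $\Omega_1$ (resp.\ $\Omega_2$) where the construction puts $\calR^{(2)}\equiv I$ by the figures — note that part~(4) of Problem~\ref{prob:DNLS.RHP.dbar} only records $\dbar\calR^{(2)}=0$ there, so the conclusion $\calR^{(2)}\equiv I$ should be attributed to the construction in Figures~\ref{fig R-2+}--\ref{fig R-2-} rather than to that problem statement, a very minor citation slip. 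Your consistency remarks (same nontangential ray, $\tau\sim t$ for fixed $|x/t|<1$, the ray avoiding $\Sigma^{(3)}\cup\Gamma$ near the origin) are exactly the right things to check.
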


With Lemma \ref{lemma:N.to.NRHP.asy},  we arrive at the asymptotic formula under the reference frame of a given breather:
$$\mathrm{v}_\ell=\dfrac{x}{t}=\dfrac{1-\rho_\ell^2}{1+\rho_\ell^2}$$
where $\rho_\ell$ is indicated in Problem \ref{prob:mkdv.RHP1}.

\begin{proposition}
\label{lemma br.RHP.asy}
The function 
\begin{equation}
\label{cos.recon.bis}
\cos f(x,t) = 1+2 m_{12}(0 ;x,t) m_{21} (0 ;x,t)
\end{equation}
takes the form 
$$ \cos f(x,t)= 1+ 2m_{12}^{(br)} (0; x,t)m_{21}^{(br)} (0; x,t)+  \dfrac{\mathrm{R_{cos} }}{\tau^{1/2}}+\mathcal{O}\left( \tau^{-(2s+1)/4}\right) $$
where  $ m^{(br)} (0; x,t)$ is given by \eqref{br-0} and

\begin{align}
\label{cos-asym}
\dfrac{ \mathrm{R_{cos} }(x, t) }{\tau^{1/2}}&=    \dfrac{2}{   \sqrt{\dfrac{t  z_0 }{ 1+z_0^2 } }  }\left\lbrace -\left[m_{11}^{(br)}( -z_0)\right]^2 \left( i ( \delta^0_A)^2 \overline{\beta}_{12} \right) - \left[m_{12}^{(br)} (-z_0)\right]^2 \left(  i \left( \delta^0_A\right)^{-2}\overline{\beta}_{21} \right)  \right. \\
\nonumber
&\quad   \left.+ \left[ m_{11}^{(br)} (z_0)\right]^2 \left( i ( \delta^0_B)^2 {\beta}_{12} \right)  + \left[ m_{12}^{(br)}(z_0)\right]^2 \left( i ( \delta^0_B)^{-2}{\beta}_{21} \right) \right\rbrace m_{21}^{(br)}(0)m_{22}^{(br)}(0) \\
\nonumber
& \quad +  \dfrac{2}{   \sqrt{\dfrac{t  z_0 }{ 1+z_0^2 } }  }\left\lbrace \left[m_{21}^{( br )}( -z_0)\right]^2 \left( i ( \delta^0_A)^2 \overline{\beta}_{12} \right) + \left[m_{12}^{(br)} (-z_0)\right]^2 \left(  i ( \delta^0_A)^{-2}\overline{\beta}_{21} \right)  \right. \\
\nonumber
&\quad   \left.- \left[ m_{21}^{(br)} (z_0)\right]^2 \left( i ( \delta^0_B)^2 {\beta}_{12} \right)    - \left[m_{22}^{(br)}(z_0)\right]^2 \left( i ( \delta^0_B)^{-2}{\beta}_{21} \right) \right\rbrace m_{11}^{( br )}(0)m_{12}^{(br)}(0) 
\end{align}
and the function
\begin{equation}
\label{sin.recon.bis}
\sin f(x,t) =2 m_{21}(0 ;x,t) m_{22} (0 ;x,t)
\end{equation}
takes the form 
$$ \sin f(x,t)= 2m_{21}^{(br)} (0; x,t)m_{22}^{(br)} (0; x,t)+  \dfrac{\mathrm{R_{sin} }}{\tau^{1/2}}+\mathcal{O}\left( \tau^{-(2s+1)/4}\right) $$
with
\begin{align}
\label{sin-asym}
\mathrm{R_{sin} }(x, t)&= \dfrac{2}{   \sqrt{\dfrac{t  z_0 }{ 1+z_0^2 } }  }\left\lbrace \left[ m_{21}^{(br)}( -z_0)\right]^2 \left( i ( \delta^0_A)^2 \overline{\beta}_{12} \right) + \left[m_{12}^{( br )} (-z_0)\right]^2 \left(  i ( \delta^0_A)^{-2}\overline{\beta}_{21} \right)  \right. \\
\nonumber
&\quad   \left.- \left[ m_{21}^{(br)} (z_0)\right]^2 \left( i ( \delta^0_B)^2 {\beta}_{12} \right)    -\left[ m_{22}^{( br )}(z_0)\right]^2 \left( i ( \delta^0_B)^{-2}{\beta}_{21} \right) \right\rbrace \\
\nonumber
  &\quad \times \left\lbrace m_{11}^{( br )}(0)m_{22}^{(br)}(0) + m_{12}^{(br)}(0)m_{21}^{(br)}(0) \right\rbrace.
\end{align}
All the terms above have been obtained in Section \ref{sec:local}.
\end{proposition}
Next we choose the reference frame of a kink:
$$\mathrm{v}_\ell=\dfrac{x}{t}=\dfrac{1-\zeta_\ell^2}{1+\zeta_\ell^2}$$
where $z_\ell=i\zeta_\ell \in \lbrace i\zeta_k   \rbrace_{k=1}^{N_1}$ is indicated in Definition \ref{genericity} and Remark \ref{re-arrange}. 
\begin{proposition}
\label{lemma kin.RHP.asy}
The function 
\begin{equation}
\label{cos.recon.bis}
\cos f(x,t) = 1+2 m_{12} (0 ;x,t)m_{21} (0 ;x,t)
\end{equation}
takes the form 
$$ \cos f(x,t)= 1+ 2m_{12}^{(kin)} (0; x,t)m_{21}^{(kin)} (0; x,t)+  \dfrac{\mathrm{R_{cos} }}{\tau^{1/2}}+\mathcal{O}\left( \tau^{-(2s+1)/4}\right) $$
where  
\begin{align}
\label{cos-asym k}
\dfrac{ \mathrm{R_{cos} }(x, t) }{\tau^{1/2}}&=    \dfrac{2}{   \sqrt{\dfrac{t  z_0 }{ 1+z_0^2 } }  }\left\lbrace -\left[m_{11}^{(kin)}( -z_0)\right]^2 \left( i ( \delta^0_A)^2 \overline{\beta}_{12} \right) - \left[m_{12}^{(kin)} (-z_0)\right]^2 \left(  i ( \delta^0_A)^{-2}\overline{\beta}_{21} \right)  \right. \\
\nonumber
&\quad   \left.+ \left[ m_{11}^{(kin)} (z_0)\right]^2 \left( i ( \delta^0_B)^2 {\beta}_{12} \right)  + \left[ m_{12}^{(kin)}(z_0)\right]^2 \left( i ( \delta^0_B)^{-2}{\beta}_{21} \right) \right\rbrace m_{21}^{(kin)}(0)m_{22}^{(kin)}(0) \\
\nonumber
& \quad +  \dfrac{2}{   \sqrt{\dfrac{t  z_0 }{ 1+z_0^2 } }  }\left\lbrace \left[m_{21}^{( kin )}( -z_0)\right]^2 \left( i ( \delta^0_A)^2 \overline{\beta}_{12} \right) + \left[m_{12}^{(kin)} (-z_0)\right]^2 \left(  i ( \delta^0_A)^{-2}\overline{\beta}_{21} \right)  \right. \\
\nonumber
&\quad   \left.- \left[ m_{21}^{(kin)} (z_0)\right]^2 \left( i ( \delta^0_B)^2 {\beta}_{12} \right)    - \left[m_{22}^{(kin)}(z_0)\right]^2 \left( i ( \delta^0_B)^{-2}{\beta}_{21} \right) \right\rbrace m_{11}^{( kin )}(0)m_{12}^{(kin)}(0) 
\end{align}
and the function
\begin{equation}
\label{sin.recon.bis}
\sin f(x,t) =2 m_{21}(0 ;x,t) m_{22} (0 ;x,t)
\end{equation}
takes the form 
$$ \sin f(x,t)= 2m_{21}^{(kin)} (0; x,t)m_{22}^{(kin)} (0; x,t)+  \dfrac{\mathrm{R_{sin} }}{\tau^{1/2}}+\mathcal{O}\left( \tau^{-(2s+1)/4}\right) $$
with
\begin{align}
\label{sin-asym k}
\mathrm{R_{sin} }(x, t)&= \dfrac{2}{   \sqrt{\dfrac{t  z_0 }{ 1+z_0^2 } }  }\left\lbrace \left[ m_{21}^{(kin)}( -z_0)\right]^2 \left( i ( \delta^0_A)^2 \overline{\beta}_{12} \right) + \left[m_{12}^{( kin )} (-z_0)\right]^2 \left(  i ( \delta^0_A)^{-2}\overline{\beta}_{21} \right)  \right. \\
\nonumber
&\quad   \left.- \left[ m_{21}^{(kin)} (z_0)\right]^2 \left( i ( \delta^0_B)^2 {\beta}_{12} \right)    -\left[ m_{22}^{( kin )}(z_0)\right]^2 \left( i ( \delta^0_B)^{-2}{\beta}_{21} \right) \right\rbrace \\
\nonumber
  &\quad \times \left\lbrace m_{11}^{( kin )}(0)m_{22}^{(kin)}(0) + m_{12}^{(kin)}(0)m_{21}^{(kin)}(0) \right\rbrace.
\end{align}
\end{proposition}
\begin{proof}
In this case we only have to replace Problem \ref{prob:mkdv.br} with the following exactly solvable problem:
\begin{problem}
\label{prob:mkdv.kin}
Find a matrix-valued function $m^{(kin)}(z;x,t)$ on $\bbC \setminus\Sigma$ with the following properties:
\begin{enumerate}
\item		$m^{(kin)}(z;x,t) \rarr I$ as $|z| \rarr \infty$,
\item		$m^{(kin)}(z;x,t)$ is analytic for $z \in  \bbC \setminus (  \tgamma_\ell \cup \tgamma^*_\ell ) $
			with continuous boundary values
			$m^{(kin)}_\pm(z;x,t)$.
\item On $  \tgamma_\ell \cup \tgamma^*_\ell$, let $\delta(z)$ be the solution to Problem \ref{prob:RH.delta} and we have the following jump conditions
$m^{(kin)}_+(z;x,t)=m^{(kin)}_-(z;x,t)	
			e^{-i\theta\ad\sigma_3}v^{(kin)}(z)$
			where
$$
e^{-i\theta\ad\sigma_3}v^{(kin)}(z)= 	\begin{cases}
						\twomat{1}{0}{\dfrac{\iota_\ell \left[ \delta(i\zeta_\ell )\right]^{-2} }{z-i\zeta_\ell} e^{2i\theta(i\zeta_\ell )}}{1}	&	z\in \tgamma_\ell, \\
						\\
						\twomat{1}{\dfrac{\overline{\iota_\ell} \left[ \delta(\overline{i\zeta_\ell })\right]^2 }{z -\overline{i\zeta_\ell }} e^{-2i\theta (\overline{i\zeta_\ell })} }{0}{1}
							&	z \in \tgamma_\ell^* .\\
							
\end{cases}
$$					
\end{enumerate}
\end{problem}
Recall that $\iota_\ell =ib_\ell$ and $i\zeta_\ell$ are both purely imaginary and notice that 
\begin{align*}
{\chi(i\zeta_\ell)} &=\dfrac{1}{2\pi i}\int_{-z_0}^{z_0}\dfrac{\log( 1+|r(s)|^2)}{s-i\zeta_\ell} {d\zeta} \\
  &=\dfrac{1}{2\pi i}\int_{-z_0}^{z_0}\dfrac{\log( 1+|r(s)|^2) (s+i\zeta_\ell   )   }{s^2 +\zeta^2_\ell} {d\zeta} \\
   &=\dfrac{1}{2\pi }\int_{-z_0}^{z_0}\dfrac{\log( 1+|r(s)|^2) \zeta_\ell  }{s^2 +\zeta^2_\ell} {d\zeta}
\end{align*}
is real by the evenness of $|r(z)|^2$.
And we can explicitly calculate that 
\begin{equation}
		m^{(kin)}(z)=\twomat{1+\dfrac{ia}{z-i\zeta_\ell}}{\dfrac{ib}{z+i\zeta_\ell}}
				{\dfrac{ib}{z-i\zeta_\ell}}{1-\dfrac{ia}{z+i\zeta_\ell}}.
\end{equation}
where
\begin{align}
		a=&\dfrac{2 \left( b_\ell  \left[  \delta(i\zeta_\ell )\right]^{-2}  e^{2\theta(i\zeta_\ell )}  \right)^2 \zeta_\ell }{ \left( b_\ell \left[  \delta(i\zeta_\ell )\right]^{-2}   e^{2\theta(i\zeta_\ell )}  \right )^2+4\zeta_\ell^2},\\
	   b=&\dfrac{4 \left( b_\ell \left[  \delta(i\zeta_\ell )\right]^{-2} e^{2\theta(i\zeta_\ell )} \right) \zeta_\ell^2}{\left( b_\ell  \left[ \delta(i\zeta_\ell )\right]^{-2} e^{2\theta(i\zeta_\ell )}   \right)^2+4\zeta_\ell^2}.
	\label{eq:Kink-1/2u}
\end{align}
And consequently
\begin{equation}
		\begin{split}
				m^{(kin)}_{11}(0) &=m^{(kin)}_{22}(0)=\dfrac{4 \zeta_\ell^2-\left( b_\ell   \left[\delta(z_\ell )\right]^{-2} e^{2\theta(i\zeta_\ell )}   \right) ^2}{ \left(b_\ell  \left[ \delta(i\zeta_\ell )\right]^{-2} e^{2\theta(i\zeta_\ell )}  \right) ^2+4\zeta_\ell^2},\\
				m^{(kin)}_{21}(0) &=-m^{(kin)}_{12}(0)=-\dfrac{4 \left( b_\ell  \left[ \delta(i\zeta_\ell )\right]^{-2} e^{2\theta(i\zeta_\ell )} \right) \zeta_\ell}{  \left(b_\ell \left[  \delta(i\zeta_\ell )\right]^{-2} e^{2\theta(i\zeta_\ell )}  \right) ^2+4\zeta_\ell^2},
		\end{split}
\end{equation}

\begin{align}
\label{kink-cos}
 2m_{12}^{(kin)} (0; x,t)m_{21}^{(kin)} (0; x,t) &=-2\dfrac{ \left(b_\ell  \left[ \delta(i\zeta_\ell )\right]^{-2} \right)^2 e^{4\theta(i\zeta_\ell )}}{ \left( \zeta_\ell+ \dfrac{  \left(b_\ell \left[  \delta(i\zeta_\ell )\right]^{-2} \right) ^2 e^{4\theta(i\zeta_\ell)} }{  4\zeta_\ell }   \right)^2 }\\
 \nonumber
        &=-2\,\mathrm{sech}\left(\dfrac{1}{2}\left[\left(\zeta_\ell+\dfrac{1}{\zeta_\ell}\right)x+\left(\zeta_\ell-\dfrac{1}{\zeta_\ell}\right)t\right]-\log\dfrac{b_\ell  \left[ \delta(i\zeta_\ell )\right]^{-2} }{2\zeta_\ell}\right)^2,
\end{align}

\begin{align}
\label{kink-sin}
 2m_{21}^{(kin)} (0; x,t)m_{22}^{(kin)} (0; x,t) &=2\dfrac{ b_\ell \left[  \delta(i\zeta_\ell )\right]^{-2}  e^{2\theta(i\zeta_\ell )}  \left( \zeta_\ell- \dfrac{  \left(b_\ell \left[  \delta(i\zeta_\ell ) \right]^{-2} \right)^2 e^{4\theta(i\zeta_\ell)} }{  4\zeta_\ell }   \right)   }   { \left( \zeta_\ell+ \dfrac{  \left( b_\ell  \left[ \delta(i\zeta_\ell )\right]^{-2} \right) ^2 e^{4\theta(i\zeta_\ell)} }{  4\zeta_\ell }   \right)^2 }\\
 \nonumber
 &=-\frac{2\sinh\left(\dfrac{1}{2}\left[\left(\zeta_\ell+\dfrac{1}{\zeta_\ell}\right)x+\left(\zeta_\ell-\dfrac{1}{\zeta_\ell}\right)t\right]-\log\dfrac{b_\ell  \left[ \delta(i\zeta_\ell )\right]^{-2} }{2\zeta_\ell}\right)}{\cosh\left(\dfrac{1}{2}\left[\left(\zeta_\ell+\dfrac{1}{\zeta_\ell}\right)x+\left(\zeta_\ell-\dfrac{1}{\zeta_\ell}\right)t\right]-\log\dfrac{b_\ell \left[  \delta(i\zeta_\ell )\right]^{-2} }{2\zeta_\ell}\right)^2}.
 \end{align}

\end{proof}

Finally in the solitonless region we have:
\begin{proposition} If we choose the frame  $x=\mathrm{v} t$ with $| \mathrm{v} |< 1$ and $\mathrm{v} \neq ( 1-\zeta_k^2)/(1+\zeta_k^2) $ for all $1\leq k \leq N_1$ and $\mathrm{v} \neq ( 1-\rho_j^2)/(1+\rho_j^2) $ for all $1\leq j \leq N_2$, then
\begin{align*}
\cos f(x,t)  &= \cos f_{as}(x,t) +\mathcal{O}\left( \tau^{-(2s+3)/4}\right)\\
\sin f(x,t)    &=\sin f_{as}(x,t) +\mathcal{O}\left( \tau^{-(2s+1)/4}\right)
\end{align*}
where
\begin{align}
\label{as-solitonless}
\cos f_{as}(x,t) &=1-\dfrac{4|\kappa |}{\tau} \cos^2 \left(  2\tau  +\kappa \log(8\tau) +\phi(z_0)\right)\\
\label{as-solitonless'}
\sin f_{as}(x, t) &= \sqrt{ \dfrac{8|\kappa|}{\tau}  } \cos \left(  2\tau  +\kappa \log(8\tau) +\phi(z_0)\right)\
\end{align}
with
\begin{align*}
\phi(z_0)&=-\arg \Gamma(i\kappa) + \dfrac{\pi}{4}-\arg  \overline{r(z_0)}+\dfrac{1}{\pi}\int_{-z_0}^{z_0}\log\left( \dfrac{1+|r(\zeta)|^2}{1+|r(z_0)|^2} \right)\dfrac{d\zeta}{\zeta-z_0}\\
             & \quad -4 \left( \sum_{i\zeta_k\in \mathcal{B}_\ell }  \arg(z_0-i\zeta_k)  +  \sum_{z_j \in \mathcal{B}_\ell } \arg(z_0- {z_j}) +  \sum_{z_j\in \mathcal{B}_\ell } \arg(z_0 + \overline{z_j})   \right)
\end{align*}
\end{proposition}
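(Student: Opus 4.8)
The statement is the solitonless-region analogue of the breather and kink asymptotics in Propositions~\ref{lemma br.RHP.asy} and~\ref{lemma kin.RHP.asy}, so the plan is to run the same chain of transformations but with the discrete-spectrum contribution entirely absorbed into the phase correction. First I would specialize the conjugation of Section~\ref{sec:prep}: along the frame $x=\mathrm{v}t$ with $\mathrm{v}\neq(1-\zeta_k^2)/(1+\zeta_k^2)$ and $\mathrm{v}\neq(1-\rho_j^2)/(1+\rho_j^2)$, the two stationary points $\pm z_0$ determined by \eqref{stationary} avoid all eigenvalue circles, so by Remark~\ref{painleve-soliton} every $z_k$ and $z_j$ has $\operatorname{Re}i\theta\neq 0$ at its location, and the set $\mathcal{B}_\ell$ appearing in Problem~\ref{prob:RH.delta} is simply $\{z_k,z_j:\operatorname{Re}i\theta(z_k)>0,\ \operatorname{Re}i\theta(z_j)>0\}$. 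The contour deformation of Section~\ref{sec:mixed} and the $\dbar$-analysis of Section~\ref{sec:dbar} go through verbatim: Proposition~\ref{prop:N3.est} still gives $m^{(3)}(0;x,t)=I+\mathcal{O}(\tau^{-3/4})$, and Proposition~\ref{Prop:expo} still gives the $E_1=I+\mathcal{O}(e^{-c\tau})$ separation. The key simplification is that in \eqref{parametrix} there are no circles $\pm\gamma_\ell,\pm\gamma_\ell^*$ to carry, so $m^{(br)}$ (equivalently $m^{(kin)}$) is replaced by the identity, and \eqref{E-1-2 cauchy} collapses to $E_2(0)=I+\big(z_0\sqrt{2t/((1+z_0^2)z_0)}\big)^{-1}\big(\text{sum of the two }\beta\text{-matrices}\big)+\mathcal{O}(t^{-1})$.

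Next I would assemble the small-$z$ expansion following Lemma~\ref{lemma:N.to.NRHP.asy} and \eqref{N3.to.N}: since $\calR^{(2)}(0)=I$ and $\delta(0)^{\sigma_3}=\operatorname{diag}((-1)^l,(-1)^{-l})$ (Lemma~\ref{lemma:delta}(iii)), and the $\pm 1$ factors cancel in the products $m_{12}m_{21}$ and $m_{21}m_{22}$, one gets
$$
m(0;x,t)=\big(I+\mathcal{O}(\tau^{-3/4})\big)E_1(0;x,t)E_2(0;x,t)\,\delta(0)^{\sigma_3}.
$$
Plugging this into the reconstruction formulas \eqref{cos} and \eqref{sin}, and using the explicit off-diagonal form of $m_1^{A^0}$ and $m_1^{B^0}$ from \eqref{explicit-B0}, \eqref{mA-mB} together with $\delta^0_A=(8\tau)^{i\kappa/2}e^{i\tau}e^{\chi(-z_0)}\eta_0(-z_0)$ and $\delta^0_B=(8\tau)^{-i\kappa/2}e^{-i\tau}e^{\chi(z_0)}\eta_0(z_0)$, the leading term of $\cos f$ is $1+2m_{12}(0)m_{21}(0)$ with $m_{12}(0),m_{21}(0)$ of size $\mathcal{O}(\tau^{-1/2})$, hence $\cos f=1-\frac{4|\kappa|}{\tau}\cos^2(\cdots)+\mathcal{O}(\tau^{-5/4})$, while $\sin f=2m_{21}(0)m_{22}(0)=\sqrt{8|\kappa|/\tau}\,\cos(\cdots)+\mathcal{O}(\tau^{-3/4})$, using $\kappa<0$ so $|\beta_{12}\beta_{21}|=|\kappa|$ via the reflection formula for $\Gamma$.

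The remaining work is purely bookkeeping of the phase: collecting the arguments of $(8\tau)^{\pm i\kappa/2}$, $e^{\pm i\tau}$, $e^{\chi(\pm z_0)}$, $\eta_0(\pm z_0)$ and $\Gamma(\pm i\kappa)^{-1}$, and using $\bchi(z_0)=\frac{1}{2\pi i}\int_{-z_0}^{z_0}\log\!\big(\frac{1+|r(s)|^2}{1+|r(z_0)|^2}\big)\frac{ds}{s-z_0}$ together with the evenness $r(-z)=\overline{r(z)}$ (which forces $\kappa$ real and pairs the $-z_0$ and $+z_0$ contributions into the single cosine $\cos(2\tau+\kappa\log(8\tau)+\phi(z_0))$). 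The discrete data enter only through $\eta_0(\pm z_0)=\prod_{z_k\in\mathcal{B}_\ell}\frac{\pm z_0-\overline{z_k}}{\pm z_0-z_k}\prod_{z_j\in\mathcal{B}_\ell}\frac{\pm z_0-\overline{z_j}}{\pm z_0-z_j}\frac{\pm z_0+z_j}{\pm z_0+\overline{z_j}}$, whose arguments produce exactly the $-4\big(\sum\arg(z_0-z_k)+\sum\arg(z_0-z_j)+\sum\arg(z_0+\overline{z_j})\big)$ term in $\phi(z_0)$ after using that $z_0$ is real and $|r(z_0)|$-symmetry. I expect the main obstacle to be this phase computation: keeping the branch of the logarithm consistent (Lemma~\ref{lemma:delta} fixes $-\pi<\arg<\pi$), correctly tracking the sign of $\kappa$ through $e^{-\pi\kappa/2}$ in $\beta_{12},\beta_{21}$, and verifying that the $-z_0$ and $+z_0$ terms combine into a single real oscillation rather than two; everything else is a direct specialization of the machinery already built in Sections~\ref{sec:prep}--\ref{sec:large-time}.
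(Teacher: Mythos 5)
Your proposal is correct and follows the same approach as the paper: conjugate by the same $\delta$, deform, note that absent any eigenvalue circle on the $|z|=z_0$ critical circle the parametrix in \eqref{parametrix} loses its $m^{(br)}$ factor and reduces to the two parabolic-cylinder models, and then collect the phases of $\delta^0_A,\delta^0_B,\eta_0(\pm z_0),\beta_{12},\beta_{21}$. The paper's proof is exactly this, stated tersely by deferring the final phase bookkeeping to \cite{CL19} and \cite[Sec.\ 4]{DZ93}; your expansion of that step (including the identity $\beta_{12}\beta_{21}=|\kappa|$ via the reflection formula and the pairing of $\pm z_0$ contributions through $r(-z)=\overline{r(z)}$) is the intended computation.
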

\begin{proof}
We use the signature table \ref{figure-Sigma1}
and define the same  $\delta$ function as \eqref{RH.delta.sol}. We follow the same procedure as in Section 3 and deform contours. It is easy to see that the localized RHP $m^{\text{LC}}$ in this case reduces to Problem \ref{prob:mkdv.A} and Problem \ref{prob:mkdv.B}. We then follow \cite{CL19} and  \cite[Section 4 ]{DZ93} to derive the explicit formula of $u_{as}$ in \eqref{as-solitonless}-\eqref{as-solitonless'}.
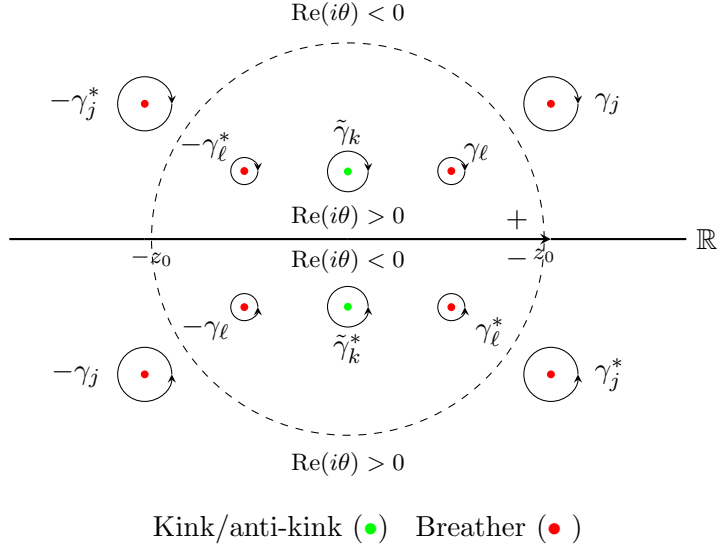
\begin{figure}
\caption{The Augmented Contour $\Sigma$}
\vspace{.5cm}
\label{figure-Sigma1}
\begin{tikzpicture}[scale=0.9]
 \draw[ thick] (0,0) -- (-3,0);
\draw[ thick] (-3,0) -- (-5,0);
\draw[thick,->,>=stealth] (0,0) -- (3,0);
\draw[ thick] (3,0) -- (5,0);
\node[above] at 		(2.5,0) {$+$};
\node[below] at 		(2.5,0) {$-$};
\node[right] at (3.5 , 2) {$\gamma_j$};
\node[right] at (3.5 , -2) {$\gamma_j^*$};
\node[left] at (-3.5 , 2) {$-\gamma_j^*$};
\node[left] at (-3.5 , -2) {$-\gamma_j$};
\draw[->,>=stealth] (-2.6,2) arc(360:0:0.4);
\draw[->,>=stealth] (3.4,2) arc(360:0:0.4);
\draw[->,>=stealth] (-2.6,-2) arc(0:360:0.4);
\draw[->,>=stealth] (3.4,-2) arc(0:360:0.4);
\draw [red, fill=red] (-3,2) circle [radius=0.05];
\draw [red, fill=red] (3,2) circle [radius=0.05];
\draw [red, fill=red] (-3,-2) circle [radius=0.05];
\draw [red, fill=red] (3,-2) circle [radius=0.05];
\node[right] at (5 , 0) {$\bbR$};

\draw (0,0) [dashed] circle [ radius=2.9 ];

\draw [green, fill=green] (0, 1) circle [radius=0.05];
\draw[->,>=stealth] (0.3, 1) arc(360:0:0.3);
\draw [green, fill=green] (0, -1) circle [radius=0.05];
\draw[->,>=stealth] (0.3, -1) arc(0:360:0.3);
\node[above] at 		(0, 1.25) {$\tgamma_k$};
\node[below] at 		(0, -1.25) {$\tgamma_k^*$};
 \node [below] at (2.9,0) {\footnotesize $z_0$};
    \node [below] at (-2.9,0) {\footnotesize $-z_0$};
     \draw	[fill, red]  (1.529, 1.006)		circle[radius=0.05];	  
     \draw[->,>=stealth] (1.729, 1.006) arc(360:0:0.2);
   \draw	[fill, red]  (-1.529, 1.006)	circle[radius=0.05];	  
    \draw[->,>=stealth] (-1.329, 1.006) arc(360:0:0.2);
     \draw	[fill, red]  (1.529, -1.006)		circle[radius=0.05];	  
     \draw[->,>=stealth] (1.729, -1.006) arc(0:360:0.2);
   \draw	[fill, red]  (-1.529, -1.006)	circle[radius=0.05];	  
   \draw[->,>=stealth] (-1.329, -1.006) arc(0:360:0.2); 
    \node[above]  at (0, 0) {\footnotesize $\text{Re}(i\theta)>0$};
    
      \node[below]  at (0, 0) {\footnotesize $\text{Re}(i\theta)<0$};
    
     \node[above]  at (0, 3) {\footnotesize $\text{Re}(i\theta)<0$};
     \node[below]  at (0, -3) {\footnotesize $\text{Re}(i\theta)>0$};
     \node[above] at 		(1.9, 1.0) {$\gamma_\ell$};
\node[above] at 		(-2.1, 1.0) {$-\gamma_\ell^*$};
\node[below] at 		(-2.1, -1.0) {$-\gamma_\ell$};
\node[below] at 		(2.1, -1.0) {$\gamma_\ell^*$};
\end{tikzpicture}
\begin{center}
\begin{tabular}{ccc}
Kink/anti-kink ({\color{green} $\bullet$})	&	
Breather ({\color{red} $\bullet$} ) 
\end{tabular}
\end{center}
\end{figure}

\end{proof}

\section{Outside the light cone}
\label{sec:outside}
We now turn to the study of the asymptotic behavior when $|x/t|>c>1$. We first deal with the case $x/t>c>1$. Our starting point is RHP Problem \ref{RHP-1}. As it will become clear later, outside the light cone there are only higher order decay terms comparing to the interior of the light cone. For the purpose of brevity we are only going to display the calculations directly related to error terms. 
\subsection{$x/t >c> 1$}
\label{subsec: out}
First notice that for $v>0$, 
\begin{equation}
\label{theta-out}
4\text{Re}i\theta(z; x, t)=-\left(   1+\dfrac{x}{t} \right)v t +  \left( 1-\dfrac{x}{t} \right)\dfrac{v t}{u^2+v^2}   <0
\end{equation}
Similarly for $v<0$,
\begin{equation}
\label{theta-out'}
4\text{Re}\left[-i\theta(z; x, t)  \right] =\left(   1+\dfrac{x}{t} \right)v t +  \left( \dfrac{x}{t}-1 \right)\dfrac{v t}{u^2+v^2}  <0.
\end{equation}
Since all the pole conditions given in $\eqref{res-1}-\eqref{res-6}$ have desired decay properties, we only need the  following upper/lower factorization on $\bbR$:
\begin{equation}
\label{v-ul}
e^{-i\theta\ad\sigma_3}v(z)	=\Twomat{1}{\overline{r(z)}   e^{-2i\theta}}{0}{1} \Twomat{1}{0}{r(z)  e^{2i\theta}}{1}.
						\quad z \in\bbR 
\end{equation}
and the contour deformation:
\begin{figure}[H]
\caption{$\Sigma$-outside }
\vskip 15pt
\begin{tikzpicture}[scale=0.7]
\draw[->,thick,>=stealth] 	(3, 3) -- (2,2);						
\draw[thick]		(0,0) -- (2,2);		
\draw[->,thick,>=stealth]  		(0,0) -- (-2,2); 				
\draw[thick]	(-3,3) -- (-2,2);	
\draw[->,thick,>=stealth]		(-3,-3) -- (-2,-2);							
\draw[thick]						(-2,-2) -- (0,0);
\draw[thick,->,>=stealth]		(0,0) -- (2,-2);								
\draw[thick]						(2,-2) -- (3,-3);
\draw	[fill]							(0,0)		circle[radius=0.1];	
\node [below] at (0,0) {0};
\draw [dashed] (0,0)--(4,0);
\draw [dashed] (-4,0)--(0,0);
\node[right] at (3,3)					{$\Sigma_1$};
\node[left] at (-3,3)					{$\Sigma_2$};
\node[left] at (-3,-3)					{$\Sigma_3$};
\node[right] at (3,-3)				{$\Sigma_4$};
\node[above] at (3,0)					{$\Omega_1$};
\end{tikzpicture}
\label{fig:Painleve}
\end{figure}
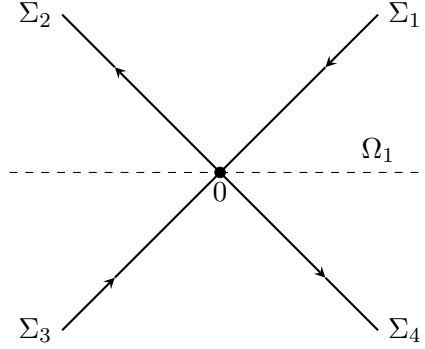
For brevity, we only discuss the situation in $\Omega_1$. In $\Omega_1$, we define
\begin{align*}
	R_1	&=	\begin{cases}
						\twomat{0}{0}{r(z)  e^{2i\theta}  }{0}		
								&	\zeta \in (0,\infty)\\[10pt]
								\\
						\twomat{0}{0}{r( 0 ) e^{2i\theta (z )  } (1-\Xi_\calZ)  }{0}	
								&	\zeta	\in \Sigma_1
					\end{cases}
	\end{align*}
and the interpolation is given by 
$$ \left( r(0)+ \left( r\left( \text{Re} z \right) -r( 0)  \right) \cos 2\phi  \right)    (1-\Xi_\calZ)=r\left( \text{Re} z \right)   \cos 2\phi     (1-\Xi_\calZ)$$
given $r(0)=0$.
So we arrive at the $\dbar$-derivative in $\Omega_1$ :
\begin{align}
\dbar R_1&= \left[ \left( \dbar( \mathbf{r}\left( z\right))  \cos 2\phi- 2\dfrac{ \mathbf{r}\left( z\right)    }{  \left\vert z \right\vert   } e^{i\phi} \sin 2\phi  \right)  (1-\Xi_\calZ) - r\left( u \right)   \cos 2\phi\, \dbar (\Xi_\calZ( z ) )\right] e^{2i\theta}
\end{align}
\begin{equation}
\label{R1.bd1}
|W|=\left| \dbar R_1  \right| 	\lesssim\left( |   \dbar( \mathbf{r}\left( z\right)) | +\dfrac{| \mathbf{r}\left( z\right)| }{  |z| } + \dbar (\Xi_\calZ( z) )  \right) e^{2\text{Re}i\theta(z; x, t)}.
\end{equation}
We proceed as in the previous section and study the integral equation related to the $\dbar$ problem. Setting $z=\alpha+i\beta$ and $\zeta=u+iv$, the region $\Omega_1$ corresponds to $u\geq v \geq 0 $. We decompose the integral operator into three parts:
$$
 \int_{\Omega_1}  \dfrac{1}{|z-\zeta|} |W(\zeta)| \, d\zeta  \lesssim  I_1 + I_2 +I_3
$$
where
\begin{align*}
I_1 	&=	\int_0^\infty \int_v^\infty \dfrac{1}{|z-\zeta|} \left\vert  \dbar[\mathbf{r}](\zeta)   \right\vert e^{\text{Re}i\theta(z; x, t)} \, du \, dv, \\[5pt]
I_2	&=	\int_0^\infty \int_v^\infty \frac{1}{|z-\zeta|}  \dfrac {1} {  \left| u  +iv \right|^{1/2} } e^{\text{Re}i\theta(z; x, t)} \, du \, dv,\\[5pt]
I_3 	&=	\int_0^\infty \int_v^\infty \dfrac{1}{|z-\zeta|} \left\vert  \dbar (\Xi_\calZ(\zeta) )   \right\vert e^{\text{Re}i\theta(z; x, t)} \, du \, dv. 
\end{align*}
Following the same proof of \eqref{omega8-}, we can conclude that
$$
 \int_{\Omega_1}  \dfrac{1}{|z-\zeta|} |W(\zeta)| \, d\zeta  \lesssim  t^{1/2-s}.
$$
We now  calculate the decay rate of the integral
$$
 \int_{\Omega_1} \dfrac{ |W(\zeta)|}{|\zeta|} \, d\zeta
.$$
Again we decompose the integral above into three parts
\begin{align*}
I_1 	&=	\int_0^{ \infty } \int_{ v}^{\infty} \dfrac{1}{|\zeta|} |\dbar[\mathbf{r}](\zeta)  | e^{\text{Re}i\theta(z; x, t)} \, du \, dv, \\[5pt]
I_2	&=	\int_0^{ \infty } \int_{ v}^{\infty} \dfrac{1}{|\zeta|^2 }|\mathbf{r}(\zeta) |  e^{\text{Re}i\theta(z; x, t) } \, du \, dv,\\
I_3	&=	\int_0^{ \infty } \int_{ v}^{\infty} \dfrac{1}{|\zeta|} \left|  \dbar (\Xi_\calZ(\zeta  ) ) \right| e^{\text{Re}i\theta(z; x, t) } \, du \, dv.
\end{align*}

For $I_1$ we apply H\"older's inequality. Taking $1<p<2$ and $q$ as its conjugate, one has
\begin{align*}
I_1 	&\lesssim \int_0^{ \infty } \int_{ 1}^{\infty} \dfrac{1}{|\zeta|} |\dbar[\mathbf{r}](\zeta) | e^{-v t} \, du \, dv +  \int_0^{ 1 } \int_{ v}^{1} \dfrac{1}{|\zeta|} |\dbar[\mathbf{r}](\zeta) | e^{-vt/u^2 } \, du \, dv\\
       &\lesssim t^{-s}.
\end{align*}
For $I_2$ we again apply H\"older's  inequality with $1<p<2$:
\begin{align}
\label{est-r-out}
I_2	 &\lesssim \int_0^{1}  \int_{v}^{1} \dfrac{ |\mathbf{r}(\zeta)|}{ |u^2+v^2|  }{e^{-v\tau/u^2}}\, du \, dv + \int_0^{\infty}   e^{-v t}   \int_{ 1}^{\infty} \dfrac{ |\mathbf{r}(\zeta)|}{ |u^2+v^2|  }\, du \, dv\\
\nonumber
   &\lesssim \norm{r}{H^s(\bbR)} \int_0^{1} \dfrac{1}{u^2} \int_{ 0}^{u} e^{-v\tau/u^2} dv   du + t^{-s}\\
   \nonumber
    & \lesssim t^{-s}.
 \end{align}
 The estimate on $I_3$ is similar to that of $I_1$.
Following the same procedure given in Section \ref{sec:large-time} we obtain the following asymptotic formulas:
\begin{align}
\label{outside-cos}
\cos f(x,t) -1 &= \mathcal{O}\left( t^{-2s}\right)\\
\label{outside-sin}
\sin f(x,t)    &=\mathcal{O}\left( t^{-s}\right).
\end{align}

\subsection{$x/t<-1$}
First notice that for $v>0$, 
\begin{equation}
\label{-theta-out}
4\text{Re}i\theta(z; x, t)=-\left(   1+\dfrac{x}{t} \right)v t +  \left( 1-\dfrac{x}{t} \right)\dfrac{v t}{u^2+v^2}  \geq -\left(   1+\dfrac{x}{t} \right)v t >0
\end{equation}
Similarly for $v<0$,
\begin{equation}
\label{-theta-out'}
4\text{Re}\left[-i\theta(z; x, t)  \right] =\left(   1+\dfrac{x}{t} \right)v t +  \left( \dfrac{x}{t}-1 \right)\dfrac{v t}{u^2+v^2}  \geq \left(   1+\dfrac{x}{t} \right)v t >0.
\end{equation}
 Define the scalar function:
\begin{equation}
\label{psi-0}
\psi(z) =\left( \prod_{k=1}^{N_1} \dfrac{z- \overline {i\zeta_k }}{ z-i\zeta_k}\right) \left(\prod_{j=1}^{N_2}\dfrac{z-\overline{z_j}}{z-z_j}\right) \left(\prod_{j=1}^{N_2} \dfrac{z+z_j}{z+\overline{z_j}}\right)\exp \left(  \dfrac{1}{2\pi i}\int_{-\infty}^{\infty}\dfrac{\log( 1+|r(s)|^2)}{s-z} {d\zeta}  \right) .
\end{equation}
It is straightforward to check that if $M(z;x,t)$ solves Problem \ref{RHP-1}, then the new matrix-valued function $m^{(1)}(z;x,t)=M(z;x,t)\psi(z)^{\sigma_3}$ has the following jump matrices:
\begin{equation}
e^{-i\theta\ad\sigma_3}v^{(1)}(z) 	=\Twomat{1}{0}{\dfrac{\psi_-^{-2}  r}{1+|r|^2}  e^{2i\theta}}{1}
					\Twomat{1}{\dfrac{\psi_+^2 \rbar}{1+|r|^2} e^{-2i\theta}}{0}{1},
						\quad z \in\bbR 
\end{equation}
\begin{align}
\label{v-soliton-p}
e^{-i\theta\ad\sigma_3}v^{(1)}(z) = 	\begin{cases}
						\twomat{1}{\dfrac{\left[ (1/\psi)'(i\zeta_k)\right]^{-2} }{\iota_k  (z-i\zeta_k)}e^{-2i\theta} }{0}{1}	&	z\in \tgamma_k, \\
						\\
						\twomat{1}{0}{\dfrac{\left[ \psi'( \overline{i\zeta_k } )\right]^{-2}}{\overline{\iota_k}  (z -\overline{i\zeta_k })} e^{2i\theta}}{1}
							&	z \in \tgamma_k^*,
					\end{cases}
\end{align}

\begin{align}
\label{v-br+}
e^{-i\theta\ad\sigma_3}v^{(1)}(z)= 	\begin{cases}
						\twomat{1}{\dfrac{\left[ (1/\psi )'(z_j)\right]^{-2} }{c_j  (z-z_j)}e^{-2i\theta} }{0}{1}	&	z\in \gamma_j, \\
						\\
						\twomat{1}{0}{\dfrac{ \left[\psi'( \overline{z_j } )\right]^{-2}}{\overline{c_j}  (z -\overline{z_j })} e^{2i\theta}}{1}
							&	z \in \gamma_j^*,\\
							\\
							\twomat{1}{0}{-\dfrac{ \left[\psi'( -{z_j } )\right]^{-2}}{ c_j  (z + z_j ) } e^{2i\theta}}{1}	&	z\in -\gamma_j, \\
						\\
						\twomat{1} {-\dfrac{ \left[ (1/\psi )'(  -\overline{z_j } )\right]^{-2} }{ \overline{c_j}  (z +\overline{z_j } ) }  e^{-2i\theta} }{0}{1}
							&	z \in -\gamma_j^*.
					\end{cases}
\end{align}

We now see that all entries in \eqref{v-soliton-p}-\eqref{v-br+} decay exponentially as $t\to\infty$, so we are allowed to reduce the RHP to a problem on $\bbR$ and perform the same contour deformation of the case when $x/t>1$ in Figure \ref{fig:Painleve}. Again for brevity, we only discuss the situation in $\Omega_1$. In $\Omega_1$, we define
\begin{align*}
	R_1	&=	\begin{cases}
						\twomat{0}{ \dfrac{\psi_+^2 \rbar}{1+|r|^2} e^{-2i\theta} }{ 0 }{0}		
								&	\zeta \in (0,\infty)\\[10pt]
								\\
						\twomat{0}{ \dfrac{\psi_+^2(0) \rbar(0)}{1+|r(0)|^2} e^{-2i\theta}\left(1-\Xi_\calZ\right) }	{0}{0}
								&	\zeta	\in \Sigma_1
					\end{cases}
	\end{align*}
and the interpolation is given by 
$$  \dfrac{ \rbar(u)}{1+|r(u)|^2}  (  \cos 2\phi)  \psi^2(z)    (1-\Xi_\calZ)  e^{-2i\theta} $$
and consequently
\begin{align}
\dbar R_1&= \left[ \left(  r'\left( u\right) \cos 2\phi- 2\dfrac{ r(u  )    }{  \left\vert z \right\vert   } e^{i\phi} \sin 2\phi  \right)  (1-\Xi_\calZ) - r\left( u \right)   \cos 2\phi \dbar (\Xi_\calZ( z ) )\right] e^{-2i\theta}
\end{align}
\begin{equation}
\label{R1.bd1}
|W|=\left| \dbar R_1  \right| 	\lesssim\left(  \left\vert  \left(  \dfrac{ \rbar(u)}{1+|r(u)|^2} \right)'\right\vert +\dfrac{|\rbar(u)| }{  |z|(   1+|r(u)|^2) } + \dbar (\Xi_\calZ( z) )  \right) e^{- 2\text{Re}i\theta(z; x, t)}.
\end{equation}
The asymptotic formulas are the same as \eqref{outside-cos}-\eqref{outside-sin}.

\section{Soliton Resolution and Full Asymptotic Stability}\label{sec:summary}
In this section, we summarize the computations in previous sections and conclude the soliton resolution for the sine-Gordon equation with  generic data. Then we use the long-time asymptotics to obtain the full asymptotic stability of reflectionless nonlinear structures.

\subsection{Soliton resolution}
\begin{theorem}
	\label{thm:maindetail}Given the generic initial data $\vec{f}\left(0\right)=\left(f_{0},f_{1}\right)\in H_{\text{sin}}^{2,s}\left(\mathbb{R}\right)\times H^{1,s}\left(\mathbb{R}\right)$ in the sense of Definition \ref{genericity} with $s>\frac{1}{2}$ and let  $f$ be the solution to the sine-Gordon equation \eqref{eq: sG}, then it can be written as the superposition of breathers, kinks,
	anti-kinks and the radiation.
	\[
	\sin\left(f(x,t)\right)=\sum_{\ell=1}^{N_{1}}\sin\left(Q_{\ell }(x,t)\right)+\sum_{\ell=1}^{N_{2}}\sin\left(B_{ \ell }\left(x,t\right)\right)+f_{r, sin}(x,t)
	\]
	and
	\[
	1-\cos\left(f(x,t)\right)=\sum_{\ell=1}^{N_{1}}\left(1-\cos\left(Q_{\ell}(x,t)\right)\right)+\sum_{\ell=1}^{N_{2}}\left(1-\cos\left(B_{\ell}(x,t\right)\right)+f_{r, cos}(x,t)
	\]
	where $1-\cos\left(B_{\ell}(x,t)\right)$, $\sin\left(B_{\ell}(x,t)\right)$
	and $1-\cos\left(Q_{\ell}(x,t)\right)$, $\sin\left(Q_{\ell}\left(x,t\right)\right)$
	are given by \eqref{br-cos}, \eqref{br-sin}, \eqref{kink-cos} and \eqref{kink-sin} respectively. As for the radiation terms, $f_{r, sin}(x,t)$ and $f_{r, cos}(x,t)$, by setting $x=\mathtt{v}t$,
	we obtain that
	\begin{itemize}
		\item[1.] For $\mathtt{v}=v_{\ell}^{K}$,  the velocity of the $\ell $th kink, $f_{r, cos}(x,t)$ and $f_{r, sin}(x,t)$ are given by \eqref{cos-asym k} and \eqref{sin-asym k}.
		
		\item[2.] For  $\mathtt{v}=v_{\ell}^{B}$, the velocity of the $\ell $th breather,  $f_{r, cos}(x,t)$ and $f_{r, sin}(x,t)$ are given by \eqref{cos-asym} and \eqref{sin-asym}.
		
		\item[3.]  For $\mathtt{v}\neq v_{\ell}^{B},v_{\ell}^{K}$ for all $\ell$, $f_{r, cos}(x,t)$ and $f_{r, sin}(x,t)$ are given by \eqref{as-solitonless} and \eqref{as-solitonless'}.
		
		\item[4.] For  $\left|\mathtt{v}\right|>1$,  $f_{r, cos}(x,t)$ and $f_{r, sin}(x,t)$ are given by 
		\eqref{outside-cos} and \eqref{outside-sin}.
		\item [5.] For $ x/t \to 1^-$ as $t \to \infty $  $f_{r, cos}(x,t)$ and $f_{r, sin}(x,t)$ are given by  \eqref{app-cos} and \eqref{app-sin}.
		\item[6.] For $ x/t \to -1^+$ as $t \to \infty $  $f_{r, cos}(x,t)$ and $f_{r, sin}(x,t)$ are given by  \eqref{app-cos-} and \eqref{app-sin-}.
	\end{itemize}
\end{theorem}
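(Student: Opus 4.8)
The proof collects the reconstruction formula of Proposition \ref{prop:recon} together with the frame-by-frame asymptotics obtained in Sections \ref{sec:local}--\ref{sec:large-time} and the discussion outside the light cone, and then repackages them as a single superposition statement. The plan is the following.

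\emph{Step 1 (reconstruction).} By Proposition \ref{prop:recon} the solution is recovered from the small-$z$ behaviour of the Riemann--Hilbert solution via $\sin f(x,t) = 2\,m(0;x,t)_{21} m(0;x,t)_{22}$ and $\cos f(x,t) = 1 + 2\,m(0;x,t)_{12} m(0;x,t)_{21}$, where $m(0;x,t)$ denotes the non-vanishing term in the expansion \eqref{small z}. The legitimacy of all the $z\to 0$ limits appearing along the chain of transformations rests on $\lim_{z\to 0} r(z)/z = 0$ from Proposition \ref{prop:r} and on $\lim_{z\to 0} m^{(3)}_2(z;x,t) = 0$ from Proposition \ref{prop:N3.est}; these are invoked at the outset.

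\emph{Step 2 (fix a frame and undo the transformations).} Along a ray $x = \mathtt{v}t$ one inverts the chain of deformations, $m = m^{(3)} m^{\RHP} (\calR^{(2)})^{-1} \delta(z)^{\sigma_3}$ (equation \eqref{N3.to.N}), together with $m^{\RHP} = E_1 E_2 m^{(\bullet)}$ where $m^{(\bullet)}$ is the appropriate discrete model (a breather, a kink, or no discrete factor at all). Evaluating at $z = 0$ and using Lemma \ref{lemma:N.to.NRHP.asy} — $\calR^{(2)} \equiv I$ near the origin, $\delta(0)^{\sigma_3} = \diagmat{(-1)^l}{(-1)^{-l}}$ which cancels in the products $m_{12}m_{21}$ and $m_{21}m_{22}$, $m^{(3)}(0) = I + \mathcal{O}(\tau^{-3/4})$, and $m^{\RHP}(0) = (I + \mathcal{O}(e^{-c\tau}))(I + \mathcal{O}(\tau^{-1/2})) m^{(\bullet)}(0)$ — one reads off $\sin f$ and $\cos f$ in that frame. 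Which model $m^{(\bullet)}$ survives is dictated by the signature table of Section \ref{sec:prep} and the triangularity of the jumps: only the eigenvalues lying on the circle $|z|^2 = \rho_\ell^2$ (respectively on $i\mathbb{R}$ at $i\zeta_\ell$) satisfy $\Real(i\theta)=0$, so all other residue contributions and the real-line jump outside $[-z_0,z_0]$ decay exponentially along $\Sigma^{(3)}\cup\Gamma$, while the eigenvalues of $\mathcal{B}_\ell$ are absorbed into $\delta$. This produces exactly the four items: for $\mathtt{v} = v^B_\ell$ one lands on Proposition \ref{lemma br.RHP.asy} with $m^{(\bullet)} = m^{(br)}$ given by \eqref{br-0} whose $z=0$ entries are \eqref{br-cos}--\eqref{br-sin}, and the leftover $\mathcal{O}(\tau^{-1/2})$ from the parabolic-cylinder parametrices at $\pm z_0$ is the $\mathrm{R_{cos}}/\tau^{1/2}$, $\mathrm{R_{sin}}/\tau^{1/2}$ of \eqref{cos-asym}--\eqref{sin-asym}; for $\mathtt{v} = v^K_\ell$ one replaces Problem \ref{prob:mkdv.br} by the explicitly solvable Problem \ref{prob:mkdv.kin}, giving Proposition \ref{lemma kin.RHP.asy} with the kink formulas \eqref{kink-cos}--\eqref{kink-sin} and radiation \eqref{cos-asym k}--\eqref{sin-asym k}; for $\mathtt{v}$ avoiding every soliton velocity no discrete factor survives and a Deift--Zhou stationary-phase computation at $\pm z_0$ yields the modulated oscillation \eqref{as-solitonless}--\eqref{as-solitonless'}; and for $|\mathtt{v}|>1$ the phase has no real stationary point, all jumps decay exponentially (after conjugating by $\psi$ of \eqref{psi-0} when $\mathtt{v}<-1$), so only the $\dbar$-error remains, giving \eqref{outside-cos}--\eqref{outside-sin}. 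Since $z_0$ is a fixed positive constant in each of the interior frames, $\tau = t z_0/(1+z_0^2) \sim t$, so $\mathcal{O}(\tau^{-3/4}) = \mathcal{O}(t^{-3/4})$, matching Theorem \ref{thm:main1}.

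\emph{Step 3 (assemble the superposition).} To display the result globally one \emph{defines} $\sin Q_\ell$, $1 - \cos Q_\ell$, $\sin B_\ell$, $1 - \cos B_\ell$ by the formulas \eqref{kink-cos}--\eqref{kink-sin} and \eqref{br-cos}--\eqref{br-sin}, in which the $\delta$-dressing encodes both the logarithmic phase correction induced by the radiation and the position shifts from interaction with the slower solitons of $\mathcal{B}_\ell$, and then sets $f_{r, sin} := \sin f - \sum_\ell \sin Q_\ell - \sum_\ell \sin B_\ell$ and similarly $f_{r, cos}$. Each $\sin Q_\ell$, $1-\cos Q_\ell$ is a $\sech$ (respectively $\sech\cos$) profile travelling along $x = v^K_\ell t$ up to the lower-order corrections, hence decays exponentially off that ray; consequently, in any frame $x = \mathtt{v}t$ the soliton sum collapses as $t\to\infty$ to at most the single matching term, and subtracting it from the asymptotics of Step 2 leaves precisely the $f_r$ described in items 1--4. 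The only genuine point of care — and the main obstacle — is not any single estimate but the coherent bookkeeping: verifying that distinct soliton profiles do not overlap to leading order (which uses the genericity hypothesis $\rho_j\neq\zeta_k$, $\rho_{j_1}\neq\rho_{j_2}$ of Definition \ref{genericity}, so that no two solitons share a velocity), tracking how the localized model and the $\delta$-conjugation combine at $z=0$, and ensuring that the $\tau^{-3/4}$ error terms are uniform, which follows from the uniformity statements in Propositions \ref{prop:N3.est} and \ref{prop:r}.
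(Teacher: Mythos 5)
Your proposal is correct and follows essentially the same route the paper takes: Theorem \ref{thm:maindetail} is a collation of the reconstruction formula of Proposition \ref{prop:recon}, the frame-dependent asymptotics of Propositions \ref{lemma br.RHP.asy} and \ref{lemma kin.RHP.asy}, the solitonless Deift--Zhou computation, and the exponential-decay estimate outside the light cone, and you correctly assemble these via \eqref{N3.to.N} and Lemma \ref{lemma:N.to.NRHP.asy}. You also correctly flag the two points the paper itself leaves implicit — that the superposition identity is made rigorous by \emph{defining} $f_{r,\sin}, f_{r,\cos}$ as the residue after subtracting the $\delta$-dressed soliton profiles, and that genericity (distinct moduli $\rho_j,\zeta_k$) is what prevents overlapping leading-order contributions — so no substantive gap remains.
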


\begin{remark}
	Note that as the velocity of the reference frame $\mathtt{v}$ moving away from
	the  velocity of the breather, kink and anti-kinks, $m_{12}^{\left(br\right)}\left(z_{0}\right)$,
	$m_{12}^{(kin)}(z_{0})$ will exponentially decay in time. \footnote{
	Of
	course, these exponential decay rates depend on the gap given by the
	velocity of the frame and the velocities of breathers, kinks and anti-kinks.} Combining these exponential decay terms with 
	the remaining terms  will give us the same asymptotics 
	as the later expression in (3) above.
\end{remark}

\subsection{Full asymptotic stability}\label{subsec:fullasym}
To study the asymptotic stability, we first construct the general  reflectionless solution. Suppose we have the following
discrete scattering data
\begin{equation}
\mathcal{S}_{D}=\left\{ 0,\left\{ i\zeta_{0,k},\iota_{0,k}\right\} _{k=1}^{N_{1}},\left\{ z_{0,j},c_{0,j}\right\} _{j=1}^{N_{2}}\right\} \in H_0^{s}(\bbR)\otimes\mathbb{C}^{2N_{1}}\otimes\mathbb{C}^{2N_{2}}.\label{eq:discrete}
\end{equation}
Assuming that $z_{0,j}:=\rho_{0,j}e^{i\omega_{0,j}}$, we denote
\[
v_{0,j}^{B}=\frac{1-\rho_{0,j}^2}{1+\rho_{0,j}^2},\quad v_{0,k}^{K}=\frac{1-\zeta_{0,k}^2}{1+\zeta_{0,k}^2}.
\]
Moreover, for simplicity, we assume that all $v_{0,j}^{B}$ $v_{0,k}^{K}$
are different. If $N_{1}=N_2=0$ then the corresponding
set will be the empty set.

Then one can construct a reflectionless solution $N\left(x,t\right)$
using the discrete scattering data \eqref{eq:discrete} in Problem \ref{RHP-1}
and Proposition \ref{prop:recon}.

Moreover, we have the following asymptotics for $N\left(x,t\right)$
as $t\rightarrow\infty$: 
\begin{align}\label{eq:N1}
\sin\left(N(x,t)\right)=\sum_{j=1}^{N_{1}}\sin\left(Q_{j}(x,t)\right)+\sum_{j=1}^{N_{2}}\sin\left(B_{j}\left(x,t\right)\right)+\mathcal{O}\left(e^{-\epsilon t}\right)
\end{align}
and
\begin{align}\label{eq:N2}
1-\cos\left(N(x,t)\right)=\sum_{j=1}^{N_{1}}\left(1-\cos\left(Q_{j}(x,t)\right)\right)+\sum_{j=1}^{N_{2}}\left(1-\cos\left(B_{j}(x,t\right)\right)+\mathcal{O}\left(e^{-\epsilon t}\right)
\end{align}
where $\sin\left(B_{\ell}(x,t)\right)$, $1-\cos\left(B_{\ell}(x,t)\right)$
and $\sin\left(Q_{\ell}(x,t)\right)$, $1-\cos\left(Q_{\ell}\left(x,t\right)\right)$
are reconstructed via  Problem \ref{prob:mkdv.br} and Problem \ref{prob:mkdv.kin} using the scattering
data \eqref{eq:discrete} respectively. Therefore, by the continuity
of the scattering data, we can also consider the stability of the
sum
\[
\sum_{j=1}^{N_{1}}\sin\left(Q_{j}(x,t)\right)+\sum_{j=1}^{N_{2}}\sin\left(B_{j}\left(x,t\right)\right)
\]
and
\[
\sum_{j=1}^{N_{1}}\left(1-\cos\left(Q_{j}(x,t)\right)\right)+\sum_{j=1}^{N_{2}}\left(1-\cos\left(B_{j}(x,t\right)\right)
\]
provided the kinks, antikinks and breathers in these sums are sufficiently separated. Notice that for $N_{2}=0$ and $N_{1}=1$, $N\left(x,t\right)$ is
simply a kink/anti-kink and for $N_{2}=1$ and $N_{1}=0$, $N\left(x,t\right)$
is a breather. 

\smallskip
With preparations above and Theorem \ref{thm:maindetail}, we state a corollary regarding the asymptotic stability of $N\left(x,t\right).$
\begin{corollary}
	\label{cor:stabN}Consider the reflectionless solution $N\left(x,t\right)$
	to the sine-Gordon equation \eqref{eq: sG}. For $s>\frac{1}{2}$, suppose
	\[
	\left\Vert \vec{f}(0)-\left(N(0),\partial_{t}N(0)\right)\right\Vert _{H^{2,s}\times H^{1,s}}<\epsilon
	\]
	for $0< \epsilon\ll1$ small enough. Let $\vec{f}$ be the solution
	to the sine-Gordon equation with the initial data $\vec{f}(0)$, then
	there exist scattering data
	\begin{equation}
	\mathcal{S}=\left\{ r\left(z\right),\left\{ i\zeta_{1,k}, \iota_{1,k}\right\} _{k=1}^{N_{1}},\left\{ z_{1,j},c_{1,j}\right\} _{j=1}^{N_{2}}\right\} \in H^{s}_0(\bbR) \otimes\mathbb{C}^{2N_{1}}\otimes\mathbb{C}^{2N_{2}}\label{eq:newscatt}
	\end{equation}
	computed in terms of $\vec{f}(0)$ such that
	\[
	\left\Vert r\right\Vert _{H^{1,1}}+\sum_{\ell=1}^{N_{2}}\left(\left|i\zeta_{0,\ell}-i\zeta_{1,\ell}\right|+\left|\iota_{0,\ell}-\iota_{1,\ell}\right|\right)+\sum_{j=1}^{N_{1}}\left(\left|z_{0,j}-z_{1,j}\right|+\left|c_{0,j}-c_{1,j}\right|\right)\lesssim\epsilon.
	\]
	Moreover, with the scattering data $\mathcal{S}$ given by \eqref{eq:newscatt}, one can write the solution
	$f$ 
	\[
	\sin\left(f(x,t)\right)=\sum_{j=1}^{N_{1}}\sin\left(\tilde{Q}_{j}(x,t)\right)+\sum_{j=1}^{N_{2}}\sin\left(\tilde{B}_{j}\left(x,t\right)\right)+f_{r,sin}(x,t)
	\]
	and
	\[
	1-\cos\left(f(x,t)\right)=\sum_{j=1}^{N_{1}}\left(1-\cos\left(\tilde{Q}_{j}(x,t)\right)\right)+\sum_{j=1}^{N_{2}}\left(1-\cos\left(\tilde{B}_{j}(x,t\right)\right)+f_{r,cos}(x,t)
	\]
	where $\sin\left(\tilde{B}_{\ell}(x,t)\right)$, $1-\cos\left(\tilde{B}_{\ell}(x,t)\right)$
	and $\sin\left(\tilde{Q}_{\ell}(x,t)\right)$, $1-\cos\left(\tilde{Q}_{\ell}\left(x,t\right)\right)$
	are reconstructed via solving Problem \ref{prob:mkdv.br} and Problem \ref{prob:mkdv.kin} using scattering
	data \eqref{eq:newscatt} respectively. (If $N_{\ell}$, $\ell=1,2$
	is $0$ then the corresponding sum will be zero.) Here again the radiation
	terms $f_{r,sin}\left(x,t\right)$ and $f_{r,cos}\left(x,t\right)$ have the
	asymptotics given by Theorem \ref{thm:maindetail} with scattering data \eqref{eq:newscatt}.
\end{corollary}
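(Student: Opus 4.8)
The plan is to combine the Lipschitz continuity of the direct scattering transform near the reflectionless datum with the soliton resolution already proved in Theorem \ref{thm:maindetail}. Since $N(x,t)$ is reflectionless, the scattering data of $\left(N(0),\partial_{t}N(0)\right)$ is precisely $S_{D}$ in \eqref{eq:discrete}: the reflection coefficient vanishes identically, and the only spectral data are the simple zeros $z_{0,k}^{K}=i\rho_{0,k}^{K}$ of $\breve{a}$ on $i\bbR^{+}$ (the kinks/anti-kinks), the quadruples $\pm z_{0,j}^{B},\pm\overline{z_{0,j}^{B}}$ off $\bbR$ (the breathers), and the associated norming constants. Thus the task splits into: (i) show that a small $H^{2,1}\times H^{1,1}$ perturbation of $\left(N(0),\partial_{t}N(0)\right)$ still lies in the generic set $\mathcal{G}$ of Remark \ref{rmk-generic}; (ii) show that its reflection coefficient $r$ is small in $H^{1,1}(\bbR)$ and that its eigenvalues and norming constants are $\mathcal{O}(\epsilon)$-close to those of $N$; (iii) feed the resulting generic scattering data \eqref{eq:newscatt} into Theorem \ref{thm:maindetail}. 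Throughout one uses that the solution $\vec{f}$ of \eqref{eq: sG} with data $\vec{f}(0)$ is exactly the one reconstructed from Problem \ref{RHP-1} and Proposition \ref{prop:recon} with the time-evolved data \eqref{time-evol}, so that the asymptotics of Theorem \ref{thm:maindetail} apply verbatim.

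For (i)--(ii) I would rerun the Volterra estimates of Section \ref{sec: r} on the \emph{difference} of the two potentials. Writing $U,V$ for the potentials attached to $\vec{f}(0)$ and $U_{N},V_{N}$ for those attached to $\left(N(0),\partial_{t}N(0)\right)$ through \eqref{Psi-x}--\eqref{Phi-x}, the hypothesis gives $\norm{U-U_{N}}{L^{1}\cap L^{2,1}}+\norm{V-V_{N}}{L^{1}\cap L^{2,1}}\lesssim\epsilon$, and since $m^{\pm}-I=\left(I-K^{\pm}_{U}\right)^{-1}K^{\pm}_{U}I$ depends Lipschitz-continuously on the potential on the bounded sets in play (the resolvent bound \eqref{resolvent-K} is uniform), every estimate proved in Section \ref{sec: r} for $S(z)-I$, for $\partial_{z}S$, and for their behavior near $z=0$ and $z=\infty$ holds for the difference $S(z)-S_{N}(z)$ with the right-hand sides multiplied by $\epsilon$. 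Since $b_{N}\equiv0$ this yields $\norm{b}{H^{1,1}}\lesssim\epsilon$; combined with $\breve{a}_{N}-1\in H^{1}$, $\breve{a}(z)\to1$ as $z\to\infty$, and the fact from Remark \ref{rmk-generic} that $\breve{a}$ is bounded away from $0$ near the origin, one gets that $\breve{a}$ is bounded away from $0$ on $\bbR$, hence $r=-b/\breve{a}\in H^{1,1}(\bbR)$ with $\norm{r}{H^{1,1}}\lesssim\epsilon$ and $\lim_{z\to0}r(z)/z=0$ as in Proposition \ref{prop:r}. For the discrete data, the $H^{1}$ (hence uniform) closeness $\norm{\breve{a}-\breve{a}_{N}}{H^{1}(\bbR)}\lesssim\epsilon$ propagates into $\bbC^{+}$ through the analytic (Volterra) representation; a Rouché argument on small disjoint disks about each $z_{0,k}^{K},z_{0,j}^{B}$, together with simplicity of the zeros of $\breve{a}_{N}$ and $\breve{a}$ being bounded away from $0$ near $\bbR$ and near $0$, shows that $\breve{a}$ has exactly one simple zero $z_{1,\bullet}$ in each disk with $\left|z_{1,\bullet}-z_{0,\bullet}\right|\lesssim\epsilon$, no other zeros in $\bbC^{+}$, and no new spectral singularities — so $\vec{f}(0)\in\mathcal{G}$ — while the symmetries \eqref{symmetry-2}--\eqref{symmetry} keep the kink zeros on $i\bbR^{+}$ and the breather zeros in the quadruple pattern. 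Finally $c_{1,\bullet}=b_{\bullet}/\breve{a}'(z_{1,\bullet})$ is $\mathcal{O}(\epsilon)$-close to $c_{0,\bullet}$ because $b_{\bullet}$ and $\breve{a}'$ depend Lipschitz-continuously on the potential and $\breve{a}'(z_{0,\bullet})\neq0$; this proves the displayed estimate for $\norm{r}{H^{1,1}}$ and the eigenvalue and norming-constant differences.

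With $\vec{f}(0)$ now known to be generic with scattering data \eqref{eq:newscatt}, step (iii) is immediate: Theorem \ref{thm:maindetail} applied to $f$ gives the decomposition of $\sin f$ and $1-\cos f$ into the kinks $\tilde{Q}_{j}$ (reconstructed from the $z_{1,k}$ via Problem \ref{prob:mkdv.kin}), the breathers $\tilde{B}_{j}$ (reconstructed from the $z_{1,j}$ via Problem \ref{prob:mkdv.br}), and the radiation terms $f_{r,\sin},f_{r,\cos}$ with the stated asymptotics; for $\epsilon$ small the velocities attached to the $z_{1,\bullet}$ stay mutually distinct, so the solitary components separate as $t\to\infty$ and the overlap-matching of the regional asymptotic formulas (see the remark following Theorem \ref{thm:maindetail}) applies. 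I expect the main obstacle to be step (ii): upgrading the mapping-property estimates of Section \ref{sec: r} to quantitative difference estimates that control $r$ in the \emph{weighted} space $H^{1,1}$ and, in tandem, rule out the appearance of any new eigenvalue or spectral singularity near $\bbR$ or near $z=0$ under the perturbation. Once these are in place, the remainder is bookkeeping with the asymptotic formulas already derived in Sections \ref{sec:local}--\ref{sec:large-time}.
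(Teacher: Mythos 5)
Your proposal is correct and follows essentially the same route the paper intends for this corollary: the paper states Corollary \ref{cor:stabN} without a dedicated proof, presenting it as a direct consequence of Theorem \ref{thm:maindetail} together with the Lipschitz continuity of the direct scattering map (established implicitly through the estimates of Section \ref{sec: r}, the no-new-eigenvalues conclusion in Remark \ref{rmk-generic}, and made explicit later in Lemma \ref{lem:BCD}). Your rerun-the-Volterra-estimates-on-differences argument, the Rouch\'e localization of the perturbed zeros, and the direct application of the soliton resolution theorem are precisely the details the paper leaves to the reader.
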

\begin{remark}
	We can also consider the nonlinear structure such that the underlining  eigenvalues 
	can produce elements having the same speed. Then the perturbation should be generic in
	the sense of Definition \ref{genericity} so that we can apply our steepest descent
	computations.
\end{remark}

\section{Asymptotic Stability/Instability  in Weighted Energy Spaces}\label{sec:stability}
In this section, we study the asymptotic stability and instability of multi-soliton solutions to the sine-Gordon equation in the weighted energy space $H^{1,s}(\mathbb{R})\times L^{2,s}(\mathbb{R})$.

\subsection{Well-posedness}








Recall that the conserved energy for the sine-Gordon equation is
\begin{align*}
E_{\text{sin}}\left(t\right) & =\frac{1}{2}\int_{\mathbb{R}}\left|\partial_{t}f\right|^{2}+\left|\partial_{x}f\right|^{2}\,dx+\int\left(1-\cos f\right)\,dx\\
& =\frac{1}{2}\int_{\mathbb{R}}\left|\partial_{t}f\right|^{2}+\left|\partial_{x}f\right|^{2}\,dx+2\int\sin^{2}\left(\frac{f}{2}\right)\,dx.
\end{align*}
Therefore, the natural energy space for the sine-Gordon equation is
\[
H_{\text{sin}}^{1}\left(\mathbb{R}\right):=\left\{ f_x \in L^{2}\left(\mathbb{R}\right):\ \sin\left(f/2\right)\in L^{2}(\bbR)\right\} ,
\]
and the pseudometric distance function associated to this space is
\[
d_{\text{sin}}\left(f_{1},f_{2}\right):=\left(\left\Vert \sin\left(\frac{f_{1}-f_{2}}{2}\right)\right\Vert _{L^{2}}^{2}+\left\Vert \partial_{x}\left(f_{1}-f_{2}\right)\right\Vert _{L^{2}}^{2}\right)^{\frac{1}{2}}.
\]
First of all, when the initial data is in the weighted energy space,
one could not directly prove the solution obtained from solving the RHP is the solution to the sine-Gordon equation. We overcome
this by using the notation of the strong solution.
\begin{definition}
	\label{def:strong}We say that the function $f\left(x,t\right)$ is
	a \emph{strong solution} in $H_{\sin}^{k}\left(\mathbb{R}\right)$
	to the sine-Gordon equation
	\begin{equation}
	\partial_{tt}f-\partial_{xx}f+\sin\left(f\right)=0,\label{eq:IVP}
	\end{equation}
	with
	\[
	f\left(0\right)=f_{0}\in H_{\sin}^{k}\left(\mathbb{R}\right),\ \partial_{t}f\left(0\right)=f_{1}\in H^{k-1}\left(\mathbb{R}\right)
	\]
	if and only if $\left(f,f_{t}\right)\in C\big(\mathbb{R};H_{\text{sin}}^{k}\left(\mathbb{R}\right)\times  H^{k-1}\left(\mathbb{R}\right)\big)$
	satisfies
	\begin{align}
	f & =\cos\left(t\sqrt{-\Delta}\right)f_{0}+\frac{\sin\left(t\sqrt{-\Delta}\right)}{\sqrt{-\Delta}}f_{1}\label{eq:mild}-\int_{0}^{t}\frac{\sin\left(\left(t-s\right)\sqrt{-\Delta}\right)}{\sqrt{-\Delta}}\left(\sin\left(f\left(s\right)\right)\right)\,ds.
	\end{align}
\end{definition}
Next, we record the well-posedness of the sine-Gordon equation in the energy space. This can be established using the standard energy estimate and the Picard iteration. For the  detailed proof, see de Laire-Gravejat \cite{deLG}. 
\begin{theorem}\label{thm:GWP}
	Given $\left(f_{0},f_{1}\right)\in H_{\text{sin}}^{1}\times L^{2}$,
	there exists a unique solution $f\in C\left(\mathbb{R};H_{\text{sin}}^{1}\left(\mathbb{R}\right)\right)$
	with $\partial_{t} f \in C\left(\mathbb{R};L^{2}\left(\mathbb{R}\right)\right)$ to the sine-Gordon
	equation with initial data $\left(f_{0},f_{1}\right)$. Moreover the
	solution satisfies the following statements
	
	(1) For any positive number $T$, there exists a positive number $A$
	depending on $T$ such that the flow map $\left(f_{0},f_{1}\right)\rightarrow\left(f,  f_t \right)$
	satisfies
	\[
	d_{\text{sin}}\left( f \left(\cdot,t\right),\tilde{ f }\left(\cdot,t\right)\right)+\left\Vert f_t\left(\cdot,t\right)-\tilde{f}_t\left(\cdot,t\right)\right\Vert _{L^{2}}\leq A\left(d_{\text{sin}}\left(f_{0},\tilde{f }_{0}\right)+\left\Vert f_{1}-\tilde{f}_{1}\right\Vert _{L^{2}}\right),
	\]
	for any $t\in\left[-T,T\right]$. Here, the function $\tilde{f}$
	is the unique solution to the sine-Gordon equation with initial conditions
	$\left(\tilde{f}_{0},\tilde{f}_{1}\right)$.
	
	(2) The sine-Gordon energy $E_{\text{sin }}$ is conserved along the
	flow.
\end{theorem}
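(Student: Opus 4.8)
The plan is to construct the solution by a contraction argument on the Duhamel formulation \eqref{eq:mild} and then promote the resulting local solution to a global one via the conservation of $E_{\text{sin}}$. Since $f$ itself is not in $L^{2}$ — only $\sin(f/2)$ and $f_{x}$ are — I would first subtract a fixed reference profile: choose a smooth $\phi=\phi(x)$, independent of $t$, with $\phi\equiv 0$ for $x<-1$ and $\phi\equiv 2\pi n$ for $x>1$, so that $F_{\phi}:=-\phi_{xx}+\sin\phi$ is a fixed compactly supported $L^{2}$ function. Writing $f=g+\phi$ turns the equation into $g_{tt}-g_{xx}=-F_{\phi}-\big(\sin(g+\phi)-\sin\phi\big)$ for the genuine unknown $g\in H^{1}(\mathbb{R})$ with $g_{t}\in L^{2}$. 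The structural input that makes the iteration work is that the nonlinearity is globally Lipschitz into $L^{2}$: from $\sin a-\sin b=2\cos\!\big(\tfrac{a+b}{2}\big)\sin\!\big(\tfrac{a-b}{2}\big)$ one gets $\|\sin f-\sin\tilde f\|_{L^{2}}\le 2\,\|\sin(\tfrac{f-\tilde f}{2})\|_{L^{2}}\le 2\,d_{\text{sin}}(f,\tilde f)$, and in particular $|\sin(g+\phi)-\sin\phi|\le|g|$. Together with the $L^{2}$-boundedness of the free wave propagators $\cos(t\sqrt{-\Delta})$ and $\sin(t\sqrt{-\Delta})/\sqrt{-\Delta}$ — the second having Fourier multiplier $\sin(t|\xi|)/|\xi|$, bounded pointwise by $\min(|t|,|\xi|^{-1})$ — a standard fixed point in $C([-T_{0},T_{0}];H^{1})\cap C^{1}([-T_{0},T_{0}];L^{2})$ with $T_{0}$ depending only on the size of $(f_{0},f_{1})$ yields a unique local strong solution in the sense of Definition \ref{def:strong}.

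For statement (1), I would subtract the Duhamel formulas for two solutions $f$ and $\tilde f$ (using the same reference $\phi$, so that the comparison is meaningful), apply the propagator bounds and the Lipschitz estimate above, and close a Gronwall inequality for $\Phi(t):=d_{\text{sin}}(f(\cdot,t),\tilde f(\cdot,t))+\|f_{t}(\cdot,t)-\tilde f_{t}(\cdot,t)\|_{L^{2}}$. This produces a bound $\Phi(t)\le A(T)\,\Phi(0)$ with $A=A(T)$ growing like $e^{CT}$, which is the asserted time-dependent Lipschitz constant; uniqueness on the whole existence interval is a byproduct.

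For statement (2), for smooth and spatially well-behaved solutions one computes $\tfrac{d}{dt}E_{\text{sin}}(t)=\int(f_{tt}-f_{xx}+\sin f)\,f_{t}\,dx=0$ after an integration by parts. At energy-space regularity I would justify this by density: approximate $(f_{0},f_{1})$ in $d_{\text{sin}}$ and $L^{2}$ by data whose $g$-part is smooth and compactly supported, apply the continuity of the flow from statement (1), and use that each summand of $E_{\text{sin}}$ is continuous with respect to $d_{\text{sin}}$ and $\|f_{t}\|_{L^{2}}$. Finally, global existence follows by iterating the local theory: the conserved quantity $E_{\text{sin}}$ controls $\|f_{x}\|_{L^{2}}^{2}+\|f_{t}\|_{L^{2}}^{2}+\|\sin(f/2)\|_{L^{2}}^{2}$, so the energy norm of the solution stays bounded on any finite time interval and the local solution extends to all $t\in\mathbb{R}$.

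The part I expect to require the most care is not the contraction but the bookkeeping imposed by the nontrivial limits of $f$ at $\pm\infty$: one must work consistently with the pseudometric $d_{\text{sin}}$ and the fixed reference $\phi$ throughout — in the iteration space, in the Gronwall estimate, and in the density argument for energy conservation — and check at each step that the $\phi$-contributions (a fixed smooth $L^{2}$ forcing $F_{\phi}$, together with the harmless mismatch $\sin(g+\phi)-\sin\phi$) do not interfere. Since this is precisely the content of de Laire-Gravejat \cite{deLG}, I would carry out the above only to the level of detail needed and cite that reference for the remaining routine estimates.
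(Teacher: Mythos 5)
The paper does not prove this theorem but defers entirely to de Laire--Gravejat \cite{deLG}, noting only that it follows from the standard energy estimate and Picard iteration; your sketch is a correct, more detailed account of exactly that standard approach (reference-profile subtraction to enter $H^{1}\times L^{2}$, global Lipschitz nonlinearity, Duhamel contraction, Gronwall for the flow Lipschitz bound, density for energy conservation, and energy control to globalize), and you too ultimately cite \cite{deLG} for the remaining routine estimates. So this matches the paper's intended proof.
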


\subsection{Asymptotic stability}\label{subsec:wasymp}
Considering the sine-Gordon equation in $H^{1,s}(\mathbb{R})\times L^{2,s}(\mathbb{R})$ with $s>\frac{1}{2}$, our goal of this subsection is to establish the asymptotic stability of the nonlinear structure
given by \eqref{eq:N1} and \eqref{eq:N2} in subsection \ref{subsec:fullasym} measured in the localized energy norm.

Recall that for given $\mathfrak{v}\in\mathbb{R}$ and $\mathfrak{L}>0$ fixed,
we define the localized energy norm for a vector $\vec{f}=\left(f_{1},f_{2}\right)$
as
\[
\left\Vert \vec{f}\right\Vert _{\mathcal{E},\mathfrak{v} ,\mathfrak{L}}^{2}:=\left\Vert f_{1}\right\Vert _{L^{2}\left(\left|x-\mathfrak{v}t\right|<\mathfrak{L}\right)}^{2}+\left\Vert \partial_{x}f_{1}\right\Vert _{L^{2}\left(\left|x-\mathfrak{v}t\right|<\mathfrak{L}\right)}^{2}+\left\Vert f_{2}\right\Vert _{L^{2}\left(\left|x-\mathfrak{v}t\right|<\mathfrak{L}\right)}^{2}.
\]
Our main result in this subsection is the following:
\begin{theorem}
	\label{thm:stabNL}Consider the reflectionless solution $N\left(x,t\right)$ constructed using \eqref{eq:N1} and \eqref{eq:N2}
	to the sine-Gordon equation \eqref{eq: sG}. Suppose	\[
	\left\Vert \vec{f}(0)-\left(N(0),\partial_{t}N(0)\right)\right\Vert _{H^{1,s}\times L^{2,s}}<\epsilon
	\]
	for $0\leq\epsilon\ll1$ small enough and $s>\frac{1}{2}$. Let $\vec{f}$
	be the solution to the sine-Gordon equation with the initial data
	$\vec{f}(0)$ then there exist scattering data
	\begin{equation}
	\mathcal{S}=\left\{ r\left(z\right),\left\{ i\zeta_{1,k}, \iota_{1,k}\right\} _{k=1}^{N_{1}},\left\{ z_{1,j},c_{1,j}\right\} _{j=1}^{N_{2}}\right\} \in \left( L^2 \left(\mathbb{R}\right)\cap L^\infty(\bbR) \right)\otimes\mathbb{C}^{2N_{1}}\otimes\mathbb{C}^{2N_{2}}\label{eq:newscatt1}
	\end{equation}
	computed in terms of $\vec{f}(0)$ with $r(0)=0$ such that
	\[
	\left\Vert r\right\Vert _{ \left( L^2 \left(\mathbb{R}\right)\cap L^\infty(\mathbb{R}) \right)\  }+\sum_{k=1}^{N_{1}}\left(\left| \zeta_{0,k}-\zeta_{1, k}\right|+\left| \iota_{0, k}-\iota_{1,k}\right|\right)+\sum_{j=1}^{N_{2}}\left(\left|z_{0,j}-z_{1,j}\right|+\left|c_{0,j}-c_{1,j}\right|\right)\lesssim\epsilon.
	\]
	Moreover, with the scattering data $S$, one can write the solution
	$f$ as
	\[
	f=\sum_{\ell=1}^{N_{1}}\tilde{Q}_{\ell}\left(x,t\right)+\sum_{\ell=1}^{N_{2}}\tilde{B}_{\ell}\left(x,t\right)+f_r\left(x,t\right)
	\]
where $\tilde{B}_{\ell}\left(x,t\right)$ and $\tilde{Q}_{\ell}\left(x,t\right)$
	are reconstructed via Problem \ref{prob:mkdv.br} and Problem \ref{prob:mkdv.kin} using scattering
	data \eqref{eq:newscatt1} respectively, and the radiation term satisfies
	\[
	\lim_{t\rightarrow\infty}\left\Vert f_r(t)\right\Vert _{\mathcal{E},\mathfrak{v},\mathfrak{L}}=0
	\]
	any fixed  $\mathfrak{v}\in\mathbb{R} $ and  $\mathfrak{L}>0$.
\end{theorem}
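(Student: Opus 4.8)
\textbf{Proof proposal for Theorem \ref{thm:stabNL}.}

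The plan is to combine the long-time asymptotics from Theorem \ref{thm:maindetail} with a careful approximation argument that upgrades from the weighted energy space $H^{1,s}\times L^{2,s}$ to the space $H^{2,s}_{\text{sin}}\times H^{1,s}$ in which the nonlinear steepest descent has been carried out. First I would invoke the direct scattering map on the initial perturbation: since $\vec f(0) - (N(0),\partial_t N(0))$ is small in $H^{1,s}\times L^{2,s}$ with $s>\tfrac12$, the bound $L^{2,s}\subset L^1$ (Cauchy--Schwarz) together with the continuity of the Volterra/Jost construction in Section \ref{sec: d-i} shows that the scattering data associated to $\vec f(0)$ lies in $(L^2\cap C)(\mathbb{R})\oplus\mathbb{C}^{2N_1}\oplus\mathbb{C}^{2N_2}$ and is $O(\epsilon)$-close to the discrete data $S_D$ of $N$; in particular, by Remark \ref{rmk-generic} and the smallness, no new eigenvalues or spectral singularities appear (the reflectionless profile is preserved up to the radiation $r$, and $r$ is small), and the genericity of Definition \ref{genericity} holds so that the $\ell$ eigenvalues retain distinct velocities. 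This produces the scattering data $S$ in \eqref{eq:newscatt1} with the claimed bounds, and via Problem \ref{RHP-1} and Proposition \ref{prop:recon} defines the solution $f$ and the decomposition $f=\sum \tilde Q_\ell+\sum\tilde B_\ell + f_r$.

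The main step is to show $\|f_r(t)\|_{\mathcal{E},\text{v},\mathfrak{L}}\to 0$. The obstacle is that the asymptotic error estimates of Theorem \ref{thm:maindetail} require $\vec f(0)\in H^{2,s}_{\text{sin}}\times H^{1,s}$ (one more derivative than $H^{1,s}\times L^{2,s}$), because the $1/z$ factor in the phase \eqref{theta} forces the extra regularity in Proposition \ref{prop:r}. So I would choose a \emph{well-designed} approximating sequence $\vec f_n(0)\in H^{2,s}_{\text{sin}}\times H^{1,s}$ with $\vec f_n(0)\to\vec f(0)$ in $H^{1,s}\times L^{2,s}$, \emph{adjusted at the level of scattering data} so that each $\vec f_n$ has the same discrete spectrum (hence the same soliton velocities) as $\vec f$ — only the reflection coefficient is mollified. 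This is exactly the device flagged in item 2 of the Discussion: if the solitons in $\vec f_n$ and $\vec f$ share velocities, then the Lipschitz constant in $\|\vec f_n(t)-\vec f(t)\|_{H^1_{\text{sin}}\times L^2}\le A\|\vec f_n(0)-\vec f(0)\|_{H^1_{\text{sin}}\times L^2}$ does not grow in $t$. To make $A$ genuinely $t$-independent I would use the Beals--Coifman representation \eqref{BC-int} of both solutions and the uniform resolvent estimates (Proposition \ref{prop:resolvent}, Lemma \ref{lemma:RHP.bd}), writing the difference of solutions through the difference of scattering data and controlling the singular integral operators uniformly in $t$; the key point is that $(1-C_{w_\theta})^{-1}$ stays bounded uniformly, and the soliton-velocity matching kills the only source of secular growth. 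This yields a \emph{global} Lipschitz bound
\[
\sup_{t\in\mathbb{R}}\Big\|\vec f_n(t)-\vec f(t)\Big\|_{H^1_{\text{sin}}\times L^2}\lesssim \Big\|\vec f_n(0)-\vec f(0)\Big\|_{H^1_{\text{sin}}\times L^2}\xrightarrow[n\to\infty]{}0 .
\]

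With the uniform approximation in hand, I would finish as follows. For each fixed $n$, Theorem \ref{thm:maindetail} (equivalently Corollary \ref{cor:stabN}) applies to $\vec f_n$ and gives $f_n=\sum\tilde Q_{n,\ell}+\sum\tilde B_{n,\ell}+f_{r,n}$ with $|f_{r,n}(x,t)|\lesssim t^{-1/2}$ pointwise on any cone $|x-\text{v}t|<\mathfrak L$, so in particular $\|f_{r,n}(t)\|_{\mathcal{E},\text{v},\mathfrak L}\to 0$ as $t\to\infty$ (the localized energy norm of the $O(t^{-1/2})$ radiation on a compact window of fixed width tends to $0$; the $\partial_x$ and $\partial_t$ pieces are handled the same way using the corresponding derivative asymptotics, which decay at least as fast). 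Then one writes
\[
\|f_r(t)\|_{\mathcal{E},\text{v},\mathfrak L}\le \|f_{r,n}(t)\|_{\mathcal{E},\text{v},\mathfrak L}+\Big\|\vec f(t)-\vec f_n(t)\Big\|_{\mathcal{E},\text{v},\mathfrak L}+\Big\|\big(\textstyle\sum\tilde Q_{n,\ell}+\sum\tilde B_{n,\ell}\big)-\big(\sum\tilde Q_{\ell}+\sum\tilde B_{\ell}\big)\Big\|_{\mathcal{E},\text{v},\mathfrak L},
\]
and uses: (i) continuity of the reflectionless multi-soliton $N$ in its scattering parameters (Proposition \ref{prop:recon}) to make the last term $\lesssim\|S_n-S\|$ small; (ii) the global Lipschitz bound to make the middle term $\lesssim\|S_n-S\|$ small uniformly in $t$; and (iii) for the chosen $n$, $\limsup_{t\to\infty}\|f_{r,n}(t)\|_{\mathcal{E},\text{v},\mathfrak L}=0$. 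Sending $t\to\infty$ first and then $n\to\infty$ — now legitimate because the middle term is controlled \emph{uniformly} in $t$ — gives $\limsup_{t\to\infty}\|f_r(t)\|_{\mathcal{E},\text{v},\mathfrak L}=0$, which is the claim. The hard part is precisely step (ii): proving the Lipschitz constant is $t$-independent, which rests on the velocity-matched construction of $\vec f_n(0)$ and on the uniform resolvent bounds of Section \ref{sec:local}; everything else is continuity of the inverse scattering map and the already-established decay of the radiation.
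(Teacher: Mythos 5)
Your proposal is correct and follows essentially the same route as the paper: approximate at the level of scattering data (mollify the reflection coefficient to $r_n\in H^{1,1}$ while keeping the discrete eigenvalues and norming constants fixed, so soliton velocities are unchanged), obtain a $t$-uniform Lipschitz bound via the Beals--Coifman singular-integral representation \eqref{BC-int} and the uniform resolvent estimates \eqref{norm-dp}--\eqref{resol-w}, then run the triangle inequality and exchange the $t\to\infty$ and $n\to\infty$ limits. The only cosmetic difference is that you phrase the approximation as choosing a sequence of initial data $\vec f_n(0)\in H^{2,s}_{\text{sin}}\times H^{1,s}$, whereas the paper works entirely at the level of scattering data and never re-enters physical initial-data space (it constructs $f_n$ directly from $S_n=\{r_n,\{z_{1,k},c_{1,k}\},\{z_{1,j},c_{1,j}\}\}$ via the RHP), which avoids having to verify membership of the mollified initial data in $H^{2,s}_{\text{sin}}\times H^{1,s}$.
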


To achieve our goal, we begin with the following lemma:

\begin{lemma}
	\label{lem:BCD}Given initial data $\vec{f}(0)\in H_{\text{sin}}^{1,s}\left(\mathbb{R}\right)\times L^{2,s}\left(\mathbb{R}\right)$
	with $s>\frac{1}{2}$ which is generic in the sense of Definition \ref{genericity},
	it produces scattering data
	\begin{equation}
	\mathcal{S}=\left\{ r\left(z\right),\left\{ i\zeta_{k},\iota_{k}\right\} _{k=1}^{N_{1}},\left\{ z_{j},c_{j}\right\} _{j=1}^{N_{2}}\right\} \in \left( L^2 \left(\mathbb{R}\right)\cap L^\infty(\bbR) \right)\otimes\mathbb{C}^{2N_{1}}\otimes\mathbb{C}^{2N_{2}}\label{eq:scattlow}
	\end{equation}
with $r(0)=0$.	(If $N_{\ell}$, $\ell=1,2$ is $0$ then the corresponding set will
	be the empty set.) Then this map is locally Liphschitz and
 the function given by the reconstruction formula \eqref{sin}-\eqref{cos}  using the
	scattering data \eqref{eq:scattlow} is a strong solution to the sine-Gordon
	equation in the sense of Definition \ref{def:strong}.
\end{lemma}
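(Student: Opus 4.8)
The plan is to prove Lemma \ref{lem:BCD} in three stages: first the local Lipschitz property of the direct-scattering map on the low-regularity space, second the identification of the Beals-Coifman reconstruction as a classical solution when the data is smoother, and third an approximation argument to transfer this to the rough data. The main obstacle will be the third stage, since the error terms in the long-time asymptotics blow up along a minimizing sequence, so one must exploit the structure of the sequence (keeping soliton velocities matched) together with a \emph{global-in-time} Lipschitz estimate for the flow.

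\textbf{Step 1: Lipschitz continuity of the scattering transform.} For $\vec f(0)\in H^{1,s}_{\text{sin}}\times L^{2,s}$ with $s>1/2$, by a Cauchy-Schwarz argument the potential $U$ lies in $L^1(\bbR)$ with $L^1$-norm depending only on the given data norm, so the Volterra theory of Section \ref{sec: d-i} applies: the Jost solutions $m^\pm$ exist, are bounded in $L^\infty_x L^\infty_z$, and depend Lipschitz-continuously on $U$ in $L^1$ (hence on $\vec f(0)$ in the weighted energy norm) through the resolvent bound \eqref{resolvent-K} and the second resolvent identity. This yields $r\in L^2(\bbR)\cap C(\bbR)$ — the $L^2$ membership coming from the off-diagonal estimate \eqref{energy-1} with the variable change \eqref{z-lambda}, and continuity from the boundedness and continuity of $m^\pm$ in $z$ — together with $r(0)=0$ (this is exactly Lemma \ref{lemma:r2}, whose proof only used $f_{0,x}, f_{0,xx}, f_1, f_{1,x}\in L^{2,1}$ near the origin; here the argument is run with the weaker hypothesis but still gives $\lim_{z\to 0}r(z)=0$, as $S(\gamma)-I\to 0$ at $\gamma=\infty$ by the $L^2$-bound on $\partial_\gamma S$ restricted to a suitable subinterval). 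Genericity (Remark \ref{rmk-generic}) ensures the discrete data $\{z_{1,k},c_{1,k}\},\{z_{1,j},c_{1,j}\}$ are finitely many, simple, and off the axis, and they depend continuously on $\vec f(0)$ since $\ba, a$ are bounded away from zero near their zeros; the $\ell^1$-bound on the differences of eigenvalues and norming constants follows from Rouché's theorem and the Lipschitz dependence of $\ba,a$ on the potential.

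\textbf{Step 2: Reconstruction gives a strong solution for smooth data.} For $\vec f(0)\in H^{2,s}_{\text{sin}}\times H^{1,s}$, Proposition \ref{prop:recon} shows the RHP \ref{RHP-1} is uniquely solvable and the reconstructed $f$ satisfies the sine-Gordon equation in the classical sense, hence — being also in $C(\bbR,H^1_{\text{sin}})$ with $\partial_t f\in C(\bbR,L^2)$ by the long-time asymptotics of Section \ref{sec:large-time} together with local energy estimates — it is the strong solution of Definition \ref{def:strong} by uniqueness in Theorem \ref{thm:GWP}. Thus on the dense subset $H^{2,s}_{\text{sin}}\times H^{1,s}\subset H^{1,s}_{\text{sin}}\times L^{2,s}$ the reconstruction formula and the Duhamel solution coincide.

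\textbf{Step 3: Approximation.} Given rough generic data $\vec f(0)$, pick a sequence $\vec f_n(0)\in H^{2,s}_{\text{sin}}\times H^{1,s}$ converging to it in $H^{1,s}_{\text{sin}}\times L^{2,s}$; crucially, as explained in the discussion after \eqref{space: weight}, choose the sequence by perturbing only the reflection coefficient (smoothing it) while keeping all discrete eigenvalues and norming constants fixed, so each $\vec f_n$ carries solitons of exactly the same velocities as $\vec f$. On one hand, for each fixed $n$, the reconstructed $f_n$ is the strong solution with data $\vec f_n(0)$ by Step 2. On the other hand, by the local well-posedness in Theorem \ref{thm:GWP}, the strong solutions $f_n$ converge to the strong solution $f$ of $\vec f(0)$ in $C([-T,T],H^1_{\text{sin}})$ for each $T$. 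The obstruction is making this convergence uniform in $t$: the naive Lipschitz constant $A(T)$ grows in $T$, and this growth is driven precisely by solitons of differing velocities. Since our sequence matches velocities, one instead proves a \emph{global} Lipschitz estimate by writing both solutions via their Beals-Coifman representations \eqref{BC-int} and using the uniform resolvent bounds (Proposition \ref{prop:resolvent} and the small-norm solvability in the proof of Problem \ref{prob: E-2}), so that $\|\vec f_n(t)-\vec f(t)\|_{H^1_{\text{sin}}\times L^2}\lesssim \|\vec f_n(0)-\vec f(0)\|_{H^{1,s}_{\text{sin}}\times L^{2,s}}$ with constant independent of $t$. With the global estimate in hand, the two limits (in $n$, and as the reconstruction-vs-Duhamel identity) may be interchanged, so the reconstruction formula \eqref{sin}-\eqref{cos} applied to the rough scattering data \eqref{eq:scattlow} produces the strong solution of $\vec f(0)$, and Lipschitz continuity of the whole map follows by composing Step 1 with this global flow estimate. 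The hard part, as anticipated, is the global Lipschitz bound: one must track the $z\to 0$ behavior of the Beals-Coifman solution carefully (using $r(0)=0$ from Step 1 to control the singular kernel as in Proposition \ref{prop:recon}) and verify that the resolvent operators stay uniformly invertible as $t\to\infty$ along the whole family, which is where the velocity-matching of the sequence is indispensable.
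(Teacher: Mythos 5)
Your three-stage plan follows the same high-level route the paper takes: (i) Lipschitz continuity of $\vec f(0)\mapsto S$ from the Volterra/resolvent analysis of Section \ref{sec: r}, (ii) the identification $\mathrm{Recon}=\,$strong solution for smoother data via Proposition \ref{prop:recon} and uniqueness, and (iii) an approximation argument to transfer this to rough data using the well-posedness of Theorem \ref{thm:GWP}. Steps 1 and 2 are in order, modulo one caveat below.

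The issue is the framing of Step 3. You identify the global-in-time Lipschitz estimate (velocity-matched approximating sequence, uniform resolvent bounds) as ``the hard part'' of proving Lemma \ref{lem:BCD}. That machinery is genuinely needed in the paper, but only later, in the proofs of Theorem \ref{thm:zero} and Theorem \ref{thm:stabNL}, where one must pass to the $t\to\infty$ limit. For Lemma \ref{lem:BCD} itself the claim is pointwise in $(x,t)$: the Beals--Coifman reconstruction equals the Duhamel solution. For this, a fixed-$T$ argument is enough. Take any approximating sequence $\vec f_n(0)\to\vec f(0)$ in $H^{1,s}_{\sin}\times L^{2,s}$ with $\vec f_n(0)\in H^{2,s}_{\sin}\times H^{1,s}$ (no soliton-matching needed); Theorem \ref{thm:GWP} gives convergence of the Picard solutions $f_n\to f$ on $[-T,T]$, while for each fixed $(x,t)$ the continuity of $S_n\to S$ (Step 1) together with the $(x,t)$-pointwise invertibility of $1-C_w$ gives $\mathrm{Recon}(S_n)(x,t)\to\mathrm{Recon}(S)(x,t)$; since Step 2 identifies $\mathrm{Recon}(S_n)=f_n$, the two limits agree. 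The constant $A(T)$ growing in $T$ is harmless here. Presenting the global estimate as essential to Lemma \ref{lem:BCD} misplaces the difficulty and over-complicates a step the paper dispatches in one sentence (``by the uniqueness of classical solutions and the standard approximation argument'').

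A smaller point: in Step 1 you assert $\lim_{z\to 0} r(z)=0$, citing Lemma \ref{lemma:r2} but acknowledging that its proof uses $f_{0,xx},f_{1,x}\in L^{2,1}$, which the hypothesis $\vec f(0)\in H^{1,s}_{\sin}\times L^{2,s}$ does not provide; your claim that ``the argument still gives'' the vanishing at the origin with one fewer derivative is not substantiated. Fortunately the statement of Lemma \ref{lem:BCD} only asserts $r\in L^2(\bbR)\cap C(\bbR)$ and does not claim $r(0)=0$, so this extra assertion is neither needed nor justified, and is better omitted.
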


	\begin{proof}
		The Lipschitz continuity of the map $\vec{f}(0)\mapsto \mathcal{S}$ follows from the proof of Proposition \ref{prop:r} directly. Secondly, given the initial data $\vec{f}(0)\in H_{\text{sin}}^{1,s}\left(\mathbb{R}\right)\times L^{2,s}\left(\mathbb{R}\right)$
for $s>\frac{1}{2}$, the scattering data it produces can still be used to set up the Riemann-Hilbert problem and give a Beals-Coifman
solution $\mu$. With this $\mu$, one can still use the reconstruction
formula to define a space-time function. Since we do not have enough
smoothness, one could not conclude this function is a solution to
the sine-Gordon equation using the inverse scattering formalism and
compatibility conditions. But the weighted energy space is a subset
of the standard energy space, so we can apply the standard PDE Picard iterations
to construct the solution (see Theorem \ref{thm:GWP}). By the { uniqueness} of classical solutions and
the standard approximation argument, we can conclude that the space-time
function above given by the limiting reconstruction formula is indeed a strong solution
to the sine-Gordon equation in the sense of the Duhamel formalism given by Definition \ref{def:strong}.
	\end{proof}
Then we record two easy but important identities. We can write $f=N+f_r$ where $N$ corresponds to the sum of breathers,
kinks and anti-kinks and $f_r$ denotes the radiation. Elementary trigonometric identities give
\begin{align}\label{eq:rsine}
\sin(f_r) & =\sin\left(N+f_r\right)\cos\left(N\right)-\cos\left(N+f_r\right)\sin\left(N\right)\\
& =\cos\left(N\right)\left(\sin\left(N+f_r\right)-\sin\left(N\right)\right)-\sin\left(N\right)\left(\cos\left(N+f_r\right)-\cos\left(N\right)\right)\nonumber
\end{align}
and
\begin{align}\label{eq:rcosine}
1-\cos\left(f_r\right) & =1-\cos\left(f_r+N\right)\cos\left(N\right)-\sin\left(f_r+N\right)\sin\left(N\right)\\
& =-\left[\left(\cos\left(f_r+N\right)-\cos\left(N\right)\right)\cos\left(N\right)\nonumber+\left(\sin\left(N+f_r\right)-\sin\left(N\right)\right)\sin\left(N\right)\right].\nonumber
\end{align}
By Corollary \ref{cor:stabN}, we know the asymptotics of
$\sin\left(N+f_r\right)-\sin\left(N\right)$ and $\cos\left(N+f_r\right)-\cos\left(N\right)$.
Then from two identities above,  we can deduce the asymptotics of $\sin\left(f_r\right)$ and
$1-\cos\left(f_r\right)$.

To prove Theorem \ref{thm:stabNL}, in order to keep the proof short and illustrate ideas, we first prove the following two simple cases of the asymptotic stability. The first one is the stability of zero solution. Before we start the proof, we make the following comments:

\begin{itemize}
	\item[1.]  Note that from Problem \ref{RHP-1}, we have the following reconstruction formula from the solution of this RHP by taking $z\to \infty$:
	\begin{equation}
	\label{recon:fx+ft}
	f_x(x, t)+f_t(x, t)=\dfrac{1}{\pi} \int_\bbR \mu_{11} \overline{r(z)} e^{-2i\theta} dz
	\end{equation}
	and similarly 
	\begin{align}
	\label{recon:fx+ft n}
	f_{n, x}(x, t)+f_{n,t}(x, t) &=\dfrac{1}{\pi} \int_\bbR \mu_{n, 11} \overline{r_n(z)} e^{-2i\theta} dz.
	\end{align}
	There are similar reconstruction formulas for $f_{n, x}(x, t)-f_{n,t}$.  Indeed we can work with the spectral problem \eqref{Phi-x} and establish a parallel version of Proposition \ref{prop:r} and  RHP Problem \ref{RHP-1}. Then we can obtain the expressions for $f_x(x, t), f_t(x, t)$ separately. We omit this step for brevity and will only work with the difference between \eqref{recon:fx+ft}-\eqref{recon:fx+ft n}.
	\item[2.] We do not have a direct reconstruction of $f$ from the RHP \eqref{RHP-1}. To deal with the $L^2$ norm of $f$ in the definition of local energy \eqref{eq:localenergy}, we can use the fact that $1-\cos x \sim x^2/2$ for $x$ which is close to zero and the asymptotic behavior of $1-\cos f_n$ given by \eqref{as-solitonless'}. Again for brevity we omit this $L^2$ norm.
\end{itemize}

\begin{theorem}
	\label{thm:zero}
	Suppose $f\left(x, 0\right)=f_{0}\left(x\right)$ and $f_{t}\left(x, 0\right)=f_{1}\left(x\right)$
	with $\left(f_{0},f_{1}\right)\in H^{1,s}\times L^{2,s}$ with $s>\frac{1}{2}$
	such that $\left\Vert \left(f_{0},f_{1}\right)\right\Vert _{H^{1,s}\times L^{2,s}}$ is small enough.
	Then for any space-time interval $I_{t}=\{x\in\mathbb{R}|\,|x- \mathfrak{v} t|<\mathfrak{L}\}$ with any given fixed $\mathfrak{L}>0$ and any $\mathfrak{v}\in\mathbb{R}$, one has
	\[
	\lim_{t\rightarrow\infty}\int_{I_t}\left| f_x\left(\cdot, t\right)\right|^{2} + \left| f \left(\cdot, t\right)\right|^{2}+\left| f_t \left(\cdot, t\right)\right|^{2}\,dx\rightarrow0.
	\]
\end{theorem}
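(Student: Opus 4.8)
\textbf{Proof proposal for Theorem \ref{thm:zero}.}

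The plan is to treat the zero solution (the trivial reflectionless background $N\equiv 0$, $N_1=N_2=0$) as a special case of the soliton resolution machinery developed above, but with the crucial twist that the initial data is only assumed to lie in the weighted energy space $H^{1,s}\times L^{2,s}$, one derivative less than what the $\dbar$-steepest descent requires. First I would invoke Lemma \ref{lem:BCD}: since $\left(f_0,f_1\right)$ is small, it is generic (by the continuity of the scattering map and Remark \ref{rmk-generic}), so it produces no discrete spectrum and a reflection coefficient $r\in L^2(\bbR)\cap C(\bbR)$ with $\norm{r}{L^2\cap C}\lesssim\norm{(f_0,f_1)}{H^{1,s}\times L^{2,s}}$, and the reconstruction formula gives a strong solution to \eqref{eq: sG} in the sense of Definition \ref{def:strong}. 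By \eqref{eq:rsine}--\eqref{eq:rcosine} with $N\equiv 0$, controlling $f$ on a compact interval is equivalent to controlling $\sin f$ and $1-\cos f$ there (for $f$ small, which holds a posteriori once we know $f\to 0$ locally in $L^\infty$ via the reconstruction and the pointwise decay).

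The heart of the argument is an approximation scheme. Given $\left(f_0,f_1\right)\in H^{1,s}\times L^{2,s}$, choose a sequence $\left(f_{0,n},f_{1,n}\right)\in H^{2,s}_{\text{sin}}\times H^{1,s}$ with $\left(f_{0,n},f_{1,n}\right)\to\left(f_0,f_1\right)$ in $H^{1,s}\times L^{2,s}$ and each still small and generic; let $f_n$ be the corresponding solution. For each fixed $n$, the soliton resolution (Theorem \ref{thm:maindetail}, solitonless region, or directly Proposition with $\cos f_{as},\sin f_{as}$ in \eqref{as-solitonless}--\eqref{as-solitonless'}) gives
\[
\left| \cos f_n(x,t) - 1 \right| \lesssim_n \tau^{-1}, \qquad \left| \sin f_n(x,t) \right| \lesssim_n \tau^{-1/2},
\]
with constants depending on $\norm{(f_{0,n},f_{1,n})}{H^{2,s}\times H^{1,s}}$, hence these bounds are \emph{not} uniform in $n$ — this is exactly the obstruction highlighted in point 2 of the discussion. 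To pass to the limit I would use the global (in time) Lipschitz estimate: since there are no solitons, the Beals-Coifman representation together with the uniform resolvent bounds (Proposition \ref{prop:resolvent} and the small-norm arguments of Section \ref{sec:local}, now with no discrete data so $\Gamma=\emptyset$) yields
\[
d_{\text{sin}}\!\left(f(\cdot,t),f_n(\cdot,t)\right) + \norm{f_t(\cdot,t)-f_{n,t}(\cdot,t)}{L^2} \lesssim \norm{(f_0,f_1)-(f_{0,n},f_{1,n})}{H^{1,s}\times L^{2,s}},
\]
with an implied constant independent of $t$ and $n$ (the growth of the Lipschitz constant in Theorem \ref{thm:GWP}(1) is driven entirely by solitons, which are absent here). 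Then, for any compact $I$ and any $\varepsilon>0$, pick $n$ so that the right-hand side is $<\varepsilon/2$ for all $t$, and pick $T$ so that for $t>T$ the (now fixed-$n$) radiation bound makes $\norm{f_n(\cdot,t)}{\mathcal E,0,\mathfrak L}<\varepsilon/2$; the triangle inequality closes the argument and lets us interchange the order of limits.

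The main obstacle is establishing the \emph{time-uniform} Lipschitz estimate in the weighted energy topology: one must show that the Beals-Coifman solution map $r\mapsto \mu\mapsto(\sin f,1-\cos f)$ is globally Lipschitz from a neighbourhood of $0$ in $L^2\cap C$ into $C_t(H^1_{\text{loc}}\times L^2_{\text{loc}})$, uniformly in $t$. This requires (i) the uniform $L^2$-resolvent bound $\norm{(1-C_{w_\theta})^{-1}}{L^2}\lesssim 1$ for small $r$ on the deformed contour, which follows from the small-norm theory since $\norm{w_\theta}{L^2\cap L^\infty}\lesssim\norm{r}{L^2\cap L^\infty}$ and the phase only improves decay along the contour; (ii) propagating the difference of two such resolvents via the second resolvent identity and the singular-integral bound \eqref{norm-vE}; and (iii) translating the resulting $L^2_z$ control of $\mu-\mu_n$ back to $L^2_{\text{loc},x}$ control of the reconstructed fields using the small-$z$ expansion of Proposition \ref{prop:recon} together with the decay $\lim_{z\to 0}r(z)/z=0$ (here only the weaker $r\in L^2\cap C$ version, $\lim_{z\to 0}r(z)=0$ from the Sobolev embedding $H^1\hookrightarrow C^{1/2}$ on compact pieces, and the integrability supplied by Lemma \ref{lemma:r3}-type arguments, is available, so some care is needed near the origin). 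Once this uniform Lipschitz bound is in hand, the rest is the soft interchange-of-limits argument sketched above.
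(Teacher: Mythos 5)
Your proposal takes essentially the same route as the paper: approximate $(f_0,f_1)$ by a sequence in $H^{2,s}_{\text{sin}}\times H^{1,s}$, establish a \emph{time-uniform} Lipschitz estimate for the map from reflection coefficients to solutions in the (unweighted) energy topology via the Beals--Coifman representation and the second resolvent identity, then interchange the order of limits against the decay of the regular approximants. Two technical points differ from what the paper actually does, and they matter for cleanliness. First, the paper carries out the Lipschitz estimate on the \emph{undeformed} contour $\mathbb{R}$, not on $\Sigma^{(3)}$: on $\mathbb{R}$ the phase $i\theta$ is purely imaginary, so $\norm{w_\theta}{L^2\cap L^\infty}$ is controlled by $\norm{r}{L^2\cap L^\infty}$ uniformly in $(x,t)$, and the resolvent bound $\norm{(1-C_w)^{-1}}{L^2}\lesssim 1+\norm{r}{L^\infty}^2$ comes from the vanishing-lemma argument of Deift--Park, not from Proposition \ref{prop:resolvent} (whose operators live on $x/t$-dependent contours and whose bounds are only claimed for large $\tau$). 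Using the deformed contour here would require extra work to get uniformity for small $\tau$ and to compare solutions living on different contours. Second, the paper reconstructs via the large-$z$ expansion, specifically $f_x+f_t=\frac{1}{\pi}\int_{\bbR}\mu_{11}\,\overline{r}\,e^{-2i\theta}\,dz$, rather than via the small-$z$ reconstruction of $\sin f$ and $1-\cos f$ that you propose; the large-$z$ route directly delivers $H^1\times L^2$ control of $(f,f_t)$ and sidesteps the delicate behavior of $r(z)/z$ near $z=0$ (you only have $r\in L^2\cap C$ with $r(0)=0$, not $r/z\to 0$, so your suggested step (iii) is genuinely the weak point of your variant). With those two adjustments your argument coincides with the paper's.
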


\begin{proof}
	To establish this, we take a sequence of
	\[
	\left(f_{0, n}\left( x\right),f_{1, n}\left(x\right)\right)\in  {H}^{2,1}\times {H}^{1,1}
	\]
	such that
	\[
	\left(f_{0, n}\left( x\right), f_{1, n} \left(x\right)\right) \rightarrow \left(f_{0}\left( x\right), f_{1}\left(x\right)\right) 
	\]
	in $H^{1,s}\times L^{2,s}$. By the smallness assumption,
	the direct scattering map from $\left(f_{0}\left( x\right), f_{1}\left(x\right)\right) $
	and $\left(f_{0, n}\left( x\right), f_{1, n} \left(x\right)\right) $ to scattering
	data will not produce discrete data. Moreover, by the mapping properties of the direct scattering from the initial data to the scattering data, the reflection coefficients
	$r_{n}$ and $r$ generated by the approximation sequence and the
	initial data satisfy $r_{n} \in H^{1,1}_0$  and $r \in L^2$ with
	$\lim_{n\rightarrow\infty}r_{n}=r$ in $L^2$. Moreover, we also
	know $r_{n}$ satisfies the regularity assumption in Proposition \ref{prop:r} from which long-time asymptotics can be computed.
	We now consider the localized energy for the original solution.
	\[
	\left\Vert \left(f \left(x, t\right), f_t \left(x, t\right)\right)\right\Vert _{H^{1}\left(I_t\right)\times L^{2}\left(I_t\right)}.
	\]
	For any $\epsilon>0$, we pick $n$ large, say, $n\geq\mathcal{N}_{0}$,
	such that
	\[
	\left\Vert ( f_0-f_{0, n}), (f_1-f_{1,n}) \right\Vert _{H^{1,s}\times L^{2,s}}\leq \epsilon.
	\]
	By the forward scattering map, see Lemma \ref{lem:BCD}, we know that for $n\geq\mathcal{N}_{0}$
	\[
	\left\Vert r_{n}-r\right\Vert _{L^{2}\cap L^\infty }\lesssim \epsilon.
	\]	
We now want to show that for any $t\in\mathbb{R}^{+}$, one has
	\begin{align*}
	\left\Vert \left( f\left(\cdot, t\right)-f_{n}\left(\cdot, t\right),f_{t}\left(t\right)-f_{n, t}\left(t\right)\right)\right\Vert _{H^{1}\times L^{2}}
	\lesssim\left\Vert r_{n}-r\right\Vert _{L^2\cap L^\infty}\lesssim \epsilon.
	\end{align*}
	Then we can compute the difference between \eqref{recon:fx+ft}-\eqref{recon:fx+ft n}:
	\begin{align}
	\label{differ}
	D_n &=\dfrac{1}{\pi}\int_\bbR \mu_{n, 11}\left(  \overline{r_n(z)}  -\overline{r(z)} \right) e^{-2i\theta} dz+ \dfrac{1}{\pi} \int_\bbR \left( \mu_{n, 11} - \mu_{11} \right) \overline{r(z)} e^{-2i\theta} dz\\
	&=: D_{n, 1} +D_{n, 2}	\nonumber
	\end{align}where
	\begin{align*}
	D_{n, 1} =\dfrac{1}{\pi}\int_\bbR( \mu_{n, 11}-1)\left(  \overline{r_n(z)}  -\overline{r(z)} \right) e^{-2i\theta} dz+\dfrac{1}{\pi}\int_\bbR \left(  \overline{r_n(z)}  -\overline{r(z)} \right) e^{-2i\theta} dz.
	\end{align*}
Consider $D_{n,1}$ above, the second term is in $L^2_x(\bbR)$ by the standard Fourier theory. For the first term,  \cite[(2.6)]{Zhou98} and the Cauchy-Schwarz inequality gives:
	\begin{align*}
	\dfrac{1}{\pi}\int_\bbR( \mu_{n, 11}-1)\left(  \overline{r_n(z)}  -\overline{r(z)} \right) e^{-2i\theta} dz &\leq \norm{\mu_{n, 11}-1}{L^2_z} \norm{ \overline{r_n(z)}  -\overline{r(z)}}{L^2_z \cap L^\infty }\\
	&\lesssim (1+x^2)^{-1}\norm{ \overline{r_n(z)}  -\overline{r(z)}}{L^2_z\cap L^\infty} .
	\end{align*}
	For $D_{n, 2}$, 
	we only have to show that $\mu_{n, 11} - \mu_{11} \in L^2_z(\bbR)$ and the rest will follow from the Fourier theory. To see this, we make the following observation:
	\begin{align*}
	\mu-I&=(1-C_w)^{-1}(C_w I)\\
	\mu_{n}-I &= (1-C_{w_n})^{-1}(C_{w_n} I)
	\end{align*}
	where operator $C_w$ is defined in \eqref{BC-int}. We first note that by \cite[Lemma 5.2]{DP},  the operator $(1-C_w)^{-1}$ indeed exists and has norm
	\begin{equation}
	\label{norm-dp}
	\norm{(1-C_w)^{-1}}{L^2}\lesssim 1+\norm{r}{L^\infty}^2.
	\end{equation}
	Then using the second resolvent identity, we can write
	\begin{equation}
	\label{resol-w}
	(1-C_{w_n} )^{-1}= (1-C_{w})^{-1}\left[ 1- (C_{w_n} - C_w ) (1-C_{w})^{-1}\right] .
	\end{equation}
	The uniform boundedness of this resolvent operator will follow from
	$$\norm{C_{w_n} - C_w }{L^2}\leq \norm{r_n-r}{L^2\cap L^\infty}.$$
	and the norm is independent of $x, t$ since $i\theta$ is purely imaginary. Now we calculate the difference:
	\begin{align}
	\label{differ-mu}
	D_{\mu, n}&=\mu-\mu_{n}\\
	\nonumber
	&=(1-C_w)^{-1}(C_w I)-(1-C_{w_n})^{-1}(C_{w_n} I)\\
	\nonumber
	&=\left[ (1-C_w)^{-1}- (1-C_{w_n})^{-1} \right](C_w I)\\
	\nonumber
	&\quad - (1-C_{w_n})^{-1} (C_{w_n} I- C_w I )\\
	\nonumber
	&=:D_{\mu, n, 1}-D_{\mu, n, 2}.
	\end{align}
Then it follows that 
$$\norm{\mu-\mu_{n}}{L^2_z}\lesssim \norm{r_n-r}{L^2 \cap L^\infty}.$$
	Here we used uniform resolvent bounds and the second resolvent identity:
	\begin{align*}
	\norm{(1-C_w)^{-1}- (1-C_{w_n})^{-1}}{L^2}\leq \norm{(1-C_w)^{-1}}{L^2} \norm{C_{w_n} - C_w }{L^2} \norm{(1-C_{w, n} )^{-1}}{L^2}.
	\end{align*}
	Thus by taking $n$ large enough, we have
	\begin{align*}
	\left\Vert \left(f \left(x, t\right), f_t \left(x, t\right)\right)\right\Vert _{H^{1}\left(I_t\right)\times L^{2}\left(I_t\right)} & \leq \left\Vert\left(f_{n}\left( x, t\right), f_{n, t} \left(x, t\right)\right)\right\Vert _{H^{1}\left(I_t\right)\times L^{2}\left(I_t\right)} \\
	&\quad +\left\Vert \left( f\left(\cdot, t\right)-f_{n}\left(\cdot, t\right),f_{t}\left(t\right)-f_{n, t}\left(t\right)\right)\right\Vert _{H^{1}\times L^{2}} \\
	& \lesssim \left\Vert\left(f_{n}\left( x, t\right), f_{n, t} \left(x, t\right)\right)\right\Vert _{H^{1}\left(I_t\right)\times L^{2}\left(I_t\right)} +\epsilon.
	\end{align*}
	By our explicit computations for the radiation term, Theorem \ref{thm:maindetail}, we also have
	\[
	\left\Vert\left(f_{n}\left( x, t\right), f_{n, t} \left(x, t\right)\right)\right\Vert _{H^{1}\left(I_t\right)\times L^{2}\left(I_t\right)}\rightarrow0.
	\]
	Taking $n=\mathcal{N}_{0}$ and letting $t_{\mathcal{N}_{0}}$ large
	enough such that
	\[
	\left\Vert \left(f_{\mathcal{N}_{0}}\left(x,  t\right), f_{\mathcal{N}_{0}, t} \left(x, t\right)\right)\right\Vert _{H^{1}\left(I_t\right)\times L^{2}\left(I_t\right)}\leq {\epsilon}
	\]
	for $t\ge t_{\mathcal{N}_{0}}$, we conclude that
	\begin{align*}
	\left\Vert \left(f \left(x, t\right), f_t \left(x, t\right)\right)\right\Vert _{H^{1}\left(I_t\right)\times L^{2}\left(I_t\right)} & \lesssim \left\Vert \left(f_{\mathcal{N}_{0}}\left(x,  t\right), f_{\mathcal{N}_{0}, t} \left(x, t\right)\right)\right\Vert _{H^{1}\left(I_t\right)\times L^{2}\left(I_t\right)} +{\epsilon} \lesssim\epsilon
	\end{align*}which implies the desired result.
\end{proof}
\begin{remark}
In \eqref{differ}, the integration is taken on $\mathbb{R}$. A byproduct of this proof will be the convergence in energy norm $f_n\to f$. By uniqueness, this guarantees that $f$ obtained from solving RHP is the solution to the sine-Gordon equation.
\end{remark}
Then we prove the stability of one-kink solution. To deal with the $L^2$ norm of $f$ in the definition of local energy \eqref{eq:localenergy}, we again use the fact that $1-\cos x \sim x^2/2$ for $x$ close to zero and the expression \eqref{eq:rcosine}. Again for brevity we omit this $L^2$ norm estimate.
\begin{theorem}For any $x_0\in\mathbb{R}$ and $v\in(-1.1)$, given the notation of kinks \eqref{eq:kink2}, suppose $f\left(x, 0 \right)=Q(x, 0; v, x_0)+f_{0}\left(x\right)$ and $f_{t}\left(x, 0\right)=\partial_{t}Q\left(x, 0; v, x_0\right)+f_{1}\left(x\right)$
	with $\left(f_{0},f_{1}\right)\in H^{1,s}\times L^{2,s}$ for $s>\frac{1}{2}$
	and $\left\Vert \left(f_{0},f_{1}\right)\right\Vert _{H^{1,s}\times L^{2,s}}\lesssim \epsilon$ small enough.
	Then there exist $\tilde{v}$ and $\tilde{x}_{0}(t)$ such that  $\left|\tilde{v}-v\right|+ \left|\tilde{x}_0(t) -x_0\right|\lesssim\epsilon$
	and the space-time interval $I_{t}=\{x\in\mathbb{R}|\,|x- \mathfrak{v} t|<\mathfrak{L}\}$ for any given fixed $\mathfrak{L}>0$ and any $\mathfrak{v}\in\mathbb{R}$, one has
	\[
	\lim_{t\rightarrow\infty}\int_{I_t}\left| f -K\left(\tilde{v},\tilde{x}_{0}\right) \right|^{2}(t)+\left|\partial_x\left( f -K\left(\tilde{v},\tilde{x}_{0}\right)\right) \right|^{2}(t)+\left|\left( f -K\left(\tilde{v},\tilde{x}_{0}\right)\right)_t\right|^{2}\,dx\rightarrow0
	\]
where 
$$K\left(\tilde{v},\tilde{x}_{0}\right)(t):=Q(x,t;\tilde{v},\tilde{x}_{0}).$$
\end{theorem}

\begin{proof}
	By the standard inverse scattering, we can construct $K\left(v,x_{0}\right)$
	from two eigenvalues $i\zeta_{0}$, $-i\zeta_{0}$ located on
	the imaginary axis with some norming constant $\iota_{0}$. By the continuity of the map from the initial condition to the scattering data,
	 we know that $\left(f(x, 0), f_t(x, 0)\right)$
	will generate eigenvalues $i\zeta_{1}$ and $-i\zeta_{1}$ such
	that $\left|\zeta_{1}-\zeta_{0}\right|\lesssim\epsilon.$ It will
	also produce the reflection coefficient $r\in L^\infty(\bbR)\cap L^{2}(\bbR)$ with $r(0)=0$. We denote this
	scattering data as $\left(r, i\zeta_{1}, \iota_1\right)$. 
	
	Next, we use a sequence of $\{ r_{n} \}_{n=1}^\infty \subset H^{1,1}_0$ to approximate $r$ in $L^{2}$.
	Then we consider the scattering data $\left(r_n, i\zeta_{1}, \iota_1\right)$.
	By the inverse scattering transform, this will also produce a sequence of solutions $\left(f_{n}\left(t\right),\partial_{t}\left(f_{n}\right)\left(t\right)\right)$.
	Note that for $t>0$
	$$|x- \mathfrak{v}t|<\mathfrak{L} \Rightarrow \ \mathfrak{v}-\dfrac{\mathfrak{L} }{t}< \dfrac{x}{t}<   \mathfrak{v}+\dfrac{\mathfrak{L}}{t}.$$
	Note that by construction, the velocity of kink $K\left(\tilde{v},\tilde{x}_{0}\right)$ is given by
	\begin{equation}
	\text{v}_K=\tilde{v}=\dfrac{1-\zeta_1^2}{1+\zeta_1^2}.
	\end{equation}
	Without losing generality, we choose the center of the kink as our frame of reference by letting 
	$$  \mathfrak{v}-\dfrac{\mathfrak{L}}{t}< \dfrac{x}{t}=\text{v}_K<   \mathfrak{v}+\dfrac{\mathfrak{L}}{t}.$$ In the spirit of nonlinear steepest descent, we conjugate the corresponding RHPs related to scattering data $\left(r, i\zeta_{1}, \iota_1\right)$ and $\left(r_n, ie, \iota_1\right)$ by
	$$ \delta^{\sigma_3}=  \left[ e^{\chi(z)} \right]^{\sigma_3}, \, \delta_n^{\sigma_3}=  \left[ e^{\chi_n(z)} \right]^{\sigma_3}.$$
	and let $t$ be large enough such that we can perform nonlinear steepest descent on the RHP related to $\left(r, i\zeta_{1}, \iota_1\right)$. By the triangle inequality,  we rewrite the limit as{\small
	\begin{align*}
	& \int_{I_t}\left|\left( f -K\left(\tilde{v},\tilde{x}_{0}(t)\right)\right)_x\right|^{2}+\left| f -K\left(\tilde{v},\tilde{x}_{0}(t)\right) \right|^{2}+\left|\left( f -K\left(\tilde{v},\tilde{x}_{0}(t)\right)\right)_t\right|^{2}\,dx\\
	& \leq  \int_{I_t}\left| \left( f_n -K_n\left(\tilde{v},\tilde{x}_{0}(t)\right)\right)_x\right|^{2}+\left|  f_n -K_n\left(\tilde{v},\tilde{x}_{0}(t)\right) \right|^{2}+\left|\left( f_n -K_n\left(\tilde{v},\tilde{x}_{0}(t)\right)\right)_t\right|^{2}\,dx\\
	&+ \int_{I_t}\left| \left(K_n\left(\tilde{v},\tilde{x}_{0}(t)\right)-K\left(\tilde{v},\tilde{x}_{0}(t)\right)\right)_x\right|^{2}+\left| K_n\left(\tilde{v},\tilde{x}_{0}(t)\right)-K\left(\tilde{v},\tilde{x}_{0}(t)\right)\right|^{2} +\left| \left(K_n\left(\tilde{v},\tilde{x}_{0}(t)\right)-K\left(\tilde{v},\tilde{x}_{0}(t)\right)\right)_t \right|^{2}\,dx\\
	&+\left\Vert \left( f\left(\cdot, t\right)-f_{n}\left(\cdot, t\right),f_{t}\left(t\right)-f_{n, t}\left(t\right)\right)\right\Vert^2 _{  (H^{1}\times L^{2} ) (I_t) }\\
	&= \mathrm{I}_{t, 1}+ \mathrm{I}_{t, 2}+\mathrm{I}_{t, 3}
	\end{align*}}
For a fixed $n$, with smooth data, our result in Theorem \ref{thm:maindetail} and identities \eqref{eq:rsine}, \eqref{eq:rcosine} imply that $\mathrm{I}_{t, 1}\to 0$ as $t\to +\infty$ since $I_t$ is a compact interval. For $\mathrm{I}_{t, 2}$, we recall from \eqref{eq:Kink-1/2u} that the difference only involves
	$$| \delta(i\zeta_1)- \delta_n(i\zeta_1) | \lesssim \norm{r-r_n}{L^2\cap L^\infty}.$$
	For $\mathrm{I}_{t, 3}$, we can apply the same proof as that of Theorem \ref{thm:zero}. So we conclude that given any $\epsilon>0$, there exists $T>0$ such that for $t>T$
	$$ \left\vert \int_{I_t}\left|\left( f -K\left(\tilde{v},\tilde{x}_{0}(t)\right)\right)_x\right|^{2}+\left| f -K\left(\tilde{v},\tilde{x}_{0}(t)\right) \right|^{2}+\left|\left( f -K\left(\tilde{v},\tilde{x}_{0}(t)\right)\right)_t\right|^{2}\,dx \right\vert \lesssim \epsilon$$
	where  $\left|\tilde{v}-v\right|, \left|\tilde{x}_0-x_0\right|\lesssim\epsilon$.
\end{proof}
Finally we turn to the proof of Theorem \ref{thm:stabNL}.
\begin{proof} [Proof of Theorem \ref{thm:stabNL}]
	By Lemma \ref{lem:BCD},
	we know the existence of scattering data
	\begin{equation}
	\mathcal{S}=\left\{ r\left(z\right),\left\{ i\zeta_{1,k},\iota_{1,k}\right\} _{k=1}^{N_{1}},\left\{ z_{1,j},c_{1,j}\right\} _{j=1}^{N_{2}}\right\} \in \left( L^2\left(\mathbb{R}\right)\cap L^\infty(\mathbb{R}) \right)\otimes\mathbb{C}^{2N_{1}}\otimes\mathbb{C}^{2N_{2}}\label{eq:newscatt1-1}
	\end{equation}
	computed in terms of $\vec{f}(0)$ with $r(0)=0$ such that
	\[
	\left\Vert r\right\Vert _{L^2\left(\mathbb{R}\right)\bigcap L^\infty (\mathbb{R})}+\sum_{\ell=1}^{N_{2}}\left(\left| i\zeta_{0,\ell}-i\zeta_{1,\ell}\right|+\left|\iota_{0,\ell}-\iota_{1,\ell}\right|\right)+\sum_{j=1}^{N_{1}}\left(\left|z_{0,j}-z_{1,j}\right|+\left|c_{0,j}-c_{1,j}\right|\right)\lesssim\epsilon.
	\]
	Then using the scattering data we can apply the Riemann-Hilbert problem
	and reconstruction formula to define the solution $f$ and find the
	reflectionless part $\tilde{N}\left(x,t\right)$ using Problem \ref{RHP-1}
	and Proposition \ref{prop:recon}.
	
	Next, we use sequence $\lbrace r_n \rbrace$ with $r_{n}\in H_0^{1,1}(\bbR)$  to approximate $r$ in $L^2\left(\mathbb{R}\right)\bigcap L^\infty(\bbR)$. Then we have a sequence
	of scattering data
	\[
	\mathcal{S}_{n}=\left\{ r_{n}\left(z\right),\left\{ i\zeta_{1,k},\iota_{1,k}\right\} _{k=1}^{N_{1}},\left\{ z_{1,j},c_{1,j}\right\} _{j=1}^{N_{2}}\right\} \in H_0^{1,1}(\bbR) \otimes\mathbb{C}^{2N_{1}}\otimes\mathbb{C}^{2N_{2}}.
	\]
	Using the scattering data $\mathcal{S}_{n}$, we again can employ the Riemann-Hilbert
	problem and reconstruction formula to define the function $f_{n}$
	and find the reflectionless part $\tilde{N}_{n}\left(x,t\right)$
	using Problem \ref{RHP-1} and Proposition \ref{prop:recon}. (Here $f_{n}$ will actually
	be a solution to the sine-Gordon equation by Zhou \cite{Zhou95}, but this fact is
	not essential here.) We are mainly interested in the case when $\text{v}$ is close to 
	a kink whose velocity is given by $(1-\zeta_\ell^2)/(1+ \zeta_\ell^2)$. Similar to the proof of the previous theorem, we conjugate the RHPs associated to both $\mathcal{S}$ and $\mathcal{S}_n$ by $\delta(z_\ell)$ and $\delta_n(z_\ell)$ as in problem \ref{prob:mkdv.kin}. Note that by construction,
	\begin{align*}
	\left\Vert \left(\tilde{N}\left(t\right)-\tilde{N}_{n}\left(t\right),\left(\tilde{N}\right)_{t}\left(t\right)-\left(\tilde{N}_{n}\right)_{t}\left(t\right)\right)\right\Vert _{\mathcal{E},\mathfrak{v},\mathfrak{L}} & \lesssim\left\Vert r_{n}-r\right\Vert _{L^2\left(\mathbb{R}\right)\bigcap L^\infty (\mathbb{R})}.
	\end{align*}
	Moreover, if we look at the corresponding integral equation representation of solutions given by \eqref{BC-int}, taking the difference,
	those parts associated with eigenvalues will be exponentially small. So by the uniform
	Lipschitz estimates for the resolvents established in Theorem \ref{thm:zero}, we know that for any $t\in\mathbb{R}^{+}$,
	one has
	\begin{align*}
	\left\Vert \left(f\left(t\right)-f_{n}\left(t\right),\left(f\right)_{t}\left(t\right)-\left(f_{n}\right)_{t}\left(t\right)\right)\right\Vert  _{\mathcal{E},\mathfrak{v},\mathfrak{L}}& \lesssim\left\Vert r_{n}-r\right\Vert _{L^2\left(\mathbb{R}\right)\bigcap L^\infty (\mathbb{R})}.
	\end{align*}
	For $f_{n}$, subtracting the reflectionless part off, we write
	\[
	f_{n}=\tilde{N}_{n}(x,t)+f_{r,n}(x,t).
	\]
	By our long-time asymptotic computations and the identities \eqref{eq:rsine}, \eqref{eq:rcosine},
	we know that
	\[
	\lim_{t\rightarrow\infty}\left\Vert f_{r,n}(t)\right\Vert _{\mathcal{E},\mathfrak{v},\mathfrak{L}}=0.
	\]
	Next, we write
	\begin{align*}
	\left\Vert f_r(t)\right\Vert _{\mathcal{E},\mathfrak{v},\mathfrak{L}}^{2}&\lesssim\left\Vert f_{r,n}(t)\right\Vert _{\mathcal{E}, \mathfrak{v},\mathfrak{L}}^{2}+\left\Vert \tilde{N}(t)-\tilde{N}_{n}(t)\right\Vert _{\mathcal{E},\mathfrak{v},\mathfrak{L}}^{2}+\left\Vert f_{n}(t)-f(t)\right\Vert _{\mathcal{E},\mathfrak{v},\mathfrak{L}}^2\\
	&\lesssim\left\Vert f_{r,n}(t)\right\Vert _{\mathcal{E},\mathfrak{v},\mathfrak{L}}^{2}+\left\Vert r_{n}-r\right\Vert _{L^2\left(\mathbb{R}\right)}^2 +\left\Vert \left(f\left(t\right)-f{}_{n}\left(t\right),\left(f\right)_{t}\left(t\right)-\left(f_{n}\right)_{t}\left(t\right)\right)\right\Vert _{H_{\text{sin}}^{1}\times L^{2}}^{2}\\
	&\lesssim\left\Vert f_{r,n}(t)\right\Vert _{\mathcal{E},\mathfrak{v},\mathfrak{L}}^{2}+\left\Vert r_{n}-r\right\Vert _{L^2\left(\mathbb{R}\right)}^2.
	\end{align*}
	This could be arbitrarily small provided $\left\Vert r_{n}-r\right\Vert _{L^2\left(\mathbb{R}\right)}$
	is small enough and $t$ large enough. So we conclude that
	\[
	\lim_{t\rightarrow\infty}\left\Vert f(t)\right\Vert _{\mathcal{E},\mathfrak{v},\mathfrak{L}}=0
	\]
	as desired.
\end{proof}

\subsection{The failure of asymptotic stability}
Finally, we consider the sine-Gordon equation in $H^{1,s}_{\text{sin}}(\mathbb{R})\times L^{2,s}(\mathbb{R})$ with $0\leq s<1/2$. By computing the norms of breathers, the difference between a wobbling kink and a kink, we conclude the failure of the asymptotic stability of solutions to the sine-Gordon equation in these weighted spaces.
\subsubsection{Breathers}\label{subsubsec:breather}

We start with the computations for breathers in weighted energy spaces. For simplicity,
we focus on breathers with zero velocity  here.

Recall that from \eqref{eq:breather}, all breathers with zero velocity of the sine-Gordon equation
are given as
\[
\ensuremath{B\left(x,t;\beta,x_{1},x_{2}\right):=B_{0}\left(x,t;\beta, x_{1},x_{2}\right)=4\arctan\left(\frac{\beta}{\alpha}\frac{\sin\left(\alpha\left(t+x_{1}\right)\right)}{\cosh\left(\beta\left(x+x_{2}\right)\right)}\right),\quad\alpha=\sqrt{1-\beta^{2}},\quad\beta\neq0.}
\]
To simplify the computation again, we take $x_{1}=x_{2}=0$.
\begin{proposition}\label{prop:breather}
	Denoting $B:=B(x,t;\beta, 0,0)$, for $\beta$ small,  at $t=\frac{\pi}{2\alpha}$, we have
	\[
	\int\left|x\right|^{2s}\left(\left|\partial_{x}B\right|^{2}+\left|\partial_{t}B\right|^{2}+\sin^{2}\left(\frac{B}{2}\right)\right)\,dx\sim\beta^{1-2s}.
	\]
\end{proposition}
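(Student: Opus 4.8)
The plan is to estimate the three contributions to the weighted energy norm separately, using the explicit formula
\[
B(x,t;\beta,0,0)=4\arctan\left(\frac{\beta}{\alpha}\frac{\sin(\alpha t)}{\cosh(\beta x)}\right),\qquad \alpha=\sqrt{1-\beta^{2}},
\]
and exploiting the fact that for small $\beta$ the profile is small in sup norm (of size $\sim\beta$) but spread out over a spatial scale $|x|\lesssim 1/\beta$. The key elementary observation is that $|\arctan(u)|\le |u|$ and, conversely, $|\arctan(u)|\gtrsim |u|$ when $|u|$ stays bounded; since $\frac{\beta}{\alpha}\frac{|\sin(\alpha t)|}{\cosh(\beta x)}\lesssim \beta$ is uniformly small, all of $B$, $\partial_xB$, $\partial_tB$, $\sin(B/2)$ are, up to constants, comparable to the corresponding expressions with the outer $4\arctan$ replaced by its linearization. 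Concretely, $\sin(B/2)$ is comparable to $\frac{\beta}{\alpha}\frac{\sin(\alpha t)}{\cosh(\beta x)}$, while $\partial_xB$ is comparable to $-\frac{\beta^{2}}{\alpha}\sin(\alpha t)\frac{\sinh(\beta x)}{\cosh^{2}(\beta x)}$ and $\partial_tB$ to $\beta\cos(\alpha t)\frac{1}{\cosh(\beta x)}$ — here one must keep track of the denominator $1+\big(\tfrac{\beta}{\alpha}\tfrac{\sin(\alpha t)}{\cosh(\beta x)}\big)^{2}=1+O(\beta^{2})$ coming from differentiating $\arctan$, but this factor is bounded above and below by absolute constants.

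With these comparisons in hand, the computation reduces to evaluating weighted $L^{2}_x$ integrals of the building blocks $\operatorname{sech}(\beta x)$ and $\operatorname{sech}(\beta x)\tanh(\beta x)$. The substitution $y=\beta x$ turns
\[
\int_{\mathbb{R}}|x|^{2s}\,\beta^{2}\operatorname{sech}^{2}(\beta x)\,dx=\beta^{2}\cdot\beta^{-1}\cdot\beta^{-2s}\int_{\mathbb{R}}|y|^{2s}\operatorname{sech}^{2}(y)\,dy=c_{s}\,\beta^{1-2s},
\]
where $c_{s}=\int_{\mathbb{R}}|y|^{2s}\operatorname{sech}^{2}(y)\,dy<\infty$ precisely because $2s<1$ guarantees integrability at the origin is not an issue (the integrand is bounded there) and the exponential decay of $\operatorname{sech}^{2}$ handles infinity for every $s\ge 0$. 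The same scaling applied to $\int |x|^{2s}\beta^{4}\operatorname{sech}^{2}(\beta x)\tanh^{2}(\beta x)\,dx$ gives $c_{s}'\,\beta^{3-2s}$, which is lower order, so the $\partial_xB$ contribution is negligible compared to the $\partial_tB$ and $\sin(B/2)$ terms. Averaging the oscillatory factors $\sin^{2}(\alpha t)$, $\cos^{2}(\alpha t)$ is not needed for a pointwise-in-$t$ bound; one simply notes $0\le \sin^{2}(\alpha t),\cos^{2}(\alpha t)\le 1$ for the upper bound, and for the lower bound one observes that $\sin^{2}(\alpha t)+\cos^{2}(\alpha t)=1$, so that at every fixed $t$ at least one of the $\partial_tB$ term or the $\sin(B/2)$ term has its oscillatory prefactor bounded below by $1/2$, which yields the matching lower bound $\gtrsim\beta^{1-2s}$. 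Collecting the three pieces gives $\int|x|^{2s}\big(|\partial_xB|^{2}+|\partial_tB|^{2}+\sin^{2}(B/2)\big)\,dx\sim\beta^{1-2s}$.

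The main (and essentially only) technical obstacle is making the "$\arctan\approx$ linearization" step rigorous with two-sided constants uniform in $x,t$ and in $\beta\in(0,\beta_0]$: one must verify that the argument $u=\tfrac{\beta}{\alpha}\tfrac{\sin(\alpha t)}{\cosh(\beta x)}$ is bounded by a fixed small constant (true since $\alpha\to 1$ and $\cosh\ge 1$), and then use that on $|u|\le u_0$ one has $\tfrac{1}{2}|u|\le|\arctan u|\le|u|$ and likewise that the chain-rule denominators $1+u^{2}$ and $1+(\tfrac{\beta}{\alpha}\tfrac{\sin}{\cosh})^{2}$ lie in $[1,1+u_0^{2}]$. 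This is routine but needs to be stated carefully so that the $\sim$ (not merely $\lesssim$) in the conclusion is justified. Everything else is the scaling change of variables and the finiteness of the $\beta$-independent constants $c_s,c_s'$, which is where the hypothesis $0\le s<1/2$ enters — indeed the exponent $1-2s$ is positive exactly in this range, which is why the breather norm can be made arbitrarily small, and this is precisely the mechanism behind the instability statement in the surrounding section.
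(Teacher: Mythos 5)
Your proposal is correct and follows essentially the same path as the paper: the key moves in both are the substitution $y=\beta x$, the observation that the argument of $\arctan$ is $O(\beta)$ so the profile behaves like a rescaled $\operatorname{sech}$, and the pointwise-in-$t$ lower bound obtained by noting that $\cos^2(\alpha t)$ and $\sin^2(\alpha t)$ cannot both be small (the paper handles this by computing the $\partial_t B$ term at general $t$ and then switching to $\alpha t=\pi/2$ for the $\sin^2(B/2)$ term, whereas you phrase it cleanly as $\sin^2+\cos^2=1$). One small misstatement in your commentary: the constant $c_s=\int|y|^{2s}\operatorname{sech}^2 y\,dy$ is finite for \emph{all} $s\ge 0$ (the integrand is bounded near the origin and decays exponentially), not only for $2s<1$; the restriction $s<1/2$ is needed only to make the exponent $1-2s$ positive, i.e., for the instability conclusion, not for the estimate itself.
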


\begin{proof}
	Taking the time derivative, one has
	\[
	\partial_{t}B=4\frac{1}{1+g^{2}\left(x,t\right)}\partial_{t}g
	\]
	where
	\[
	g\left(x,t\right):=\frac{\beta}{\alpha}\frac{\sin\left(\alpha t\right)}{\cosh\left(\beta x\right)}.
	\]
	Direct computations give $\partial_{t}g\left(x,t\right)=\beta\frac{\cos\left(\alpha t\right)}{\cosh\left(\beta x\right)}$
	which implies
	\begin{align*}
	\partial_{t}B &=4\frac{\alpha^{2}\cosh^{2}\left(\beta x\right)}{\alpha^{2}\cosh^{2}\left(\beta x\right)+\beta^{2}\sin^{2}\left(\alpha t\right)}\beta\frac{\cos\left(\alpha t\right)}{\cosh\left(\beta x\right)}\\
	&=4\beta\frac{\alpha^{2}\cosh\left(\beta x\right)\cos\left(\alpha t\right)}{\alpha^{2}\cosh^{2}\left(\beta x\right)+\beta^{2}\sin^{2}\left(\alpha t\right)}.
	\end{align*}
	Taking the homogeneous weighted norm, one has
	\begin{align*}
	&\int\left|x\right|^{2s}\left(4\beta\frac{\alpha^{2}\cosh\left(\beta x\right)\cos\left(\alpha t\right)}{\alpha^{2}\cosh^{2}\left(\beta x\right)+\beta^{2}\sin^{2}\left(\alpha t\right)}\right)^{2}\,dx\\
	& \quad =16\alpha^{2}\cos^{2}(\alpha t)\int\left|x\right|^{2s}\left(\beta\frac{\cosh\left(\beta x\right)}{\alpha^{2}\cosh^{2}\left(\beta x\right)+\beta^{2}\sin^{2}\left(\alpha t\right)}\right)^{2}\,dx\\
	&\quad =16\alpha^{2}\cos^{2}(\alpha t)\beta^{1-2s}\int\left|y\right|^{2s}\left(\frac{\cosh\left(y\right)}{\alpha^{2}\cosh^{2}\left(y\right)+\beta^{2}\sin^{2}\left(\alpha t\right)}\right)^{2}\,dy\\
&\quad	\sim16\alpha^{2}\cos^{2}(\alpha t)\beta^{1-2s}
	\end{align*}
	where in the third equality, we did a change of variable $y=\beta x$.
	
	
	Next, we consider $\alpha t=\frac{\pi}{2}$. In this case breather
	takes the form
	\[
	B\left(x\right)=4\arctan\left(\frac{\beta}{\alpha}\frac{1}{\cosh\left(\beta x\right)}\right)
	\]
and we compute 
	\begin{align*}
	\int\left|x\right|^{2s}B^{2}(x)\,dx & =16\int\left|x\right|^{2s}\left|\arctan\left(\frac{\beta}{\alpha}\frac{1}{\cosh\left(\beta x\right)}\right)\right|^{2}\,dx\\
	& =16\beta^{-2s}\int\left|y\right|^{2s}\left|\arctan\left(\frac{\beta}{\alpha}\frac{1}{\cosh\left(y\right)}\right)\right|^{2}\,dy\\
	& \sim\beta^{1-2s}.
	\end{align*}
	Similar computations can also be applied to $\partial_{x}B$ which will
	be smaller when $\beta$ is small since $\partial_{x}$ will introduce
	one more $\beta$ factor.
	
	To summarize the computations above, we conclude that
	\[
	\int\left|x\right|^{2s}\left(\left|\partial_{x}B\right|^{2}+\left|\partial_{t}B\right|^{2}+\left|B\right|^{2}\right)(t,x;\beta)\,dx\sim\beta^{1-2s}.
	\]
	Finally, note that when $\beta$ is small, $\left|B\right|\sim\sin(B/2)$
	which allow us to interchange the weighted $H^{1}$ norm above with
	the weighted $H_{\text{sin}}^{1}$ norm.
\end{proof}

\subsubsection{Wobbling kinks}\label{subsubsec:wobbling}
By similar ideas above, now we measure
the difference between a wobbling kink and a kink in the weighted space $H^{1,s}\times L^{2, s}$. Recall that one has the kink
\[
\ensuremath{S(x)=4\arctan\left(e^{x}\right),\quad x\in\mathbb{R}}.
\]
Then a wobbling kink has the following form:
\begin{equation}
\ensuremath{W_{\alpha}(x, t)=4\arctan\left(V_{\alpha}\left(x, t\right)/U_{\alpha}\left(x, t\right)\right)}
\label{eq:Wobbling-Kink}
\end{equation}
where
\[
\ensuremath{U_{\alpha}(x, t)=1+\frac{1+\beta}{1-\beta}e^{2\beta x}-\frac{2\beta}{1-\beta}e^{(1+\beta)x}\cos(\alpha t)}
\]
\[
V_{\alpha}(x, t)=\frac{1+\beta}{1-\beta}e^{x}+e^{(1+2\beta)x}-\frac{2\beta}{1-\beta}e^{\beta x}\cos(\alpha t).
\]
We denote $$\mathfrak{B}:=\left(S-W_{\alpha}\right).$$To simplify
the problem, we only compute these quantities at $t=\frac{\pi}{2\alpha}$
as in Kowalczyk-Martel-Muñoz \cite{KMM2}.
\begin{proposition}\label{prop:wobble}
	At $t=\frac{\pi}{2\alpha}$, we have
	\[
	\dfrac{\pi}{2\alpha}\int\left|x\right|^{2s}\left(\left|\partial_{x}  \mathfrak{B}  \right|^{2}+\left|\partial_{t}   \mathfrak{B} \right|^{2}+\sin^{2}\left(\frac{\mathfrak{B}}{2}\right)\right)\,dx\sim\beta^{1-2s}.
	\]
\end{proposition}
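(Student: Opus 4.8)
The plan is to reduce the estimate for the wobbling-kink difference to the breather computation in Proposition \ref{prop:breather}, exploiting the heuristic that a wobbling kink is the nonlinear superposition of a kink and a breather sharing the same (zero) velocity. First I would evaluate $W_\alpha$ and its derivatives explicitly at the time slice $t=\frac{\pi}{2\alpha}$, where $\cos(\alpha t)=0$, so that
\[
U_\alpha\left(x,\tfrac{\pi}{2\alpha}\right)=1+\frac{1+\beta}{1-\beta}e^{2\beta x},\qquad
V_\alpha\left(x,\tfrac{\pi}{2\alpha}\right)=\frac{1+\beta}{1-\beta}e^{x}+e^{(1+2\beta)x},
\]
which simplifies $V_\alpha/U_\alpha$ to a manageable closed form. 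I would then compare $W_\alpha\left(x,\tfrac{\pi}{2\alpha}\right)$ with the static kink $S(x)=4\arctan(e^x)$ directly: both are $O(e^{x})$ as $x\to-\infty$ and approach $2\pi$ as $x\to+\infty$ like $O(e^{-x})$, and the leading discrepancy is governed by the $\beta$-dependent corrections. The key pointwise claim is
\[
\left|\mathfrak{B}\left(x,\tfrac{\pi}{2\alpha}\right)\right| = \left|S(x)-W_\alpha\left(x,\tfrac{\pi}{2\alpha}\right)\right| \lesssim \beta\, \mathrm{sech}(\beta x)\,\langle \beta x\rangle,
\]
or a similarly shaped bound, after using the Lipschitz property of $\arctan$ and carefully expanding $V_\alpha/U_\alpha - e^x$ in $\beta$. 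Inserting this into $\int |x|^{2s}|\mathfrak{B}|^2\,dx$ and changing variables $y=\beta x$ produces the scaling $\beta^{1-2s}$ exactly as in the breather case.

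For the time derivative $\partial_t\mathfrak{B}=-\partial_t W_\alpha$, I would differentiate $W_\alpha=4\arctan(V_\alpha/U_\alpha)$ in $t$ and note that $\partial_t U_\alpha$ and $\partial_t V_\alpha$ each carry a factor $\alpha\sin(\alpha t)$, which equals $\alpha$ at $t=\frac{\pi}{2\alpha}$; since $\alpha=\sqrt{1-\beta^2}\sim 1$ the relevant smallness comes entirely from the coefficient $\frac{2\beta}{1-\beta}\sim 2\beta$ in front of the $\cos(\alpha t)$ terms. Thus $\partial_t W_\alpha\left(x,\tfrac{\pi}{2\alpha}\right)=O\big(\beta\,\mathrm{sech}(\beta x)\cdot(\text{bounded profile})\big)$, and the weighted $L^2$ norm again scales like $\beta^{1-2s}$ after the substitution $y=\beta x$. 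The spatial derivative $\partial_x\mathfrak{B}$ is handled the same way; as in the breather argument one extra $\partial_x$ on the slowly varying $\mathrm{sech}(\beta x)$ factor only improves the power of $\beta$, so this term is subdominant and does not affect the asymptotic $\sim\beta^{1-2s}$. Finally, since $\mathfrak{B}$ is small in $L^\infty$ when $\beta$ is small, $\sin^2(\mathfrak{B}/2)\sim \mathfrak{B}^2/4$ pointwise, which lets me replace the $H^{1,s}_{\mathrm{sin}}$-type quantity by the ordinary weighted $H^1$ norm, just as in Proposition \ref{prop:breather}.

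The main obstacle I anticipate is establishing the sharp two-sided pointwise bound on $\mathfrak{B}$ and $\partial_t\mathfrak{B}$ of the form (constant)$\times\beta\,\mathrm{sech}(\beta x)\times$(bounded, $\beta$-uniform profile): the naive expansion of $V_\alpha/U_\alpha$ around $e^x$ produces several competing terms ($e^{\beta x}$, $e^{(1+2\beta)x}$, the denominator $1+\frac{1+\beta}{1-\beta}e^{2\beta x}$), and one must check that after forming $\arctan(V_\alpha/U_\alpha)-\arctan(e^x)$ the dangerous regions $x\to\pm\infty$ and $x=O(1/\beta)$ all contribute at the same order $\beta$ and no worse. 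I would organize this by splitting $\mathbb{R}$ into $|x|\le \beta^{-1}$ and $|x|>\beta^{-1}$ (or into the three regions $x\lesssim -\beta^{-1}$, $|x|\lesssim\beta^{-1}$, $x\gtrsim\beta^{-1}$) and bounding $\mathfrak{B}$ separately, using the exponential decay of $\mathrm{sech}$ to kill the large-$|x|$ contributions against the polynomial weight $|x|^{2s}$. Once the pointwise bound is in hand, the change of variables $y=\beta x$ and the elementary integral $\int |y|^{2s}\,\mathrm{sech}^2(y)\,dy<\infty$ deliver both the upper bound $\lesssim\beta^{1-2s}$ and, via a lower bound on $|\partial_t\mathfrak{B}|$ on a fixed compact $y$-interval, the matching lower bound $\gtrsim\beta^{1-2s}$, completing the proof that the quantity is $\sim\beta^{1-2s}$.
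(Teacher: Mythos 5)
Your overall plan is the same as the paper's: at $t=\frac{\pi}{2\alpha}$ the positional difference $\mathfrak{B}=S-W_\alpha$ is harmless, the $\beta^{1-2s}$ scaling lives entirely in $\partial_t\mathfrak{B}=-\partial_tW_\alpha$, and the two-sided estimate comes from the dominant piece $\partial_tV_\alpha/U_\alpha=\frac{2\beta e^{\beta x}}{(1-\beta)+(1+\beta)e^{2\beta x}}\sim\beta\,\mathrm{sech}(\beta x)$ on $x\le 0$, with the substitution $y=\beta x$ producing the upper bound and a compact $y$-interval producing the matching lower bound. That part of your proposal is correct and is exactly what the paper does.

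Where you go wrong is in your description of the spatial difference $\mathfrak{B}$ itself, and this is a genuine conceptual error even though it does not break the final conclusion. At $t=\frac{\pi}{2\alpha}$ one has $\cos(\alpha t)=0$, so the wobbling modulation of the position vanishes; writing $a=\frac{1+\beta}{1-\beta}$, $b=e^{2\beta x}$ one finds $V_\alpha/U_\alpha=e^x\cdot\frac{a+b}{1+ab}$, hence
\[
e^x-\frac{V_\alpha}{U_\alpha}=-\,e^x\cdot\frac{(1-a)(1-b)}{1+ab},\qquad
\left|\mathfrak{B}\right|\lesssim\frac{e^x}{1+e^{2x}}\left|\frac{(1-a)(1-b)}{1+ab}\right|\lesssim\beta\,\mathrm{sech}(x).
\]
The decay is in $x$, not in $\beta x$: the fast kink envelope $\mathrm{sech}(x)$ survives, and only the prefactor carries a $\beta$. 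Consequently $\int|x|^{2s}|\mathfrak{B}|^2\,dx=O(\beta^4)$ (and similarly for $\partial_x\mathfrak{B}$), which is far smaller than $\beta^{1-2s}$. Your sentence that inserting $|\mathfrak{B}|\lesssim\beta\,\mathrm{sech}(\beta x)\langle\beta x\rangle$ and rescaling ``produces the scaling $\beta^{1-2s}$ exactly as in the breather case'' is therefore misleading: that bound is a true but very lossy upper bound, and the analogy to the breather fails precisely because the breather at $t=\frac{\pi}{2\alpha}$ has $\sin(\alpha t)=1$ (maximal spatial amplitude, vanishing time-derivative), whereas the wobble at the same time slice has $\cos(\alpha t)=0$ (vanishing spatial amplitude, maximal time-derivative). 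The paper sidesteps all of this by noting that $S(\cdot)-W_\alpha(\cdot,\frac{\pi}{2\alpha})$ is small in exponentially weighted Sobolev spaces, and puts all the work into $\partial_tW_\alpha$. The same remark applies to your claim that ``one extra $\partial_x$ on the slowly varying $\mathrm{sech}(\beta x)$ factor only improves the power of $\beta$'': $\mathfrak{B}$ is not slowly varying, so $\partial_x$ does not gain a factor of $\beta$; it is subdominant for the more elementary reason that $\mathfrak{B}$ already decays like $\mathrm{sech}(x)$ and carries the factor $\beta$ to begin with.

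To summarize: your upper-bound estimates for $\mathfrak{B}$ and $\partial_x\mathfrak{B}$ are correct as stated but non-sharp, and the reasoning behind them conflates the wobble difference with a breather; if you had tried to extract a lower bound from $\int|x|^{2s}|\mathfrak{B}|^2\,dx$ you would have failed. Your lower bound from $\partial_t\mathfrak{B}$ on a fixed compact $y$-interval is the right idea and is what actually carries the proposition. A cleaner route, matching the paper, is to estimate $\mathfrak{B}$, $\partial_x\mathfrak{B}$, and $\sin^2(\mathfrak{B}/2)$ crudely by $O(\beta^2)$ in the weighted norm (using the $\mathrm{sech}(x)$ decay rather than $\mathrm{sech}(\beta x)$) and reserve the $\beta^{1-2s}$ analysis, and in particular both directions of the $\sim$, for $\partial_tW_\alpha$.
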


\begin{proof}
	By direct computations or by Kowalczyk-Martel-Muñoz \cite{KMM2},
	we know that for $t=\frac{\pi}{2\alpha}$, 
	\[
	S(x)-W_{\alpha}(x, \frac{\pi}{2\alpha})
	\]
	is small in any polynomial weighted Sobolev spaces. It remains
	to check $\partial_{t}\left(S-W_{\alpha}\right).$ Since $S\left(x\right)$
	is stationary, it suffices to check $\partial_{t}W_{\alpha}(x,\frac{\pi}{2\alpha})$
	has a uniform lower bound.
	
	At $t=\frac{\pi}{2\alpha}$, one has 
	\[
	\ensuremath{U_{\alpha}=1+\frac{1+\beta}{1-\beta}e^{2\beta x}},\ V_{\alpha}=\frac{1+\beta}{1-\beta}e^{x}+e^{(1+2\beta)x}
	\]
	and
	\[
	\partial_{t}U_{\alpha}=\frac{2\beta\alpha}{1-\beta}e^{(1+\beta)x},\ \ \partial_{t}V_{\alpha}=\frac{2\beta}{1-\beta}e^{\beta x}.
	\]
	Taking the time derivative of $W_{\alpha}$, one has
	\[
	\partial_{t}W_{\alpha}=\frac{1}{1+\left(V_{\alpha}\left(x, t\right)/U_{\alpha}\left(x, t\right)\right)^{2}}\partial_{t}\left(\frac{V_{\alpha}\left(x, t\right)}{U_{\alpha}\left(x, t\right)}\right).
	\]
	It is safe to replace $\frac{V_{\alpha}\left(x, t\right)}{U_{\alpha}\left(x, t\right)}$
	by $e^{x}$ in the denominator. 
	
	Next, we only consider the part $x\leq0$ since computations for  $x\geq 0$ are similar. Direct computations give
	us
	\[
	\partial_{t}\left(\frac{V_{\alpha}\left(x, t\right)}{U_{\alpha}\left(x, t\right)}\right)=\frac{\partial_{t}V_{\alpha}}{U_{\alpha}}-\frac{V_{\alpha}}{U_{\alpha}}\frac{\partial_{t}U_{\alpha}}{U_{\alpha}}
	\]
	where
	\[
	\frac{\partial_{t}V_{\alpha}}{U_{\alpha}}=\frac{\frac{2\beta}{1-\beta}e^{\beta x}}{1+\frac{1+\beta}{1-\beta}e^{2\beta x}}=\frac{2\beta e^{\beta x}}{1-\beta+\left(1+\beta\right)e^{2\beta x}}.
	\]
	\[
	\frac{\partial_{t}U_{\alpha}}{U_{\alpha}}=\frac{\frac{2\beta\alpha}{1-\beta}e^{(1+\beta)x}}{1+\frac{1+\beta}{1-\beta}e^{2\beta x}}\sim\beta e^{x},\ \frac{V_{\alpha}}{U_{\alpha}}=\frac{\frac{1+\beta}{1-\beta}e^{x}+e^{(1+2\beta)x}}{1+\frac{1+\beta}{1-\beta}e^{2\beta x}}\sim e^{x}.
	\]
	Therefore, using estimates above, one can conclude that for $x\lesssim-1$,
	\begin{align*}
	\partial_{t}W_{\alpha} & =\frac{1}{1+\left(V_{\alpha}\left(t,x\right)/U_{\alpha}\left(x, t\right)\right)^{2}}\partial_{t}\left(\frac{V_{\alpha}\left(x, t\right)}{U_{\alpha}\left(x, t\right)}\right)\\
	& \sim\frac{1}{2}\frac{2\beta e^{\beta x}}{1-\beta+\left(1+\beta\right)e^{2\beta x}}-\beta e^{2x}.
	\end{align*}
It follows that
	\begin{align*}
	\int_{-\infty}^{-1}\left|x\right|^{2s}\left|\partial_{t}W\right|^{2}\,dx & \sim\int_{-\infty}^{-1}\left|x\right|^{2s}\left(\frac{1}{2}\frac{2\beta e^{\beta x}}{1-\beta+\left(1+\beta\right)e^{2\beta x}}\right)^{2}\,dx\\
	&  \quad +\int_{-\infty}^{-1}\left|x\right|^{2s}\beta^{2}e^{4x}\,dx.
	\end{align*}
	The last integral above can be arbitrarily small provided that $\beta$ is small enough.
	For the first integral, note that
	\begin{align*}
	\int_{-\infty}^{-1}\left|x\right|^{2s}\left(\frac{1}{2}\frac{2\beta e^{\beta x}}{1-\beta+\left(1+\beta\right)e^{2\beta x}}\right)^{2}\,dx & \sim\beta^{1-2s}\int_{-\infty}^{-\beta}\left|y\right|^{2s}\left(\frac{ke^{y}}{1-\beta+\left(1+\beta\right)e^{2y}}\right)^2\,dy\\
	& \sim\beta^{1-2s}.
	\end{align*}
	Therefore, one can conclude that
	\[
	\int\left|x\right|^{2s}\left|\partial_{t}W_{\alpha}\left(\frac{\pi}{2\alpha}\right)\right|^2\,dx\sim\beta^{1-2s}
	\]
	and
	\[
	\frac{\pi}{2\alpha}\int\left|x\right|^{2s}\left(\left|\partial_{x}   \mathfrak{B}   \right|^{2}+\left|\partial_{t}   \mathfrak{B} \right|^{2}+\sin^{2}\left(\frac{  \mathfrak{B}  }{2}\right)\right)\,dx\sim\beta^{1-2s}
	\]
	as desired.
\end{proof}
\subsubsection{The failure of the stability}
Consider the sine-Gordon equation in $H^{1,s}(\mathbb{R})\times L^{2,s}(\mathbb{R})$.
From Proposition \ref{prop:breather}, we find that when $s<\frac{1}{2}$, as $\beta\rightarrow0$, the norm of the breather goes to $0$.  Therefore there are arbitrarily
small breathers which imply the failure of asymptotic stability
of the zero solution. Similarly, by Proposition \ref{prop:wobble}, in the weighted energy spaces with $s<\frac{1}{2}$, the difference between the wobbling kink and the kink can be arbitrarily small. Hence the kink is not asymptotically stable.

\smallskip

 On the other hand, when $s>1/2$, the norms of breathers and the difference between the wobbling kink and the kink cannot be arbitrarily small. Indeed, by our results Subsection \ref{subsec:wasymp}, the sine-Gordon equation is asymptotically stable in $H^{1,s}(\mathbb{R})\times L^{2,s}(\mathbb{R})$ measured using the localized energy norm.  Therefore the weights we require are essentially optimal if
 we consider the problem without any symmetry assumptions. Finally $s=\frac{1}{2}$ as the threshold
 remains as an interesting  problem.
\appendix

\section{Global well-posedness  in weighted spaces}
In the direct scattering process, it requires that initial data to
be in $H_{\text{sin}}^{2,s}\left(\mathbb{R}\right)\times H^{1,s}\left(\mathbb{R}\right)$ with $s>\frac{1}{2}$.
Here we provide a sketch of the proof of  the global well-posedness  in this space for
the sake of completeness.
\begin{theorem}
\label{thm:gwpweighted}Given $\left(f_{0},f_{1}\right)\in H_{\text{sin}}^{2,s}\left(\mathbb{R}\right)\times H^{1,s}\left(\mathbb{R}\right)$ for any $s>\frac{1}{2}$,
there exists a unique solution $$f\in C\left(\mathbb{R};H_{\text{sin}}^{2,s}\left(\mathbb{R}\right)\right)\,\text{
with}\,\, \partial_{t}f\in C\left(\mathbb{R};H^{1,s}\left(\mathbb{R}\right)\right)$$
to the sine-Gordon equation \eqref{eq: sG} with initial data $\left(f_{0},f_{1}\right)$.
\end{theorem}

\begin{proof}
The proof here is more or less standard. Given $f_{0}\in H_{\text{sin}}^{2}\left(\mathbb{R}\right)$, following the steps in de Laire-Gravej \cite{deLG}, we can find a smooth function $\phi\left(x\right)$ such that $f_{0}-\phi\left(x\right)\in H^{2}\left(\mathbb{R}\right)$.
 Then one can show that
for given $\left(f_{0},f_{1}\right)\in H_{\text{sin}}^{2}\left(\mathbb{R}\right)\times H^{1}\left(\mathbb{R}\right)$,
there exists  a unique solution $f-\phi\left(x\right)\in C\left(\mathbb{R};H^{2}\left(\mathbb{R}\right)\right)$
with $\partial_{t}f\in C\left(\mathbb{R};H^{1}\left(\mathbb{R}\right)\right)$
to the sine-Gordon equation \eqref{eq: sG} with initial data $\left(f_{0},f_{1}\right)$. It follows from Theorem \ref{thm:GWP} above. Basically, one can perform a
direct differentiation and use the fixed-point theorem to prove the local
well-posedness in $[-T,T]$. Then the size of $T$ can be estimated
by performing standard energy estimates. Actually, one can show the
global well-posedness in the space $H_{\text{sin}}^{k}\left(\mathbb{R}\right)\times H^{k-1}\left(\mathbb{R}\right)$
with $k>\frac{3}{2}$ by the same argument. Again we refer to de Laire-Gravej \cite{deLG}
for full details.

With unweighted results above, one can pass to the weighted space
directly. Given $\left(f_{0},f_{1}\right)\in H_{\text{sin}}^{2,s}\left(\mathbb{R}\right)\times H^{1,s}\left(\mathbb{R}\right)$, following notations above, one can find the solution $f$ such that $f\in C\left(\mathbb{R};H_{\text{sin}}^{2,s}\left(\mathbb{R}\right)\right)$
with $\partial_{t}f\in C\left(\mathbb{R};H^{1}\left(\mathbb{R}\right)\right)$. Our goal is to show that $f$ is in the weighted Sobolev spaces. We denote $\tilde{f}=f-\phi\left(x\right)$. Then $\tilde{f}$ satisfies
\begin{align}
\partial_{tt}\tilde{f}-\partial_{xx}\tilde{f} & =-\sin\left(\tilde{f}+\phi\right)+\partial_{xx}\phi\label{eq:tildefdiff}\\
 & =-\sin\left(\tilde{f}\right)-\sin\left(\tilde{f}\right)\left(\cos\left(\phi\right)-1\right)-\cos\left(\tilde{f}\right)\sin\left(\phi\right)+\partial_{xx}\phi.\nonumber 
\end{align}
We multiply the equation by $x^{2s}e^{-2\epsilon x^{2s}}\partial_{t}\tilde{f}$
and then directing differentiation results in
\begin{align}
\frac{d}{dt}E_{\sin,w}^{\epsilon}\left(\tilde{f},\tilde{f}_{t}\right) & \lesssim E_{\text{sin}}\left(\tilde{f},\tilde{f}_{t}\right)+E_{\sin,w}^{\epsilon}\left(\tilde{f},\tilde{f}_{t}\right)\label{eq:weightedenegydiff}
\end{align}
where the weighted energy is defined via
\[
E_{\sin,w}^{\epsilon}\left(g_{1},g_{2}\right)=\int\left(\frac{\left|\partial_x g_{1}\right|^{2}}{2}+\frac{\left|g_{2}\right|^{2}}{2}+\frac{1}{2}\sin^{2}\left(\frac{g_{1}}{2}\right)\right)x^{2s}e^{-2\epsilon x^{2s}}\,dx.
\]
Indeed, differentiating  the weighed energy and integrating by parts, one has
\begin{align*}
\frac{d}{dt}E_{\sin,w}^{\epsilon}\left(\tilde{f},\tilde{f}_{t}\right) & =\int\left(\partial_{tt}\tilde{f}\partial_{t}\tilde{f}+\partial_{tx}\tilde{f}\partial_{x}\tilde{f}+\sin\left(\tilde{f}\right)\partial_{t}\tilde{f}\right)x^{2s}e^{-2\epsilon x^{2s}}\,dx\\
& =\int\partial_{t}\tilde{f}\left(\partial_{tt}\tilde{f}-\partial_{xx}\tilde{f}+\sin\left(\tilde{f}\right)\right)x^{2s}e^{-2\epsilon x^{2s}}\,dx\\
& \quad-\int\partial_{t}\tilde{f}\partial_{x}\tilde{f}\partial_{x}\left(x^{2s}e^{-2\epsilon x^{2s}}\right)\,dx.
\end{align*}
Using the equation for $\tilde{ f }$, \eqref{eq:tildefdiff}, one has
\begin{align*}
\int\partial_{t}\tilde{f}\left(\partial_{tt}\tilde{f}-\partial_{xx}\tilde{f}+\sin\left(\tilde{f}\right)\right)x^{2s}e^{-2\epsilon x^{2s}}\,dx & = \int\partial_{t}\tilde{f}\left[-\sin\left(\tilde{f}\right)\left(\cos\left(\phi\right)-1\right)\right.\\
&\quad \left.  -\cos\left(\tilde{f}\right)\sin\left(\phi\right)+\partial_{xx}\phi\right]x^{2s}e^{-2\epsilon x^{2s}}\,dx.
\end{align*}
By construction, $\phi$ achieves the boundary values rapidly as $\left|x\right|\rightarrow\infty$,
whence, by the Cauchy-Schwarz inequality, it follows that

\begin{align*}
&\left|\int\partial_{t}\tilde{f}\left(-\sin\left(\tilde{f}\right)\left(\cos\left(\phi\right)-1\right)-\cos\left(\tilde{f}\right)\sin\left(\phi\right)+\partial_{xx}\phi\right)x^{2s}e^{-2\epsilon x^{2s}}\,dx\right|\\
&\quad \lesssim E_{\sin,w}^{\epsilon}\left(\tilde{f},\tilde{f}_{t}\right).
\end{align*}
Actually, using the rapid convergence of $\phi$ to boundary values,
one can use the unweighted energy on the RHS above.

Notice that explicitly, we have
\begin{align}
\left|\int\partial_{t}\tilde{f}\partial_{x}\tilde{f}\partial_{x}\left(x^{2s}e^{-2\epsilon x^{2s}}\right)\,dx\right| & \lesssim E_{\sin,w}^{\epsilon}\left(\tilde{f},\tilde{f}_{t}\right)+E_{\text{sin}}\left(\tilde{f},\tilde{f}_{t}\right).\label{eq:weightedcom}
\end{align}
where the last line is achieved by comparing the size of $|x|^s$ and $\epsilon$. 
Due to the well-posedness result in the regular Sobolev space, the
last term on the RHS of \eqref{eq:weightedcom} is bounded. 

Combining \eqref{eq:weightedenegydiff} and \eqref{eq:weightedcom} together,
Gronwall\textquoteright s inequality gives
\[
E_{\sin,w}^{\epsilon}\left(\tilde{f}\left(t\right),\tilde{f}_{t}\left(t\right)\right)\leq C_{T}E_{\sin,w}^{\epsilon}\left(\tilde{f}_{0},\tilde{f}_{1}\right)
\]
for $t\in\left[-T,T\right]$ for all $T\in\mathbb{R}$ where $\tilde{f}_0=f_0-\phi$ and $\tilde{f}_1=f_1$. Passing $\epsilon$
to $0$ by the monotone convergence theorem, we get
\[
E_{\text{sin},w}\left(\tilde{f}\left(t\right),\tilde{f}_{t}\left(t\right)\right)\leq C_{T}E_{\text{sin}}\left(\tilde{f}_{0},\tilde{f}_{1}\right)
\]
where
\[
E_{\sin,w}\left(g_{1},g_{2}\right)=\int\left(\frac{\left|\partial_x g_{1}\right|^{2}}{2}+\frac{\left|g_{2}\right|^{2}}{2}+\frac{1}{2}\sin^{2}\left(\frac{g_{1}}{2}\right)\right)x^{2s}\,dx
\]
for $t\in\left[-T,T\right]$ for all $T\in\mathbb{R}$ which implies
that $f\in C\left(\mathbb{R};H_{\text{sin}}^{1,s}\left(\mathbb{R}\right)\right)$
with $\partial_{t}f\in C\left(\mathbb{R};L^{2,s}\left(\mathbb{R}\right)\right)$.

To obtain the result at the level of $H^{2,s}_{ \text{sin}}(\mathbb{R})\times H^{1,s}(\mathbb{R})$, we can differentiate the equation \eqref{eq:tildefdiff} by $\partial_{x}$.
The same argument above results in
\begin{align*}
\int\left(\dfrac{\left|\partial_{x}^{2}\tilde{f}\left(t\right)\right|^{2}}{2}+\dfrac{\left|\partial_{x}\tilde{f_{t}}\left(t\right)\right|^{2}}{2}\right)x^{2s}e^{-2\epsilon x^{2s}}\,dx &\leq C_{T}E_{\sin,w}^{\epsilon}\left(\tilde{f}_{0},\tilde{f}_{1}\right)\\
&\quad +C_T\int\left(\dfrac{\left|\partial_{x}^{2}\tilde{f}_{0}\right|^{2}}{2}+\dfrac{\left|\partial_{x}\tilde{f_{1}}\left(t\right)\right|^{2}}{2}\right)x^{2s}e^{-2\epsilon x^{2s}}\,dx
\end{align*}
for $t\in\left[-T,T\right]$ for all $T\in\mathbb{R}$. Again passing
$\epsilon$ to $0$, the desired results follow.
\end{proof}

\section{Approaching the light cone}
In this appendix we discuss the situation when $|x/t|\to 1$ as $t\to \infty$. When $|x/t|$ is close to $1$ enough, we will not have any solitons in this space time region. So in the following section we omit all discrete scattering data for brevity.
\subsection{$0<x/t<1$} 
\label{subsec: app}
We write $z=u+iv$. Notice that for $v>0$ and $u>\sqrt{z_0}= \sqrt[4]{(t-x)/(t+x)}$
\begin{align*}
4\text{Re}i\theta(z; x, t) &=-\left(   1+\dfrac{x}{t} \right)v t +  \left( 1-\dfrac{x}{t} \right)\dfrac{v t}{u^2+v^2}\\
&  \leq -\left(   1+\dfrac{x}{t} \right)v t  +\left(   1-\dfrac{x}{t} \right) \dfrac{v}{z_0} t \\
& \leq -\left(   1+\dfrac{x}{t} \right)v t +\sqrt{1-\dfrac{x^2}{t^2}}vt\\
&\leq -vt.
\end{align*}
Given $\frac{x}{t}\rightarrow1$ which implies
$z_0 \to 0$,  \text{as} $t \to \infty$ we only need the  following upper/lower factorization on $\bbR$:
\begin{equation}
\label{v-ul}
e^{-i\theta\ad\sigma_3}v(z)	=\Twomat{1}{\overline{r(z)}   e^{-2i\theta}}{0}{1} \Twomat{1}{0}{r(z)  e^{2i\theta}}{1}
						\quad z \in\bbR .
\end{equation}
We will again perform the contour deformation and write the solution as a product of solutions to a $\dbar$-problem and a ``localized" Riemann-Hilbert problem.
\begin{figure}[H]
\caption{$\Sigma^{(1)}:\text{near the light cone}$}
\vskip 15pt
\begin{tikzpicture}[scale=0.7]
\draw[thick]		(5, 3) -- (4,2);						
\draw[->,thick,>=stealth] 		(2,0) -- (4,2);		
\draw[thick] 			(-2,0) -- (-4,2); 				
\draw[->,thick,>=stealth]  	(-5,3) -- (-4,2);	
\draw[->,thick,>=stealth]		(-5,-3) -- (-4,-2);							
\draw[thick]						(-4,-2) -- (-2,0);
\draw[thick,->,>=stealth]		(2,0) -- (4,-2);								
\draw[thick]						(4,-2) -- (5,-3);
\draw[thick]		(0,0)--(2,0);
\draw[thick,->,>=stealth] (-2,0) -- (0, 0);
\draw	[fill]							(-2,0)		circle[radius=0.1];	
\draw	[fill]							(2,0)		circle[radius=0.1];
\draw [dashed] (2,0)--(6,0);
\draw [dashed] (-6,0)--(-2,0);
\node[below] at (-2,-0.25)			{$-\sqrt{z_0}$};
\node[below] at (2,-0.25)			{$\sqrt{z_0}$};
\node[right] at (5,3)					{$\Sigma^{(1)}_1$};
\node[left] at (-5,3)					{$\Sigma^{(1)}_2$};
\node[left] at (-5,-3)					{$\Sigma^{(1)}_3$};
\node[right] at (5,-3)				{$\Sigma^{(1)}_4$};
\node[above] at (4.5,0)           {$\Omega_1$};
\node[above] at (-4.5,0)           {$\Omega_2$};
\node[below] at (-4.5,0)           {$\Omega_3$};
\node[below] at (4.5,0)           {$\Omega_4$};
\end{tikzpicture}
\label{fig:contour-scale-1}
\end{figure}
For brevity, we only discuss the $\dbar$-problem in $\Omega_1$.  In $\Omega_1$, we define
\begin{align*}
	R_1	&=	\begin{cases}
						\twomat{0}{0}{r(z)  e^{2i\theta}  }{0}		
								&	z \in (\sqrt{z_0} ,\infty)\\[10pt]
								\\
						\twomat{0}{0}{r( \sqrt{z_0} ) e^{2i\theta (z )  } }{0}	
								&	z	\in \Sigma_1^{(1)}
					\end{cases}
	\end{align*}
and the interpolation is given by
$$R_1=  r( \sqrt{z_0}  )+ \left( \mathbf{r}\left(  z \right) -r( \sqrt{z_0} )  \right) \cos 2\phi  . $$
So we arrive at the $\dbar$-derivative in $\Omega_1$ :
\begin{align}
\dbar R_1&=  \left(  \dbar[\mathbf{r}\left( z\right)] \cos 2\phi- 2\dfrac{ \mathbf{r}(z  )-   r( \sqrt{z_0}  )  }{  \left\vert z -\sqrt{z_0} \right\vert   } e^{i\phi} \sin 2\phi  \right) e^{2i\theta}.
\end{align}
\begin{equation}
\label{R1.bd app}
|W|=\left| \dbar R_1  \right| 	\lesssim\left( |   \dbar[\mathbf{r}\left( z\right)] | +\dfrac{|\mathbf{r}(z)- r( \sqrt{z_0}  ) | }{  |z-\sqrt{z_0}| }  \right) e^{- v t}.
\end{equation}
From \eqref{r=0} we deduce that
$$r(\sqrt{z_0})={o}(\sqrt{z_0})$$
so the solution to model problem on the deformed contour will take the form
\begin{equation}
\label{model-app}
I+o(\sqrt{z_0})
\end{equation}
Then we proceed as in the previous section and study the integral equation related to the $\dbar$-problem. The estimates will follow from the same arguments in section \ref{sec:outside}. All we need is a uniform estimate of the following integrals near $z_0$ as $z_0\to 0$:
{ \begin{align*}
\int_0^{ \infty } \int_{ v +\sqrt{z_0} }^{\infty} \dfrac{\left| \dbar[\mathbf{r}\left( z\right)]\right| }{ |u^2+v^2|^{1/2} }  e^{- v t } \, du \, dv & \leq \int_0^{ 1 } \int_{ v }^{1} \dfrac{|\dbar[\mathbf{r}\left( z\right)]| }{ |u^2+v^2|^{1/2} }  e^{- v t } \, du \, dv \\
&\quad+\int_0^{ \infty } \int_{ 1 }^{\infty} \dfrac{|\dbar[\mathbf{r}\left( z\right)] | }{  |u^2+v^2|^{1/2}  } e^{- v t } \, du \, dv\\
 &\lesssim t^{1/2-s}+t^{-s}.
\end{align*}}

{\small \begin{align*}
\int_0^{ \infty } \int_{ v +\sqrt{z_0} }^{\infty} \dfrac{|\mathbf{r}(z)- r( \sqrt{z_0}  ) | }{  |(u-\sqrt{z_0})^2+v^2 |^{1/2} |u^2+v^2|^{1/2} }  e^{- v t } \, du \, dv & \leq \int_0^{ \infty } \int_{ v +\sqrt{z_0} }^{\infty} \dfrac{|\mathbf{r}(z)| }{  |(u-\sqrt{z_0})^2+v^2 |^{1/2} |u^2+v^2|^{1/2} }  e^{- v t } \, du \, dv \\
&+\int_0^{ \infty } \int_{ v  +\sqrt{z_0}  }^{\infty} \dfrac{|r( \sqrt{z_0}  ) | }{  |(u-\sqrt{z_0})^2+v^2 |^{1/2} |u^2+v^2|^{1/2} }  e^{- v t } \, du \, dv\\
 &=:\tilde{I}_1+\tilde{I}_2.
\end{align*}}For $1<p<2$ using the same argument as \eqref{est-r-out}
\begin{align*}
\tilde{I}_1 \lesssim t^{-s}.
\end{align*}
For $\tilde{I}_2$, again using H\"older's inequality, for $1<p<2$ and $1/p+1/q=1$
\begin{align*}
\tilde{I}_2 \leq \int_0^{\infty}  e^{-vt}  \left( \int_{ \sqrt{z_0} }^\infty \dfrac{| r(\sqrt{z_0})  |^q}{|u|^{q}}du  \right)^{1/q} \left( \int_{ v}^{\infty} \dfrac{1}{ |v|^{p} |1+(u^2/v^2)|^{p/2}  } du  \right)^{1/p} dv.
\end{align*}
Using the fact that $\lim_{z\to 0} r(z)/z $ is bounded near the origin, a simple calculation gives 
$$ \left( \int_{ \sqrt{z_0} }^\infty \dfrac{| r(\sqrt{z_0})  |^q}{|u|^{q}}du  \right)^{1/q} <\infty$$
as $z_0 \to 0$ and choose $p=1/s$ to conclude that
\begin{align*}
\tilde{I}_2 \lesssim t^{-s}.
\end{align*}
Similarly to Section \ref{sec:outside} we obtain the following asymptotic formulas as $x/t\to 1$ for $0<x/t<1$:
\begin{align}
\label{app-cos}
\cos f(x,t) -1 &= o(z_0) +\mathcal{O}\left(  t^{1-2s}\right)\\
\label{app-sin}
\sin f(x,t)    &=o(\sqrt{z_0}) + \mathcal{O}\left( t^{1/2-s }\right).
\end{align}
\subsection{$x/t \to -1$} Finally we give a brief discussion on the asymptotic formula of \eqref{eq: sG} in the region where $x/t \to -1$ as $t\to \infty$.  We instead work with the spectral problem \eqref{Phi-x} and establish a parallel version of Proposition \ref{prop:r} and  RHP Problem \ref{RHP-1}. We then repeat the nonlinear steepest descent method on this new RHP. To further facilitate computation, we make the following change of variable:
$$z \mapsto \dfrac{1}{z}$$
then we have 
\begin{align*}
\breve{r}(z) &=r\left( \dfrac{1}{z} \right)\\
\breve{\theta}(z; x, t)&= \dfrac{1}{4}\left(  \left( \dfrac{1}{z}-z  \right)\dfrac{x}{t} +\left( z+\dfrac{1}{z}  \right) \right)t\\
\breve{z_0} &= \sqrt{ \dfrac{t+x}{t-x}}.
\end{align*}
It is easy to check that 
\begin{itemize}
\item[I.] $x/t>-1$ implies that 
for $v>0$, 
\begin{equation}
\label{theta-out-}
4\text{Re}i\theta(z; x, t)=-\left(   1 -\dfrac{x}{t} \right)v t +  \left( 1+\dfrac{x}{t} \right)\dfrac{v t}{u^2+v^2};
\end{equation}
\item[II.] $x/t <-1$ implies that 
for $v>0$, 
\begin{equation}
\label{theta-out-}
4\text{Re}i\theta(z; x, t)=-\left(   1 -\dfrac{x}{t} \right)v t +  \left( 1+\dfrac{x}{t} \right)\dfrac{v t}{u^2+v^2}<-\left(   1 -\dfrac{x}{t} \right)v t  ;
\end{equation}
\end{itemize}
For case I above we will follow the same argument in subsection \ref{subsec: app} to obtain 
\begin{align}
\label{app-cos-}
\cos f(x,t) -1 &= o( \breve{z_0}) +\mathcal{O}\left(  t^{1-2s}\right)\\
\label{app-sin-}
\sin f(x,t)    &=o(\sqrt{ \breve{ z_0} }) + \mathcal{O}\left( t^{1/2-s }\right).
\end{align}
Case II is similar to the situation in subsection \ref{subsec: out} so the formulas are 
\begin{align}
\label{app-cos-'}
\cos f(x,t) -1 &= \mathcal{O}\left(  t^{1-2s}\right)\\
\label{app-sin-'}
\sin f(x,t)    &= \mathcal{O}\left( t^{1/2-s}\right).
\end{align}
{ All the implicit constants above only depend on the Sobolev norm of the reflection coefficient  $r\in H^{s}_0(\bbR)$.} 

\begin{remark}
{In order to get a decay rate of higher order than $\frac{1}{\sqrt{t}}$, we have to make further assumptions on smoothness of initial data which result in better weighted estimates for the reflection coefficient $r$. And from there, uniform estimates will allow us to pass $z_0\rightarrow0$ as $x/t\rightarrow 1$ or $\breve{z}_0\rightarrow0$ as $x/t\rightarrow -1$ and match up the corresponding asymptotic formulas with \eqref{outside-cos}-\eqref{outside-sin} respectively. } 
\end{remark}
\begin{proposition}
    \label{prop: r-higher}
If $\vec {f}(0)\in H_{\text{sin}}^{3,s}\left(\mathbb{R}\right)\times H^{2,s}\left(\mathbb{R}\right)$
	with $s>\frac{1}{2}$ 
then
\begin{itemize}
\item[1.] $(\cdot)r(\cdot)\in H^{s}(\bbR)$;
\item[2.] $\lim_{z\to 0}{r(z)}/z=0$.
\end{itemize}
\end{proposition}
\begin{proof}
The proof of this proposition follows from the proof of Proposition \ref{prop:r} applied to the integral representation given by \eqref{uni-cont}.
\end{proof}
We then proceed to make the following estimate:
\begin{align}
    \int_0^{ 1 } \int_{ v }^{1} \dfrac{|\dbar[\mathbf{r}\left( z\right)]| }{ |u^2+v^2|^{1/2} }  e^{- v t } \, du \, dv &\leq \int_0^{ 1 } \int_{ v }^{1} \dfrac{|\dbar[\mathbf{r}\left( z\right)]| }{ u }  e^{- v t } \, du \, dv\\
    \nonumber
    &\leq  \int_0^{ 1 } \int_{ v }^{1} \dfrac{\left\vert \int_\bbR i\xi\left(  e^{i\xi u}\hat{r}(\xi)\hat{\mathcal{P}}(v\xi) + e^{i\xi u}\hat{r}(\xi)\hat{\mathcal{P}}'(v\xi) \right) d\xi\right\vert}{ u }  e^{- v t } \, du \, dv  \\
    \nonumber
    &\lesssim \norm{(\cdot)r(\cdot)}{H^s(\bbR)}t^{-s}
\end{align}
which is a consequence of integration by parts and the standard \textit{Fourier} theory. Then we can replace the decay term $\mathcal{O}(t^{1-2s})$ in \eqref{app-cos}, \eqref{app-cos-} and \eqref{app-cos-'} by $\mathcal{O}(t^{-2s})$ and $\mathcal{O}(t^{1/2-s})$ in \eqref{app-sin}, \eqref{app-sin-} and \eqref{app-sin-'} by $\mathcal{O}(t^{-s})$.

\section*{Acknowledgements}
G.C. was supported by Fields Institute for Research in Mathematical Sciences via Thematic Program on Mathematical Hydrodynamics. 
Part of this work was done when the third author was visiting the department of mathematics, University of Toronto. The third author wants to thank the department for their hospitality and Catherine Sulem for the financial support. We would also like to thank Claudio Mu\~noz, Dmitry Pelinovsky, Wilhelm Schlag for helpful suggestions. We are very grateful to the editor and the anonymous referees whose detailed comments improve the presentation of the paper significantly.


\begin{thebibliography}{99}
\bibitem{AF}  Ablowitz, M.; Fokas, A.: Complex variables: introduction and applications. Second edition. Cambridge Texts in Applied Mathematics. \emph{Cambridge University Press}, Cambridge, 2003. 

\bibitem{AKNS73} Ablowitz,  M. J.; Kaup, D;  Newell, A.C.; Segur, H.: Method for solving the sine-Gordon equation, \textit{Stud. Appl. Math}, vo1.30. 11.0.25, (1973). 1262-1264.

\bibitem{AKNS} Ablowitz, M. J.; Kaup, D.; Newell, A.C.; Segur, H.: The inverse scattering transform-Fourier analysis for nonlinear problems. Studies in Appl. Math. 53 (1974), no. 4, 249–315. 


\bibitem{AMP} Alejo, M.A;  Mu\~noz, C.; Palacios, J.M. : On the asymptotic
stability of the sine-Gordon kink in the energy space. arXiv:2003.09358. 


\bibitem{BEMS}
Barone, A.;  Esposito, F.;  Magee, C. J.;  Scott, A. C.: Theory and applications of the sine-Gordon equation.\textit{ La Rivista del Nuovo Cimento} (1971-1977), 1(2):227–267, Apr 1971.

\bibitem{BC84}
 Beals, R.; Coifman, R. R.: Scattering and inverse scattering for first order systems. \emph{Comm. Pure Appl. Math.} 37 (1984), no. 1, 39–90. 


\bibitem{BDT88}
Beals, R.; Deift, P.; Tomei, C.:
Direct and inverse scattering on the line. 
\emph{Mathematical Surveys and Monographs}, 28. American Mathematical Society, Providence, RI, 1988. 
\bibitem{BL} Bergh, J, ; L\"ofstr\"om, J.  Interpolation spaces. An introduction. Grundlehren der Mathematischen Wissenschaften, No. 223. \emph{Springer-Verlag}, Berlin-New York, 1976. x+207 pp. 

\bibitem{BJM16}
Borghese, M., Jenkins, R., McLaughlin, K. T.-R.:
Long-time asymptotic behavior of the focusing nonlinear Schr\"{o}dinger
equation. \emph{Ann. Inst. H. Poincaré Anal. Non Linéaire }{35 (2018), no. 4, 887–920}.
 
\bibitem{BM08}
 Buckingham, R.; Miller, P. D.: Exact solutions of semiclassical non-characteristic Cauchy problems for the sine-Gordon equation. \textit{Phys. D} 237 (2008), no. 18, 2296–2341.

\bibitem{BuckinghamM2013} Buckingham, R. J.; Miller,  P. D.: The sine-Gordon equation in the semiclassical limit: dynamics of fluxon condensates, \textit{Mem. Amer. Math. Soc.}, 2013

\bibitem{CL19} Chen, G., Liu, J.: Long-time asymptotics to the modified KdV equation in weighted Sobolev spaces. Preprint. arXiv:1903.03855, 2019.

\bibitem{CL2} Chen, G.; Liu, J.: Soliton resolution for the
focussing modified KdV equation.  \emph{Ann. Inst. H. Poincaré Anal. Non Lin\' eaire}, in press.

\bibitem{Cheng} Cheng, P.: Long-time asymptotics for the sine-Gordon equation. Thesis
(Ph.D.)\textendash Duke University. 1997.

\bibitem{CVZ99}  Cheng, P.; Venakides, S.; Zhou, X.: Long-time asymptotics for the pure radiation solution of the sine-Gordon equation. \emph{Comm. Partial Differential Equations}  24 (1999), no. 7-8, 1195–1262


\bibitem{Coleman1975}Coleman, S.: Quantum sine-Gordon equation as the massive Thirring model.\textit{ Physical Review D}, 11:2088–2097, Apr 1975.


\bibitem{CuPe} Cuccagna, S.; Pelinovsky, D. E.: The asymptotic stability of solitons in the cubic NLS equation on the line. \emph{Appl. Anal.} 93 (2014), no. 4, 791–822. 

\bibitem {CKW} Cuevas-Maraver, J.;  Kevrekidis, P.G.;  Williams, F.:
The sine-Gordon model and its applications. From pendula and Josephson
junctions to gravity and high-energy physics. Edited by Jesus Cuevas-Maraver,
Panayotis G. Kevrekidis and Floyd Williams. Nonlinear Systems and
Complexity, 10.\emph{ Springer, Cham}, 2014. xiv+263 pp. ISBN: 978-3-319-06721-6. 

\bibitem {DP} Dauxois, T. ; Peyrard, M.:  Physics of Solitons.
\emph{Cambridge University Press, Cambridge, 2010.} xii+422 pp. 

\bibitem{DTY} Deconinck, B.; Trogdon, T.; Yang, X.:
Numerical inverse scattering for the sine-Gordon equation.
\textit{Physica D: Nonlinear Phenomena}
 399 (2019), 159-172


\bibitem{deLG} de Laire, A.; Gravejat, P. :The sine-Gordon
regime of the Landau-Lifshitz equation with a strong easy-plane anisotropy.
\emph{Ann. Inst. H. Poincaré Anal. Non Linéaire} 35 (2018), no. 7,
1885--1945.



\bibitem{Deift99}
 Deift, P. A.: Orthogonal polynomials and random matrices: a Riemann-Hilbert approach. Courant Lecture Notes in Mathematics, 3. \emph{New York University, Courant Institute of Mathematical Sciences, New York; American Mathematical Society, Providence, RI, 1999.} viii+273 pp.

\bibitem{DP}
Deift, P.A.; Park, J.: Long-Time Asymptotics for Solutions of the NLS Equation with a Delta Potential and Even Initial Data.\textit{ International Mathematics Research Notices}, \textbf{ 24}, ( 2011), 5505–5624.

\bibitem{DZ93}
Deift, P.,  Zhou, X.: A steepest descent method for oscillatory Riemann-Hilbert problems. Asymptotics for the MKdV equation. \emph{Ann. of Math.} (2) \textbf{137} (1993), 295--368. 

\bibitem{DZ03}
Deift, P.,  Zhou, X.: Long-time asymptotics for solutions of the NLS equation with initial 
data in a weighted Sobolev space. Dedicated to the memory of J\"urgen K. Moser. 
\emph{Comm. Pure Appl. Math.}, \textbf{56} (2003), 1029--1077.


\bibitem {DM} Delort, J.M.; Masmoudi, M.: Long time dispersive
estimates for perturbations of a kink solution of one dimensional
cubic wave equations. \emph{$<hal-02862414>$}, 2020. 
 
 
 
\bibitem{JMD} Delort, J. M.: Semiclassical microlocal normal
 forms and global solutions of modified one-dimensional KG equations.
\emph{Ann. Inst. Fourier (Grenoble)} 66 (2016), no. 4, 1451--1528. 
 
 
\bibitem{DM08}
Dieng, M., McLaughlin, K D.-T.: Long-time Asymptotics for the NLS equation via dbar methods.  Preprint,
\href{http://arXiv.org/pdf/0805.2807.pdf}{arXiv:0805.2807},
2008.
\bibitem{DMM18}
Dieng, M., McLaughlin, K D.-T., Miller, P.: Dispersive Asymptotics for Linear and Integrable Equations by the $\dbar$-Steepest Descent Method. \textit{Fields Institute Communications }83 (Springer, New York, NY), 497-582 (2019).
\bibitem{DKM} Duyckaerts, T.; Kenig, C.E.; Merle, F.:
Classification of radial solutions of the focusing, energy-critical
wave equation. \emph{Camb. J. Math. }1 (2013), no. 1, 75\textendash 144. 

\bibitem{DJKM} Duyckaerts, T.; Jia, H.; Kenig, C.E.;
Merle, F.: Soliton resolution along a sequence of times for the
focusing energy critical wave equation. \emph{Geom. Funct. Anal}.
27 (2017), no. 4, 798\textendash 862.


\bibitem{FK}
Frenkel, J.; Kontorova, T.: On the theory of plastic deformation and twinning. \textit{Journal of Physics }USSR, l:137, 1939.

\bibitem{GP} Germain, P.; Pusateri, F.: Quadratic Klein-Gordon
equations with a potential in one dimension. arXiv:2006.15688.

\bibitem{HN1} Hayashi, N.; Naumkin, P.: The initial
value problem for the cubic nonlinear Klein-Gordon equation.\emph{Z. Angew.
Math. Phys.} 59 (2008), no. 6, 1002--1028. 

\bibitem{HN2} Hayashi, N.; Naumkin, P.: Final state
problem for the cubic nonlinear Klein-Gordon equation. \emph{J. Math. Phys.}
50 (2009), no. 10, 103511, 14 pp. 

\bibitem{HN3} Hayashi, N.; Naumkin, P.: Scattering operator
for nonlinear Klein-Gordon equations. \emph{ Commun. Contemp. Math.} 11 (2009),
no. 5, 771--781. 

\bibitem{HPW} Henry, D.B.;  Perez, J.F.;   Wreszinski, W.F.: Stability
Theory for Solitary-Wave Solutions of Scalar Field Equations, \emph{Comm.
Math. Phys.} 85, 351--361 (1982).

\bibitem{HL}
Huang, L.; Lenells, J.: Construction of solutions and asymptotics for the sine-Gordon equation in the quarter plane, \textit{J. Integrable Systems} 3 (2018), 1–92.



 \bibitem{Its1981} Its, A. R.: Asymptotics of solutions of the nonlinear Schrodinger equation and isomonodromic deformations of systems of linear differential equations, \textit{Sov. Math. Dokl.} 24 (1981). 452-456.
 
 \bibitem{JKL}Jendrej, J.; Kowalczyk, M.;  Lawrie. A; : Dynamics of strongly interacting kink-antikink pairs for scalar fields on a line. arXiv:1911.02064.

\bibitem{JLPS18} Jenkins, R.; Liu, J.; Perry, P.; Sulem, C.: Soliton resolution for the derivative nonlinear Schrödinger equation. \emph{Comm. Math. Phys}. 363 (2018), no. 3, 1003–1049.  

 \bibitem{KamvissisMM2003}
 Kamvissis, S.; Mclaughlin, K D.-T.; Miller, P.:
  Semiclassical Soliton Ensembles for the Focusing Nonlinear Schrodinger Equation, \textit{Annals of Mathematics Studies}, Princeton Press, 2003
 

\bibitem{Kaup75}  Kaup, D. J.: Method for solving the sine-Gordon equation in laboratory coordinates. Studies in Appl. Math. 54 (1975), no. 2, 165–179. 


\bibitem{KK1} Komech, A.I.; Kopylova, E.: On asymptotic stability
of kink for relativistic Ginzburg-Landau equations. \emph{Arch. Ration.
	Mech. Anal.} 202 (2011), no. 1, 213-245.

\bibitem {KK2} Komech, A. I. ; Kopylova, E. On asymptotic stability
of moving kink for relativistic GinzburgLandau equation. \emph{Comm.
	Math. Phys.} 302 (2011), no. 1, 225-252. 

\bibitem {KMM1} Kowalczyk, M.;  Martel, Y.;  Mu\~noz, C.: Nonexistence
of small, odd breathers for a class of nonlinear wave equations. \emph{Lett.
	Math. Phys}. 107 (2017), no. 5, 921--931. 

\bibitem {KMM2}  Kowalczyk, M.; Martel, Y.; Mu\~noz,C.:
Kink dynamics in the $\phi^{4}$ model: asymptotic stability for odd
perturbations in the energy space. \emph{J. Amer. Math. Soc.} 30 (2017),
no. 3, 769--798. 

\bibitem {KMMV} Kowalczyk, M.; Martel, Y.; Mu\~noz,C.; Van
Den Bosch, H.; A sufficient condition for asymptotic stability of kinks
in general (1+1)-scalar field models. arXiv:2008.01276.


\bibitem {Lamb} Lamb, G.L.: Elements of Soliton Theory, \emph{Pure
	Appl. Math.,} Wiley, New York, 1980.



\bibitem{Liu} Liu, J.: Global Well-posedness for the Derivative
Nonlinear Schrodinger Equation Through Inverse Scattering. Thesis
(Ph.D.)\textendash University of Kentucky. 2017.

\bibitem{LPS} Liu, J.; Perry, P.; Sulem, C.:  Long-time behavior of solutions to the derivative nonlinear Schrödinger equation for soliton-free initial data. \emph{Ann. Inst. H. Poincaré Anal. Non Lin\' eaire} 35 (2018), no. 1, 217--265.

\bibitem{LS} 
Lindblad, H.; Soffer, A.: Scattering for the Klein-Gordon equation with quadratic and variable coefficient cubic nonlinearities. \emph{Trans. Amer. Math. Soc.} 367 (2015), no. 12, 8861--8909.

\bibitem{LS2} 
Lindblad, H.; Soffer, A.: A remark on asymptotic completeness for the critical nonlinear Klein-Gordon equation.\emph{ Lett. Math. Phys.} 73 (2005), no. 3, 249--258. 

\bibitem{LS3} 
Lindblad, H.; Soffer, A.:A remark on long range scattering for the nonlinear Klein-Gordon equation. \emph{J. Hyperbolic Differ. Equ.} 2 (2005), no. 1, 77–89.

 Scattering for the Klein-Gordon equation with quadratic and variable coefficient cubic nonlinearities. \emph{Trans. Amer. Math. Soc.} 367 (2015), no. 12, 8861--8909.


\bibitem{LLS} 
Lindblad H.; Luhrmann, J.; Soffer, A.:
Decay and asymptotics for the 1D Klein-Gordon equation with variable coefficient cubic nonlinearities.
Preprint {\em arXiv:1907.09922}.

\bibitem{MM08}
McLaughlin, K. T.-R.; Miller, P. D.:
The $\dbar$ steepest descent method and the asymptotic behavior of polynomials orthogonal on the unit circle with fixed and exponentially varying nonanalytic weights. 
\emph{IMRP Int. Math. Res. Pap. }
(2006), Art.\ ID 48673, 1--77.


\bibitem{MP} Mu\~noz, C.; Palacios, J. M.: Nonlinear stability
of 2-solitons of the sine-Gordon equation in the energy space. \emph{Ann.
	Inst. H. Poincaré Anal. Non Linéaire} 36 (2019), no. 4, 977--1034.


\bibitem{PS19}
Pelinovsky, D.; Saalmann, A.: Inverse scattering for the Massive Thirring model.   \textit{Fields Institute Communications }83 (Springer, New York, NY), 497-582 (2019).
\bibitem{SA18}
Saalmann, A.:
Long-time asymptotics for the Massive Thirring model. Thesis
(Ph.D.)\textendash University of K\"oln. 2019.

\bibitem{Sterb}
Sterbenz, J.:
Dispersive Decay for the 1D Klein-Gordon Equation with Variable Coefficient Nonlinearities.
{\em  Trans. Amer. Math. Soc.} 368 (2016), no. 3, 2081-2113.



\bibitem{SCR76}
Scott, A. C.;  Chu,  F. Y. F.; Reible,  S. A.: Magnetic-flux propagation on a Josephson transmission line. \textit{Journal of Applied Physics}, 47(7):3272–3286, 1976.

\bibitem{segur}
Segur, H.: Wobbling kinks in $\phi^4$ and sine-Gordon theory. \emph{Journal of Mathematical Physics } (1983) 24, 1439.

\bibitem {SS} Shatah, J.;  Struwe, M.: Geometric wave equations,
volume 2 of \emph{Courant Lecture Notes in Mathematics.} Amer. Math.
Soc., Providence, 1998.


 \bibitem{Takhatajan1974} 
Talrhtajan,  I L.A.  Exact theory of propagation of the ultrashort optical pulses in the two-level media, \textit{Zh. Eksp. Teor. Fiz}, 66(2). (1974). 476-489.


\bibitem{Tao} Tao, T.  Why are solitons stable? \emph{Bull. Amer. Math.}
Soc. (N.S.) 46 (2009), no. 1, 1-33. 

\bibitem{TO16}
Trogdon, T., Olver, S. 
Riemann-Hilbert problems, their numerical solution, and the computation of nonlinear special functions. 
Society for Industrial and Applied Mathematics (SIAM), Philadelphia, PA, 2016. 

\bibitem{TW} Tsuru, H.; Wadati, M.: The multiple pole solutions
of the sine-Gordon equation. \emph{J. Phys. Soc. Japan} 53 (1984), no. 9,
2908--2921. 


\bibitem{Y04}
Yakushevich, L. V.: Nonlinear Physics of DNA, Second Edition. \emph{Wiley-VCH}, Weinheim, 2004.

\bibitem{ZakharovM1976} 
Zakharov,  V. E.; Manakov, S.V.: Asymptotic behavior of nopnlinear wave systems integrated by the inverse method, \textit{Sov. Phy. JETPA} 4(1), (1976). 106-112

\bibitem{ZS71}
Zakharov, V. E.; Shabat, A. B.: Exact theory of two dimensional self-focusing and one dimensional self-modulation of waves in nonlinear media. \textit{Soviet Physics JETP} 34 (1972), no. 1, 62–69; translated from \textit{Z. Eksper. Teoret. Fiz.} 61 (1971), no. 1, 118-134.

\bibitem{ZTF}
Zakharov, V. E.; Takhtadzhyan,  L.A.;  Faddev, L. D. Complete description of solutions of the sine-Gordon equation, Sov. Phy. Dokl., Vol. 19, NO. 12. (1975), 824-826.

\bibitem{Zhou89}
 Zhou, X.: The Riemann-Hilbert problem and inverse scattering. \emph{SIAM J. Math. Anal.} 20 (1989), no. 4, 966–-986. 

\bibitem{Zhou95}
Zhou, X.: Inverse scattering transform for systems with rational spectral dependence. \emph{J. Diff. Eq.} 115 (1995), 277–303.
 
\bibitem{Zhou98}
Zhou, X.:
$L^2$-Sobolev space bijectivity of the scattering and inverse scattering transforms. \emph{Comm. Pure Appl. Math.} \textbf{51} (1998), 697--731.




\end{thebibliography}
\end{document}